\documentclass{amsart}
\usepackage[T1]{fontenc} 
\usepackage{amssymb,enumerate}

\usepackage{amsthm}
\usepackage{amsmath}
\usepackage{xcolor}
\usepackage[all]{xy}
\xyoption{arrow}
\usepackage[pagebackref]{hyperref}
\usepackage{theoremref}

\theoremstyle{plain}\newtheorem{Theorem}{Theorem}[section]
\theoremstyle{plain}\newtheorem{Conjecture}[Theorem]{Conjecture}
\theoremstyle{plain}\newtheorem{Corollary}[Theorem]{Corollary}
\theoremstyle{plain}\newtheorem{Lemma}[Theorem]{Lemma}
\theoremstyle{plain}\newtheorem{Proposition}[Theorem]{Proposition}
\theoremstyle{definition}\newtheorem{Definition}[Theorem]{Definition}
\theoremstyle{definition}
\theoremstyle{definition}
\theoremstyle{definition}
\theoremstyle{definition}
\theoremstyle{definition}\newtheorem{Remark}[Theorem]{Remark}
\theoremstyle{definition}

 \def\OH{{\mathcal{O}H}} 
  
\def\CD{{\mathcal{D}}}  
\def\CE{{\mathcal{E}}}

\def\CO{{\mathcal{O}}}

\def\CX{{\mathcal{X}}}
\def\CY{{\mathcal{Y}}}

\def\Aut{\mathrm{Aut}}                    
\def\Br{\mathrm{Br}}

\def\Ext{\mathrm{Ext}}        

\def\GL{\mathrm{GL}}

\def\Ind{\mathrm{Ind}}

\def\Inn{\mathrm{Inn}}

\def\Irr{\mathrm{Irr}}

\def\GU{\mathrm{GU}}
\def \O{\mathrm{O}}
\def\PGL{\mathrm{PGL}}
\def\PO{\mathrm{P\Omega}}
\def\PSL{\mathrm{PSL}}
\def\PSp{\mathrm{PSp}}
\def\PSU{\mathrm{PSU}}
\def\SL{\mathrm{SL}}
\def\SO{\mathrm{SO}}

\def\Spin{\mathrm{Spin}}
\def\SU{\mathrm{SU}}

\def\opp{\mathrm{opp}}  
\def\Out{\mathrm{Out}}

\def\Res{\mathrm{Res}}

\def\Tr{\mathrm{Tr}}             
             
\def\mf{\mathrm{mf}}             
\def\Lin{\mathrm{Lin}}

\def\Stab{\mathrm{Stab}}

\newcommand{\bC}{{\mathbf{C}}}
\newcommand{\bG}{{\mathbf{G}}}
\newcommand{\bH}{{\mathbf{H}}}
\newcommand{\bL}{{\mathbf{L}}}

\newcommand{\bN}{{\mathbf{N}}}
\newcommand{\bP}{{\mathbf{P}}}

\newcommand{\bT}{{\mathbf{T}}}
\newcommand{\bU}{{\mathbf{U}}}

    \newcommand{\type}{\operatorname}

 \newcommand{\wt}{\widetilde}
  
	\newcommand{\tw}[1]{{}^#1\!}

\newcommand{\tA}{\type{A}}
\newcommand{\tB}{\type{B}}
\newcommand{\tC}{\type{C}}
\newcommand{\tD}{\type{D}}
\newcommand{\tE}{\type{E}}
\newcommand{\tF}{\type{F}}
\newcommand{\tG}{\type{G}}
\newcommand{\cent} {C}
\newcommand{\zent} {Z}
\newcommand{\norm} {N}
\newcommand{\sym}{{S}}
\newcommand{\alt}{{A}}
\newcommand{\FF}{\mathbb{F}}

\numberwithin{equation}{section}

\makeindex
\title{On  $2$-blocks with quaternion  defect groups} 

\author[Eaton]{Charles Eaton}
\address[Eaton]{Department of Mathematics, University of Manchester,  Oxford Road, Manchester, M13 9PL}
\email{charles.eaton@manchester.ac.uk}

\author[Eisele]{Florian Eisele}
\address[Eisele]{Department of Mathematics, University of Manchester,  Oxford Road, Manchester, M13 9PL}
\email{florian.eisele@manchester.ac.uk}

\author[Kessar]{Radha Kessar}
\address[Kessar]{Department of Mathematics, University of Manchester,  Oxford Road, Manchester, M13 9PL}
\email{radha.kessar@manchester.ac.uk}
\author[Linckelmann]{Markus Linckelmann}
\address[Linckelmann]{Department of Mathematics, City St George's, University of London EC1V 0HB,
 United Kingdom}
\email{markus.linckelmann.1@city.ac.uk}

\author[Schaeffer Fry]{A. A. Schaeffer Fry}
\address[Schaeffer Fry]{{Department of Mathematics}, {University of Denver}, {Denver, CO 80210, USA}}
\email{mandi.schaefferfry@du.edu}

\date{\today}
\thanks{The second author was supported by the EPSRC grant UKRI2780. The fourth author was supported by EPSRC grant EP/X035328/1. The fifth author was supported by the U.S. National Science Foundation, Award No. DMS-2439897}

\begin{document}

\begin{abstract}
We determine the Morita equivalence classes of $2$-blocks with  quaternion
defect groups of arbitrary $2$-power order, thereby completing the proof of Donovan's 
conjecture for blocks of tame representation type.
\end{abstract}
\subjclass[2020]{20C20, 20C33, 16G60}

\maketitle

\section{Introduction.}  \label{Intro-section} 

Nearly four decades ago, Erdmann classified blocks of finite groups of tame representation
type in terms of their quivers and relations, except that some of the quivers were 
not known to arise as blocks, and certain scalars in the relations remained undetermined.
Erdmann's work, in a series of papers leading up to the monograph \cite{er90}, 
came tantalisingly close to proving Donovan's conjecture for all tame
blocks, were it not for the undetermined scalars. Some of these were ruled out
or narrowed down to finitely many possibilities in subsequent work by several authors,
including Eisele \cite{Eisele16}, Holm \cite{Holm99}, and Macgregor
\cite{Macgregor}. 

At the time of writing, Donovan's conjecture  was known to 
hold for all tame blocks except blocks with a  quaternion defect group of order at least $16$
and two isomorphism classes of simple modules. The main purpose of this paper is to prove
Donovan's conjecture for this remaining case, thereby completing the proof of Donovan's
conjecture for all tame blocks. We do this by proving a stronger
result which implies Donovan's conjecture for blocks with a 
quaternion defect group, of arbitrary $2$-power order, 
over a complete discrete valuation ring. The proof, which
uses the classification of finite simple groups, yields 
furthermore  a list  containing all Morita equivalence classes of these  blocks.

In order to state this, let $\CO$ be a complete discrete valuation ring with
an algebraically closed residue field $k$ of characteristic $2$ and a field of
fractions $K$ of characteristic zero. By a quaternion group we mean a (possibly
generalised) quaternion group of order $2^n$ for some integer $n\geq 3$. 

\begin{Theorem}\thlabel{Main1}   
The Morita--Frobenius  number of any block of a finite group algebra over $\CO$ with defect groups isomorphic to a quaternion group $Q_{2^n}$ equals $1$. In particular, Donovan's conjecture holds over $\CO$ and over $k$ for blocks with a quaternion defect group. 
\end{Theorem}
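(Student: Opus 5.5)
Throughout, $B$ is a block of $\CO G$ with defect group $P\cong Q_{2^n}$. The Morita--Frobenius number of $B$ is the least $m$ such that $B$ is Morita equivalent to its Frobenius twist $B^{(2^m)}$. By a theorem of Kessar, Donovan's conjecture for a class of blocks closed under Frobenius twists follows from two facts: weak Donovan (Cartan entries bounded in terms of $|P|$), and a uniform bound on Morita--Frobenius numbers. Weak Donovan is already known for tame blocks from Erdmann's work, so the whole statement reduces to proving $\mf(B)=1$, i.e. that $B$ is Morita equivalent to $B^{(2)}$. Since $\CO$ and $k$ are interchangeable here via the standard lifting arguments, I would work over $\CO$ throughout.

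\textbf{Step 1: fusion patterns.} I would sort by the fusion system of $B$ on $Q_{2^n}$; there are three saturated possibilities.
\begin{enumerate}[(a)]
\item Nilpotent fusion. Here Puig's theorem gives $B$ Morita equivalent to $\CO P$, which is defined over the prime field, so $\mf=1$.
\item Three simple modules. Erdmann's classification leaves no free scalars in this case. The basic algebra is then determined and defined over $\FF_2$, and the $\CO$-version follows from the known liftings (e.g. the blocks of $\SL_2(q)$ or $2.\alt_7$ type).
\item Two simple modules, with fusion system that of $\SL_2(q)$ restricted to one of the two essential-subgroup classes. This is the hard case, since Erdmann's relations involve an undetermined parameter $c\in k$.
\end{enumerate}

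\textbf{Step 2: reduction for case (c).} I would choose a minimal counterexample $(G,B)$ with $|G:Z(G)|$ minimal. Then I would apply the standard reductions:
\begin{itemize}
\item Fong--Reynolds and Külshammer--Puig to pass to quasiprimitive blocks.
\item Clifford theory for normal subgroups of odd index or $2$-power index.
\item Results on blocks covering nilpotent blocks of normal subgroups.
\end{itemize}
These reduce to the case $O_{2'}(G)\le Z(G)$ with $F^*(G)$ a central product of quasisimple groups $L$, each of whose covered blocks has defect group a subgroup of $Q_{2^n}$. Subgroups of quaternion groups are cyclic or quaternion. Hence $G/Z(G)$ is almost simple and the block of $L$ has quaternion or cyclic defect group. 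The cyclic case is handled through Brauer trees, whose Morita classes are known to be defined over the prime field.

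\textbf{Step 3: quasisimple groups (main obstacle).} By the classification, quasisimple groups with quaternion-defect $2$-blocks having two simple modules are essentially the covers $\SL_2(q)$, $\SU_2$ variants, and extensions by field or diagonal automorphisms, such as $\SL_2(q).2$ or $\GL_2(q)$-type situations inside $2.\PGL_2(q)$. For each of these I would identify the block explicitly. The key point is that these blocks are Galois-stable under the Frobenius: they are unions of Lusztig series defined by $\sigma$-stable semisimple classes, so $B^{(2)}\cong B$ via the field automorphism. For the resulting blocks of almost simple groups, I would use Bonnafé--Dat--Rouquier Morita equivalences, or direct computation via Broué's perfect isometries, to show that the twisted block is isomorphic to the original, perhaps after conjugation by an automorphism of $G$.

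The main difficulty is controlling the extension from $L$ to $G$. Frobenius twisting can change the $2$-cocycle or the Külshammer--Puig class, so I must show the relevant class in $H^2(G/L,k^\times)$ is trivial or $\FF_2$-rational. I would try first to show that the inertial quotient is a $2$-group or trivial, making the class lie in a group where squaring acts trivially. From the finite list of resulting Morita classes, I would then read off $\mf=1$ and, in particular, that $c$ takes values in $\FF_2$.
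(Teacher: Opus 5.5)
Your preliminary reduction and Steps~1--2 broadly match the paper's architecture: Cartan bounds from Erdmann, Donovan reduced to Morita--Frobenius numbers via \thref{Donovan_and_Morita_Frob}, then reduction to a quasisimple $N$ or an overgroup $H=NP$ with $[H:N]=2$. However, Step~3, which carries all the weight, has a genuine gap.

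\textbf{The case list.} The paper deduces $\mf=1$ from \thref{Main2}: every such block is Morita equivalent to a principal block or to one of a few explicit exceptions. Your list misses those exceptions entirely. Quaternion blocks with two simple modules also arise as:
the two non-principal blocks of maximal defect of $6.\alt_7$;
the $Q_{16}$-blocks of $2.\type{B}_3(3)$ and $6.\type{B}_3(3)$;
the $Q_{16}$-blocks of $2.\Aut(J_2)$ and $2.\sym_7$.
The paper does not know whether these are Morita equivalent to principal blocks. It gets $\mf=1$ because each is the unique non-principal block of its group with that defect group, hence fixed by $\sigma$. The exception is the pairs of faithful blocks of $6.\alt_7$ and $6.\type{B}_3(3)$. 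These lie over the two faithful characters of the central $C_3$, which $\sigma$ swaps, so $\sigma$ interchanges the two blocks; a separate Morita equivalence \cite[Prop.~7.2]{FK} is needed there. Nothing in your proposal covers these cases.

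\textbf{The Lie-type mechanism.} The twist $\sigma$ is a Galois action on coefficients, squaring $2'$-roots of unity. It sends $e_s^{G}$ to $e_{s^2}^{G}$ and has nothing to do with a field automorphism of $G$, which involves $p$-th powers. The label $s$ is in general not $\sigma$-stable. For example, take $G=\SL_6(q)$ with $q\equiv 1\pmod 3$, and $s\in G^\ast=\PGL_6(q)$ quasi-isolated of order $3$ whose preimages have centraliser $\GL_2(q^3)$ in $\GL_6(q)$. The blocks in $\mathcal{E}_2(G,s)$ have quaternion defect groups, but $s^2$ is not conjugate to $s$. So $\sigma(b)\neq b$, and you need an honest Morita equivalence between two different blocks. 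Perfect isometries do not provide that. Your concern about $H^2(G/L,k^\times)$ and inertial quotients is beside the point: for $n\geq 4$ the inertial quotient is trivial, since $\Aut(Q_{2^n})$ is a $2$-group.

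\textbf{What the paper does instead.} It shows every such Lie-type block is Morita equivalent to a principal block. Bonnaf\'e--Rouquier and Bonnaf\'e--Dat--Rouquier equivalences, extended through diagonal and field automorphisms (Section~\ref{sec:extendBDR}), together with Morita minimality, reduce to quasi-isolated blocks. With quaternion defect these survive only in type $\tA$, where they are equivalent to principal blocks. In the index-$2$ case the key step is to show, from the Sylow structure of centralisers, that the outer automorphism must be diagonal (\thref{thm:typeA}, \thref{thm:typeD}; $\tE_6$ yields only nilpotent blocks by \thref{E6}). After that, \thref{cor:extendclassical2} applies.

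\textbf{Two smaller points.} In Step~1(b), the existence of principal-block lifts does not give $\mf_{\CO}=1$; you need Eisele's uniqueness of the $\CO$-form \cite{Eisele16}. In Step~2, cyclic $2$-blocks are nilpotent, so the real issue is blocks covering nilpotent blocks, which the paper handles by reduction to solvable groups rather than via Brauer trees.
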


\begin{Corollary} \thlabel{tame-Donovan-Corollary}
Donovan's conjecture holds for all tame blocks of finite group algebras over $k$. 
\end{Corollary}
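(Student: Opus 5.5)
The plan is to combine \thref{Main1} with what was already known for the other tame defect groups, as described in the introduction. A block $B$ of $kG$ has tame representation type exactly when $k$ has characteristic $2$ and the defect groups of $B$ are dihedral, semidihedral or (generalised) quaternion; this is the Bondarenko--Drozd / Brenner / Erdmann description of tame blocks. Donovan's conjecture over $k$ asks, for each fixed $2$-group $P$, that there be only finitely many Morita equivalence classes of blocks with defect group $P$. So it suffices to fix $P$ in each of these three families and show finiteness.

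\emph{Quaternion defect groups.} This case is immediate from \thref{Main1}, which already gives Donovan's conjecture over $k$. If one wants to see the mechanism, I would spell it out through Kessar's reduction: Donovan's conjecture for a fixed $P$ is equivalent to two bounds holding together. The first is weak Donovan, meaning the entries of Cartan matrices of blocks with defect group $P$ are bounded. The second is that Morita--Frobenius numbers are bounded. Weak Donovan holds for tame blocks because Erdmann's classification leaves only finitely many possible quivers and Cartan matrices for each defect group; alternatively one can use the general bound for blocks with defect groups of bounded order and bounded number of simple modules. \thref{Main1} gives Morita--Frobenius number $1$, so finiteness follows.

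\emph{Dihedral and semidihedral defect groups.} Here I would cite the literature mentioned in the introduction. Erdmann's classification \cite{er90} reduces each block, up to Morita equivalence, to one of finitely many quiver-with-relations families for each defect group order. The remaining undetermined scalars were settled or reduced to finitely many values by Eisele \cite{Eisele16} and Macgregor \cite{Macgregor} in the dihedral and semidihedral cases. Nilpotent blocks are handled separately by Puig: they are Morita equivalent to $kP$. Together these give finitely many Morita classes for each such $P$, and this is precisely the status quo asserted in the introduction ("known to hold for all tame blocks except quaternion of order $\geq 16$ with two simples").

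The only real obstacle is therefore the one the paper addresses, namely the quaternion case with two isomorphism classes of simple modules. That case is subsumed by \thref{Main1}, so with \thref{Main1} assumed the corollary is a bookkeeping step. The one point I would check carefully is that the cited dihedral and semidihedral results are genuinely over $k$ and pin down the scalars up to finitely many values. I would also check that the three-simple and one-simple quaternion cases need no separate argument, which holds because \thref{Main1} covers all quaternion blocks uniformly.
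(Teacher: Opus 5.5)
Your proposal matches the paper's proof: the quaternion case is exactly \thref{Main1}, and for dihedral and semidihedral defect groups the paper simply cites Erdmann's classification \cite{er90}, where (unlike the quaternion case) the socle scalars already lie in the finite set $\{0,1\}$, so finiteness over $k$ is immediate. One small correction is that \cite{Eisele16} and \cite{Macgregor} concern quaternion blocks, not dihedral or semidihedral ones: they treat three simple modules over $\mathcal{O}$ and the non-occurrence of $Q(3B)$ for $n\geq 5$, respectively. Those citations are therefore misplaced, but your conclusion does not depend on them.
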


We expect the above  to hold over $\CO$, but we currently do not have complete information 
about blocks with semidihedral defect groups to conclude this. See \thref{Remark: Donovan over O for dihedral and semidihedral} for a detailed account of what is known.

Theorem  \ref{Main1}  is  a consequence of the  following result.

\begin{Theorem} \thlabel{Main2}  
Any block of a finite group algebra over $\CO$  with defect groups isomorphic to a
quaternion group $Q_{2^n} $ of order $2^n$ for some integer $n\geq 3$
is  either Morita equivalent to a principal  block or a block, over $\CO$, in the  following list.
\begin{enumerate}
\item One of the two nonprincipal blocks of maximal defect of $6.A_7$.
\item  The unique block of $2.\Aut(J_2)$ or $2.S_7$ with $Q_{16}$ defect groups, where $2.S_7$ is the $-$ type double cover of $S_7$.
\item  The unique  block of $2.\type{B}_3(3)$ with $Q_{16}$ defect groups or one of the two  
faithful blocks of $6.\type{B}_3(3)$ with $Q_{16}$ defect groups. 
\end{enumerate}

\end{Theorem}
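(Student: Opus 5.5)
We will argue by a reduction to quasisimple groups, in the style of the Külshammer--Puig / Eaton--Livesey reductions used for Donovan's conjecture. Let $B$ be a block of $\CO G$ with defect group $D\cong Q_{2^n}$, and choose a counterexample with $|G:Z(G)|$, and then $|G|$, minimal. The first step uses Fong reductions and the second Fong reduction (Külshammer) to assume that $B$ is quasiprimitive, i.e.\ for every normal subgroup $N$ of $G$, $B$ covers a $G$-stable block of $N$. A normal subgroup $N$ with $B$ covering a nilpotent block of $N$ is handled by the Külshammer--Puig theorem. Since $\Out(Q_{2^n})$ is a $2$-group for $n\geq 4$, and $\Aut(Q_8)$ is small, the possible inertial quotients are very restricted. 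For the nilpotent-covered case we then expect $B$ to be Morita equivalent to a twisted group algebra of a $2'$-extension of $D$, hence either nilpotent or with $D=Q_8$ and inertial quotient $C_3$. In the latter case the block is already known (Erdmann's list together with results on $Q_8$ / $\SL_2(3)$) to be Morita equivalent to a principal block.

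Next we use that $O_2(G)\leq Z(G)$ after reduction. If $O_{2}(G)$ is noncentral, we pass to $\cent_G(O_2(G))$ and use Watanabe/Puig-type arguments. Consequently the generalised Fitting subgroup $F^*(G)$ is a central product $Z(G)E_1\cdots E_t$ of components. Since $D\cap E_1\cdots E_t$ is normal in $D$, and a quaternion group has no nontrivial direct product decomposition (the centre is cyclic of order $2$), we expect to show $t=1$. Otherwise the covered block would have defect group a central product that cannot embed as a normal subgroup of $Q_{2^n}$ with the required properties. So $E=E_1$ is quasisimple with $B$ covering a $G$-stable block $b$ of $E$. Its defect group $D\cap E$ is normal in $D$, hence cyclic, quaternion, or trivial. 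By the Frattini argument $G=E\,C_G(E)\,D$-ish. The cyclic case gives $G/EC_G(E)$ acting, and we handle it via the structure of blocks with cyclic defect (Brauer tree algebras) and Clifford theory. The quaternion case with $D\cap E$ of index $\leq 2$ in $D$ reduces to understanding $b$ and its extension to $G$, of index at most $2$ modulo $EC_G(E)$.

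The main step, and the main obstacle, is the classification of quasisimple groups $E$ having a block $b$ with quaternion or cyclic $2$-defect compatible with the above. It also requires deciding, for each, whether $b$ and its extensions are Morita equivalent to principal blocks. Here we would run through the CFSG list. For alternating and sporadic covers, a direct inspection of $2$-blocks of double covers yields $2.A_n$, $2.S_n$ (giving $2.S_7$ and $6.A_7$), and $2.J_2$, $2.\Aut(J_2)$. For groups of Lie type in characteristic $2$, defect groups are Sylow or trivial, and $\SL_2(q)$-type arguments apply. For groups of Lie type in odd characteristic, we use Bonnafé--Dat--Rouquier Jordan decomposition to reduce to quasi-isolated blocks. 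We then use Enguehard's and Kessar--Malle's descriptions of unipotent and quasi-isolated $2$-blocks to see that quaternion defect arises essentially only for $\SL_2(q)$, $\SL_3$-, $\SU_3$-type and few exceptional cases such as $2.\type{B}_3(3)$, $6.\type{B}_3(3)$. In non-quasi-isolated cases, BDR gives a Morita equivalence with a block of a proper Levi subgroup with the same defect group, and by minimality this is Morita equivalent to a principal block or a listed one. The delicate part will be the exceptional covers (the Schur multiplier of $\type{B}_3(3)=\O_7(3)$ has exceptional $3$-part, and $A_7$ likewise), where BDR does not apply. For these we would compute decomposition matrices and Cartan invariants, match them against Erdmann's algebras $Q(2\mathcal{B})$, $Q(3\mathcal{A})$, etc. We would then distinguish them from principal blocks by invariants over $\CO$, which is where the exceptions in items (1)--(3) arise.
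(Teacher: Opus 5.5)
Your overall architecture matches the paper's: reduce to a quasisimple group or an index-$2$ overgroup of one, then go through the classification, using Bonnaf\'e--Dat--Rouquier and the Kessar--Malle/Enguehard descriptions of quasi-isolated blocks for Lie type. However, two essential steps are missing.

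\textbf{The overgroup problem.} Your claim that a Frattini argument gives $G=E\,C_G(E)D$ is false. Odd-order field or diagonal automorphisms of $E$ can survive in $G/EC_G(E)$, and a block covering a $G$-stable block of $E$ with $G/E$ of odd order need not be Morita equivalent to it. The paper needs a separate argument here. It uses that $G/F^*(G)$ is supersolvable and descends along normal subgroups of odd prime index. Because every intermediate block has two or three simple modules, each step is a Morita equivalence, ending with $G=F^*(G)$, or $[G:F^*(G)]=2$ and $G=DF^*(G)$.

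More seriously, in the remaining case $H=N\langle t\rangle$ with $N$ of Lie type, a Bonnaf\'e--Rouquier/BDR equivalence for $N$ says nothing about $\CO Hc$ unless the inducing bimodule extends through the action of $t$. The same holds for your minimality argument applied to a Levi subgroup. This is the heart of the paper:
\begin{itemize}
\item It extends these bimodules through diagonal and field automorphisms, via Marcus's lemma and Ruhstorfer's results. Combined with Morita minimality, this forces the block to cover a quasi-isolated block.
\item Graph automorphisms cannot be treated this way. So the paper analyses Sylow $2$-subgroups of centralisers of semisimple elements in types $\type{A}$ and $\type{D}$ in detail, and shows that when both $D$ and $D\cap N$ are quaternion, $t$ essentially acts diagonally.
\item The $\type{E}_6$ index-$2$ case is shown to be nilpotent.
\end{itemize}
After this, the extended BDR equivalence lands on a principal block. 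Your proposal leaves this whole case as ``understanding $b$ and its extension to $G$''.

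\textbf{The coefficient ring.} Matching decomposition matrices and Cartan invariants with Erdmann's algebras determines a block over $k$ only up to the socle scalar. It gives no $\CO$-Morita equivalence. The paper disposes of every block with three simple modules by Eisele's theorem that such $\CO$-blocks are Morita equivalent to principal blocks. This covers, for example, the infinitely many quaternion blocks of $2.A_n$, and those of $2.Co_1$, $2.Suz$, $6.Suz$ and $6.A_6$. The exceptions (1)--(3) are simply the non-principal blocks with two simple modules that survive the reduction. The paper does not know whether they are Morita equivalent to principal blocks, so ``distinguishing them by invariants over $\CO$'' is neither needed for the statement nor available.

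\textbf{Smaller points.}
\begin{itemize}
\item Quasi-isolated blocks of $\SL_3$/$\SU_3$-type have semidihedral or wreathed defect groups, never quaternion ones.
\item In the quasi-isolated analysis, quaternion defect arises only for $\SL_m(\epsilon q)$ with $C_{\wt G^*}(\wt s)\cong\GL_2((\epsilon q)^{m/2})$ and $m/2$ odd. These blocks still need BDR to the normaliser of a Levi subgroup, plus a linear-character twist, to reach a principal block.
\item For $n\geq 4$ the inertial quotient of a $Q_{2^n}$-block is always trivial. It therefore cannot separate nilpotent from non-nilpotent blocks; compare the principal block of $\operatorname{CSU}_2(3)$.
\item Cyclic $2$-blocks are nilpotent, so no Brauer tree analysis is required.
\end{itemize}
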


In \cite{kl20}, Koshitani and Lassueur classified principal blocks of finite groups with  a 
 quaternion Sylow $2$-subgroup up to source algebra equivalence, implying
in particular a classification of these blocks up to Morita equivalence over $\CO$.
We complete these results with a  list of all Morita equivalence classes of blocks of 
finite group algebras over  $\CO$ with a quaternion defect group.

\begin{Corollary}\thlabel{MainCorollary2}  
Let  $n \geq 3 $ and  let $q_1 $ and $q_2$ be odd prime powers satisfying $(q_1-1)_2 = 2^{n-1}$ 
and $(q_2+1)_2  =2^{n-1}$. If $n>3$, let $q_3$ and $q_4$ denote odd prime powers such that 
$(q_3-1)_2 = 2^{n-2}$, $(q_4+1)_2 =2^{n-2}$. Let  $G$ be a  finite group  and let $b $ be a 
block idempotent of $\CO G$. Suppose that the defect groups of $\CO Gb$  are isomorphic to  
$Q_{2^n}$.  Then   $\CO Gb$  is Morita equivalent  to  one of the following blocks over $\CO$.
\begin{enumerate}
    \item  $\CO Q_{2^n}$.
    \item  The principal  block of $\SL_2(q_1)$.
    \item The principal block   of $\SL_2(q_2)$.
    \item The principal block   of $2.\PGL_2(q_3)$ (assuming $n>3$).
    \item The principal block   of $2.\PGL_2(q_4)$ (assuming $n>3$).
\item One of the blocks of maximal defect of $6.A_7$.
\item  The unique block of $2.\Aut(J_2)$ or $2.S_7$ with $Q_{16}$ defect groups.
\item  The unique block of $2.\type{B}_3(3)$ with $Q_{16}$ defect groups or one of the two faithful blocks of $6.\type{B}_3(3)$ with $Q_{16}$ defect groups. 
\end{enumerate}

In the above, $2.\PGL_2(q_i)$ for $i\in\{3,4\}$ denotes the unique central extension of $\PGL_2(q_i)$ by the cyclic group $C_2$ whose Sylow $2$-subgroups are isomorphic to $Q_{2^n}$
\end{Corollary}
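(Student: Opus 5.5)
The corollary follows by combining \thref{Main2} with the Koshitani--Lassueur classification \cite{kl20} of principal blocks with quaternion Sylow $2$-subgroups. Let $\CO Gb$ have defect groups isomorphic to $Q_{2^n}$. By \thref{Main2}, either $\CO Gb$ is Morita equivalent to one of the blocks in items (1)--(3) of that theorem, or it is Morita equivalent to a principal block $B_0(\CO H)$ of some finite group $H$. In the first case the block appears verbatim as item (6), (7) or (8) of the corollary. One should check that "one of the two nonprincipal blocks of maximal defect of $6.A_7$" is covered by "one of the blocks of maximal defect of $6.A_7$". In the second case Morita equivalence preserves defect groups up to isomorphism, via the invariance of the Cartan invariants and the elementary divisors, or via Puig's results. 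So $H$ has a Sylow $2$-subgroup isomorphic to $Q_{2^n}$, and it remains to list the Morita classes of such principal blocks.

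For the principal block case I would invoke \cite{kl20}. Their result says that, up to source algebra (hence Morita) equivalence over $\CO$, such a block is equivalent to one of the following:
\begin{itemize}
\item $\CO Q_{2^n}$;
\item the principal block of $\SL_2(q)$ with $(q\pm1)_2=2^{n-1}$;
\item the principal block of $2.\PGL_2(q)$ with $(q\pm 1)_2=2^{n-2}$, for $n>3$;
\item for $n=4$, possibly the principal blocks of $2.A_7$ (which would match the $Q_{16}$ cases) or similar small exceptions.
\end{itemize}
Underlying this is Brauer--Suzuki: $H/O_{2'}(H)$ has a central involution, and $O_{2'}(H)$ can be discarded for principal blocks since $B_0(\CO H)\cong B_0(\CO H/O_{2'}(H))$. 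Then $H/O_{2'}(H)Z$ has dihedral Sylow $2$-subgroups, and one applies Gorenstein--Walter to get $\PSL_2(q)$, $\PGL_2(q)$ or $A_7$, which are lifted to the quaternion covers $\SL_2(q)$, $2.\PGL_2(q)$ and $2.A_7$.

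I would then reconcile the two lists. Any extra group appearing in \cite{kl20}, such as $2.A_7$ with Sylow $Q_{16}$, must either be Morita equivalent to a listed principal block, e.g.\ $B_0(2.A_7)$ to $B_0(\SL_2(q))$ with appropriate $q$ (a Scopes/Puig-type or known perfect isometry argument, and precisely what \cite{kl20} records), or be listed separately. I also need to check that the index conditions match: $(q_1-1)_2=2^{n-1}$ versus $(q_2+1)_2=2^{n-1}$, and the dependence on $q$ only through the $2$-part via Bonnaf\'e--Rouquier/Broué-type equivalences, again supplied by \cite{kl20}. Finally, the definition of $2.\PGL_2(q_i)$ as the central extension with quaternion Sylow subgroup needs justification. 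I would show that of the two double covers of $\PGL_2(q)$ extending $\SL_2(q)$, exactly one has generalised quaternion Sylow $2$-subgroups, while the other has semidihedral ones.

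The main obstacle is bookkeeping rather than mathematics: ensuring that the classes listed in \cite{kl20} coincide exactly with items (1)--(5), including the small case $n=3$, where $\PGL_2$ does not occur, and possible $A_7$-type exceptions at $n=4$. I would resolve these by inspecting the decomposition matrices in Erdmann's list \cite{er90}.
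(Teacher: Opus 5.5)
Your route is the same as the paper's, whose proof is just ``combine \thref{Main2} with \cite[Thm.~1.1]{kl20}''. However, the one step that needs thought, namely reconciling the exceptional $n=4$ case of Koshitani--Lassueur with items (1)--(8), is handled incorrectly.

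For $n=4$ the Koshitani--Lassueur list contains the principal block of $2.A_7$. You propose absorbing it by showing that $B_0(2.A_7)$ is Morita equivalent to some $B_0(\SL_2(q))$. That cannot work. $B_0(k\,2.A_7)$ lies in Erdmann's class $Q(3B)$, whereas $B_0(k\SL_2(q_1))$ lies in $Q(3A)_2$ and $B_0(k\SL_2(q_2))$ lies in $Q(3K)$. These are distinct Morita equivalence classes in Erdmann's classification; already their quivers differ. So the blocks are not Morita equivalent even over $k$, let alone over $\CO$. Your fallback ``or be listed separately'' leaves open where this block sits in the corollary, since $2.A_7$ is not named in the list.

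The missing observation is that $O_{2'}(Z(6.A_7))\cong C_3$ lies in the kernel of the principal block. Hence the canonical surjection $\CO[6.A_7]\to\CO[2.A_7]$ induces an isomorphism $B_0(\CO\,6.A_7)\cong B_0(\CO\,2.A_7)$, and this is a block of maximal defect (with defect group $Q_{16}$) of $6.A_7$. That is exactly why item (6) reads ``one of the blocks of maximal defect of $6.A_7$'' while \thref{Main2} reads ``one of the two nonprincipal blocks''. You noticed this change of wording but treated it as a harmless enlargement, rather than as the slot reserved for the Koshitani--Lassueur exception.

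With that identification the proof is complete. The rest of your plan is unnecessary, since all of it is already the content of \cite[Thm.~1.1]{kl20}:
\begin{itemize}
\item the Brauer--Suzuki and Gorenstein--Walter discussion;
\item the inspection of decomposition matrices;
\item the check of the $2$-adic conditions on the $q_i$;
\item the analysis of the two double covers of $\PGL_2(q)$.
\end{itemize}
The description of $2.\PGL_2(q_i)$ in the corollary is just a naming convention for the groups occurring there.
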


We do not know (and do not expect) this list to be irredundant -- that is, there may
be Morita equivalences between some members of this list. In particular, we do not
know whether every block in this list is Morita equivalent to some principal block.
Most of the Morita equivalences encountered in the proofs of the above results
are induced by bimodules with endopermutation sources, suggesting that the
classification up to Morita equivalence should come close to a classification of
the source algebras of these blocks, thereby implying Puig's conjecture for
blocks with a quaternion defect group. The  main obstacle that stands in
the way of a complete classification of these source algebras is the fact that we do 
not know the sources of the bimodules inducing the Bonnaf\'e--Dat--Rouquier 
equivalences. 

 Our results rely on the classification of finite simple groups. In Section \ref{sec:generalities} we gather some general background representation theoretical  results  and  Section  \ref{sec:tamebackground} gives  the required background on blocks with quaternion defect groups.   The main results of Section \ref{socle scalars} are Propositions ~\ref{prop:Q2A} and \ref{prop:Q2B}, which show that bounding Morita--Frobenius numbers of blocks with quaternion defect groups and two simple modules yields  analagous bounds on the socle scalars appearing in Erdmann's descriptions. Section \ref{sec:redthms} contains Clifford theoretic  
 reduction theorems -- the final reduction statement is  Corollary~\ref{main-reduction}. In Section \ref{sec:extendBDR}, we  present  some  general results  extending Bonnaf\'e--Rouquier  and  Bonnaf\'e--Dat--Rouquier Morita equivalences  to overgroups  acting as diagonal and field automorphisms -- these results  may  find  further applications.  Sections \ref{sec:lie-non} and \ref{sec:AD}  contain  an  analysis  of quaternion  blocks   arising from finite groups of Lie type in odd characteristic,  culminating in Theorem~\ref{lietype}. 
  Section \ref{sec:altspor}  deals with  the remaining   
 families of finite simple groups. In particular, when combined with Section \ref{sec:lie-non}, the results of Section \ref{sec:altspor} complete the description of blocks of quasisimple groups with quaternion defect groups, which could be of independent interest.
 Section \ref{sec:mainproofs}  contains the proofs of the theorems stated in this introduction.

\section{Generalities.}\label{sec:generalities} 

\subsection{Basic notation and blocks}

Throughout let $\ell $  be a prime and let $(K, \CO, k) $  be an $\ell$-modular  system  such that $\CO$ is absolutely  unramified  and such that $k \cong \bar{\mathbb {F}}_{\ell}$. Later, we will often have $\ell=2$, but sometimes our results apply in greater generality. For $G$ a finite group, a block of $\CO G$ or $kG$ will mean an indecomposable  $k$-algebra  factor, and will usually be written as, e.g., $\CO Gb$ where $b$ is the block idempotent. We denote by $\ell(kGb)$ the number of isomorphism classes of simple $kGb$-modules. We will use standard notation for
fixed points under group actions and relative traces; see e.g. \cite[\S 2.5]{LiBookI}.

Recall that  if  $N$ is  a normal subgroup of  $G$, $c$ is a central idempotent of $\CO N$  and  $b$ is a central idempotent of $\CO G$, then we  say that $b$ covers $c$  (or that $\CO Gb$ covers $\CO N c$)  if $bc\ne 0 $. We use the following facts about covered blocks freely, and explanation may be found in~\cite[Sec.~6.8]{LiBookII}.
Assume now that $b$ and $c$ are block idempotents. The block idempotents of $\CO N$ covered by $b$ form a single $G$-conjugacy class. If $D$ is a defect group for $\CO Gb$, then $D \cap N$ is a defect group for a block of $\CO N$ covered by $\CO Gb$, and conversely if $Q$ is a defect group for $\CO Nc$, then there is a defect group $D$ for $\CO Gb$ such that $D \cap N = Q$. Writing $H$ for the stabiliser of $c$ in $G$, there exists a block of $\CO G$ covering $\CO Nc$ with defect group $D$ such that $DN/N$ is a Sylow $\ell$-subgroup of $H/N$. If $G/N$ is an $\ell$-group, then there is a unique block of $\CO G$ covering $\CO Nc$, and in particular if $c$ is further $G$-stable, then $G=ND$ and $b=c$ (see~\cite[Thm.~ 6.5.4]{LiBookII}, noting that we have assumed $k$ is algebraically closed). Of particular use are the following:

\begin{Remark}\thlabel{remark:fongcorrespondent}
  Let $N$ be a normal subgroup of $G$ and let $b$ be a block idempotent of $\CO G$ covering a block idempotent $e$ of $\CO N$. Let $T$ denote the stabiliser of $e$ in $G$.
  \begin{enumerate}
    \item There is a unique block idempotent $d$ of $\CO T$ such that $bd=d=ed$ and $b=\Tr_T^G(d)$. Moreover, $\CO G b \cong \mathrm{Mat}_n(\CO T d)$ where $n=[G:T]$. See \cite[Prop.~6.8.3]{LiBookII}.
    \item If $G/N$ is an $\ell$-group, then $b$ is the unique block idempotent covering $e$ and $b=\Tr_T^G(e)$. In particular, $b\in \CO N$ and $e$ is a block idempotent of $\CO T$ (the latter follows from the case where $G=T$). See \cite[Prop.~6.8.11]{LiBookII}.
  \end{enumerate}
\end{Remark}

The following consequence of these facts will be useful later.

\begin{Lemma}  \label{lem-stableblock}
Let $X$ be a finite group and $G$, $N$ normal subgroups of $X$ such that $N\leq G$.
Suppose that $X/G$ has $\ell$-power order.
Let $c$ be a block idempotent of $\CO N$.  Denote by $Y$ the stabiliser of $c$ in $X$,
and suppose that $GY=X$. 
 Then there exists an $X$-stable block idempotent $b$ of $\CO G$ such that $b$ covers $c$
and such that $b$ has a defect group $P$ whose image in $G/N$ is a Sylow 
$p$-subgroup of $(G\cap Y)/N$.
\end{Lemma}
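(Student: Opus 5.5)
We work inside $G\cap Y$, the stabiliser of $c$ in $G$, build a suitable block there, and transport it up to $G$ with the Fong correspondence of \thref{remark:fongcorrespondent}(1). The key observation is that $Y$ acts on the set of blocks of $\CO(G\cap Y)$ covering $c$, and the hypothesis $X/G$ an $\ell$-group gives $Y/(G\cap Y)\cong GY/G = X/G$ an $\ell$-group. (Here ``Sylow $p$'' in the statement is read as Sylow $\ell$.)

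First, choose a block $d$ of $\CO(G\cap Y)$ covering $c$ whose defect group $P$ has $PN/N$ Sylow in $(G\cap Y)/N$. This exists by the general fact recalled above: $c$ is $(G\cap Y)$-stable, so some block of $\CO(G\cap Y)$ covering $c$ has a defect group $P$ with $PN/N$ Sylow in $(G\cap Y)/N$. Put $b=\Tr_{G\cap Y}^G(d)$. By \thref{remark:fongcorrespondent}(1), applied with $T=G\cap Y$, $b$ is a block of $\CO G$ covering $c$. Moreover, $\CO G b\cong \mathrm{Mat}_n(\CO(G\cap Y)d)$, and the Fong correspondence preserves defect groups, so $P$ is a defect group of $b$.

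The main obstacle is $X$-stability of $b$. Since $X=GY$ and $b$ is $G$-stable, it suffices to make $b$ $Y$-stable. Since $Y$ normalises $G$, $G\cap Y$ and $c$, for $y\in Y$ we have ${}^y b=\Tr_{G\cap Y}^G({}^y d)$. So it suffices to choose $d$ to be $Y$-stable.

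To achieve this, consider the set $\mathcal{S}$ of blocks of $\CO(G\cap Y)$ covering $c$ that have a defect group $P$ with $PN/N$ Sylow in $(G\cap Y)/N$. $Y$ permutes $\mathcal{S}$, with $G\cap Y$ acting trivially, so $\mathcal{S}$ carries an action of the $\ell$-group $Y/(G\cap Y)$. A fixed point is guaranteed if $|\mathcal{S}|$ is prime to $\ell$, but counting is awkward. Instead I would argue via a block of $\CO Y$. Choose a block $f$ of $\CO Y$ covering $c$ with a defect group $D$ such that $DN/N$ is Sylow in $Y/N$ (again $c$ is $Y$-stable). Since $Y/(G\cap Y)$ is an $\ell$-group, $f$ covers a unique $Y$-orbit of blocks of $\CO(G\cap Y)$, and this orbit is a single block $d$; more precisely, the blocks covered form one $Y$-orbit, whose $Y$-stabiliser $T'$ satisfies $[Y:T']$ a power of $\ell$. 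I would then show the orbit is trivial, using the covering defect group facts: $D\cap (G\cap Y)$ is a defect group of a covered block $d$, and $D$ stabilises $d$. Since $D(G\cap Y)/(G\cap Y)$ is Sylow in $Y/(G\cap Y)$, which is the whole $\ell$-group, we get $D(G\cap Y)=Y$. Hence $Y=D(G\cap Y)$ stabilises $d$, so $d$ is $Y$-stable.

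Finally, check the defect group condition for $d$: $P:=D\cap G$ is a defect group of $d$, and $PN/N=(DN/N)\cap (G\cap Y)/N$ is the intersection of a Sylow subgroup of $Y/N$ with a normal subgroup, hence Sylow in $(G\cap Y)/N$. This completes the plan.
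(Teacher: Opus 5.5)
Your proof is correct and follows essentially the same route as the paper: take a block of $\CO Y$ covering $c$ with a defect group $R$ whose image is Sylow in $Y/N$, note that $Y=(G\cap Y)R$, find a covered block of $\CO(G\cap Y)$ fixed by $R$ (hence $Y$-stable), and lift it to $G$ via the Fong--Reynolds trace $\Tr_{G\cap Y}^G$. The only difference is how the $R$-fixed covered block is found: the paper uses $\Br_R(d)\neq 0$ together with the $R$-action on the primitive decomposition of $d$ in $Z(\CO(G\cap Y))$, whereas you cite the standard Fong--Reynolds/defect-group fact that a defect group of the covering block stabilises some covered block whose defect group is the intersection.
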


\begin{proof}
Note that $Y$ contains $N$. Set $H=G\cap Y$; that is, $H$ is the stabiliser in $G$ of $c$.
There exists a block idempotent $d$ of $\CO Y$
such that $dc=d$ and such that if $R$ is any defect group of $\CO Yd$, then the
image of $R$ in $Y/N$ is a Sylow $p$-subgroup of $Y/N$. Since $G$ has $\ell$-power
index in $X=GY$, it follows that $X=GR$, so intersecting with $Y$ yields
$Y=HR$. By \cite[Thm.~6.5.4]{LiBookII}, noting that $k$ is algebraically closed, we have that $d$ is contained in $\CO H$,
hence that $d$ belongs to the fixed point algebra  $ (\CO H)^Y$. 
Since $(\CO H)^Y$ is contained in $Z(\CO H)$, the
unique primitive decomposition of $d$ in $Z(\CO H)$ takes the form
$d=\sum_{i=1}^r\ e_i$, where the $e_i$ are the block
idempotents of $\CO H$ satisfying $de_i=e_i$. Since $R$ is a defect group of $\CO Yd$ 
we have $\Br_R(d)\neq 0$. Since $R$ permutes the set $\{e_i\ | \ 1 \leq i\leq r\}$ this
forces that there is one of the $e_i$ which is fixed by $R$; call this block $e$.
Since $Y=HR$ this implies that
$e$ is a $Y$-stable block idempotent of $\CO H$ satisfying $ec=e$.
Set $b=$ $\Tr^G_H(e) =$ $\Tr^G_H(ec)=$ $\Tr^G_H(bce)$. 
Thus $bce\neq 0$, so $bc\neq 0$.
By \cite[Thm.~6.8.3, Lem.~6.8.4]{LiBookII}, $b$ is a block idempotent of $\CO G$.
Moreover, $b$ is both $G$-stable (it belongs to $Z(\CO G)$) and $Y$-stable, because
$Y$ normalises both $G$ and $H=G\cap Y$ and stabilises $e$. Since $X=GY$ it
follows that  $b$ is $X$-stable.
Then $P=H\cap R$ is a defect group of $\CO He$, which by
\cite[Thm.~6.8.3]{LiBookII} remains a defect group of $\CO Gb$. Since $RN/N$ is a 
Sylow $\ell$-subgroup of $Y/N$, intersecting with the normal subgroup $H/N$ of 
$Y/N$ shows that $PN/N$ is a Sylow $\ell$-subgroup of $H/N$. This completes the
proof of the Lemma.
\end{proof}

\begin{Lemma}
\thlabel{solv_quotient_characteristic:lemma}
Let $G$ be a finite group and  $\CO G b$ a block  with defect group $D$. Assume that whenever $N$ is a characteristic subgroup of $G$, the block $\CO G b$ covers a unique block $\CO N c$. Let $N \lhd G$  be a characteristic subgroup such that $G/N$ is solvable and let $\CO N c$ be the unique block covered by $\CO Gb$. Then $ND/N$ is a Sylow $p$-subgroup of $G/N$.
\end{Lemma}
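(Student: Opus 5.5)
Induct on $|G/N|$, going down a characteristic series of $G/N$. The case $N=G$ is trivial. If $N<G$, the solvable group $G/N$ has a nontrivial characteristic elementary abelian subgroup, for instance the last nontrivial term of its derived series modulo its own Frattini subgroup. It is cleaner to use the derived series directly. The term $M=G^{(r)}N$ (the preimage of the last nontrivial derived subgroup of $G/N$) would not give an abelian quotient in the right direction. So we take instead $M\ge N$ with $M/N$ the preimage of a minimal characteristic subgroup of $G/N$; then $M$ is characteristic in $G$ and $M/N$ is characteristic-simple and solvable, hence an elementary abelian $q$-group for some prime $q$.

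Since $M$ is characteristic in $G$, $\CO Gb$ covers a unique block $\CO Me$. Also $G/M$ is solvable and smaller than $G/N$, so by induction $MD/M$ is a Sylow $p$-subgroup of $G/M$. It therefore suffices to show that $(D\cap M)N/N$ is a Sylow $p$-subgroup of $M/N$. For this we use the facts about covering blocks. Since $\CO Gb$ covers the unique block $\CO Me$, the idempotent $e$ is $G$-stable. Since $\CO Me$ covers $\CO Nc$ and $c$ is the unique block of $\CO N$ covered by $b$, the idempotent $c$ is $G$-stable as well, and in particular $M$-stable.

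We now split into cases according to $q$. If $q=p$, then $M/N$ is a $p$-group and $c$ is $M$-stable, so by the covering facts (\cite[Thm.~6.5.4]{LiBookII}) we have $M=N D_M$ for any defect group $D_M$ of $\CO Me$. We may choose $D_M=D\cap M$, since $D\cap M$ is a defect group of the block of $\CO M$ covered by $b$. Hence $(D\cap M)N/N=M/N$ is Sylow. If $q\neq p$, the Sylow $p$-subgroup of $M/N$ is trivial and there is nothing to prove. In both cases $(D\cap M)N/N$ is Sylow in $M/N$. Combining this with $MD/M$ being Sylow in $G/M$ gives, by comparing orders,
\[
|DN/N| = |DM/M|\cdot|(D\cap M)N/N| ,
\]
which is the $p$-part of $|G/N|$. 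To justify the first equality, use $|DN/N| = |D/D\cap N|$ and $|D\cap M : D\cap N| = |(D\cap M)N/N|$.

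The main obstacle is to set up the induction so that it truly applies to $G/M$. The hypothesis concerns the fixed group $G$ and its characteristic subgroups, and $M$ is characteristic in $G$, so the inductive statement for the pair $(G,M)$ is of the same form with a smaller solvable quotient. The induction should therefore be on $|G:N|$ with $G$ fixed, which is legitimate. The minor point to check is that the defect group of the covered block can be taken to be $D\cap M$, and this is exactly the quoted fact $D\cap M$ is a defect group of a covered block, together with uniqueness of that block.
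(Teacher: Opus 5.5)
Your argument is correct, and it is essentially the standard proof the paper defers to (Ardito's Lemma~2.4, run with characteristic rather than normal subgroups). You induct on $|G:N|$ through a minimal characteristic section $M/N$. That section contributes nothing when it is a $p'$-group, and when it is a $p$-group you get $M=N(D\cap M)$ because $c$ is $G$-stable.

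Your aside that $M=G^{(r)}N$ ``would not give an abelian quotient in the right direction'' is mistaken: that choice also works once you split the abelian section into its $p$- and $p'$-parts, whose preimages are characteristic in $G$. You do not rely on it, so the proof is unaffected.
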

\begin{proof}
  This can be proved just   as  in~\cite[Lem.~2.4]{ar19}.
\end{proof}

\subsection{Donovan's conjecture and Morita--Frobenius numbers}

\begin{Conjecture}[Donovan]
Let $P$ be a finite $\ell$-group. There are only finitely many Morita equivalence classes of blocks of finite groups with defect group isomorphic to $P$. This may be stated over $k$ or $\CO$. 
\end{Conjecture}

The conjecture makes sense over any complete, local and commutative coefficient ring $R$ with algebraically closed residue field of characteristic $\ell$. However, the version over $k$ is the original form of the conjecture, and the version over $\CO$ is the (stronger) form of the conjecture usually considered in recent literature on the topic. We will therefore restrict ourselves to these two coefficient rings. See~\cite{eel20} for a discussion on choice of $\CO$.

By~\cite{ke04} (for $k$-blocks) and~\cite[Cor.~3.11]{eel20} (for $\CO$-blocks), Donovan's conjecture reduces to the bounding of the Cartan invariants and Morita--Frobenius numbers as we briefly describe here.

Denote by $\overline{\sigma}:k \rightarrow k$ the Frobenius automorphism, so $\overline{\sigma}(\lambda)=\lambda^\ell$ for each $\lambda\in k$. Since $\CO$ is absolutely unramified, $\overline{\sigma}$ lifts to a unique automorphism $\sigma: \CO \rightarrow \CO$, which raises every $\ell'$-root of unity to its $\ell$-power. For $G$ a finite group, we may then define ring automorphisms $\overline{\sigma}$ and $\sigma$ of $kG$ and $\CO G$ by $\varphi(\sum_{a \in G} a_gg)=\sum_{a \in G} \varphi(a_g)g$ for $\varphi=\overline{\sigma}$ and $\sigma$. These ring automorphisms permute the corresponding block idempotents of $\CO G$ and $kG$ identically. Writing $\pi:\CO G \rightarrow kG$ for the canonical ring automorphism and $b$ for a block idempotent of $\CO G$, the \emph{Morita--Frobenius number} $\mf_k(kG\pi(b))$ is defined to be the smallest positive $m \in \mathbb{Z}$ such that $kG\pi(b)$ is Morita equivalent to $kG\overline{\sigma}^m(\pi(b))$, and $\mf_\CO(\CO Gb)$ is defined to be the smallest positive $m \in \mathbb{Z}$ such that $\CO G b$ is Morita equivalent to $\CO G\sigma^m(b)$. The Morita--Frobenius number for any block Morita equivalent to a principal block is one.

\begin{Proposition}[~\cite{ke04} and~\cite{eel20}]
\thlabel{Donovan_and_Morita_Frob}
Let $P$ be a finite $\ell$-group. Then Donovan's Conjecture holds for $P$ if and only if there are $m(P)$ and $c(P)$ such that for all blocks of finite groups with defect group isomorphic to $P$, the Morita--Frobenius number is at most $m(P)$ and the largest entry of the Cartan matrix is at most $c(P)$.     
\end{Proposition}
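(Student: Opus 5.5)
This proposition is cited from \cite{ke04} and \cite{eel20}, so my plan is to reconstruct their arguments. I treat the two coefficient rings in parallel, with $\CO$ handled by reduction to $k$ plus a lifting argument. The forward direction is easy. Suppose there are finitely many Morita classes of blocks with defect group $P$. Cartan matrices are Morita invariants, so only finitely many occur, and their entries are bounded by some $c(P)$. For Morita--Frobenius numbers, note that $kG\overline{\sigma}^m(\pi(b))$ is again a block with defect group $P$. It is isomorphic as a ring to $kGb$ via the semilinear map $\overline{\sigma}^m$, and as a $k$-algebra it is the Frobenius twist $(kGb)^{(\overline{\sigma}^m)}$. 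The twists $(kGb)^{(\overline{\sigma}^m)}$ for $m\geq 1$ therefore all lie among the finitely many Morita classes of the statement. Since $\overline{\sigma}$ acts on these classes by a permutation, the first return time is at most the number $N$ of classes, and we may take $m(P)=N!$. The same argument works verbatim over $\CO$ with $\sigma$.

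For the converse, first work over $k$. Bounded Cartan entries bound the dimension of a basic algebra $A$ Morita equivalent to $kGb$. The number of simple modules $\ell(kGb)$ is bounded by $|P|^2$ (Brauer--Feit, or simply $\ell\le k(B)$), and the Cartan entries are bounded by $c(P)$, so $\dim A\le |P|^4c(P)$. Such algebras with their structure constants form an affine variety of bounded dimension defined over $\mathbb{F}_\ell$. The key step, which is Kessar's argument, is to show that $A$ is defined over a finite field $\mathbb{F}_{\ell^{m}}$ with $m$ bounded in terms of $m(P)$ and $\dim A$. Then, up to isomorphism, there are only finitely many algebras of dimension at most $d$ over $\mathbb{F}_{\ell^{m}}$, and extending scalars to $k$ leaves finitely many classes.

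To obtain the descent, use that $A^{(\overline{\sigma}^{m})}\cong A$ with $m=\mf_k\le m(P)$. The group $\langle\overline{\sigma}^m\rangle$ acts semilinearly on $A$ up to inner-type automorphisms. Twisting by an element of $\Aut(A)$ gives a semilinear automorphism $\tau$ of $A$ lifting $\overline{\sigma}^m$. Its restriction to a finite-dimensional $\mathbb{F}_\ell$-span of structure constants has finite order, because the relevant automorphisms form a finite set over a finite field containing them. Hence some power $\tau^r$ is $\overline{\sigma}^{mr}$-semilinear of order one, and by Galois descent (Lang/Hilbert 90 for $\GL$) we get an $\mathbb{F}_{\ell^{mr}}$-form of $A$. \textbf{The main obstacle} is bounding $r$ uniformly. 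I would first try Lang's theorem on the connected algebraic group $\Aut(A)^\circ$: it allows $\tau$ to be adjusted so that the obstruction lies in the finite component group, whose order is bounded in terms of $\dim A$, and this bounds $r$.

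Over $\CO$, follow \cite{eel20}. Bounded Cartan numbers and decomposition numbers (using $k(B)\le |P|^2$ and Brauer's bounds) bound the $\CO$-rank of a basic order. Finiteness of $k$-Morita classes, together with the bound $\mf_\CO\le m(P)$, then gives $\CO$-finiteness through a descent to $W(\mathbb{F}_{\ell^{m}})$-orders and a finiteness result for orders of bounded rank in a fixed semisimple $K$-algebra with bounded discriminant (Jordan--Zassenhaus). I expect this step to be secondary; the main difficulty remains the descent over $k$.
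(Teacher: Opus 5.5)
\textbf{Verdict: there is a genuine gap at the descent step.} Your forward direction is fine. So is the reduction of the converse to basic algebras $A$ of bounded dimension, via $\ell(B)\le k(B)$, Brauer--Feit, and $\dim A\le \ell(B)^2c(P)$. Both follow Kessar's outline. The gap is the step you yourself flag as the main obstacle, and the route you propose through it cannot work.

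Suppose $A_1$ is a $\tau$-stable $\mathbb{F}_{\ell^t}$-form of $A$ on which $\tau$ has order $r$. Then $\tau^r$ is $\overline{\sigma}^{mr}$-semilinear and is the identity on $A_1$, which forces $t\mid mr$. So the $\mathbb{F}_{\ell^{mr}}$-form obtained by descending along $\tau^r$ is just $\mathbb{F}_{\ell^{mr}}\otimes_{\mathbb{F}_{\ell^t}}A_1$. That is an enlargement of the form you started with, whose field $\mathbb{F}_{\ell^t}$ depends on $G$, $b$ and even the choice of basis, and is not a priori bounded. Hence $t/\gcd(m,t)$ divides $r$, and $r$ cannot be bounded in principle. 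Adjusting $\tau$ by $\Aut(A)^\circ$ and appealing to a bound on the component group does not change this. On $A$ itself no $\overline{\sigma}^m$-semilinear map has finite order, and on a finite form the order is tied to the size of that form.

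The missing idea is that no power of $\tau$ and no bound on $r$ are needed. You descend along $\tau$ itself, and the field you get is exactly $\mathbb{F}_q$ with $q=\ell^m$ and $m=\mf_k(kGb)\le m(P)$. In a $k$-basis of $A$, write $\tau=M\circ F_q$, where $F_q$ raises coordinates to the $q$-th power and $M\in\GL_d(k)$. Apply Lang's theorem to the connected group $\GL_d$, i.e.\ the linear automorphisms of the underlying space, not to $\Aut(A)$. It gives $g\in\GL_d(k)$ with $M=g\,F_q(g)^{-1}$, so $\tau=g\circ F_q\circ g^{-1}$. Then $A^\tau=g(\mathbb{F}_q^d)$ and $k\otimes_{\mathbb{F}_q}A^\tau\cong A$. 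Since $\tau$ is a ring automorphism, $A^\tau$ is an $\mathbb{F}_q$-subalgebra, so $A$ is defined over $\mathbb{F}_{\ell^m}$.

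Equivalently, in your own setup: on $\mathbb{F}_{\ell^{mr}}\otimes_{\mathbb{F}_{\ell^t}}A_1$, the map $\tau$ itself generates a semilinear action of $\mathrm{Gal}(\mathbb{F}_{\ell^{mr}}/\mathbb{F}_{\ell^m})$, and Galois descent gives an $\mathbb{F}_{\ell^m}$-form whatever $r$ is. Finiteness then follows at once, since there are only finitely many $\mathbb{F}_{\ell^m}$-algebras of dimension at most $|P|^4c(P)$ with $m\le m(P)$. Note that $m$ does not even depend on $\dim A$.

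Your $\CO$ sketch rests on the same descent, and the same fix applies. Lang's theorem also holds for $\GL_d(\CO)$ with $\sigma^m$ acting on entries, by successive approximation from the residue field, since $\CO$ is the Witt vectors of $k$. After that you still have to control the resulting $W(\mathbb{F}_{\ell^m})$-orders beyond their rank, as you indicate.
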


For a finite group $G$, denote by $\Lin(G)$ the group of linear characters, and recall that when $\eta \in \Lin(G)$ has values in $\CO$ (such as when $\eta$ has $\ell'$ order), $\eta$ induces an $\CO$-algebra automorphism of $\CO G$  via 
$$  \eta. \left(\sum_{g \in G} \alpha_g   g\right)   =    \sum_{g\in G} \alpha_g  \eta(g)  g.$$

\begin{Lemma} \label{lem:normalabelian}
 Let $N$  be a normal subgroup of a  finite group $G$  with $G/N $ abelian.  Let $e$ be  a block idempotent of  $\CO N$. If $b_1 $ and $b_2$ are block idempotents of $\CO  G$  covering $e$, then  there exists $\eta \in \mathrm{Lin} (G/N)$ of $\ell'$ order such that  the  action  of $\eta $ on $\CO G$   restricts to  an  $\CO$-algebra  isomorphism  $\CO G b_1 \cong   \CO G b_2$. In particular, the number of block idempotents  of $\CO G$  covering $e$ is prime to ${\ell}$. 
\end{Lemma}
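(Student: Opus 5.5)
We reduce first to the case where $e$ is $G$-stable. Let $T$ be the stabiliser of $e$ in $G$; it is normal in $G$ because $G/N$ is abelian. By Remark~\ref{remark:fongcorrespondent}(1), the blocks $b_i$ of $\CO G$ covering $e$ correspond bijectively to blocks $d_i$ of $\CO T$ covering $e$, via $b_i=\Tr_T^G(d_i)$ and $\CO Gb_i\cong \mathrm{Mat}_n(\CO Td_i)$. Let $\eta\in\Lin(T/N)$ have $\ell'$ order and send $d_1$ to $d_2$. Extend $\eta$ to some $\tilde\eta\in\Lin(G/N)$ of $\ell'$ order; this is possible because $G/N$ is abelian and the $\ell'$-part of $\Lin$ of an abelian group restricts surjectively onto the $\ell'$-part of $\Lin$ of a subgroup. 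Then $\tilde\eta$ fixes $e$, since it acts trivially on $\CO N$. It commutes with $\Tr_T^G$ because it is a $G$-conjugation-equivariant algebra automorphism. Hence $\tilde\eta$ sends $b_1=\Tr_T^G(d_1)$ to $\Tr_T^G(d_2)=b_2$. This reduces the problem to $G=T$.

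Now assume $e$ is $G$-stable. Let $H\le G$ be the inverse image of an $\ell'$-Hall subgroup of the abelian group $G/N$. Then $G/H$ is an $\ell$-group, so by Remark~\ref{remark:fongcorrespondent}(2) each block of $\CO H$ covering $e$ is covered by a unique block of $\CO G$. The $G$-action on the blocks of $\CO H$ is trivial because $G/N$ is abelian and $H\supseteq N$; more precisely, conjugation by $G$ fixes $e$ and permutes the blocks of $\CO H$ covering $e$. Conversely, each $b_i$ is a sum of blocks of $\CO H$. By Remark~\ref{remark:fongcorrespondent}(2) applied with $H\lhd G$, each $b_i$ equals $\Tr_{T_i}^G(f_i)$ for a block $f_i$ of $\CO H$. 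Any $\eta\in\Lin(H/N)$ of $\ell'$ order extends to $G/N$ with $\ell'$ order, by the same argument as above. So it suffices to prove the statement for $H$, that is, when $G/N$ is an abelian $\ell'$-group and $e$ is $G$-stable.

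In this case the blocks of $\CO G$ covering $e$ are the primitive idempotents of $Z(\CO Ge)$. We use the graded structure $\CO Ge=\bigoplus_{x\in G/N}\CO N e x$. By Külshammer/Dade-type results for $\ell'$ quotients, $Z(\CO Ge)$ contains a twisted group algebra of a subgroup $E\le G/N$, namely of the classes $x$ for which $\CO Nex$ contains a unit centralising $\CO Ne$. The blocks covering $e$ are the primitive idempotents of this commutative twisted group algebra $\CO_\alpha E$. This algebra is split semisimple over $\CO$ because $|E|$ is invertible. Its primitive idempotents are permuted transitively by the dual group $\Lin(E)$ acting via $\eta$, since $\CO_\alpha E$ with $E$ abelian is isomorphic to a group algebra of a central-extension quotient whose idempotents correspond to characters. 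Every character of $E$ extends to $G/N$, with values in $\CO$ and of $\ell'$ order.

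The main obstacle is justifying this last structure step, namely identifying the central idempotents over $e$ and showing that they form a single $\Lin(G/N)$-orbit. If this becomes delicate, the plan is to argue character-theoretically instead. Take $\chi\in\Irr(G)$ lying over $\theta\in\Irr(N)$ in $e$. By Gallagher and Clifford theory for abelian $G/N$, every irreducible character of $G$ over the $G$-orbit of $\theta$ has the form $\lambda\chi'$ for $\lambda\in\Lin(G/N)$, with $\chi'$ over $\theta$. The $\ell$-part of $\lambda$ does not change the block, since it is trivial on $\ell$-regular elements. Hence $b_2=\eta b_1$ for some $\ell'$-order $\eta$.

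The final claim follows by counting. The $\ell'$-group $\Lin(G/N)_{\ell'}$ acts transitively on the blocks covering $e$, so their number divides $|\Lin(G/N)_{\ell'}|$ and is therefore prime to $\ell$.
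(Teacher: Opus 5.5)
\textbf{The main route has a false step, and the fallback is missing one step.}

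\emph{The structural step (your third paragraph) fails as stated.} Your reductions to the case where $e$ is $G$-stable and $G/N$ is an $\ell'$-group are sound. Extending $\ell'$-order linear characters to $G/N$, and the compatibility of the $\eta$-action with $\Tr_T^G$, both work. (Your remark that $G$ acts trivially on the blocks of $\CO H$ ``because $G/N$ is abelian'' does not follow from that reason, but you never use it.) The problem is Dade's subgroup $E$ of cosets $x$ for which $\CO Nex$ contains a unit centralising $\CO Ne$. It does not give a twisted group algebra inside $Z(\CO Ge)$, and $\CO_\alpha E$ need not be commutative even though $E$ is abelian.

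For a counterexample, take $\ell=2$, $G$ extraspecial of order $27$, $N=Z(G)$, and $e$ the block of a faithful $\theta\in\Irr(N)$. Each coset $gN$ contains the unit $eg\in\CO Neg$, which centralises $\CO Ne=\CO e$, so $E=G/N\cong C_3\times C_3$. But $\theta$ is fully ramified, so $\CO Ge\cong\mathrm{Mat}_3(\CO)$ and $Z(\CO Ge)=\CO e$.

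One way to repair this is to grade the centre itself:
\begin{itemize}
\item Since $G/N$ is abelian, $Z(\CO Ge)=\bigoplus_x \bigl(Z(\CO Ge)\cap \CO Nex\bigr)$, with local degree-one part $Z(\CO Ne)^G$.
\item Let $E'$ be the set of $x$ whose component contains a unit.
\item A homogeneous central element of degree $x\notin E'$ has its $o(x)$-th power in $J(Z(\CO Ne)^G)$, so it lies in $J(Z(\CO Ge))$.
\item Hence $Z(\CO Ge)/J(Z(\CO Ge))$ is a $\Lin(G/N)$-equivariant image of a commutative twisted group algebra $k_{\bar\alpha}E'\cong kE'$.
\item $\Lin(E')$, and hence $\Lin(G/N)_{\ell'}$ by surjectivity of restriction, acts transitively on the primitive idempotents of $kE'$. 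This gives the claim.
\end{itemize}

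\emph{Your character-theoretic fallback is essentially the paper's proof, but it omits a needed step.} The paper takes $\theta\in\Irr(N)$ in $e$. It uses \cite[Thm.~9.4]{N98} to obtain $\chi_1\in\Irr(b_1)$ and $\chi_2\in\Irr(b_2)$ lying over the \emph{same} $\theta$, and then applies \cite[Lem.~2.9]{FK}. Your sketch lacks this common-$\theta$ step. Knowing only that $b_1$ and $b_2$ cover $e$ gives characters lying over some $\theta_1,\theta_2\in\Irr(e)$. These need not be $G$-conjugate, and then the Clifford argument relates nothing.

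The Clifford fact you need is also not Gallagher's theorem, since no extendibility is available. It is simply $\Irr(G\mid\theta)=\{\lambda\chi:\lambda\in\Lin(G/N)\}$, which follows from $(\chi_N)^G=\chi\cdot(1_N)^G$. Your sentence about the $G$-orbit of $\theta$ and ``$\chi'$ over $\theta$'' should be replaced by this.

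With these two points, the character argument works directly for any $G$ with $G/N$ abelian, so your reductions become unnecessary. Your final counting argument, via transitivity of the $\ell'$-group $\Lin(G/N)_{\ell'}$, is correct.
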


\begin{proof} By, for example, \cite[Thm.~9.4]{N98} for each irreducible character $\theta$ of $N$ in $e$ there are irreducible characters  $\chi_1 $  of $G$ in $b_1 $ and  $\chi_2$ in $b_2 $  covering $\theta$.  Then the first assertion follows by \cite[Lem.~2.9]{FK}. The second assertion is a consequence of the  first and the fact  (also explained in the proof of \cite[Lem.~2.9]{FK}) that multiplying any irreducible character of a block of $\CO G$ by a linear character of $G/N$ of $\ell$-power order results in an irreducible character in the same block.\end{proof}

Recall from above that $\sigma : \CO  \to \CO $ is the ring automorphism  which  raises  every $\ell'$-root  of unity to  its $\ell$-power, and that $\sigma $ is uniquely determined by this property.

\begin{Proposition} \label{prop:frobeniusnormal}   Let $N$
and $H$ be normal subgroups of a  finite group $G$ with $N \leq H$. Suppose that  $H/N$ is  an abelian $\ell'$-group and  $G/H$ is an $\ell$-group.    Let $e$  be a block idempotent of  $\CO N $  with the property that $\,^\sigma  e = e$  and let $b$ be a block idempotent of $\CO G$  covering  $e$. Then $\mf_\CO(\CO G b)=1$.     If   $e$ is the principal block  idempotent of $\CO N$, then  $\CO Gb $  is Morita equivalent to  $\CO  Td$,   for some $T$, $d$  with  $ H\leq  T \leq G$  and   $d$ the principal block idempotent of   $\CO T$.
\end{Proposition}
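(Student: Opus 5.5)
Let $T$ be the stabiliser of $e$ in $G$. By Remark~\ref{remark:fongcorrespondent}(1), $\CO Gb\cong \mathrm{Mat}_{[G:T]}(\CO Td)$ for a unique block $d$ of $\CO T$ covering $e$, and $H\le T$ need not hold a priori. So I first reduce to the part of $G$ that stabilises $e$.

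Since $\sigma$ commutes with conjugation by $G$ and fixes $e$, it fixes the $G$-orbit sum of $e$. Also $\sigma(b)$ is a block of $\CO G$ covering $\sigma(e)=e$. By Remark~\ref{remark:fongcorrespondent}, $\sigma(b)$ has Fong correspondent $\sigma(d)$ in $\CO T$, and $\CO G\sigma(b)\cong\mathrm{Mat}_{[G:T]}(\CO T\sigma(d))$. Hence it suffices to show $\CO Td$ and $\CO T\sigma(d)$ are Morita equivalent (indeed isomorphic).

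Put $H_T=H\cap T$, so $N\le H_T\lhd T$, $H_T/N$ is an abelian $\ell'$-group and $T/H_T\cong TH/H$ is an $\ell$-group. Let $f$ be a block of $\CO H_T$ covered by $d$ (and covering $e$). Then $\sigma(f)$ is a block of $\CO H_T$ covering $e$. Lemma~\ref{lem:normalabelian}, applied to $N\lhd H_T$, gives $\eta\in\Lin(H_T/N)$ of $\ell'$-order whose twist maps $\CO H_T f$ isomorphically onto $\CO H_T\sigma(f)$, sending $f$ to $\sigma(f)$.

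Now pass to $T$. Since $T/H_T$ is an $\ell$-group, Remark~\ref{remark:fongcorrespondent}(2) shows $d=\Tr^T_{T_f}(f)$ is the unique block covering $f$, with $T_f$ the stabiliser of $f$. Likewise $\sigma(d)$ is the unique block covering $\sigma(f)$, so $\sigma(d)=\Tr^T_{T_f'}(\sigma(f))$, where $T_f'$ is the stabiliser of $\sigma(f)$; this equals $T_f$, since $\sigma$ commutes with conjugation. The key step is to extend $\eta$ to a linear character $\hat\eta$ of $T$ (or at least of $T_f$) of $\ell'$-order.

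This is the main obstacle. $T/N$ is an extension of the $\ell'$-group $H_T/N$ by an $\ell$-group, and we need $\eta$ to be invariant under the relevant subgroup and to extend. My first attempt is to replace $\eta$ by a suitable power of $\ell'$-order (equivalently, use Gallagher/Glauberman-type coprimality: $\eta$ has $\ell'$-order and $T/H_T$ is an $\ell$-group, so a stable such $\eta$ extends). To get stability, I would choose $\eta$ via an $\ell$-group averaging argument. The set of $\eta\in\Lin(H_T/N)_{\ell'}$ carrying $f$ to $\sigma(f)$ is a coset of the stabiliser of $f$. The $\ell$-group $T_f$ acts on this set, which has $\ell'$ size, so it has a fixed point. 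A $T_f$-stable $\eta$ of $\ell'$-order then extends to $\hat\eta\in\Lin(T_f)$ of $\ell'$-order.

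Twisting by $\hat\eta$ gives an isomorphism $\CO T_f f\cong \CO T_f\sigma(f)$. Then $\CO Td\cong\mathrm{Mat}(\CO T_f f)\cong \CO T\sigma(d)$, so $\mf_\CO(\CO Gb)=1$.

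For the second statement, let $e$ be principal. Then $T=G$, and the principal block of $\CO H$ covers $e$. Choose $f$ as the block covered by $b$. Lemma~\ref{lem:normalabelian} gives $\eta$ carrying the principal block $f_0$ of $\CO H$ to $f$, and the same stabilisation/extension argument extends it to $T_f=T_{f_0}$. The block of $T_f$ covering $f_0$ is principal, since $T_f/H$ is an $\ell$-group and the unique block covering the principal block is principal. So $\CO Gb\cong \mathrm{Mat}(\CO T_f f)\cong\mathrm{Mat}(\CO T_f d)$ with $d$ principal and $H\le T_f\le G$.
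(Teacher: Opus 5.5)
Your proof is correct and is essentially the paper's argument. In both, you choose, from the coset of characters in $\Lin(H/N)$ that twist $f$ to ${}^\sigma f$ (a set of $\ell'$-size), a character fixed by the $\ell$-group $T_f/H$; you extend it across the coprime quotient to $T_f$; and you conclude via the Fong isomorphism $\CO Gb\cong\mathrm{Mat}_{[G:T_f]}(\CO T_f f)$. The only structural difference is that the paper skips your preliminary passage to $\Stab_G(e)$ and $H\cap\Stab_G(e)$. It applies Remark~\ref{remark:fongcorrespondent} directly to $H\lhd G$, since Lemma~\ref{lem:normalabelian} needs no stability of $e$.

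There are two harmless slips. First, it is $T_f/H_T$, not $T_f$, that is an $\ell$-group. Second, in the second part $T_{f_0}=G$, so $T_f=T_{f_0}$ is false in general. All you need there is that $T_f$ also stabilises the $G$-stable idempotent $f_0$, so that $T_f$ acts on the set of characters twisting $f_0$ to $f$.
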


\begin{proof}    Since $G/H$ is an $\ell$-group Remark~\ref{remark:fongcorrespondent} implies $b \in   \CO  H$  and there is  a  block idempotent $f$ of $\CO H $  with $fe \ne 0 $   and  such that  $b= \Tr_{T}^G  f$, where  $T=  \Stab_G(f)$. Moreover, $f$  is a block idempotent of  $\CO T$  and   $\CO  Gb $  is isomorphic  as an $\CO$-algebra to  $\mathrm{Mat}_n (\CO  T f)$, where $ n= [G : T]$.     Since  $\Stab_G(f)  = \Stab_G (\,^\sigma f) $, we   also have   by the same argument that  
$\,^\sigma f $ is a block idempotent  of $\CO T $ and $\CO G \,^\sigma b $  is isomorphic  as an $\CO$-algebra  to  $\mathrm{Mat}_n (\CO T \,^\sigma f)$. Thus, it suffices to prove that  $\CO  T f  $  and  $\CO  T\,^\sigma f $  are  isomorphic  $\CO$-algebras.   Note that   since  $\,^\sigma e =e$  and $ e f \ne 0$, we also have that  $e \, ^\sigma f \ne 0 $.    
Let $\CX_0$ be the subgroup of  $\mathrm{Lin}(H/N)$  consisting  of  characters $\tau  $  such that $\tau. f =f$ and  let 
 $\CX $  be the subset  of  $\mathrm{Lin}(H/N) $  consisting of  those characters $\theta $   with the property  that $\theta. f =  \,^\sigma  f $.   By Lemma~\ref{lem:normalabelian},  $\CX \ne \emptyset  $  and  hence  $\CX $ is   a left coset  of $\CX_0 $  in $ \mathrm{Lin} (H/N)$.  Since $f$ and $\,^\sigma f $ are $T$-invariant, we also have that  $T/H$ acts by conjugation on   $\CX $.  Since   $|\CX |$  is relatively  prime to $\ell$  and $ T/H$  is  an $\ell$-group, there exists  $\eta \in \CX $  which is $T$-stable.    Again since  $|T/H|$  is relatively prime to  $|H/N|$, it follows that $\eta $  extends to a linear character, say $\tilde \eta $,    of $T/N$ (and this extension again takes values in $\CO$, since all elements of $\ell$-power order can be assumed to lie in its kernel). Then the   automorphism of  $\CO T $  induced by $\tilde \eta $   induces an  isomorphism between $\CO T f$ and  $\CO T \,^\sigma f$.

 Now suppose that $e$  is the principal block idempotent of $\CO N$  and let $T$ and $f$  be  as in the beginning of the proof.  By the argument above, it suffices  to show that  $\CO Tf$ is Morita equivalent to  $\CO T d$, where $d$ is the principal  block idempotent  of  $\CO T$. As earlier, let $\CX_0$ be the subgroup of  $\mathrm{Lin}(H/N)$  consisting  of  characters $\tau  $  such that $\tau. f =f$ and  let 
 $\CY $  be the subset  of  $\mathrm{Lin}(H/N) $  consisting of  those characters $\theta $   with the property  that $\theta. f = d $.   By Lemma~\ref{lem:normalabelian},  $\CY \ne \emptyset  $  and  hence  $\CY $ is   a left coset  of $\CX_0 $  in $ \mathrm{Lin} (H/N)$.  Since $f$ and $d $ are $T$-invariant, we also have that  $T/H$ acts by conjugation on   $\CY$.  Now  we argue as in the previous paragraph.
\end{proof}

\subsection{Some lemmas concerning bimodules inducing Morita equivalences}

Let $\Lambda \in \{K, \CO, k \}$. If $H$ is a common subgroup of groups $G$ and $L$, denote by $\Delta H $  the subgroup $\{(h, h^{-1}):  h \in H \}$
of $G \times L^{\opp}$. 

We recall  Marcus's result on extending equivalences \cite{Marcus}.
\begin{Lemma} \label{lem:marcus} Suppose that $ G_1 $ is  a finite group with normal subgroup $G$   and  $L_1, L $  are   subgroups of $G_1  $ with  $ L= G \cap   L_1 $.    Let $ b$   be $G_1$-stable central idempotent   of $\Lambda G$  and $c $ an $L_1$-stable   central idempotent   of $\Lambda L $ and let $M $ be an $\Lambda(G \times L^{\opp} ) (b \otimes c)$-module  inducing a Morita equivalence between $\Lambda  G b $ and $\Lambda Lc$.  Let  $ \CD$ be the  subgroup  
$$ (G  \times L) \Delta L_1=\{(x, y):  x\in G_1, y \in    L_1^{\opp} :    x G = y^{-1} G \}    \leq   G_1 \times  L_1^{\opp}. $$
If  the $\Lambda (G \times L^{\opp} )$-module  structure on  $M$ extends to  an $\Lambda  \CD  $-module  structure on $M$, then $\Ind_{\CD}^{G_1 \times L_1} M $ induces a Morita equivalence between  $\Lambda  G_1b $ and   $\Lambda L_1 c $.
\end{Lemma}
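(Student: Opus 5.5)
We verify directly that the induced bimodule $\tilde M := \Ind_{\CD}^{G_1\times L_1} M = \Lambda G_1 \otimes_{\Lambda G} M$ induces a Morita equivalence. The right-hand identification is the first step: since $G_1\times L_1 = \CD\,(G_1\times 1)$ and $\CD\cap(G_1\times 1) = G\times 1$, induction from $\CD$ can be computed by inducing only along the left factor. As left $\Lambda G_1$-modules we therefore get $\tilde M \cong \Lambda G_1\otimes_{\Lambda G} M$, and as right modules $\tilde M\cong M\otimes_{\Lambda L}\Lambda L_1$, using $L_1\times 1$-cosets and $L = G\cap L_1$. Concretely, $\tilde M = \bigoplus_{x} x\otimes M$ over coset representatives of $G$ in $G_1$. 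Because $M$ is projective on each side, $\tilde M$ is finitely generated projective as a left $\Lambda G_1 b$-module and as a right $\Lambda L_1 c$-module. The idempotents act correctly because $b$ is $G_1$-stable and $c$ is $L_1$-stable, so $b$ and $c$ are central in $\Lambda G_1$ and $\Lambda L_1$.

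The second step is the key computation, of $\tilde M^* \otimes_{\Lambda G_1} \tilde M$ and $\tilde M\otimes_{\Lambda L_1}\tilde M^*$. Here $M^*$ carries the transported $\Lambda\CD^{\opp}$-structure, and $\tilde M^*\cong \Ind(M^*)$. Using the coset decomposition and the fact that $\CD$ maps onto the group $G_1/G$ with each coset $xG$ paired with $L_1$-elements $y$ satisfying $xG=y^{-1}G$, I expect a Mackey-type formula
$$\tilde M\otimes_{\Lambda L_1}\tilde M^* \cong \Ind_{\CD'}^{G_1\times G_1}\bigl(M\otimes_{\Lambda L} M^*\bigr),$$
with $\CD' = (G\times G)\Delta G_1$. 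Since $M\otimes_{\Lambda L} M^*\cong \Lambda G b$ as $\Lambda(G\times G)$-modules, the remaining task is to check that this isomorphism is compatible with the extended $\CD'$-action, where $\Delta G_1$ acts on $\Lambda Gb$ by conjugation. This compatibility follows from functoriality of the evaluation map $M\otimes M^*\to \Lambda G b$, which is $\CD'$-equivariant by construction. Then $\Ind_{\CD'}^{G_1\times G_1}\Lambda G b\cong \Lambda G_1 b$, because $\Lambda G_1 = \Ind_{\Delta G_1 (G\times G)}^{G_1\times G_1}\Lambda G$. The symmetric computation gives $\tilde M^*\otimes_{\Lambda G_1}\tilde M\cong \Lambda L_1 c$, using $L_1\cap G = L$ and the fact that $L_1$ surjects onto the relevant quotient in $\CD$.

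The main obstacle is the surjectivity implicit in $\CD$. We need $(G\times L)\Delta L_1$ to contain enough elements that $G_1\times 1$ together with $\CD$ generates $G_1\times L_1$, and that the double coset bookkeeping in the Mackey formula collapses to a single term. Because $\CD$ is defined with $x\in G_1$ and $y\in L_1$ subject to $xG=y^{-1}G$, the projection $\CD\to G_1$ has image $G L_1$, not all of $G_1$. So the count of cosets in $\tilde M\otimes\tilde M^*$ must be done carefully: tensoring over $\Lambda L_1$ identifies exactly the cosets coming from $L_1/L\cong GL_1/G$. If $GL_1\neq G_1$, the product yields the induced algebra $\Lambda G_1\otimes_{\Lambda GL_1}\Lambda GL_1 b\cong\Lambda G_1 b$, and this is still fine. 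I would therefore first treat the case $G_1 = GL_1$, and then induce from $GL_1$ to $G_1$ using $G_1$-stability of $b$. Alternatively, the whole statement can simply be cited as \cite[Thm.~3.4]{Marcus} after checking that its hypotheses match.
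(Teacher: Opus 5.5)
\textbf{Verdict: the core argument is sound when $G_1 = GL_1$, but your final paragraph is wrong, and the statement as displayed is false without that hypothesis.}

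The paper gives no proof of this lemma; it simply quotes Marcus's theorem. Your direct tensor-product (Mackey/Bouc) computation is the standard proof of that theorem. It is sound in the situation Marcus treats, namely $G_1 = GL_1$, equivalently when inclusion induces $L_1/L\cong G_1/G$. In that case:
\begin{itemize}
\item $\CD(1\times L_1)=G_1\times L_1$ and $\CD\cap(1\times L_1)=1\times L$, so your right-module description $\tilde M\cong M\otimes_{\Lambda L}\Lambda L_1$ is correct.
\item The coset bookkeeping collapses as you describe. You get $\tilde M\otimes_{\Lambda L_1}\tilde M^*\cong \Ind_{(G\times G)\Delta G_1}^{G_1\times G_1}\Lambda Gb\cong \Lambda G_1 b$, and symmetrically $\Lambda L_1c$ on the other side.
\item The $\Delta G_1$-equivariance of $M\otimes_{\Lambda L}M^*\cong\Lambda Gb$ needs the usual check, but it holds.
\end{itemize}

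\textbf{The gap.} Your last paragraph claims the case $GL_1\neq G_1$ is ``still fine''. It is not: the conclusion is then false, so no reduction can rescue it.

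A counterexample: take $G=L=L_1=1$, $G_1=C_2$, $b=c=1$ and $M=\Lambda$. Then $\CD=1$ and $\tilde M=\Lambda C_2$ as a $(\Lambda C_2,\Lambda)$-bimodule. But $\tilde M^*\otimes_{\Lambda C_2}\tilde M\cong \Lambda C_2$ has $\Lambda$-rank $2\neq 1$, so $\tilde M$ does not induce a Morita equivalence.

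Here is where your computation goes wrong in general. Write $\tilde M_0=\Ind_{\CD}^{GL_1\times L_1}M$, so that $\tilde M=\Lambda G_1\otimes_{\Lambda GL_1}\tilde M_0$. The induction then appears on both sides of the product:
\begin{itemize}
\item $\tilde M\otimes_{\Lambda L_1}\tilde M^*\cong \Lambda G_1 b\otimes_{\Lambda GL_1}\Lambda G_1$, not $\Lambda G_1\otimes_{\Lambda GL_1}\Lambda GL_1 b$ as you wrote. Its $\Lambda$-rank is $[G_1:GL_1]$ times that of $\Lambda G_1 b$.
\item $\tilde M^*\otimes_{\Lambda G_1}\tilde M$ acquires one summand per double coset in $GL_1\backslash G_1/GL_1$. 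Only the trivial double coset gives $\Lambda L_1 c$.
\item Your first step also silently uses $G_1=GL_1$ when it identifies $\tilde M$ as a right $\Lambda L_1$-module. In general it is $[G_1:GL_1]$ twisted copies of $M\otimes_{\Lambda L}\Lambda L_1$.
\end{itemize}

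\textbf{The fix.} Add the hypothesis $G_1=GL_1$ and delete the ``induce from $GL_1$ to $G_1$'' step. This hypothesis is part of Marcus's theorem, but it is missing from the displayed statement. It holds in every application in the paper: there $H_1=H\langle x\rangle$ and $Q_1=Q\langle x\rangle$ with $x\in Q_1$, so $H_1=HQ_1$.
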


\begin{Lemma} \label{lem:centraldomination}   Let  $G$  and $H$ be  finite groups  and $b$ and  $c$  be central idempotents of  $\CO G $ and $ \CO H$ respectively. 
Let $M$ be an $\CO Gb  \otimes  (\CO  Hc)^{\opp}$-module  such that $M$ induces a Morita equivalence between $\CO Gb$ and $\CO Hc$. Let $Z$ be a group embedded as a central subgroup of both $G$  and $H$, let  $\bar  G = G/Z$, $\bar H =H/Z$  and let 
$\bar b$  (respectively $\bar c$) be the image of  $b$ (respectively $c$) under the canonical surjection $\CO G \to \CO G/Z$ (respectively $\CO H \to  \CO  H/Z $).

Suppose that  $\bar b \ne 0$ and $(z, z^{-1}).m=m$  for all  $z \in Z$ and $m \in M$.   Then  $\bar c\ne 0 $, $\CO  \otimes_{\CO  Z} M$  is  an  
$\CO \bar G\bar b  \otimes (\CO \bar H \bar c)^{\opp}$-module via  $(\bar g \otimes  \bar h)(1 \otimes m ) =  1\otimes (g \otimes h)m $ for all $g \in G, h\in H, m\in M$  and  $\CO \otimes_{\CO Z} M$  induces a Morita equivalence between $\CO \bar G \bar b$ and $\CO  \bar H \bar c$.
\end{Lemma}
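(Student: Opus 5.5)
We would prove Lemma~\ref{lem:centraldomination} by passing to the quotient directly on the bimodule and then checking that the quotient bimodule still induces a Morita equivalence. Write $\bar M=\CO\otimes_{\CO Z}M$, where $Z$ acts on $M$ through the left $G$-action; by hypothesis this coincides with the action through the right $H$-action, since $zm=mz$ for $z\in Z$. Equivalently, $\bar M \cong M/I_ZM$ with $I_Z$ the augmentation ideal of $\CO Z$, and also $\bar M\cong \CO\bar G\otimes_{\CO G}M \cong M\otimes_{\CO H}\CO\bar H$. Checking that the stated formula gives a well-defined $\CO\bar G\otimes(\CO\bar H)^{\opp}$-module structure is routine: $Z\times Z$ acts trivially on $\bar M$ because $(z,1)$ and $(1,z)$ both act by the same scalar action through $Z$, which is killed in $\CO\otimes_{\CO Z}-$.

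The main step is the Morita equivalence. Let $N=M^\vee=\mathrm{Hom}_{\CO}(M,\CO)$ be the inverse bimodule, so that $M\otimes_{\CO Hc}N\cong \CO Gb$ and $N\otimes_{\CO Gb}M\cong\CO Hc$ as bimodules. The hypothesis on $M$ transfers to $N$: $Z$ acts identically on both sides of $N$. Set $\bar N=\CO\bar H\otimes_{\CO H}N$. We then compute
\[
\bar M\otimes_{\CO\bar H}\bar N\cong \CO\bar G\otimes_{\CO G}M\otimes_{\CO H}\CO\bar H\otimes_{\CO \bar H}\CO\bar H\otimes_{\CO H}N .
\]
Here we use that $M\otimes_{\CO H}\CO\bar H\otimes_{\CO H}N\cong (M\otimes_{\CO H}N)\otimes_{\CO H}\CO\bar H$ after moving the $Z$-coinvariants across, which is possible because $Z$ is central and acts the same on the two sides. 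This gives $\CO\bar G\otimes_{\CO G}\CO Gb\otimes_{\CO G}\CO\bar G\cong\CO\bar G\bar b$, and symmetrically $\bar N\otimes_{\CO\bar G}\bar M\cong\CO\bar H\bar c$. We expect this bookkeeping to be the main obstacle. It must be done carefully, using the identifications of $Z$-coinvariants with tensoring by $\CO\bar G$ or $\CO\bar H$, and verifying bimodule isomorphisms rather than just module isomorphisms.

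Alternatively, and more cleanly, the equivalence functor $M\otimes_{\CO Hc}-$ sends $\CO Hc$-modules on which $Z$ acts trivially to $\CO Gb$-modules on which $Z$ acts trivially, again because $Z$ acts equally on both sides of $M$. The inverse functor does likewise. So it restricts to an equivalence between the full subcategories of $\CO\bar H\bar c$-modules and $\CO\bar G\bar b$-modules. On these subcategories $M\otimes_{\CO H}U\cong \bar M\otimes_{\CO\bar H}U$, so the equivalence is induced by $\bar M$. Finally, $\bar c\neq0$ follows at once: $\CO\bar G\bar b\neq0$, so its image under this restricted equivalence is a nonzero $\CO\bar H\bar c$-module. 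Equivalently, $\bar M\neq0$ by Nakayama's lemma since $M\ne0$, and $\bar M=\bar M\bar c$.
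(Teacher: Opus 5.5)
Your proof is correct and is essentially what the paper intends. The paper only defers to the proof of \cite[Lem.~2.9]{EKKS}, noting that the $\ell$-group hypothesis on $Z$ there is unnecessary. Your bimodule computation supplies such a proof: you establish $\bar M\otimes_{\CO\bar H}\bar N\cong\CO\bar G\bar b$ and $\bar N\otimes_{\CO\bar G}\bar M\cong\CO\bar H\bar c$ by moving the $Z$-coinvariants through the tensor product, using that the left and right $Z$-actions agree on $M$ and on $N=M^\vee$. Your variant, restricting $M\otimes_{\CO Hc}-$ to modules with trivial $Z$-action and then invoking Morita's theorem, is equally valid.

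One remark in your last paragraph should be removed. $\bar M\neq 0$ does not follow from $M\neq 0$ by Nakayama's lemma. The augmentation ideal $I_Z$ of $\CO Z$ lies in $J(\CO Z)$ only when $Z$ is an $\ell$-group, and that is exactly the hypothesis the paper is dropping. For example, take $\ell=2$, $G=H=Z\cong C_3$, $b=c$ a non-principal block idempotent and $M=\CO Gb$. Then $z-1$ acts on $M$ as $\omega-1$, which is a unit in $\CO$, so $I_ZM=M$ and $\bar M=0$ (consistently with $\bar b=0$). Nonvanishing of $\bar M$, and hence of $\bar c$, has to come from the hypothesis $\bar b\neq 0$. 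Your first argument does this correctly, via the preimage of $\CO\bar G\bar b$ under the restricted equivalence. Alternatively, $\bar M\cong\CO\bar G\bar b\otimes_{\CO Gb}M\neq 0$.
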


\begin{proof}   This can be proved exactly as in  \cite[Lem.~2.9]{EKKS}. Note that in~\cite{EKKS} $Z$ is  assumed to be an $\ell$-group (so  $\bar b$ and $\bar c $ are both non-zero), but it can be checked that  this assumption  is not needed  in the proof  and that one obtains as a  byproduct that  if $\bar b \ne 0 $, then $\bar c \ne$0.\end{proof}

\section{Background on tame blocks}\label{sec:tamebackground}

We include here for convenience an outline of the classification of Morita equivalence
classes of $k$-blocks with quaternion defect group based on Erdmann's in~\cite{er90}, 
as well as further known results refining this classification and on Morita equivalence of 
$\CO$-blocks. We do not include quivers and relations here since in the case of one 
or three simple modules there are no undecided parameters involved in the relations, 
and the quivers and relations in the case of two simple modules are presented in 
Section \ref{socle scalars}. As before  we denote 
by $Q_{2^n}$ a quaternion group of order $2^n$, where $n\geq 3$.
Similarly, we denote by $D_{2^n}$ a dihedral group of order $2^n$, where
$n\geq 2$, with the convention that $D_4$ is a Klein four group. 

\begin{Proposition}
\thlabel{Erdmann_class_plus}
Let $G$ be a finite group and $kG b$ a block with defect group $Q_{2^n}$, 
where $n \geq 3$. Then $kGb$ is Morita equivalent to one of the following:
\begin{enumerate}[(i)]
\item $kQ_{2^n}$;
\item an algebra $Q(2A)^{2^{n-2},c}$ or $Q(2B)_1^{2,2^{n-2},c}$, where $c \in k$ and $n \geq 4$. Here $\ell(kGb)=2$;
\item an algebra $Q(3A)_2^{2^{n-2}}$, $Q(3B)^{2,4}$, or $Q(3K)^{2,2,2^{n-2}}$. Here $\ell(kGb)=3$.
\end{enumerate}

In particular, $l(kGb) \leq 3$, and $l(kGb)=1$ precisely when $kGb$ is nilpotent.

Further, in the cases where $l(kGb)=1,3$, for each Morita equivalence class of $k$-blocks, there is a corresponding unique equivalence class of $\CO$-blocks, and there is a representative of each Morita equivalence class that is a principal block.
\end{Proposition}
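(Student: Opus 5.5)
This proposition is essentially a compilation of known results, so the plan is to assemble them rather than argue from scratch. The starting point is Erdmann's classification in \cite{er90}: every block with a quaternion defect group is Morita equivalent over $k$ to an algebra from her list of quaternion-type algebras. The number of simple modules is at most $3$ because, by Olsson's and Brauer's results, $\ell(kGb)\in\{1,2,3\}$ for quaternion defect groups. Erdmann's tables give the possible algebras for each value.
\begin{itemize}
\item For $\ell=1$ the only option is $kQ_{2^n}$.
\item For $\ell=2$ the options are $Q(2A)^{2^{n-2},c}$ and $Q(2B)_1^{2,2^{n-2},c}$.
\item For $\ell=3$ the options are $Q(3A)_2^{2^{n-2}}$, $Q(3B)^{2,4}$ and $Q(3K)^{2,2,2^{n-2}}$.
\end{itemize}
The parameters are pinned down by matching Erdmann's invariants (Cartan matrix, decomposition data, stable Auslander--Reiten quiver) with the defect group order $2^n$. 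The restriction $n\ge 4$ in case (ii) comes from the known fact that for $Q_8$ there is no block with two simple modules (Olsson; Erdmann). The characterisation that $\ell(kGb)=1$ exactly when the block is nilpotent follows from the Brauer--Olsson description of the fusion systems on $Q_{2^n}$. If $\ell=1$, the fusion system must be trivial: the non-trivial fusion systems on $Q_{2^n}$ come from $\SL_2(3)$-type automizers of the $Q_8$ subgroups, and these force $\ell\ge2$. Puig's nilpotent block theorem then gives a Morita equivalence with $\CO Q_{2^n}$, and in particular with $kQ_{2^n}$.

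For the lifting statement when $\ell\in\{1,3\}$, I would proceed in two steps. The nilpotent case is already settled over $\CO$ by Puig. For $\ell=3$, the first step is to note that Erdmann's relations contain no undetermined scalars in these cases. The second step is to exhibit principal blocks realising each algebra.
\begin{itemize}
\item Principal blocks of $\SL_2(q)$ with $q\equiv\pm1$ modulo suitable powers of $2$ realise $Q(3A)_2^{2^{n-2}}$ and $Q(3K)^{2,2,2^{n-2}}$.
\item The principal block of $2.A_7$ realises $Q(3B)^{2,4}$ (here $n=3$ or small).
\end{itemize}

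The genuinely delicate part is uniqueness over $\CO$: showing that two $\CO$-blocks with isomorphic reductions modulo $2$ are Morita equivalent. My plan is to cite the result of Eisele \cite{Eisele16}, together with the lifting results for tame blocks, which show that for these algebras the $\CO$-order is determined up to Morita equivalence by the $k$-algebra. For $\ell=3$, Holm's derived equivalence classification \cite{Holm99} combined with Eisele's rigidity of the corresponding tame $\CO$-orders gives a unique $\CO$-form. For $\ell=1$ the unique form is $\CO Q_{2^n}$. I expect this $\CO$-uniqueness step to be the main obstacle, since it depends on deformation-theoretic rigidity of the specific orders rather than on a purely group-theoretic argument. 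If citing it fails for some case, the fallback is to use the Koshitani--Lassueur classification \cite{kl20} of principal blocks together with a Morita-invariance argument via the decomposition matrix.
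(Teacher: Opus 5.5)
There is a genuine gap in case (iii). The rest of your outline follows the paper: Erdmann's classification \cite{er90}, nilpotent block theory when $\ell(kGb)=1$, and \cite{Eisele16} for the $\CO$-statement when $\ell(kGb)=3$.

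The gap is that you attribute the restriction of the $Q(3B)$ family to $Q(3B)^{2,4}$ to Erdmann's tables plus matching Cartan matrices, decomposition data and the order $2^n$. Erdmann's classification does not give this: it leaves $Q(3B)^{2,2^{n-2}}$ open as a possible Morita type for larger $n$ as well. The paper closes this by citing \cite{Macgregor}, a group-theoretic theorem showing that the classes $Q(3B)^{2,2^{n-2}}$ do not occur for $n\geq 5$. Without such a non-existence result you cannot write $Q(3B)^{2,4}$ in (iii). You also cannot prove the principal-block claim for $\ell(kGb)=3$. Your step of exhibiting principal blocks realising each algebra necessarily fails for $Q(3B)^{2,2^{n-2}}$ with $n\geq 5$. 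Principal blocks with quaternion Sylow $2$-subgroups and three simple modules come only from $\SL_2(q)$-type groups (types $Q(3A)_2$ and $Q(3K)$) and from $2.A_7$ (cf. \cite{kl20}). Among these, only $2.A_7$ is of type $Q(3B)$, and only for $n=4$. So the missing ingredient is a theorem ruling these classes out, not a refinement of invariant-matching.

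Some smaller points:
\begin{itemize}
\item $2.A_7$ has Sylow $2$-subgroup $Q_{16}$, so it realises $Q(3B)^{2,4}$ with $n=4$, not $n=3$.
\item When you read off the $Q(2B)_1$ family from the literature, note that the version with an extra parameter $d$ reduces to $Q(2B)_1^{2,2^{n-2},c}$ (the paper's remark on $Q(2B)_1$ parameters).
\item The appeal to Holm's derived equivalence classification is unnecessary: \cite{Eisele16} alone gives uniqueness of the $\CO$-form when $\ell(kGb)=3$.
\item Puig's theorem handles $\ell(kGb)=1$ over $\CO$, as you say.
\end{itemize}
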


\begin{proof}
This is from~\cite{er90}. Note \thref{Q(2B)_parameters} concerning the possible parameters in type $Q(2B)_1$. By~\cite{Macgregor} the classes $Q(3B)^{2,2^{n-2}}$ do not occur when $n \geq 5$.

The conclusion regarding $\CO$-blocks follows from the theory of nilpotent blocks for $l(kGb)=1$, and by~\cite{Eisele16} for $l(kGb)=3$.
\end{proof}

\begin{Remark}\label{rem:mfbound}
    It follows by \thref{Erdmann_class_plus} that for a given $Q_{2^n}$ the Cartan matrices are known and the entries bounded. Hence by \thref{Donovan_and_Morita_Frob} in order to prove Donovan's conjecture for $Q_{2^n}$ it suffices to bound the Morita--Frobenius numbers.
\end{Remark}

The following fact, stating that blocks with a quaternion defect group
are closed under Morita equivalences, is well-known (and holds more
generally for derived equivalences and stable equivalences of Morita type);
we state here only what we need in this paper, and sketch
a proof for convenience.

\begin{Proposition} \thlabel{quat-Morita}
Let $G$, $H$ be finite groups and $b$, $c$ blocks of $kG$, $kH$,
respectively, such that $kGb$ and $kHc$ are Morita equivalent.
If the defect groups of $kGb$ are isomorphic to $Q_{2^n}$ for some
$n\geq 3$, then so are the defect groups of $kHc$.
\end{Proposition}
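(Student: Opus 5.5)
We use Morita invariants that detect the defect group up to isomorphism within the class of $2$-groups. Let $P$ be a defect group of $kHc$. First, Morita equivalence preserves the Cartan matrix up to permutation of rows and columns, hence its determinant. It also preserves the centre, and with it the number $k(B)$ of ordinary irreducible characters, at least when the equivalence lifts to $\CO$; over $k$ we instead use that $\dim Z(kGb)$ is invariant. The largest elementary divisor of the Cartan matrix is $|D|$ for any defect group $D$, so $|P|=|Q_{2^n}|=2^n$. Furthermore, a Morita equivalence preserves the representation type of the algebra. Since $kGb$ has tame (non-finite) representation type, Higman's criterion together with the Bondarenko--Drozd classification shows that $P$ is cyclic, dihedral, semidihedral or (generalised) quaternion. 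Cyclic $P$ gives finite type, so $P$ is not cyclic. It therefore remains to distinguish $Q_{2^n}$ from $D_{2^n}$ and $SD_{2^n}$ of the same order.

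To rule out dihedral and semidihedral defect groups, we compare Erdmann's lists: the algebras of dihedral, semidihedral and quaternion type form disjoint classes of algebras up to Morita equivalence. The intrinsic invariant that separates them is the shape of the stable Auslander--Reiten quiver. For a block with quaternion defect group, every component of the stable AR-quiver is periodic with tree class $A_\infty$ and period dividing $4$, that is, tubes of rank at most $2$; in particular every indecomposable non-projective module is $\Omega$-periodic. For dihedral defect groups, by contrast, there are non-periodic components of type $\mathbb{Z}A_\infty^\infty$: the restriction to a Klein four subgroup produces modules of complexity $2$. Semidihedral defect groups, like dihedral ones, contain a Klein four subgroup, and the module $\Omega^{m}(k_P)$-type argument gives non-periodic modules. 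The invariant to use is therefore: \emph{every indecomposable module of the block is periodic} if and only if the defect group has $2$-rank $1$, that is, if and only if it is cyclic or quaternion. This holds because complexity is bounded by the $2$-rank of the defect group, with equality attained by a module induced from a maximal elementary abelian subgroup. Periodicity of all modules is plainly preserved by Morita equivalence, since a Morita equivalence commutes with taking projective covers, hence with $\Omega$.

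Putting the steps together: $kGb$ has defect group of $2$-rank $1$, so all its modules are periodic, and hence so are all modules of $kHc$. Thus $P$ has $2$-rank $1$, and so $P$ is cyclic or generalised quaternion. Since $P$ is not cyclic by the representation-type argument (alternatively, $\ell(kHc)=\ell(kGb)$ and blocks with cyclic defect groups have Brauer tree algebras, which are of finite type), $P$ is quaternion of order $2^n$, i.e.\ $P\cong Q_{2^n}$.

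The main obstacle is the rank statement: that the maximal complexity of modules in a block equals the $2$-rank of its defect group. We would prove it using Green correspondence and vertices. The lower bound comes from a block component of $\mathrm{Ind}_E^G(k)$ for $E$ maximal elementary abelian in $P$, which has complexity $\mathrm{rank}(E)$. The upper bound follows because every module is relatively $P$-projective, together with Quillen's/Alperin--Evens' theorem.
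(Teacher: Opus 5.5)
Your proposal is correct and follows essentially the same route as the paper: the order is fixed by the largest elementary divisor of the Cartan matrix, rank $1$ comes from a Morita invariant (the paper simply cites Linckelmann's result that Morita equivalent blocks have defect groups of the same rank, which you reprove via periodicity/complexity), and cyclic defect groups are excluded because the representation type is not finite. Your detour through Bondarenko--Drozd and stable AR-quivers is unnecessary, since $2$-rank $1$ already forces the defect group to be cyclic or generalised quaternion.
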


\begin{proof}
By \cite[Ch.~5, Thm.~11.6]{NT} the orders of the defect groups
of Morita equivalent blocks are equal since they coincide with the
largest elementary divisors of their Cartan matrices. By
\cite[Cor.~4.3]{Linvar}, the defect groups of Morita equivalent
blocks have the same rank, hence rank $1$ in this case. The defect
groups of $kHc$ cannot be cyclic since the representation type
of $kGb$, hence of $kHc$, is not finite. The result follows.
\end{proof}

We will use the following frequently without reference.

\begin{Lemma}
\thlabel{normal_subgroup_quaternion}
Let $P \cong Q_{2^n}$, where $n \geq 3$. If $Q \leq P$, then either $Q$ is quaternion and satisfies $C_p(Q)\leq Q$, or $Q$ is cyclic. If further $Q \lhd P$ is a noncyclic proper normal subgroup of $P$ (so $n \geq 4$), then $Q \cong Q_{2^{n-1}}$.

Similarly when $P \cong D_{2^n}$: subgroups are either dihedral, in which case $C_P(Q) \leq Q$, or cyclic; normal noncyclic subgroups are isomorphic to $D_{2^{n-1}}$ and are one of two such subgroups. 
\end{Lemma}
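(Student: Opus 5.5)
We argue directly from the standard presentations
$$Q_{2^n}=\langle x,y \mid x^{2^{n-1}}=1,\ y^2=x^{2^{n-2}},\ yxy^{-1}=x^{-1}\rangle, \qquad D_{2^n}=\langle x,y \mid x^{2^{n-1}}=1,\ y^2=1,\ yxy^{-1}=x^{-1}\rangle .$$
In both, the cyclic subgroup $X=\langle x\rangle$ has index $2$, and every element outside $X$ has the form $x^iy$. In $Q_{2^n}$ each such element has order $4$ and squares to the central involution $z=x^{2^{n-2}}$. In $D_{2^n}$ each such element is an involution. We also use that $C_P(x^iy)=\langle x^iy, z\rangle$, which has order $4$ (for $n\ge 3$), and that $C_P(x^j)=X$ whenever $x^j\notin Z(P)=\langle z\rangle$.

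\textbf{Classification of subgroups.} Let $Q\le P$. If $Q\le X$, then $Q$ is cyclic. Otherwise $Q\cap X=\langle x^{d}\rangle$ for some $d$, and $Q=\langle x^d, x^iy\rangle$ for some $i$. Conjugation by $x^iy$ inverts $x^d$. Hence $Q$ is again given by the quaternion, respectively dihedral, relations, with the degenerate small cases to be sorted out:
\begin{itemize}
\item In the quaternion case, $|Q\cap X|=1$ is impossible, since $(x^iy)^2=z\in X$. If $|Q\cap X|=2$, then $Q=\langle x^iy\rangle$ is cyclic of order $4$. Otherwise $Q$ is quaternion.
\item In the dihedral case, $|Q\cap X|\le 2$ gives a cyclic group of order $2$ or a Klein four group, and these count as cyclic or as $D_4$ respectively. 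Otherwise $Q$ is dihedral.
\end{itemize}

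\textbf{Self-centralising property.} Suppose $Q$ is noncyclic. Take $g\in C_P(Q)$. Since $x^iy\in Q$, we get $g\in C_P(x^iy)=\langle x^iy,z\rangle$. We have $z\in Q$: in the quaternion case $z=(x^iy)^2$, and in the dihedral case $z\in Q\cap X$ once $|Q\cap X|\ge 2$, which holds because $Q$ is noncyclic. Hence $g\in Q$, so $C_P(Q)\le Q$.

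\textbf{Normal noncyclic subgroups.} Let $Q\lhd P$ be noncyclic and proper. Conjugating $x^iy$ by $x$ gives $x^{i+2}y$, so $Q$ contains $x^2=(x^{i+2}y)(x^iy)^{-1}$. Thus $Q\cap X\ge\langle x^2\rangle$, which has index $2$ in $X$. Since $Q$ is proper and not contained in $X$, we get $|Q|=2^{n-1}$, and therefore $Q=\langle x^2, x^iy\rangle$ with $i\in\{0,1\}$.

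In the quaternion case this subgroup is $Q_{2^{n-1}}$. It is noncyclic only when $n-1\ge 3$, so $n\ge 4$; for $n=3$ the two candidates $\langle x^2,y\rangle$ and $\langle x^2,xy\rangle$ are cyclic of order $4$. In the dihedral case these are the two subgroups $D_{2^{n-1}}$, with the convention $D_4$ = Klein four group when $n=3$.

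The only mild obstacle is bookkeeping for the small-order degenerate cases. These are handled by the conventions fixed above, and by the observation that noncyclicity forces $z\in Q$.
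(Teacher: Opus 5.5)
Your proof is correct, and its key step for normal subgroups is the paper's: the paper computes $x^2=[x^iy,x^{-1}]\in Q$, which is your observation that conjugating $x^iy$ by $x$ gives $x^{i+2}y$. Both conclude that $Q=\langle x^2,y\rangle$ or $\langle x^2,xy\rangle$. Elsewhere your route is more hands-on, in three places:
\begin{enumerate}
\item \emph{Subgroup classification.} The paper gets ``cyclic or quaternion'' by quoting the classical fact that a $2$-group with a unique involution is cyclic or generalised quaternion. You instead read off the isomorphism type from $Q\cap X$ and one coset representative $x^iy$.
\item \emph{Self-centralising property.} The paper proves $C_P(Q)\le Q$ only for the two index-$2$ subgroups, then states ``it follows'' for every noncyclic $Q$ without further argument. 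Your computation $C_P(x^iy)=\langle x^iy,z\rangle$, combined with $z\in Q$, proves it directly for all noncyclic $Q$. This supplies exactly the step the paper leaves implicit.
\item \emph{Dihedral case.} The paper deduces it from $D_{2^n}\cong Q_{2^{n+1}}/Z(Q_{2^{n+1}})$. That route needs some care for centralisers: the preimage of $C_{\bar P}(\bar Q)$ consists of the $g$ with $[g,Q]\subseteq Z$, not of $C_P(Q)$. You avoid this by running the dihedral presentation in parallel.
\end{enumerate}
The paper's version is shorter because it leans on standard facts. Yours is self-contained and treats the degenerate cases explicitly: $n=3$, where the index-$2$ candidates in $Q_8$ are cyclic, and the Klein four group $D_4$.
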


\begin{proof}
Write $P=\langle x,y|x^{2^{n-1}}=1, x^{2^{n-2}}=y^2, yx=x^{-1}y \rangle$. Since $P$ has a unique involution, the same is true for subgroups of $P$ and so $Q$ must be cyclic or quaternion. 

Now let $Q \lhd P$ be a proper subgroup and suppose that $Q$ is not cyclic. Then $N \not\leq \langle x \rangle$, so there is some $x^iy^j \in Q$ with $j=1,3$. Without loss of generality we may assume $j=1$. Then $x^2=[x^iy,x^{-1}] \in Q$, so that $Q=\langle x^2,y\rangle$ or $\langle x^2,xy\rangle$. In either case $[P:Q]=2$ and $C_P(Q) \leq Q$. It follows that $C_P(Q) \leq Q$ for any noncyclic subgroup $Q$ of $P$. 

The result for dihedral groups is a consequence of the fact that $D_{2^n} \cong Q_{2^{n+1}}/Z(Q_{2^{n+1}})$. 
\end{proof}

\begin{Corollary}
Let $\CO Gb$ be a block with quaternion defect group $D$ and $N \lhd G$ with $G=ND$ and $[G:N]=2^t$ for $t \geq 2$. Then $\CO Gb$ is nilpotent.      
\end{Corollary}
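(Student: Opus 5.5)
We argue via the structure of the quotient $G/N$ and the lemma on normal subgroups of quaternion groups. As recalled in Section~\ref{sec:generalities}, since $G=ND$ the group $G/N\cong D/(D\cap N)$ is a $2$-group of order $2^t$, and $\CO Gb$ covers a unique block $\CO Nc$. This block $c$ is $G$-stable, and $Q:=D\cap N$ is a defect group of $\CO Nc$. Since $Q\lhd D$ has index $2^t\ge 4$, \thref{normal_subgroup_quaternion} shows that $Q$ is cyclic: a noncyclic proper normal subgroup of $Q_{2^n}$ would be $Q_{2^{n-1}}$, of index $2$. Also $D/Q\cong G/N$ has order at least $4$, and it is a quotient of a quaternion group by a cyclic normal subgroup. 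Such a quotient has a unique involution pattern inherited from $D$: it is either cyclic of order $\ge 4$, a quaternion or dihedral group, or $C_2\times C_2$ (when $Q=\langle x\rangle$).

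The plan is to show that $\CO Gb$ has a single simple module, and then invoke \thref{Erdmann_class_plus}, which says that $\ell(kGb)=1$ precisely when the block is nilpotent. First, $\CO Nc$ has cyclic defect group $Q$, so by Dade's theory its inertial quotient $E$ is cyclic of odd order dividing $2^m-1$ for a $2$-group $\mathrm{Aut}(Q)$. But $\mathrm{Aut}(Q)$ is a $2$-group when $Q$ is a cyclic $2$-group. Hence $E=1$, $\CO Nc$ is nilpotent, and $\ell(kNc)=1$.

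Next, since $G/N$ is a $2$-group and $c$ is $G$-stable, $b=c$ (Remark~\ref{remark:fongcorrespondent}). The unique simple $kNc$-module $S$ is $G$-stable, and the simple $kGb$-modules are exactly the simple constituents of $\Ind_N^G S$. By Green's indecomposability theorem, or by the standard fact that a $G$-stable simple module over a normal subgroup of $2$-power index has exactly one simple module lying over it in characteristic $2$ (Clifford theory with a twisted group algebra of a $2$-group over $k$ of characteristic $2$, which is local), $kGb$ has exactly one simple module. Therefore $\ell(kGb)=1$, and \thref{Erdmann_class_plus} gives that $\CO Gb$ is nilpotent.

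The main obstacle is making the Clifford-theoretic step rigorous: that $\ell(kGb)=\ell(kNc)$ when $G/N$ is a $2$-group and $c$ is $G$-stable. This reduces to the fact that the twisted group algebra $k_\alpha[G/N]$ of a $p$-group in characteristic $p$ is local, since $H^2(G/N,k^\times)$ is trivial on the $p$-part. Alternatively, one can cite that a block covering a nilpotent block with $p$-group quotient is nilpotent (Külshammer–Puig). Note that the hypothesis $t\ge 2$ is used only to force $Q$ to be cyclic. When $t=1$, $Q$ may be $Q_{2^{n-1}}$, and the argument correctly fails there.
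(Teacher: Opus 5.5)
Your proof is correct, but it reaches nilpotence by a different route from the paper.

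The first half matches the paper. You show that $c$ is $G$-stable and that $D\cap N\lhd D$ has index at least $4$. Hence $D\cap N$ is cyclic, and so $\CO Nc$ is nilpotent.

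The paper then finishes in one step. It cites Theorem~8.12.1 of Linckelmann's book, a K\"ulshammer--Puig type result: a block covering a $G$-stable nilpotent block, with $G/N$ a $p$-group, is itself nilpotent. This holds for any defect group, so the quaternion structure is used only to make $D\cap N$ cyclic.

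You instead use elementary Clifford theory. By Green's theorem, since the twisted group algebra of the $2$-group $G/N$ is local, exactly one simple $kG$-module lies over the $G$-stable simple $kNc$-module, so $\ell(kGb)=1$. You then use the quaternion-specific fact, from Erdmann's classification or Olsson's fusion analysis, that such a block with one simple module is nilpotent.

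That exchange is legitimate here, but it depends on $D$ being quaternion. In general a block with a unique simple module need not be nilpotent. For example, take $p=3$ and defect group $C_3\times C_3$ with inertial quotient $C_2\times C_2$ and nontrivial cocycle. So your argument trades a deeper general theorem for a classification input special to this defect group, and it does not generalise as the paper's does. (The K\"ulshammer--Puig alternative you mention at the end is exactly the paper's route.)

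Two side remarks in your write-up are inaccurate, though neither is used.
\begin{enumerate}
\item The list of possible $D/(D\cap N)$ is wrong. Since $[D,D]=\langle x^2\rangle$ has index $4$, a quotient by a cyclic normal subgroup of index at least $4$ is never cyclic. It is dihedral, possibly the Klein four group, or $D$ itself when $D\cap N=1$.
\item The sentence about the inertial quotient ``dividing $2^m-1$'' should just say that $E$ is a $2'$-subgroup of $\mathrm{Aut}(D\cap N)$, which is a $2$-group. Hence $E=1$.
\end{enumerate}
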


\begin{proof}
Since $G/N$ is a $2$-group, the condition that $G=ND$ is equivalent to $\CO Gb$ covering a (unique) $G$-stable block $\CO Ne$ of $N$. Now $\CO Ne$ has defect group $D \cap N \lhd D$, which by \thref{normal_subgroup_quaternion} is cyclic, so $\CO Ne$ is nilpotent, and by~\cite[Thm.~8.12.1]{LiBookII} so is $\CO Gb$.
\end{proof}

\begin{Lemma}
\thlabel{G_and_C_G(z)}
Let $kGb$ be a block with quaternion defect group $D$, and let $Z(D)=\langle z \rangle$. Let $kC_G(z)b_z$ be a block with Brauer correspondent $kGb$.  Then $\ell(kGb)=\ell(kC_G(z)b_z)$. In particular $kC_G(z)b_z$ is nilpotent if and only if $kGb$ is.    
\end{Lemma}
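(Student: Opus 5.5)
The plan is to show that $kGb$ and $kC_G(z)b_z$ have the same fusion system on $D$, and then to use the fact, going back to Olsson, that for blocks with quaternion defect group the number $\ell$ of simple modules is determined by the fusion system.

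First I fix a maximal Brauer pair $(D,b_D)$ for $kGb$. Since $\langle z\rangle=Z(D)$, there is a unique Brauer pair $(\langle z\rangle,b_z)\leq (D,b_D)$. By the usual correspondence between Brauer pairs of $G$ contained in $C_G(z)$ and Brauer pairs of $kC_G(z)b_z$, the block $kC_G(z)b_z$ has defect group $D$, and $(D,b_D)$ is also a maximal $b_z$-Brauer pair. The fusion system of $kC_G(z)b_z$ on $D$ is then the centraliser fusion system $C_{\mathcal F}(\langle z\rangle)$, where $\mathcal F$ is the fusion system of $kGb$ on $D$.

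The key observation is that $C_{\mathcal F}(\langle z\rangle)=\mathcal F$. Every subgroup $Q\leq D$ with $z\in Q$ contains $z$ as its unique involution, since $D$ has a unique involution. Hence every $\mathcal F$-morphism $\varphi\colon Q\to R$ satisfies $\varphi(z)=z$ and extends to $Q\langle z\rangle=Q$ fixing $z$. If $Q$ does not contain $z$, then $Q$ is trivial. Thus every morphism of $\mathcal F$ lies in $C_{\mathcal F}(\langle z\rangle)$. At the level of Brauer pairs, conjugation by $g\in G$ between pairs $(Q,e_Q)$ and $(R,e_R)$ contained in $(D,b_D)$ sends $z\mapsto z$, so $g\in C_G(z)$. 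To make this rigorous I would use Alperin's fusion theorem, or the description of Brauer pairs with respect to $\langle z\rangle$, to reduce to conjugations by elements of $N_G(Q,e_Q)$ for subgroups $Q$ containing $z$; those elements centralise $z$.

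Finally I invoke the classification of the possible fusion systems: for a block with quaternion defect group, $\ell(kGb)\in\{1,2,3\}$ is determined by the number of $\mathcal F$-conjugacy classes of subgroups $Q\cong Q_8$ with $\mathrm{Out}_{\mathcal F}(Q)\cong S_3$. That number is $0$, $1$ or $2$ respectively (Olsson; also \cite{er90}, via the case distinction in \thref{Erdmann_class_plus}). Since the two blocks have the same fusion system, $\ell(kGb)=\ell(kC_G(z)b_z)$. The nilpotency statement follows either from $\ell=1$ being equivalent to nilpotency (\thref{Erdmann_class_plus}) or directly from the fact that nilpotency is a property of the fusion system.

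The main obstacle is the middle step: verifying cleanly that the Brauer pair fusion of $b_z$ inside $C_G(z)$ agrees with that of $b$, including the inclusion of Brauer pairs of $C_G(z)$ into those of $G$. This is standard, via Brauer's third main theorem type arguments and Alperin–Broué, and I would cite it rather than reprove it.
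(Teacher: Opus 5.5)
Your proof is correct, but it is not the argument the paper gives as its proof. The paper works with Cartan matrices. By Olsson's result \cite{ol75}, the multiplicity of $2$ as an elementary divisor of the Cartan matrix of $kGb$ equals $\ell(kC_G(z)b_z)-1$. Reading off the elementary divisors of the Cartan matrices of Erdmann's algebras in the five possible classes shows that this multiplicity is also $\ell(kGb)-1$.

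Your argument is essentially the ``more intuitive'' observation the paper records only in its final sentence, and you flesh it out. Since $\langle z\rangle=Z(D)$ is fully centralised, the block of $C_G(z)$ has fusion system $C_{\mathcal F}(\langle z\rangle)$. Every nontrivial subgroup of $D$ contains $z$ as its unique involution, so $C_{\mathcal F}(\langle z\rangle)=\mathcal F$.

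What your route buys is a conceptual explanation of the equality. It also transfers every invariant determined by the fusion system, not just $\ell$. What it costs is an external input: that for quaternion defect groups $\ell$ is a function of the fusion system alone (Olsson's determination of $\ell(B)$, or via Erdmann). The paper's route needs only the elementary-divisor statement plus the explicit Cartan matrices from Erdmann's classification, which the paper uses anyway.

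Two small points need fixing.

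First, your criterion counting classes of $Q\cong Q_8$ with $\mathrm{Out}_{\mathcal F}(Q)\cong S_3$ is only right for $n\geq 4$. For $n=3$ we have $Q=D$, and saturation forces $\mathrm{Out}_{\mathcal F}(D)$ to have odd order. The relevant dichotomy is then $\mathrm{Out}_{\mathcal F}(D)=1$ versus $C_3$, giving $\ell=1$ or $3$. As literally stated, your criterion would always return $\ell=1$ for $Q_8$. This does not damage the argument, which only needs that $\ell$ depends on $\mathcal F$ alone.

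Second, you tacitly identify the given $b_z$ with the block below $(D,b_D)$; this needs a line of justification. Suppose $(\langle z\rangle,b_z)\leq {}^g(D,b_D)$ for some $g\in G$. Then $z\in{}^gD$ forces ${}^gz=z$, since $z$ is the unique involution. So $g\in C_G(z)$ fixes the central idempotent $b_z$, and hence $(\langle z\rangle,b_z)\leq(D,b_D)$.
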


\begin{proof}
By~\cite[Prop.~3.2]{ol75} the multiplicity of $2$ as an elementary divisor of the Cartan matrix is equal to $\ell(kC_G(z)b_z)-1$. The Cartan matrices of the algebras in the classes $Q(2 \mathcal{A})$, $Q(2\mathcal{B})_1$, $Q(3\mathcal{A})_2$, $Q(3\mathcal{B})$, $Q(3\mathcal{K})$, which represent all of the possibilities for $kGb$, are given in~\cite{er90}. Computing the elementary divisors we see that $\ell(kGb)=\ell(kC_G(z)b_z)$. The result follows since a block with defect group $D$ is nilpotent if and only if it has an unique simple module.

More intuitively, we could also observe that since $z$ is the unique involution in $D$, then $kGb$ and $kC_G(z)b_z$ induce the same fusion system on $D$.
\end{proof}

\begin{Remark}[Donovan's conjecture over $\CO$ for other types of tame blocks]\thlabel{Remark: Donovan over O for dihedral and semidihedral}
Donovan's conjecture holds for $\CO$-blocks of dihedral defect by known results. This is not explicitly stated in the literature, so we give a proof below. Donovan's conjecture for $\CO$-blocks of semidihedral defect appears to be open. Results of Lange \cite{LangeDiss} imply that Donovan's conjecture holds for $\CO$-blocks of semidihedral defect with three simple modules (see \cite[Thm.~3.2.21]{LangeDiss}, and use the same reasoning we use for blocks of dihedral defect below). However, Lange's corresponding results for blocks with two simple modules (\cite[Thms.~3.2.11,~3.2.14]{LangeDiss}) suggest that a proof in those cases will likely require group-theoretic methods. 

Let us now consider the case of dihedral defect.
Any block of dihedral defect of order $2^n$ over $k$ is Morita-equivalent to $kD_{2^n}$, $D(2A)^{2^{n-2},0}$, $D(2B)^{2^{n-2},0}$, $D(3A)_1^{2^{n-2}}$, $D(3B)_1^{1,2^{n-2}}$ or $D(3K)^{1,1,2^{n-2}}$, using the notation of \cite{Erdmann88} (except that we explicitly specify all parameters in the superscript). See also \cite[Thm. on~p.~180]{Holm97}. The fact that $D(2A)^{2^{n-2},1}$ and $D(2B)^{2^{n-2},1}$ do not occur is from \cite[Cor.~6.9]{Eisele12}. Note that these considerations give an upper bound $c(D_{2^n})$ on the Cartan numbers as required by \thref{Donovan_and_Morita_Frob}. To apply \thref{Donovan_and_Morita_Frob}, we still need an upper bound $m(D_{2^n})$ on $\CO$-Morita--Frobenius numbers.

Any $\CO$-block of dihedral defect of order $2^n$ with one simple module is nilpotent, and therefore isomorphic to $\CO D_{2^n}$. Any $\CO$-block of dihedral defect of order $2^n$ with two or three simple modules is an $\CO$-order whose basic algebra $\Gamma$ satisfies the conditions of either \cite[Thm.~6.8]{Eisele12}, \cite[Cor.~6.10]{Eisele12}, \cite[Thm.~4.15]{EiseleDiss} or \cite[Cor.~4.16]{EiseleDiss}  (depending on the Morita equivalence class of the corresponding block over $k$). In particular, if $\Gamma'$ is the basic algebra of another such block and there is an isomorphism between $Z(\Gamma)$ and $Z(\Gamma')$ which preserves decomposition matrices and character heights (these correspond to the symmetrising form in \cite{Eisele12}), then $\Gamma\cong \Gamma'$. Write $A$ for the $K$-span of $\Gamma$. We can choose a suitable lift $\sigma:\ K \longrightarrow K$ of the Frobenius automorphism of $k$ as in \cite{eel20}, extend $\sigma$ to a ring automorphism of $A$, and then construct the Frobenius twists of $\Gamma$ as $\sigma(\Gamma)$, $\sigma^2(\Gamma)$, and so forth. These all have $K$-span equal to $A$, and they have the same decomposition matrix and symmetrising form as $\Gamma$. Since the centre of an $\CO$-block of a finite group has Morita--Frobenius number $1$, the orders $Z(\Gamma)$, $Z(\sigma(\Gamma))$, $Z(\sigma^2(\Gamma))$, etc. are all isomorphic, and therefore differ by an automorphism of $Z(A)$. Being a commutative semisimple algebra, the automorphism group of $Z(A)$ is finite. 
It follows that $Z(\Gamma)=Z(\sigma^i(\Gamma))$ for some $1\leq i\leq |\Aut_K(Z(A))|$. But then \cite[Thm.~6.8]{Eisele12}, \cite[Cor.~6.10]{Eisele12}, \cite[Thm.~4.15]{EiseleDiss} or \cite[Cor.~4.16]{EiseleDiss} implies that $\Gamma \cong \sigma^i(\Gamma)$. That is, the Morita--Frobenius number of an $\CO$-block of dihedral defect of order $2^n$ with two or three simple modules is bounded by $m(D_{2^n})=|\Aut_K(Z(A))|$. We can now apply \thref{Donovan_and_Morita_Frob} to conclude that there are only finitely many Morita equivalence classes of $\CO$-blocks with defect group $D_{2^n}$. It is likely that this argument can be strengthened  using the results of Cabanes and Picaronny \cite{CabanesPicaronny} to give a classification theorem similar to \cite[Thm.~1.1]{Eisele16}.
\end{Remark}

\section{Erdmann's classification and socle scalars}
\label{socle scalars}

In this section we consider the unknown scalars $c$ in the quiver and relation representations of $Q(2A)^{s,c}$ and $Q(2B)_1^{s,t,c}$, and show that if the Morita--Frobenius number is $1$, then $c \in \{ 0,1\}$.

\begin{Definition}
  \begin{enumerate}
  \item For $2\leq s \in \mathbb N$ and $c\in k$ let $Q(2A)^{s,c}$  be the bound path algebra with quiver 
  $$
    \xymatrix{
    \bullet \ar@<0.6ex>[r]^{\beta} \ar@(ul,dl)[]_{\alpha}
    &
    \bullet \ar@<0.6ex>[l]^{\gamma}
    }
  $$
  with relations
  $$
    \begin{array}{rcl}
    \gamma\beta\gamma &=& (\gamma\alpha\beta)^{s-1}\gamma\alpha \\
    \beta\gamma\beta &=& (\alpha\beta\gamma)^{s-1}\alpha\beta \\
    \alpha^2 &=& (\beta\gamma\alpha)^{s-1}\beta\gamma + c(\beta\gamma\alpha)^s \\
    \alpha^2\beta &=& 0
    \end{array}
  $$
  \item For $2\leq s \in \mathbb N$, $3\leq t \in \mathbb N$ and $c\in k$ let $Q(2B)_1^{s,t,c}$  be the bound path algebra with quiver 
  $$
    \xymatrix{
    \bullet \ar@<0.6ex>[r]^{\beta} \ar@(ul,dl)[]_{\alpha}
    &
    \bullet \ar@<0.6ex>[l]^{\gamma} \ar@(ur,dr)[]^{\eta}
    }
  $$
  with relations
  $$
    \begin{array}{rcl}
    \gamma\beta&=&\eta^{t-1} \\
    \beta\eta&=&(\alpha\beta\gamma)^{s-1}\alpha\beta \\
    \eta\gamma&=&(\gamma\alpha\beta)^{s-1}\gamma\alpha\\
    \alpha^2 &=& (\beta\gamma\alpha)^{s-1}\beta\gamma + c(\beta\gamma\alpha)^s \\
    \alpha^2\beta &=& 0
    \end{array}
  $$
  \end{enumerate}
\end{Definition}

\begin{Remark}
\thlabel{Q(2B)_parameters}
  Note that we follow the notation of \cite{Erdmann88}, except that we renamed the parameters $k$ and $s$ to $s$ and $t$ to avoid clashing notation. Note also that several places in the literature define a version of the algebra $Q(2B)_1^{s,t,c}$ involving an addition parameter $0\neq d\in k$, where the relation $\alpha^2 = (\beta\gamma\alpha)^{s-1}\beta\gamma + c(\beta\gamma\alpha)^s$ is replaced by $\alpha^2 = d(\beta\gamma\alpha)^{s-1}\beta\gamma + c(\beta\gamma\alpha)^s$. The resulting algebra, which we may call $Q(2B)_1^{s,t,c,d}$, is isomorphic to $Q(2B)_1^{s,t,\tilde c}$ for a suitable $\tilde c\in k$. That is, the additional parameter $d$ is redundant.
\end{Remark}

\begin{Proposition}\label{prop:Q2A}
  Let $n\geq 4$ be a natural number and let $c\in k$ be arbitrary. Then the algebra $Q(2A)^{2^{n-2},c}$ has Morita--Frobenius number $1$ if and only if $Q(2A)^{2^{n-2},c} \cong Q(2A)^{2^{n-2},\tilde c}$ with $\tilde c\in \mathbb F_2$.
\end{Proposition}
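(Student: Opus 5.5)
We first make precise what the Morita--Frobenius number of an algebra given by quiver and relations is. The algebra $A_c:=Q(2A)^{s,c}$ with $s=2^{n-2}$ is basic and defined over $\FF_2(c)$. Its Frobenius twist $k\otimes_{\overline{\sigma},k}A_c$ is presented by the same quiver with all structure constants raised to the power $2$, so it is $A_{c^2}$. Since basic algebras are Morita equivalent only if they are isomorphic, $\mf(A_c)=1$ means exactly that $A_c\cong A_{c^2}$. The direction ``$\Leftarrow$'' is then immediate: if $A_c\cong A_{\tilde c}$ with $\tilde c\in\FF_2$, then $A_{c^2}$ is the twist of $A_c$, hence isomorphic to the twist of $A_{\tilde c}$, which is $A_{\tilde c^2}=A_{\tilde c}\cong A_c$.

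For ``$\Rightarrow$'' we need to understand exactly when $A_c\cong A_{c'}$. The plan is to attach to $A_c$ an isomorphism invariant $\iota(A_c)\in k$ (possibly only defined up to a finite ambiguity) that depends on $c$ in a controlled way.

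The natural candidate comes from the socle. The element $\alpha^2$ lies in $e_1Ae_1$. Modulo $\operatorname{soc}$, it equals $(\beta\gamma\alpha)^{s-1}\beta\gamma$, and the socle correction is $c(\beta\gamma\alpha)^s$. We would study how a general algebra isomorphism acts. Any isomorphism $A_c\to A_{c'}$ may be taken to fix the vertex idempotents, after possibly composing with a permutation, but the two vertices are not interchangeable because only one of them carries a loop. It then sends each of $\alpha,\beta,\gamma$ to a unit multiple of itself plus higher-order terms in the radical. Concretely:
\[
\alpha\mapsto a\alpha+x\beta\gamma+\cdots,\qquad \beta\mapsto b\beta+\cdots,\qquad \gamma\mapsto g\gamma+\cdots .
\]
Imposing the relations, in particular $\alpha^2\beta=0$ and the $\alpha^2$ relation, yields polynomial conditions on these coefficients. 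From these conditions we would extract the transformation rule for $c$.

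We expect a rule of the shape
\[
c'=\lambda c+\mu,
\]
where the admissible $(\lambda,\mu)$ are constrained in such a way that characteristic $2$ cancellations make the class of $c$ determined up to a map like $c\mapsto c$ or $c\mapsto c+1$. The alternative, where every $c\neq 0$ can be rescaled, would collapse the statement, so it must fail. If a rule of this kind is established, then the classes are $\{c\}$ or $\{c,c+1\}$, and $A_c\cong A_{c^2}$ forces $c^2\in\{c,c+1\}$. The case $c^2=c$ gives $c\in\FF_2$. In the case $c^2=c+1$, we get $c\in\FF_4$, and we must check that then $A_c\cong A_{\tilde c}$ for some $\tilde c\in\FF_2$, or else conclude that the rule has no translation part. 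Either way the conclusion $A_c\cong A_{\tilde c}$ with $\tilde c\in\FF_2$ follows.

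The main obstacle is computing the exact isomorphism criterion between the $A_c$ for varying $c$, which requires careful bookkeeping of higher-degree terms in the images of the arrows. As a first attempt we would work modulo the square of the socle-adjacent layer, i.e.\ only tracking terms that can contribute to $(\beta\gamma\alpha)^s$. Failing a clean computation, we would fall back to an invariant-theoretic argument. The idea is to define $\iota$ via the action on $e_1Ae_1$: for instance, take the unique-up-to-scalar element $u\in e_1Ae_1$ with $u\equiv\alpha$ mod $\operatorname{rad}^2$ and $u^2\in (\beta\gamma\alpha)^{s-1}\beta\gamma+\operatorname{soc}$, normalised suitably. Then show that $\iota$ takes only finitely many values on each isomorphism class and that $\iota(A_{c^2})=\iota(A_c)^2$. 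This would make the isomorphism class of $A_c$ Frobenius-stable only when $\iota$ is algebraic over $\FF_2$ of bounded degree, and finally reduce to $\FF_2$ using the explicit parameter change.
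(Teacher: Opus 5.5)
Your set-up is sound. The Frobenius twist of $A_c=Q(2A)^{s,c}$ is $A_{c^2}$, and Morita equivalent basic algebras are isomorphic. Hence $\mathrm{mf}(A_c)=1$ iff $A_c\cong A_{c^2}$, and the direction ``$\Leftarrow$'' follows exactly as you say.

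The gap is in ``$\Rightarrow$''. That direction rests entirely on the precise isomorphism criterion for the family $A_c$, and you neither compute it nor predict it correctly. In fact there is no translation part.

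Write an isomorphism as $\tilde\alpha\mapsto a\alpha+\cdots$, $\tilde\beta\mapsto b\beta+\cdots$, $\tilde\gamma\mapsto g\gamma+\cdots$, with $a,b,g$ units of the central subalgebra $k[z]$, where $z=\alpha\beta\gamma+\beta\gamma\alpha+\gamma\alpha\beta$. One shows that the relations force the correction terms into $z^{s-1}k[z]$-multiples. After that they vanish, or cancel in characteristic $2$, in $\varphi(\tilde\beta)\varphi(\tilde\gamma)\varphi(\tilde\beta)$ and in $\varphi(\tilde\alpha)^2$.

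Comparing with the relations then gives, for the constant terms,
$\bar b^2\bar g=\bar a^s\bar b^s\bar g^{s-1}$, $\bar a^2=\bar b^s\bar g^s\bar a^{s-1}$ and $\bar a^2c=\tilde c\,\bar b^s\bar g^s\bar a^s$. Consequences:
\begin{itemize}
\item $c=0$ iff $\tilde c=0$, which already excludes any rule like $c\mapsto c+1$.
\item Otherwise $c/\tilde c=\bar a$ with $\bar a^{5s-6}=1$, equivalently $\bar a^m=1$ for $m=5\cdot 2^{n-3}-3$, since $5s-6=2m$ and we are in characteristic $2$.
\item Conversely, a diagonal rescaling of the arrows realises every such ratio.
\end{itemize}
So for $c\neq 0$ one has $A_{\tilde c}\cong A_c$ exactly when $\tilde c=\zeta c$ with $\zeta^m=1$. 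Then $A_c\cong A_{c^2}$ forces $c^m=1$, i.e.\ $A_c\cong A_1$.

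The behaviour you rule out, rescaling, is exactly what happens, only by a finite cyclic group. Your reason for ruling it out is also a non sequitur: if every nonzero $c$ could be rescaled to $1$, the statement would be trivially true, not false.

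Your fallback does not close the gap either. An invariant $\iota$ with finitely many values per isomorphism class and $\iota(A_{c^2})=\iota(A_c)^2$ at best shows that $\iota(A_c)$, hence $c$, lies in some finite field. That is not enough. For $n=4$ (so $m=7$), let $c\in\mathbb{F}_4$ be a primitive cube root of unity. It is algebraic over $\mathbb{F}_2$, yet $A_c\not\cong A_{c^2}$ and $A_c$ is isomorphic to neither $A_0$ nor $A_1$; here $\mathrm{mf}(A_c)=2$.

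Your final step, ``reduce to $\mathbb{F}_2$ using the explicit parameter change'', is precisely the missing content. It requires the full description of which pairs $c,\tilde c$ give isomorphic algebras. That means carrying the coefficient bookkeeping you postponed through to the three scalar equations above.
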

\begin{proof}
  We will show that for any $c,\tilde c\in k$ there is an isomorphism $Q(2A)^{2^{n-2},c} \cong Q(2A)^{2^{n-2},\tilde c}$ if and only if either $c=\tilde c=0$ or both $c$ and $\tilde c$ are non-zero and $\frac{\tilde c}{c}$ is a $(5\cdot 2^{n-3}-3)$-rd root of unity. Given any $c\neq 0$, the latter part of the assertion means that $Q(2A)^{2^{n-2},c^2} \cong Q(2A)^{2^{n-2},c}$, i.e. $Q(2A)^{2^{n-2},c}$ has Morita--Frobenius number $1$, if and only if $Q(2A)^{2^{n-2},c} \cong Q(2A)^{2^{n-2},1}$. This clearly implies the claim. 

  We prove the ``only if'' direction first. Set $A=Q(2A)^{2^{n-2},c}$ and $\tilde A=Q(2A)^{2^{n-2},\tilde c}$. Assume that $\varphi:\ \tilde A \longrightarrow A$ is an isomorphism of $k$-algebras. To avoid confusion, we denote the generators of $A$ by $e_0$, $e_1$, $\alpha$, $\beta$ and $\gamma$ and those of $\tilde A$ by $\tilde e_0$, $\tilde e_1$, $\tilde \alpha$, $\tilde \beta$ and $\tilde \gamma$. We can assume without loss of generality that $\varphi$ maps $e_i$ to $\tilde e_i$ for $i\in\{0,1\}$, since the Cartan numbers associated to $e_0$ and $e_1$ are different. 
  We define $s=2^{n-2}$ (Erdmann calls this ``$k$'', but this conflicts with our notation), $z=\alpha\beta\gamma+\beta\gamma\alpha+\gamma\alpha\beta$ and $Z=k[z]$.  One checks that $Z\subseteq Z(A)$. Let $\bar{\phantom{f}}: Z\longrightarrow k$ denote the $k$-algebra homomorphism that maps an element of $Z$ to its constant coefficient. 
  
  The relations of $A$ imply that $A$ has a basis consisting of paths in which an $\alpha$ can only be followed by a $\beta$, a $\beta$ can only be followed by a $\gamma$ and a $\gamma$ can only be followed by an $\alpha$. Any path that does not satisfy this lies in the second socle of $A$, and every path of length strictly greater than $3s$ equals zero in $A$. To be more precise, we will use that
  $e_0Ae_1$ has basis $\{ (\beta\gamma\alpha)^i \beta, (\alpha\beta\gamma)^i \alpha\beta \ | \ i=0,\ldots,s-1\}$, 
   $e_1Ae_0$ has basis $\{ (\gamma\alpha\beta)^i \gamma, (\gamma\alpha\beta)^i \gamma\alpha \ | \ i=0,\ldots,s-1\}$ and 
   $e_0Ae_0$ has basis $\{(\alpha\beta\gamma)^i, (\beta\gamma\alpha)^{i+1}, (\alpha\beta\gamma)^i\alpha, (\beta\gamma\alpha)^i\beta\gamma \ | \ i=0,\ldots,s-1\}$. Note that $(\alpha\beta\gamma)^s=(\beta\gamma\alpha)^s$. Also note that the only non-zero path in $A$ having $\alpha^2$ as a proper subpath is $\alpha^3$.

  Given the above, we can assume that
  $$
    \varphi(\tilde \beta)=b\beta + y_1 \alpha\beta \quad\textrm{with $b\in Z^\times$, $y_1\in Z$},
  $$
  $$
    \varphi(\tilde \gamma)=g\gamma + y_2 \gamma\alpha \quad\textrm{with $g\in Z^\times$, $y_2\in Z$},\textrm{ and}
  $$
  $$
    \varphi(\tilde \alpha)=a\alpha + y_3 \beta\gamma+y_4 \alpha\beta\gamma + y_5 \beta\gamma\alpha \quad\textrm{with $a\in Z^\times$, $y_3,y_4,y_5\in Z$}.
  $$
  We get 
  $$
    \varphi(\tilde\beta)\varphi(\tilde\gamma)\varphi(\tilde\beta) = b^2g \beta\gamma\beta + b(by_2+gy_1)\beta\gamma\alpha\beta+y_1(gy_1+by_2)\alpha\beta\gamma\alpha\beta
  $$
  whereas 
  $$
    \varphi(\tilde\beta\tilde\gamma\tilde\beta)= \varphi((\tilde \alpha\tilde\beta\tilde\gamma)^{s-1}\tilde\alpha\tilde\beta)= a^sb^sg^{s-1} (\alpha\beta\gamma)^{s-1}\alpha\beta.
  $$
  It follows that $by_2+gy_1$ annihilates $\beta\gamma\alpha\beta$, which means $by_2+gy_1\in z^{s-1}Z$. But that means $by_2+gy_1$ annihilates $\alpha\beta\gamma\alpha\beta$ as well. It follows that $\varphi(\tilde\beta)\varphi(\tilde\gamma)\varphi(\tilde\beta) = b^2g \beta\gamma\beta$, and therefore 
  \begin{equation} \label{eqn:rels for hom 1}
    \bar b^2 \bar g= \bar a^s \bar b^s \bar g^{s-1}.
  \end{equation} 
  Now compare
  $$
    \varphi(\tilde\alpha)^2 = a^2\alpha^2 + ay_3 \alpha\beta\gamma + ay_5\alpha\beta\gamma\alpha +ay_3\beta\gamma\alpha + y_3y_4 \beta\gamma\alpha\beta\gamma + ay_4 \alpha\beta\gamma\alpha + y_4^2 (\alpha\beta\gamma)^2 + y_3y_5 \beta\gamma\alpha\beta\gamma + y_5^2 (\beta\gamma\alpha)^2 $$
  and 
  $$ \varphi(\tilde \alpha^2)= b^sg^sa^{s-1}(\beta\gamma\alpha)^{s-1}\beta\gamma+\tilde c b^sg^sa^s(\beta\gamma\alpha)^{s},$$
  where we again used $by_2+gy_1\in z^{s-1}Z$ to obtain the latter equation. It follows that $ay_3 \alpha\beta\gamma$ is a scalar multiple of $(\alpha\beta\gamma)^s$, which forces $y_3\in z^{s-1}Z$. But then $ay_3 \alpha\beta\gamma = ay_3\beta\gamma\alpha$ since $(\alpha\beta\gamma)^s=(\beta\gamma\alpha)^s$. That is, $ay_3 \alpha\beta\gamma + ay_3\beta\gamma\alpha =0$ as $k$ has characteristic $2$. Similarly, we see that $a(y_4+y_5)\alpha\beta\gamma\alpha$ must be zero, which implies $y_4+y_5\in z^{s-1}Z$. But then $y_3(y_4+y_5)\beta\gamma\alpha\beta\gamma=0$ and $(y_4+y_5)^2(\beta\gamma\alpha)^2=0$, so 
  $$\varphi(\tilde\alpha)^2 = a^2\alpha^2 = a^2(\beta\gamma\alpha)^{s-1}\beta\gamma+a^2c(\beta\gamma\alpha)^{s}.$$ 
  This gives us 
  \begin{equation} \label{eqn:rels for hom 2}
    \bar a^2 = \bar b^s \bar g^s\bar a^{s-1} \quad \textrm{and} \quad \bar a^2 c= \tilde c \bar b^s \bar g^s \bar a^s.
  \end{equation}
  The last equation implies that $c$ and $\tilde c$ are either both zero or both non-zero. If they are both zero, then we are done. So assume that $c$ and $\tilde c$ are both non-zero. 
  In that case, equations~\eqref{eqn:rels for hom 1}~and~\eqref{eqn:rels for hom 2} imply 
  $$
    \frac{c}{\tilde c} = \bar a \quad\textrm{and} \quad \bar b \bar g =\bar a^{\frac{3-s}{s}} \quad \textrm{and} \quad \bar a^{\frac{5s-6}{s}}=1.
  $$
  Note that $s$ is a power of two, so $s$-th roots are unique in $k$. In particular, $\bar a$ and therefore $\frac{c}{\tilde c}$ is a $(5s-6)$-th root of unity and therefore also a $(5\cdot 2^{n-3}-3)$-rd root of unity, as claimed.

  For the converse assume that $\frac{c}{\tilde c}$ is a $(5\cdot 2^{n-3}-3)$-rd root of unity (there is nothing to show in the case $c=\tilde c=0$). Define $a=\frac{c}{\tilde c}$, $b=a^{\frac{3-s}{s}}$ and $g=1$. Then one checks that the assignment
  $$
    \tilde \alpha \mapsto a \alpha, \ \tilde \beta \mapsto b \beta, \ \tilde \gamma \mapsto g \gamma
  $$
  extends to a $k$-algebra isomorphism between $\tilde A$ and $A$, proving the claim.
\end{proof}

\begin{Proposition}\label{prop:Q2B}
  Let $n\geq 4$ be a natural number and let $c\in k$ be arbitrary. Then the algebra $Q(2B)_1^{2,2^{n-2},c}$ has Morita--Frobenius number $1$ if and only if $Q(2B)_1^{2,2^{n-2},c} \cong Q(2B)_1^{2,2^{n-2},\tilde c}$ with $\tilde c\in \mathbb F_2$.
\end{Proposition}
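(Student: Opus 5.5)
We follow the strategy of Proposition~\ref{prop:Q2A}. The aim is to show that for $c,\tilde c\in k$ there is an isomorphism $Q(2B)_1^{2,2^{n-2},c}\cong Q(2B)_1^{2,2^{n-2},\tilde c}$ if and only if either $c=\tilde c=0$, or both are non-zero and $\tilde c/c$ is an $N$-th root of unity for a suitable odd integer $N$ depending on $n$. Since $N$ is odd, squaring is a bijection on the $N$-th roots of unity. Hence, for $c\neq 0$, the relation $Q(2B)_1^{2,2^{n-2},c^2}\cong Q(2B)_1^{2,2^{n-2},c}$ holds exactly when $c$ is an $N$-th root of unity, which is exactly when $Q(2B)_1^{2,2^{n-2},c}\cong Q(2B)_1^{2,2^{n-2},1}$. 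Note that here $s=2$ and $t=2^{n-2}$, so the defining relations read $\gamma\beta=\eta^{t-1}$, $\beta\eta=\alpha\beta\gamma\alpha\beta$, $\eta\gamma=\gamma\alpha\beta\gamma\alpha$ and $\alpha^2=\beta\gamma\alpha\beta\gamma+c(\beta\gamma\alpha)^2$.

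\textbf{Setup.} Write $A$ and $\tilde A$ for the two algebras and let $\varphi:\tilde A\to A$ be an isomorphism. Since the Cartan invariants at the two vertices differ, we may assume $\varphi(\tilde e_i)=e_i$. We then write down a path basis of each $e_iAe_j$. Paths alternate $\alpha\beta\gamma$ at vertex $0$ and powers of $\eta$ at vertex $1$; the mixed words lie in high radical layers. We take as centre-like element $z=\alpha\beta\gamma+\beta\gamma\alpha+\gamma\alpha\beta+\eta$, or a suitable variant that commutes with the arrows modulo the socle. With this, modulo higher radical terms that we track explicitly, we may assume
\[
\varphi(\tilde\alpha)=a\alpha+(\text{terms in }\beta\gamma,\alpha\beta\gamma,\beta\gamma\alpha),\qquad \varphi(\tilde\beta)=b\beta+y_1\alpha\beta,
\]
\[
\varphi(\tilde\gamma)=g\gamma+y_2\gamma\alpha,\qquad \varphi(\tilde\eta)=h\eta+(\text{higher powers of }\eta\text{ and }\gamma\alpha\beta),
\]
where $a,b,g,h$ are units with constant terms $\bar a,\bar b,\bar g,\bar h$.

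\textbf{Relations.} Next we apply $\varphi$ to each relation and compare leading coefficients. The relation $\gamma\beta=\eta^{t-1}$ gives $\bar g\bar b=\bar h^{t-1}$. The relations $\beta\eta=\alpha\beta\gamma\alpha\beta$ and $\eta\gamma=\gamma\alpha\beta\gamma\alpha$ give $\bar b\bar h=\bar a^2\bar b^2\bar g$ and $\bar h\bar g=\bar a^2\bar b\bar g^2$. The relation for $\alpha^2$, after showing as in Proposition~\ref{prop:Q2A} that the cross terms cancel in characteristic $2$, gives $\bar a^2=\bar b^2\bar g^2\bar a$ and $\bar a^2c=\tilde c\,\bar b^2\bar g^2\bar a^2$. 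From the last equation, $c$ and $\tilde c$ are simultaneously zero or non-zero. Eliminating variables, with $t$ a power of $2$ so that $t$-th roots are unique, expresses $c/\tilde c$ as a power of $\bar a$, and shows that $\bar a$ is a root of unity of some order $N$ that is odd (of the form $m t\pm\text{odd}$). For the converse, we define $\tilde\alpha\mapsto a\alpha$, $\tilde\beta\mapsto b\beta$, $\tilde\gamma\mapsto\gamma$, $\tilde\eta\mapsto h\eta$ with scalars solving these equations, and check that the relations are preserved.

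\textbf{Main obstacle.} The hardest step is controlling the correction terms. Concretely, we must show that the non-leading terms of $\varphi(\tilde\alpha)$, $\varphi(\tilde\beta)$, $\varphi(\tilde\gamma)$ and $\varphi(\tilde\eta)$ contribute nothing to the socle coefficient $c(\beta\gamma\alpha)^2$. The extra loop $\eta$ and the relation $\gamma\beta=\eta^{t-1}$ create more mixed paths than in the $Q(2A)$ case. To handle this, we first use the relations for $\beta\eta$ and $\eta\gamma$ to force $by_2+gy_1$ into a high power of the radical, exactly as $by_2+gy_1\in z^{s-1}Z$ was obtained in Proposition~\ref{prop:Q2A}. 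We then use characteristic $2$ to cancel the symmetric cross terms $y_3(\alpha\beta\gamma+\beta\gamma\alpha)$. The $\eta$-corrections must also be checked to land in the socle of $e_1Ae_1$, which is annihilated by the other arrows.
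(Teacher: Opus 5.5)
Your route is the paper's: fix the idempotents, write the images of the arrows as unit multiples plus corrections, and compare coefficients. Your leading-coefficient equations ($\bar g\bar b=\bar h^{t-1}$, $\bar h=\bar a^2\bar b\bar g$, $\bar a=\bar b^2\bar g^2$, $c=\bar a\tilde c$) agree with the paper's, and eliminating them gives $\bar a^{5t-6}=1$, so $N=5\cdot 2^{n-3}-3$.

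The gap is in the step you yourself identify as the crux. The relations $\beta\eta=\alpha\beta\gamma\alpha\beta$ and $\eta\gamma=\gamma\alpha\beta\gamma\alpha$ cannot force $by_2+gy_1$ into a high radical power. Using $\alpha^2\beta=0$ and $\beta\gamma\beta=\beta\eta^{t-1}=0=\eta^{t-1}\gamma=\gamma\beta\gamma$, the quantity $by_2+gy_1$ drops out of both sides entirely. What these relations do detect is the $\gamma\alpha\beta$-component $y_6$ of $\varphi(\tilde\eta)$. The coefficient of $\gamma\alpha\beta\gamma$ in $\varphi(\tilde\eta)\varphi(\tilde\gamma)$ is $y_6\bar g$, while it is $0$ in $\varphi(\tilde\eta\tilde\gamma)$, so $y_6=0$. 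That component must be shown to vanish, not merely to ``land in the socle'': $\gamma\alpha\beta$ is not in the socle and is not annihilated by the arrows, since $\beta\gamma\alpha\beta\neq 0$. Only the higher powers of $\eta$ are harmless, because $\beta\eta^2=0=\eta^2\gamma$; this is why the paper simply takes the coefficient of $\eta$ in $k[\eta]^\times$.

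The fact you need comes instead from $\gamma\beta=\eta^{t-1}$. Here $\varphi(\tilde\gamma)\varphi(\tilde\beta)=gb\,\gamma\beta+(gy_1+by_2)\gamma\alpha\beta$, whereas $\varphi(\tilde\eta)^{t-1}$ has no $\gamma\alpha\beta$-component. So $gy_1+by_2$ annihilates $\gamma\alpha\beta$, and since $z\gamma\alpha\beta=\eta^t\neq0$ this means $gy_1+by_2\in z^2k[z]$. This is indispensable for the socle comparison. Expanding $\varphi(\tilde\beta\tilde\gamma\tilde\alpha\tilde\beta\tilde\gamma)$ yields, besides $ab^2g^2\,\beta\gamma\alpha\beta\gamma$, the socle term $abg(by_2+gy_1)(\beta\gamma\alpha)^2$. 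Only the vanishing of the constant term of $by_2+gy_1$ justifies your equation $\bar a^2c=\tilde c\,\bar b^2\bar g^2\bar a^2$.

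Finally, your element $\alpha\beta\gamma+\beta\gamma\alpha+\gamma\alpha\beta+\eta$ is not central, since $\beta\eta=\alpha\beta\gamma\alpha\beta\neq0=\eta\beta$. Drop $\eta$: $z=\alpha\beta\gamma+\beta\gamma\alpha+\gamma\alpha\beta$ is already central, as in the paper. With these corrections your argument coincides with the paper's proof.
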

\begin{proof}
    We prove this the same way as Proposition~\ref{prop:Q2A}. Set $A=Q(2B)_1^{2,2^{n-2},c}$ and $\tilde A = Q(2B)_1^{2,2^{n-2},\tilde c}$. We can again set $z=\alpha\beta\gamma+\beta\gamma\alpha+\gamma\alpha\beta$ and $Z=k[z]$, and $Z$ is again central in $A$. A basis of $e_0 A e_1$ is given by $\{\ z^i\beta, \ z^i\alpha\beta \ | \ i=0,1 \}$, a basis of $e_1 A e_0$ is given by 
    $\{ z^i\gamma, \ z^i\gamma\alpha \ | \ i=0,1 \}$, a basis of 
    $e_0 A e_0$ is given by $\{e_0, \alpha\beta\gamma, z^i\beta\gamma\alpha, z^i\alpha, z^i\beta\gamma \ | \ i=0,1\}$ and a basis of 
    $e_1 A e_1$ is given by $\{e_1, \gamma\alpha\beta, \eta^i \ | \ i=1,\ldots,s\}$. 
      We will extend the ``$\bar{\phantom f}$''-notation to $k[\eta]$. We get 
      $$
        \varphi(\tilde \beta)=b\beta + y_1 \alpha\beta \quad\textrm{with $b\in Z^\times$, $y_1\in Z$},
      $$
      $$
        \varphi(\tilde \gamma)=g\gamma + y_2 \gamma\alpha \quad\textrm{with $g\in Z^\times$, $y_2\in Z$},
      $$
      $$
        \varphi(\tilde \alpha)=a\alpha + y_3 \beta\gamma+y_4 \alpha\beta\gamma + y_5 \beta\gamma\alpha \quad\textrm{with $a\in Z^\times$, $y_3,y_4,y_5\in Z$}\textrm{, and}
      $$
      $$
        \varphi(\tilde \eta) = e \eta + y_6 \gamma\alpha\beta \quad\textrm{with $e\in k[\eta]^\times$, $y_6\in k$.}
      $$

      Comparing $\varphi(\tilde \eta \tilde \gamma)$ and $\varphi(\tilde \eta)\varphi(\tilde \gamma)$ gives $y_6 \gamma\alpha\beta=0$, since $\gamma\alpha\beta\gamma$ has coefficient $0$ in $\varphi(\tilde \eta \tilde \gamma)$. It follows that $\varphi(\tilde \beta)\varphi(\tilde \eta)= be\beta\eta$, which we can compare to $\varphi(\tilde \beta \tilde \eta)= a^2b^2g\alpha\beta\gamma\alpha\beta$. We conclude $\bar a^2\bar g\bar b=\bar e$.
      
      We now compare
      $$
        \varphi(\tilde \gamma)\varphi(\tilde\beta)=gb\gamma\beta+(gy_1+by_2)\gamma\alpha\beta \textrm{ and } \varphi(\tilde \gamma\tilde\beta)=e^{s-1}\eta^{s-1},
      $$
      and conclude that $gy_1+by_2$ annihilates $\gamma\alpha\beta$, which means $gy_1+by_2\in z^2k[z]$. We also conclude $\bar g\bar b= \bar e^{s-1}$, which gives us $\bar a^2 = \bar e^{2-s}$.

      Lastly, consider 
      $$
        \varphi(\tilde \alpha)^2 = a^2 \alpha^2 + ay_3(\alpha\beta\gamma+\beta\gamma\alpha)+a(y_4+y_5)\alpha\beta\gamma\alpha + y_3(y_4+y_5)\beta\gamma\alpha\beta\gamma + (y_4+y_5)^2(\alpha\beta\gamma)^2
      $$
      and 
      $$
        \varphi(\tilde\alpha^2)= b^2g^2a \beta\gamma\alpha\beta\gamma + b^2g^2a^2 \tilde c(\alpha\beta\gamma)^2,
      $$
      where we have used $gy_1+by_2\in z^2k[z]$. Just as in the proof of Proposition~\ref{prop:Q2A}, most terms in the expression for $\varphi(\tilde \alpha)^2$ must vanish, leaving us with $\varphi(\tilde \alpha)^2 = a^2 \alpha^2$.
      We conclude $\bar a=\bar b^2\bar g^2$ and $c=\bar a \tilde c$. This yields $\bar a=\bar e^{2s-2}$, and therefore $\bar a^{2-s}=\bar e^{2(s-1)(2-s)}=\bar a^{4(s-1)}$, which is equivalent to $\bar a^{5s-6}=1$. It follows that  if $A\cong\tilde A$, then either $\frac{c}{\tilde c}$ is a $(5s-6)$-th root of unity or $c=\tilde c=0$. 

      Conversely, if  $\frac{c}{\tilde c}$ is a $(5s-6)$-th root of unity, we can just define a $\varphi$ as above by setting $y_1,\ldots,y_6=0$, $a=\frac{c}{\tilde c}$, $e=a^{3/s}$, $b=1$ and $g=a^{1/2}$. One checks that this defines an isomorphism of $k$-algebras.
\end{proof}

\begin{Remark}
  Note that the proofs of Propositions~\ref{prop:Q2A}~and~\ref{prop:Q2B} classify the algebras in the families $Q(2A)^{2^{n-2},c}$ and $Q(2B)_1^{2,2^{n-2},c}$ for $n\geq 4$ up to Morita equivalence in terms of the socle scalar $c$. Namely, two socle scalars $c$ and $\tilde c$ give rise to Morita equivalent algebras if and only if $c/\tilde c$ is a $(5\cdot 2^{n-3}-3)$-rd root of unity.
\end{Remark}  

\begin{Corollary} \label{scalar}
    Let $B$ be a block of quaternion defect $Q_{2^n}$ with two simple modules and Morita--Frobenius number $1$. Then $B$ is Morita equivalent to either $Q(2A)^{2^{n-2},c}$ or $Q(2B)_1^{2,2^{n-2},c}$ with $c\in\{0,1\}$.
\end{Corollary}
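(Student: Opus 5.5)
Since $B$ has two simple modules, $n\geq 4$ and $B$ is not nilpotent, so Proposition~\ref{Erdmann_class_plus}(ii) puts it (over $k$) in the Morita class of $Q(2A)^{2^{n-2},c}$ or $Q(2B)_1^{2,2^{n-2},c}$ for some $c\in k$. I take $B$ to be the $k$-block $kGb$; if $B$ is an $\CO$-block, I reduce mod the radical first. Morita--Frobenius number $1$ over $\CO$ forces the same over $k$: a Morita equivalence $\CO Gb\sim\CO G\sigma(b)$ reduces to $kG\pi(b)\sim kG\overline{\sigma}(\pi(b))$.

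The central step is to move the Morita--Frobenius number from $B$ to its basic algebra $A$, which is one of the two bound path algebras. I twist scalars along $\overline{\sigma}$: let $A^{(2)}$ be $A$ with its $k$-structure twisted by Frobenius, so that with respect to the defining presentation its structure constants are squared. Then $kG\overline{\sigma}(b)$ is the Frobenius twist of $kGb$, and Morita equivalences commute with twisting. Hence $kG\overline{\sigma}(b)$ is Morita equivalent to the twist of $A$. For $A=Q(2A)^{s,c}$, with $s=2^{n-2}$, this twist is the algebra given by the same quiver and relations with every scalar squared, namely $Q(2A)^{s,c^2}$; similarly the twist of $Q(2B)_1^{2,s,c}$ is $Q(2B)_1^{2,s,c^2}$. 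All other scalars in the relations are $1$.

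Since $\mf=1$, $B$ is Morita equivalent to its twist. Two basic algebras are Morita equivalent exactly when they are isomorphic, so $A\cong A^{(2)}$. This is precisely the condition that $A$ has Morita--Frobenius number $1$ in the sense of Propositions~\ref{prop:Q2A} and~\ref{prop:Q2B}, whose proofs compare $c$ with $c^2$ in exactly this way. Those propositions then give $A\cong Q(2A)^{s,\tilde c}$, respectively $A\cong Q(2B)_1^{2,s,\tilde c}$, with $\tilde c\in\mathbb F_2=\{0,1\}$.

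The only point needing care, and the main potential obstacle, is the identification of the Frobenius twist of the block with the twisted basic algebra. To handle it, I choose the basic idempotents and the presentation of $A$ inside $kGb$, and apply $\overline{\sigma}$ to obtain a basic algebra of $kG\overline{\sigma}(b)$. The map $\overline{\sigma}$ is a ring isomorphism $kGb\to kG\overline{\sigma}(b)$ that is semilinear over $k$. It carries the generators $\alpha,\beta,\gamma$ (and $\eta$) to elements satisfying the relations with conjugated scalars, and these elements present the image basic algebra as the algebra with parameter $c^2$.
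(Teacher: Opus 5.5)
Your proof is correct and follows the same route as the paper: place $B$ (over $k$) in the Morita class of $Q(2A)^{2^{n-2},c}$ or $Q(2B)_1^{2,2^{n-2},c}$, then invoke Propositions~\ref{prop:Q2A} and~\ref{prop:Q2B}. The paper cites Holm for the first step and leaves implicit that the Frobenius twist of the block corresponds to the algebra with socle scalar $c^2$; you make this step explicit, correctly, via the semilinear ring isomorphism $\overline{\sigma}\colon kGb\to kG\overline{\sigma}(b)$ and the uniqueness of basic algebras.
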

\begin{proof}
    As is remarked explicitly in the beginning of \cite[Sec.~4.1]{Holm97}, any block of quaternion defect with two simple modules is Morita equivalent to either $Q(2A)^{2^{n-2},c}$ or $Q(2B)_1^{2,2^{n-2},c}$ for some $c\in k$. It is also well-known that in this case $n\geq 4$. The claim now follows directly from Propositions~\ref{prop:Q2A}~and~\ref{prop:Q2B}.
\end{proof}

\section{Reduction theorems for blocks with quaternion defect groups}\label{sec:redthms}

The purpose in this section is to reduce the study of Morita equivalence classes of blocks with quaternion defect groups to quasisimple groups and groups containing a quasisimple subgroup with index two.

\begin{Proposition}
\thlabel{prop cover nilpotent}
    Let $G$ be a finite group with a normal subgroup $N$  such that $G/N$ is solvable. If $\CO G b$ is a block of $\CO G$ covering a $G$-stable nilpotent block of $\CO N$, then $\CO G b$ is Morita equivalent to a block of a solvable group. 
\end{Proposition}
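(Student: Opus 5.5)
The natural tool is the Külshammer--Puig theorem on extensions of nilpotent blocks (see \cite[Thm.~8.12.5]{LiBookII}, or Puig's version over $\CO$). The plan is to apply it directly to $\CO Gb$ and then read off a solvable group.

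Let $\CO Nc$ be the $G$-stable nilpotent block covered by $\CO Gb$. Let $Q$ be a defect group of $\CO Nc$ and $P$ a defect group of $\CO Gb$ with $P\cap N=Q$.

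\textbf{Step 1.} Apply Külshammer--Puig, since $c$ is $G$-stable and nilpotent. It yields a finite group $L$ containing $Q$ as a normal subgroup, with $L/Q\cong G/N$, and a central extension of $L$ by an $\ell'$-group $Z$ (equivalently, a twisting $2$-cocycle with values in $\CO^\times$). It also gives a Morita equivalence between $\CO Gc$ and a twisted group algebra $\CO_\alpha L$. Block idempotents correspond under this equivalence, so $\CO Gb$ is Morita equivalent to a block of $\CO_\alpha L$, i.e.\ to a summand of $\CO_\alpha L$ that is a block algebra.

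\textbf{Step 2.} Untwist. We may take $\alpha$ of $\ell'$-order, hence with values in $\ell'$-roots of unity in $\CO$. Then there is a central extension $1\to Z\to \hat L\to L\to 1$ with $Z$ a cyclic $\ell'$-group and a faithful linear character $\lambda$ of $Z$ such that $\CO_\alpha L\cong \CO\hat L e_\lambda$, where $e_\lambda=|Z|^{-1}\sum_{z\in Z}\lambda(z^{-1})z$. Every block of $\CO_\alpha L$ is therefore isomorphic to a block of $\CO\hat L$, namely one covering the block of $\CO Z$ labelled by $\lambda$.

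\textbf{Step 3.} Check solvability. $\hat L$ is an extension of the $\ell'$-group $Z$ (abelian) by $L$. In turn $L$ is an extension of the $\ell$-group $Q$ by $G/N$, which is solvable by hypothesis. Hence $\hat L$ is solvable, and $\CO Gb$ is Morita equivalent to a block of the solvable group $\hat L$.

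The main obstacle is Step 1. We must make sure the Külshammer--Puig Morita equivalence holds over $\CO$ and not only over $k$, and that it respects the block decomposition of $\CO Gc$ rather than merely $\CO Gc$ as a whole. Puig's formulation gives an isomorphism of $\CO Gc$ with a matrix algebra over $\CO_\alpha L\otimes_{\CO Q} S$-type structures (a source algebra statement). The block idempotents of $\CO Gc$ lie in $Z(\CO Gc)$, which corresponds to $Z(\CO_\alpha L)$, so blocks match. I would cite this in the form given in \cite[Sec.~8.12]{LiBookII}. If only a $k$-version were available, I would instead lift via the endopermutation source description of nilpotent blocks, but I expect the cited $\CO$-version to suffice.
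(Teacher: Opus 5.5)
Your proposal is correct and is essentially the paper's argument. The paper simply cites \cite[Prop.~2.2]{EatonDefectThree}, which is exactly the K\"ulshammer--Puig theorem giving a Morita equivalence with a twisted group algebra $\CO_\alpha L$ (with $Q\lhd L$ and $L/Q\cong G/N$), realised as a block of a central $\ell'$-extension $\hat L$ of $L$; your Steps 1--3 reproduce this, and the solvability of $\hat L$ follows as you say.
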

\begin{proof}
  This is a consequence of \cite[Prop.~2.2]{EatonDefectThree}.
\end{proof}

\begin{Proposition}
\thlabel{2-nilpotent_quotient}
Let $G$ be a finite group and $\CO G b$ be a block of $\CO G$ with defect group $D \cong Q_{2^n}$ for some $n \geq 3$. Suppose $M \unlhd G$ with $G/M$ supersolvable and for every $M \leq L \unlhd G$, the block $\CO Gb$ covers a (unique) $G$-stable block of $\CO L$. Then either $[G:M]_2=[MD:M]$ divides $2$ and there is a block idempotent $c\in Z(\CO MD)$ such that $\CO Gb$ is Morita equivalent to $\CO MDc$, or $\CO G b$ is Morita equivalent to a block of a solvable group. In either case the Morita equivalent block has defect group $Q_{2^n}$.

\end{Proposition}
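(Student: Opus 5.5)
I would argue along a chief series of $G/M$. Since $G/M$ is supersolvable, there is a chain $M=L_0\leq L_1\leq\cdots\leq L_r=G$ of normal subgroups of $G$ with each $L_{i+1}/L_i$ cyclic of prime order. Moreover, the layers of $2'$-order can be arranged to come first (supersolvable groups have a normal Hall $2'$-subgroup containing the commutator subgroup in the relevant ordering; more precisely, $G/M$ has a normal Sylow $p$-subgroup for the largest prime $p$, and one can refine so that $O_{2'}$-type layers sit below). Let $H/M=O_{2'}(G/M)$, so that $G/H$ is a $2$-group, since for supersolvable groups $O_{2'}$ equals the normal $2$-complement. Each intermediate normal subgroup $L$ carries a unique $G$-stable block $e_L$ covered by $b$.

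The first step treats the $2'$-part. Let $e_M$ be the $G$-stable block of $M$. For each step $L_i\leq L_{i+1}$ inside $H$ with $L_{i+1}/L_i$ cyclic of odd prime order, the block $e_{L_i}$ is $L_{i+1}$-stable and covered by the single stable block $e_{L_{i+1}}$. By Lemma~\ref{lem:normalabelian}, the number of blocks of $L_{i+1}$ covering $e_{L_i}$ is odd, and the blocks are permuted by $\mathrm{Lin}(L_{i+1}/L_i)$. The hypothesis that $b$ covers a \emph{unique} $G$-stable block then pins down $e_{L_{i+1}}$. The key dichotomy is whether the block stays "the same" (i.e.\ $e_{L_i}$ remains a block of $L_{i+1}$, which happens if $e_{L_i}$ is the unique block covering) or whether several blocks cover. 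I would rather not chase this; instead I would use the standard Fong--Reynolds/Külshammer type reduction: a $G$-stable block of $M$ covered by a block of $H$ with $H/M$ an $\ell'$-group gives, via the Külshammer--Puig theory of extensions (Morita equivalence to a twisted group algebra over the block of $M$), a crossed product.

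Since the defect group $D$ satisfies $D\cap H=D\cap M$, the covered block of $M$ has defect group $D\cap M$, which by Lemma~\ref{normal_subgroup_quaternion} is $D$, $Q_{2^{n-1}}$, or cyclic of index $\geq 4$ (the index is a power of 2 dividing $[G:M]_2$).

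The second step is a case split on $[MD:M]=[G:H]$, which equals $[G:M]_2$ because $G=HD$ by Lemma~\ref{solv_quotient_characteristic:lemma}, whose argument applies given the uniqueness hypothesis. If $[MD:M]\geq 4$, then by Lemma~\ref{normal_subgroup_quaternion} $D\cap M$ is cyclic, so $e_M$ is nilpotent. Proposition~\ref{prop cover nilpotent} then gives a Morita equivalence to a block of a solvable group, which has defect group $Q_{2^n}$ by Proposition~\ref{quat-Morita} (lifted to $\CO$). If $[MD:M]\leq 2$, I would show $\CO Gb$ is Morita equivalent to a block $\CO MDc$. Here $MD$ need not be normal, so I would pass through $H$. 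The block $e_H$ of $\CO H$ is $G$-stable with $G/H$ a $2$-group, so $b=e_H$ by Remark~\ref{remark:fongcorrespondent}, and $\CO Gb=\CO Ge_H$. Then I would compare $\CO He_H$ with $\CO Me_M$, using that $H/M$ is an abelian-by-cyclic $2'$-group layer by layer, and apply Proposition~\ref{prop:frobeniusnormal}'s technique with linear characters, together with Marcus's Lemma~\ref{lem:marcus}, to extend equivalences up through the $2$-part of order at most $2$. In the trivial-index case $D\leq M$, we simply get $c=e_M$-type blocks of $M$.

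The main obstacle is exactly this $2'$-layer. When $e_M$ is not nilpotent, the passage from $M$ to $H$ along odd-order cyclic layers must give a Morita equivalence compatible with the extra factor of $D$. My plan is that stability plus uniqueness forces each layer to have $e_{L_i}$ remain a block of $L_{i+1}$ with inertial quotient trivial. Then Külshammer's result gives $\CO L_{i+1}e\cong \CO L_i e\otimes$ (a twisted group algebra of an odd cyclic group, which is split since cyclic Schur multipliers vanish). This yields Morita equivalence between $\CO He_H$ and $\CO Me_M$, which Lemma~\ref{lem:marcus} extends to $\CO Gb$ and $\CO MDc$.
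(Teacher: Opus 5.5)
There is a genuine gap in the odd-order layers, and that is where all the content of the proposition lies. You claim that stability of $e_{L_i}$ plus the uniqueness hypothesis forces each layer to be trivial (your ``inertial quotient trivial''), so that K\"ulshammer's crossed product reduces to $\CO L_ie_{L_i}\otimes\CO[L_{i+1}/L_i]$. This is false.

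Take $M=Q_8\lhd G=\SL_2(3)$ and $b=1$, the unique $2$-block. Every hypothesis holds and $[MD:M]=1$. Yet $\CO\SL_2(3)$ has three simple modules, so it is not Morita equivalent to $\CO Q_8$. The proposition survives here only through its solvable alternative, and your plan reserves that alternative for $[MD:M]\geq 4$.

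Two related errors:
\begin{itemize}
\item A cyclic normal subgroup of $Q_{2^n}$ can have index $2$ (every maximal subgroup of $Q_8$ is cyclic). So $D\cap M$ cyclic is not confined to the branch $[MD:M]\geq 4$. Moreover, $D\cap M$ quaternion does not stop some $e_{L_i}$ from being nilpotent.
\item Your dichotomy is backwards. In the favourable case $e_{L_i}$ does \emph{not} remain a block of $L_{i+1}$. It splits into $[L_{i+1}:L_i]$ blocks, one of which is $e_{L_{i+1}}\cong\CO L_ie_{L_i}$. When $e_{L_i}$ does remain a block, as for $Q_8\lhd\SL_2(3)$, the Morita type changes.
\end{itemize}

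The missing input is a count of simple modules.
\begin{itemize}
\item If some $e_{L_i}$ with $M\leq L_i\leq H$ is nilpotent, conclude at once by \thref{prop cover nilpotent}.
\item Otherwise every $\CO L_ie_{L_i}$ has $2$ or $3$ simple modules, by Erdmann's classification. Let $p=[L_{i+1}:L_i]$, an odd prime. If $L_{i+1}$ permuted the simple $e_{L_i}$-modules transitively, $e_{L_{i+1}}$ would have only one simple module. So all of them are stable and extend to $L_{i+1}$, and the extensions can be chosen to lie in one block.
\item If twists of these extensions by two different $\eta\in\Lin(L_{i+1}/L_i)$ lay in $e_{L_{i+1}}$, that block would have at least $2t'\geq 4$ simple modules, which is impossible.
\item Hence restriction is a bijection on simple modules. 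This gives $e_{L_i}\CO L_{i+1}e_{L_{i+1}}\cong\CO L_ie_{L_i}$ as right modules, and then the algebra isomorphism $\CO L_ie_{L_i}\to\CO L_{i+1}e_{L_{i+1}}$, $x\mapsto xe_{L_{i+1}}$.
\item Both idempotents commute with $D$, so the same map gives $\CO L_iDe_{L_i}\cong\CO L_{i+1}De_{L_{i+1}}$. This is how the paper absorbs the factor of index at most $2$. Your Marcus step would also work once these layer isomorphisms are established.
\end{itemize}

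The rest of your outline is fine: $2$-nilpotence of $G/M$ from supersolvability, $G=HD$ with $[G:H]=[MD:M]$, and the $[MD:M]\geq 4$ branch.
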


\begin{proof}
Let $b_0 \in Z(\CO M)$ denote a block idempotent covered by $b$, which is unique by assumption. Then $\CO Mb_0$ has a defect group $G$-conjugate to $D\cap M$. By \thref{normal_subgroup_quaternion} applied to $D \cap M \unlhd D$ we have one of: (i) $D \cap M$ is cyclic; (ii) $D \leq M$; (iii) $D \cap M \cong Q_{2^{n-1}}$. In case (i) $\CO Mb_0$ is nilpotent (since cyclic defect blocks in characteristic $2$ are nilpotent), so by
\thref{prop cover nilpotent} $\CO Gb$ is Morita equivalent to a block of a solvable group.

Hence we may suppose that $D \leq M$ or $D \cap M \cong Q_{2^{n-1}}$. Now by~\cite[Lem.~2.4]{ar19} we have $[DM:M]=[G:M]_2=1$ or $2$ and so $G/M$ is $2$-nilpotent. Write $N$ for the preimage of $O_{2'}(G/M)$ in $G$, so $[G:N]=1$ or $2$ and $G=ND$.

We may choose a filtration $M=N_0\leq N_1 \leq \ldots\leq N_r = N$, where each $N_i$ is normal in $G$ and $N_i/N_{i-1}$ is cyclic of odd prime order. For each $i$ we let $b_i\in Z(\CO N_i)$ denote the unique block idempotent covered by $b$. Then $\CO N_ib_i$ has a defect group $G$-conjugate to $D\cap N_i=D\cap M$. Again recall that $\CO N_ib_i$ has one, two or three simple modules (up to isomorphism), with one simple module precisely when it is nilpotent. If any $\CO N_ib_i$ is nilpotent, then \thref{prop cover nilpotent} implies that $\CO Gb$ is Morita equivalent to a block of a solvable group. So we may assume that each $\CO N_ib_i$ has two or three simple modules. Finally, note that by \thref{remark:fongcorrespondent} it follows that each $b_i$ is also a block idempotent of $\CO N_i D$.
  
Now fix $0< i \leq r$, and assume by way of induction that $\CO G b$ is Morita equivalent to $\CO N_i Db_i$, which we noted above is a block. Label the simple modules of $\CO N_i b_i$ by $S_1,\ldots, S_t$, where $t\in \{2,3\}$. Note that $N_i/N_{i-1}$ acts on the isomorphism classes of simple $\CO N_{i-1}b_{i-1}$-modules, and that there are either two or three such simple modules. Since $[N_i : N_{i-1}]$ is an odd prime, either $N_i/N_{i-1}$ stabilises all isomorphism classes of simple $\CO N_{i-1}b_{i-1}$-modules, or it permutes the isomorphism classes of simple $\CO N_{i-1}b_{i-1}$-modules transitively.
 
In the latter case the induced module $\Ind_{N_{i-1}}^{N_i} T$ is simple for all simple $\CO N_{i-1}b_{i-1}$-modules $T$. Moreover any two such simple induced modules are isomorphic, since Frobenius reciprocity guarantees the existence of non-zero homomorphisms between them. Any simple $\CO N_i b_i$-module arises as a composition factor of some $\Ind_{N_{i-1}}^{N_i} T$, so $\CO N_i b_i$ has just one simple module, a contradiction. Hence $N_i/N_{i-1}$ stabilises all isomorphism classes of simple $\CO N_{i-1}b_{i-1}$-modules.

Now assume that $\CO N_{i-1} b_{i-1}$ has $t'$ isomorphism classes of simple modules, where $t'\in\{2,3\}$, and call them $T_1, \ldots, T_{t'}$.
Since $N_i/N_{i-1}$ is cyclic of odd order we can extend each $T_j$ to a simple $\CO N_i$-module~$\tilde T_j$ (see \cite[Thm.~8.12]{N98}). It follows that $\Ind_{N_i}^{N_{i-1}} T_j \cong \bigoplus_{\eta \in \Irr(N_i/N_{i-1})} \CO_\eta \otimes _\CO \tilde T_j$, where $\CO_{\eta}$ denotes $\CO$ with $N_i$ acting via $\eta$. As mentioned above, any simple $\CO N_ib_i$-module occurs as a composition factor of some $\Ind_{N_i}^{N_{i-1}} T_j$, which means that $S_1,\ldots,S_t$ are all of the form $\CO_\eta \otimes _\CO \tilde T_j$ for suitable $\eta$ and $j$.  Note that $\Ext^1(\tilde T_j, \Ind_{N_{i-1}}^{N_i}T_l)\cong \Ext^1(T_j,T_l)$ for all $1\leq j,l\leq t'$. So we can assume without loss of generality that, for all $1\leq j \leq t'$, $\Ext^1(\tilde T_j, \tilde T_l)$ is non-zero for at least one $l\neq j$ such that $\Ext^1(T_j, T_l)$ is non-zero (this is by choosing the $\tilde T_j$ differently). In particular, given that we have $t'\leq 3$, we can assume that the $\tilde T_j$ all lie in the same block. It then follows that the $\CO_\eta \otimes_\CO \tilde T_j$ all lie in the same block for any fixed $\eta\in \Irr(N_i/N_{i-1})$. This block must be equal to $\CO N_i b_i$ for exactly one choice of $\eta$, since otherwise $\CO N_i b_i$ 
would have at least $2t'\geq 4$ isomorphism classes of simple modules (to be more precise, it would follow that the number of simple modules is $[N_i:N_{i-1}]t'$). In particular, $t=t'$ and we can  assume without loss of generality that $\eta=1$ and $S_j\cong \tilde T_j$ for all $j$. This means that $\Res^{N_i}_{N_{i-1}} S_j\cong T_j$ and $b_i\Ind_{N_{i-1}}^{N_i} T_j\cong S_j$ for all $j$. 
  
It follows that $$b_{i-1}\CO N_i b_i \otimes_{\CO N_{i-1} b_{i-1}} T_j \cong T_j \cong \CO N_{i-1} b_{i-1} \otimes_{\CO N_{i-1} b_{i-1}} T_j$$ as left $\CO N_{i-1}b_{i-1}$-modules (where we have used $b_{i-1} b_i=b_i$, as otherwise the tensor product on the left would be ill-defined). Since $b_{i-1}\CO N_ib_i$ is clearly projective as a right $\CO N_{i-1}$-module, it follows from the above isomorphisms that $b_{i-1}\CO N_ib_i\cong \CO N_{i-1}b_{i-1}$ as right $\CO N_{i-1}b_{i-1}$-modules. This implies 
$$b_{i-1}\CO N_i b_{i}= \CO N_{i-1}b_{i}\textrm{ and } \CO N_{i-1} b_{i-1}\stackrel{\sim}{\longrightarrow} \CO N_{i-1}b_i: \ x \mapsto xb_i.$$
Given that $b_i$ and $b_{i-1}$ commute with $D$ it follows that
  $$b_{i-1}\CO N_iD b_i = \CO N_{i-1}D b_i.$$
  We also get a surjective ring homomorphism 
  $$
  \CO N_{i-1} D b_{i-1}\stackrel{}{\longrightarrow} \CO N_{i-1} D b_i: \ x \mapsto xb_i,
  $$
  which must be an isomorphism since domain and range are free of rank two over $\CO N_{i-1} b_{i-1}$ and $\CO N_{i-1}b_i$, respectively, ensuring that both have the same rank over $\CO$. 
  In conclusion, we get an isomorphism
  $$
  \CO N_{i-1} D b_{i-1}\stackrel{\sim}{\longrightarrow} b_{i-1}\CO N_{i} D b_i =b_{i-1}\CO N_{i} D b_i b_{i-1}:\ x \mapsto xb_i 
  $$
  which shows that $\CO N_{i} D b_i$ and $\CO N_{i-1}Db_{i-1}$ are Morita equivalent (this uses that $b_{i-1}$ does not annihilate any simple $\CO N_i b_i$-module, and therefore it does not annihilate any simple $\CO N_{i} D b_i$-module either). This completes the inductive step.

  The final part follows by \thref{quat-Morita}.
\end{proof}

\begin{Lemma}
\thlabel{Out_sub_supersolvable}
Let $L$ be a nonabelian finite simple group. Let $H \leq \Out(L)$ such that $|H|_2$ is $1$ or $2$. Then $H$ is supersolvable.    
\end{Lemma}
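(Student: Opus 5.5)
This is a case check over the classification of finite simple groups, using the known structure of $\Out(L)$. The guiding fact is that a group is supersolvable when it has a normal series with cyclic factors, each normal in the whole group. So for each $L$ the plan is to exhibit such a series for every subgroup $H$ of $\Out(L)$ with $|H|_2\le 2$.

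\emph{Alternating and sporadic groups.} If $L$ is alternating or sporadic, then $\Out(L)$ has order at most $2$, except for $L=A_6$, where $\Out(L)\cong C_2\times C_2$. In all cases $\Out(L)$ is abelian, so every subgroup $H$ is abelian and hence supersolvable.

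\emph{Groups of Lie type.} Now let $L$ be of Lie type. By Steinberg's description, $\Out(L)$ has a normal series
$$1\le \Delta\le \Delta\Phi\le \Out(L).$$
Here $\Delta$ is the group of diagonal automorphisms (abelian), $\Phi$ is the field automorphisms (cyclic), and $\Out(L)/\Delta\Phi$ is the graph automorphisms. The graph part is trivial, $C_2$, or $S_3$, the last only for $\tD_4$.

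For $H\le \Out(L)$ with $|H|_2\le 2$, intersect this series with $H$. The factor $H/(H\cap\Delta\Phi)$ embeds in the graph group, which leaves three possibilities.

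1. If the factor is cyclic of order $1$, $2$ or $3$, it gives a cyclic top factor in the series. Since $H\cap\Delta\Phi$ is normal in $H$, this is fine.
2. If the factor were $S_3$, then $|H|_2\ge 2$. The $2$-part would then force $H\cap\Delta\Phi$ to have odd order. This case can still be handled because $S_3$ itself is supersolvable.

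In every case the remaining task is to show that $H_0:=H\cap\Delta\Phi$ has an $H$-invariant series with cyclic factors. Note that $\Delta\Phi$ is metabelian: $\Delta$ is abelian with cyclic quotient $\Phi$.

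\emph{The key step, which I expect to be the main obstacle.} Here I would use the explicit structure of $\Delta$. It is cyclic except for $\tD_n$ with $q$ odd and $n$ even, where it is $C_2\times C_2$. In the cyclic case, every subgroup of $\Delta\cap H$ is characteristic, hence $H$-invariant. Then
$$1\le \Delta\cap H\le H_0\le H$$
has cyclic factors, with the middle factor embedding in $\Phi$, and each term is normal in $H$. Refining the cyclic pieces by characteristic subgroups gives a chief-type series with cyclic factors, which proves supersolvability.

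The $\tD_n$ case needs care, since $\Delta\cong C_2\times C_2$. But the condition $|H|_2\le2$ forces $|\Delta\cap H|\le 2$, so $\Delta\cap H$ is cyclic of order at most $2$ and therefore central-ish. The same argument then applies, because a normal subgroup of order $2$ is central.

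The delicate points are the $\tD_4$ triality case, where $\Delta\Phi$ meets the $S_3$ graph group, and possible non-split interactions between $\Phi$ and the graph automorphisms. For these I would check the structure directly from the known presentations of $\Out(\tD_4(q))$, together with the $2$-order restriction.
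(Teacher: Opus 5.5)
Your proposal is correct and follows essentially the paper's argument. Both use the decomposition of $\Out(L)$ into diagonal, field and graph automorphisms, note that $\Out(L)$ is already supersolvable unless the diagonal part is $C_2\times C_2$, and use $|H|_2\le 2$ precisely to force $|H\cap\Delta|\le 2$ in that case; your $\tD_4$ caveat needs no separate check, because in your series $1\le H\cap\Delta\le H\cap\Delta\Phi\le H$ the top factor embeds in $\Out(L)/\Delta\Phi$, a quotient of a subgroup of $S_3$ and hence supersolvable.
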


\begin{proof}
If $L$ is an alternating or sporadic group, then $\Out(S)$ is cyclic and the result is immediate. If $L$ is of Lie type, then by (for example)~\cite[Thm.~2.5.12]{GLS3} $\Out(L)$ is a semidirect product $T \rtimes (E \times F)$, where $T$ is cyclic or $C_2 \times C_2$ (diagonal automorphisms), and $E$ is cyclic or $S_3$, and $F$ is cyclic (graph and field automorphisms). Then $\Out(L)$ (and so $H$) is supersolvable except possibly when $T \cong C_2 \times C_2$. In the remaining cases, note that our hypothesis that $|H|_2\leq 2$ again yields that $H$ is supersolvable.
\end{proof}

The following is from the first part of the proof of~\cite[Prop.~4.3]{eel20}, noting that the hypothesis that the defect group is abelian is not used in that part.

\begin{Lemma}
\thlabel{reduced_block}
Let $G$ be a finite group and $\CO Gb$ a block of $\CO G$ with defect group $D$. Then there is a finite group $H$ and a block $\CO Hc$ of $\CO H$ such that $\CO Gb$ is basic Morita equivalent to $\CO Hc$, and a defect group $D_H$ of $C$ is isomorphic to $D$ such that:
\begin{enumerate} 
\item[(R1)] $\CO Hc$ is quasiprimitive, that is, if $N \lhd H$, then $\CO Hc$ covers a unique block of $\CO N$;
\item[(R2)] If $N \lhd H$ and $\CO Hc$ covers a nilpotent block of $\CO N$, then $N \leq O_\ell(H)Z(H)$ with $O_{\ell'}(N) \leq [H,H]$ cyclic. In particular $O_{\ell'}(H) \leq Z(H)$;
\item[(R3)] $[H:Z(H)_{\ell'}]\leq  [G:Z(G)_{\ell'}]$.
\end{enumerate}
\end{Lemma}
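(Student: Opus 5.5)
The plan is to take $H$ minimal among all groups carrying a block basic Morita equivalent to $\CO Gb$ with defect groups isomorphic to $D$. We order by $[H:Z(H)_{\ell'}]$ first and then by $|H|$. Since $G$ itself is a candidate, (R3) then holds automatically. For (R1) and (R2) we show that a violation lets us replace $H$ by a group that is strictly smaller in this order, while keeping a basic Morita equivalence and the isomorphism type of the defect group. Basic Morita equivalences are transitive and preserve defect groups (up to isomorphism) and fusion, so composing the steps is harmless.

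For (R1), suppose $N\lhd H$ and $c$ covers a block $e$ of $\CO N$ that is not $H$-stable. Let $T$ be the stabiliser of $e$, so $T$ is proper in $H$. By Remark~\ref{remark:fongcorrespondent}(1) there is a block $d$ of $\CO T$ with $\CO Hc\cong \mathrm{Mat}_{[H:T]}(\CO Td)$. This is the Fong--Reynolds reduction, which is a basic Morita equivalence with trivial-source bimodule $\CO Hd$, and the defect groups are preserved. Since $Z(H)\le T$ and $T$ is proper, $[T:Z(T)_{\ell'}]\le [T:Z(H)_{\ell'}]<[H:Z(H)_{\ell'}]$, which contradicts minimality.

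For (R2), suppose $N\lhd H$ and $c$ covers a block $e$ of $\CO N$ that is nilpotent; by (R1) it is $H$-stable. We apply Külshammer--Puig's theorem on extensions of nilpotent blocks. It says $\CO Hc$ is basic Morita equivalent to a block of a central extension $\tilde L$ of $H/N$ by an $\ell'$-group, in which $N$ is essentially replaced by a defect group $Q$ of $e$, with $Q$ normal in $L$. We arrange the data so that $[\tilde L:Z(\tilde L)_{\ell'}]$ is at most $[H/N:1]\cdot|Q|$, and compare this with $[H:Z(H)_{\ell'}]$. If $N\not\le O_\ell(H)Z(H)$, then $|N/(N\cap Z(H)_{\ell'})|$ exceeds $|Q|$ strictly, or the order of the group strictly drops, which contradicts minimality. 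Hence $N\le O_\ell(H)Z(H)$. The cyclicity of $O_{\ell'}(N)$ comes from $O_{\ell'}(N)$ being central, together with a Schur-multiplier argument. If $O_{\ell'}(N)\not\le[H,H]$, we can split off a central $\ell'$ direct factor: a linear character argument as in Lemma~\ref{lem:normalabelian} identifies the block with a block of a proper quotient, which again contradicts minimality. Cyclicity holds because a central $\ell'$-subgroup acts on the block through a single linear character, so we may factor out its kernel. The final statement $O_{\ell'}(H)\le Z(H)$ follows by applying (R2) with $N=O_{\ell'}(H)$, since any block of an $\ell'$-group is nilpotent with trivial defect.

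The main obstacle is the (R2) step. There we must control $[\tilde L:Z(\tilde L)_{\ell'}]$ through the Külshammer--Puig construction precisely enough to obtain a strict decrease, and we must check that the resulting equivalence is basic. This is exactly the bookkeeping done in the first part of the proof of \cite[Prop.~4.3]{eel20}. As noted, that argument never uses abelianness of $D$, so we would follow it verbatim.
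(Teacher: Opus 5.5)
Your route is the paper's. The paper gives no independent proof: it invokes the first part of the proof of \cite[Prop.~4.3]{eel20} and notes that abelianness of $D$ is never used there. Your minimal choice of $(H,c)$, the Fong--Reynolds step for (R1) and the K\"ulshammer--Puig step (with Puig's basic refinement) for (R2) are the content of that argument.

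One step you wrote in your own words is not correct as stated, though. For $O_{\ell'}(N)\le[H,H]$ you propose splitting off a central $\ell'$ direct factor and passing to a proper quotient by twisting with linear characters. Neither need be possible.
\begin{itemize}
\item The image of $W:=Z(H)_{\ell'}$ in $H/[H,H]$ need not be a direct factor.
\item Twisting by $\ell'$-linear characters of $H$ only multiplies the character $\lambda$ of $W$ under $c$ (faithful after your kernel reduction) by characters trivial on $W\cap[H,H]$. This need not destroy faithfulness: for $W\cong C_9$ with $W\cap[H,H]\cong C_3$, every twist of $\lambda$ is still faithful.
\end{itemize}
The argument actually needed replaces $H$ by a different group. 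Write $\CO He_\lambda\cong\CO_\alpha(H/W)$. The class of $\alpha$ corresponds to $\lambda$ composed with the transgression $H_2(H/W,\mathbb{Z})\to W$, whose image is $W\cap[H,H]$. By the universal coefficient theorem, this class is realised by a central extension $\tilde H$ of $H/W$ by a cyclic $\ell'$-group of order $|W\cap[H,H]|$ contained in $[\tilde H,\tilde H]$. Then $[\tilde H:Z(\tilde H)_{\ell'}]\le[H:W]$, and $|\tilde H|<|H|$ unless $W\le[H,H]$. This is where your secondary minimisation on $|H|$ is genuinely used.

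Relatedly, in the (R2) index count you should first replace $N$ by $NZ(H)_{\ell'}$, which still covers a nilpotent block with defect group $Q$. Otherwise $[N:N\cap Z(H)_{\ell'}]>|Q|$ does not give $[H:N]\,|Q|<[H:Z(H)_{\ell'}]$.
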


\begin{Proposition}   
\thlabel{prelim-reduction}
Let $G$ be a finite group and $\CO Gb$ a block of $\CO G$ with defect group $D \cong Q_{2^n}$ for some $n \geq 3$. Then there is a finite group $H$, a block $\CO Hc$ of $\CO H$ such that $\CO Hc$ is basic Morita equivalent to $\CO Gb$, and a defect group $P$ of $\CO Hc$ (isomorphic to $D$) such that $[H:O_{2'}(Z(H))] \leq [G:O_{2'}(Z(G))]$, and $H$, $P$ satisfy one of:
\begin{enumerate}[(i)]
\item $H$ is quasisimple;
\item there is $N \lhd H$ such that $N$ is quasisimple, $[H:N]=2$, $H=NP$ and $N \cap P \cong Q_{2^{n-1}}$;
\item $H$ is solvable.
    
\end{enumerate}
\end{Proposition}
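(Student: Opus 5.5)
We argue by induction on $[G:O_{2'}(Z(G))]$, first replacing $\CO Gb$ by the block $\CO Hc$ supplied by \thref{reduced_block}. Since (R3) only decreases the index, we may assume that $\CO Gb$ itself satisfies (R1) and (R2). If $\CO Gb$ is nilpotent, then it is Morita equivalent to $\CO D$, and (iii) holds with $H=D$. If $O_2(G)\neq 1$, then $O_2(G)\leq D$ is a nontrivial normal subgroup of $D$ contained in every defect group, and we will need to handle it; the same applies to the layer of $G$.

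The key step is to consider $F^*(G)=F(G)E(G)$. By (R2), $O_{2'}(G)\leq Z(G)$. Any normal subgroup $N$ whose covered block has cyclic defect group $D\cap N$ covers a nilpotent block, since blocks with cyclic $2$-defect are nilpotent. Hence $N\leq O_2(G)Z(G)$. If $E(G)=1$, then $F^*(G)=O_2(G)Z(G)$ and $C_G(F^*(G))\leq F^*(G)$. Since $O_2(G)$ is a normal subgroup of $D$, it is cyclic or quaternion, so $\Out$ of it is solvable, and $G/Z(G)$ embeds in a solvable extension. The next step is to show that $G$ is then solvable, or use \thref{prop cover nilpotent} when $O_2(G)$ is cyclic, giving (iii).

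If $E(G)\neq 1$, let $L_1,\dots,L_m$ be its components. Each covered block of $L_i$ has non-cyclic defect, otherwise $L_i\leq O_2(G)Z(G)$ by (R2), which is impossible. Since $D$ has a unique involution, the defect groups $D\cap L_i$ would commute pairwise and each contain $z$, so $m=1$ and $E(G)=L$ is quasisimple. The block of $L$ then has defect group $D\cap L$ equal to $D$ or $Q_{2^{n-1}}$. Next, $G/LC_G(L)$ embeds in $\Out(L/Z(L))$. Using \thref{solv_quotient_characteristic:lemma}, its $2$-part is at most $2$, so by \thref{Out_sub_supersolvable} it is supersolvable. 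We then show that $C_G(L)\leq Z(G)O_2(G)$ by (R2), and that $O_2(G)$ must be central (a $Q_8$ or larger $O_2$ together with $D\cap L$ noncyclic contradicts the unique-involution structure). Thus $LZ(G)$ has supersolvable quotient. We apply \thref{2-nilpotent_quotient} with $M=LZ(G)$, whose hypotheses hold by (R1). This gives either a solvable case, or a Morita equivalence with a block of $MD$ with $[MD:M]\leq 2$. Finally, we strip $O_{2'}(Z)$ using \thref{remark:fongcorrespondent} and a central $2'$ quotient (\thref{lem:centraldomination}) to reach (i) or (ii). In case (ii), $N\cap P\cong Q_{2^{n-1}}$ follows from \thref{normal_subgroup_quaternion}.

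The main obstacle is controlling $O_2(G)$ and $C_G(L)$: we need to show that a noncentral $O_2(G)$ forces nilpotence or solvability. We also need to track that the index bound $[H:O_{2'}(Z(H))]$ is preserved through the passage to $MD$ and the central quotients.
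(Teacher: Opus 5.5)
Your outline follows the paper's proof: reduce to a block satisfying (R1) and (R2), get solvability when $E(G)=1$, use that $D$ has no noncyclic abelian subgroups to isolate a single normal component $L$ with $D\cap L$ noncyclic, then combine \thref{solv_quotient_characteristic:lemma}, \thref{Out_sub_supersolvable} and \thref{2-nilpotent_quotient} before removing the odd central part. Two steps need tightening but no new ideas: (R2) applies only to normal subgroups, so your noncyclicity claim for each $L_i$ should be run on the product of the $G$-orbit of $L_i$; and your ``main obstacle'' goes away once you note $[O_2(G),L]=1$, so that $O_2(G)\le C_D(D\cap L)=Z(D\cap L)$ has order at most $2$ (the paper gets this directly by putting $D_0=O_2(G)$ into the decomposition $D\cap F^*(G)=D_0D_1\cdots D_t$ and showing that all but one factor is central).
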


\begin{proof}
By \thref{reduced_block} $\CO Gb$ we may assume that $\CO Gb$ satisfies conditions (R1) and (R2). In particular $O_{2'}(G)\leq Z(G)$. 

Write $F^*(G) = F(G)L_1\cdots L_t$, where $F(G)$ denotes the Fitting subgroup of $G$ and $L_1,\ldots, L_t$ denote the components of $G$ (where $t\in \mathbb Z_{\geq 0}$, so for the moment we are not assuming the existence of components). Note that  $F(G)=O_2(G)Z(G)$. The block $\CO G b$ covers a unique block $\CO F^*(G)f$, which will have defect group conjugate within $G$ to $D\cap F^*(H)$. Without loss of generality we can assume that $D\cap F^*(G)$ itself is a defect group of $\CO F^*(G)f$. It is a classical result that defect groups are intersections of two Sylow subgroups, and the Sylow $2$-subgroups of $F^*(G)$ are of the form $O_2(G)S_1\cdots S_t$, where each $S_i$ is a Sylow $2$-subgroup of $L_i$. The intersection of two such subgroups retains the property that if $h_0h_1\ldots h_t$ is an element (where $h_0\in O_2(G)$ and $h_i\in L_i$ for $1\leq i \leq t$), then so is each $h_i$.  So 
  $$D\cap F^*(G)= D_0D_1\cdots D_t,$$
where $D_0=O_2(G)$ and $D_i=D\cap L_i$ for $1\leq i \leq t$. We claim that there is some $i\in\{0,\ldots, t\}$ such that $D\cap F^*(G)=D_i$. Otherwise there would be $i\neq j$ and elements $x\in D_i$ and $y\in D_j$ such that $x\not\in D_j$ and $y\not\in D_i$. Since the groups $O_2(G), L_1,\ldots, L_t$ are all normal in $F^*(G)$ and commute it follows that $\langle x,y \rangle$ is a non-cyclic abelian subgroup of $D\cap F^*(G)$. But $D$, being quaternion, does not have any such subgroups, a contradiction. Since $F^*(G)$ is a central product of $F(G)$ and the $L_i$ it follows that $D_j\leq Z(G)$ for all $j\neq i$.

Given a subset $J\subseteq \{0,\ldots,t\}$ we define the group $L_J=\prod_{j\in J}L_j\unlhd F^*(G)$, where we set $L_0=F(G)$ for notational convenience. Since conjugation in $L_J$ is the same as conjugation in $F^*(G)$ the block $\CO F^*(G)f$ covers a unique block $\CO L_Jf_J$, and the defect groups of $\CO L_J f_J$ are the conjugates of $D_J =\prod_{j\in J}D_j$. Note that if $L_J$ is normal in $G$ then $\CO L_J f_J$ is covered by $\CO G b$, and quasiprimitivity implies that it is $G$-stable. It then follows (e.g. from \cite[Thm.~6.8.9~(ii)]{LiBookII}) that every $G$-conjugate of $D_J$ is again a defect group of $\CO L_J f_J$, that is, every $G$-conjugate of $D_J$ is an $L_J$-conjugate (or, equivalently, an $F^*(G)$-conjugate).

If we take $i$ as above such that $D\cap F^*(G)=D_i$ then the preceding argument clearly implies that $L_i$ is normal in $G$, simply because otherwise $D_{\{0,\ldots,t\}}$ does not have the property that all of its $G$-conjugates are $F^*(G)$-conjugates. So $\CO G b$ covers $\CO L_{\{i\}}f_{\{i\}}$, which implies that the defect group of this block is non-cyclic (as cyclic defect blocks for prime $2$ are nilpotent), so $D\cap L_i$ is non-cyclic. But then  $L_{\{0,\ldots,t\}\setminus \{i\}}$ is also normal in $G$, and, as we saw above, $D_{\{0,\ldots,t\}\setminus \{i\}}\leq Z(G)$. It follows that $\CO L_{\{0,\ldots,t\}\setminus \{i\}} f_{\{0,\ldots,t\}\setminus \{i\}}$ is nilpotent and covered by $\CO Gb$. By our assumptions that means $L_{\{0,\ldots,t\}\setminus \{i\}} \leq O_2(G)Z(G)$, which is solvable. That can only be true if (a) $t=0$ and $i=0$, or if (b) $t=1$ and $i=1$. In case (a) we have $F^*(G)=O_{2'}(G)O_2(G)$ and $O_{2'}(G)\leq Z(G)$, so $G/C_G(F^*(G))$ is isomorphic to a subgroup of $\Aut(D_0)$, which is solvable since $D_0$ is cyclic or quaternion. Since $C_G(F^*(G))\leq F^*(G)$, which is also solvable, it follows that $G$ is solvable. Hence we may assume we are in case (b), so that $O_2(G)\leq D_1\leq L_1$, and $F^*(G)=O_{2'}(G)L_1$. 

Now $G/F^*(G)$ is isomorphic to a subgroup of $\Out(L_1)$, a solvable group. Hence \thref{solv_quotient_characteristic:lemma} applies, so $DF^*(G)/F^*(G)$ is a Sylow $2$-subgroup of $G/F^*(G)$. Since $D \cap F^*(G) \lhd D$ and is noncyclic, it then follows from \thref{normal_subgroup_quaternion} that either $D \leq F^*(G)$, in which case $[G:F^*(G)]$ is odd, or $[G:F^*(G)]_2=2$. By \thref{Out_sub_supersolvable} $G/F^*(G)$ is then supersolvable and we may apply \thref{2-nilpotent_quotient}, so that we may assume $G=F^*(G)$ or $[G:F^*(G)]=2$ with $G=DF^*(G)$. Possibly applying \thref{reduced_block} again, $O_{p'}(G) \leq Z(G) \cap [G,G]=[L_1,L_1]$ is cyclic. Hence $F^*(G)$ is quasisimple and (noting $O_2(G) \leq L_1$) we may take $L_1=F^*(G)$ as required.
\end{proof}

\begin{Proposition}
\thlabel{solvable_classification}
Let $G$ be a solvable and $\CO G b$ be a block of $\CO G$ with defect group $D \cong Q_{2^n}$ for $n \geq 3$. Then $\CO Gb$ is Morita equivalent to the principal block of one of $D$, $\SL_2(3)$ or $\operatorname{CSU}_2(3)$. 
\end{Proposition}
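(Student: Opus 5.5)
We argue by induction on $[G:O_{2'}(Z(G))]$, replacing $\CO Gb$ by a block satisfying (R1) and (R2) of \thref{reduced_block}. Solvability is preserved under this replacement: the $H$ produced there is built from $G$ by Fong reductions and central-extension manipulations, so $H$ is again solvable. Now $O_{2'}(G)\leq Z(G)$ and $O_{2'}(G)$ is cyclic. Since $G$ is solvable, $F^*(G)=F(G)=O_2(G)O_{2'}(G)$. Thus $C_G(O_2(G))\leq O_2(G)Z(G)$, and $G/O_2(G)Z(G)$ embeds in $\Out(O_2(G))$.

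The first step is to identify $O_2(G)$. By (R1) the block covers a unique block of $\CO O_2(G)Z(G)$, so $O_2(G)\leq D$. If $O_2(G)$ is cyclic, it is normal in $G$, and the covered block of $O_2(G)O_{2'}(G)$ is nilpotent. Then (R2) gives $G\leq O_2(G)Z(G)$, which cannot hold since $D$ is nonabelian. Hence $O_2(G)$ is quaternion.
\begin{enumerate}
\item If $O_2(G)=D$, then $D\lhd G$ and the block is a block with normal defect group. By Külshammer's theorem it is Morita equivalent to a twisted group algebra $\CO_\alpha(D\rtimes E)$ with $E\leq\Out(D)$ an odd-order group. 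The inertial quotient $E$ is trivial or $C_3$, the latter only for $D=Q_8$, since $\Aut(Q_{2^n})$ is a $2$-group for $n\geq 4$. The Schur multiplier $H^2(C_3,k^\times)$ is trivial. So we obtain $\CO D$ or $\CO \SL_2(3)$, the principal block, as $\SL_2(3)\cong Q_8\rtimes C_3$.
\item If $O_2(G)\cong Q_{2^{n-1}}$ is a proper subgroup of $D$, then $n-1\geq 3$. If $n-1\geq 4$, the automorphism group of $O_2(G)$ is a $2$-group. Then $G/O_2(G)Z(G)$ is a $2$-group and every covered block is nilpotent, so \thref{prop cover nilpotent} (or directly $\ell=1$) gives $\CO D$.
\end{enumerate}
The remaining case is $O_2(G)\cong Q_8$ with $D\cong Q_{16}$. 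Here $G/O_2(G)Z(G)$ embeds in $\Out(Q_8)\cong S_3$ and must contain a $3$-element for non-nilpotency. This forces $G/O_2(G)Z(G)\cong S_3$, so $G$ looks like $(Q_8\rtimes C_3)\cdot 2$ modulo central $2'$.

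In this last case I would use \thref{2-nilpotent_quotient} with $M=O_2(G)Z(G)$; the quotient $S_3$ is supersolvable. Alternatively, I would compute directly. Since $Z(G)_{2'}$ is cyclic and central, the block corresponds to a faithful character of it. I would show that the extension splits, in the sense that the block is isomorphic to a principal block of $O_2(G)\rtimes S_3$ with quaternion Sylow subgroup of order $16$, namely $\operatorname{CSU}_2(3)$. This uses the vanishing of the relevant $2'$-part of the cohomology, $H^2(S_3,C_m)$ for odd $m$ with trivial action. For such $m$, $H^2(S_3,C_m)$ is a quotient of the $3$-part, and it vanishes because $S_3$ has trivial Schur multiplier and the abelianization of $S_3$ has order $2$. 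Hence $G\cong Z(G)_{2'}\times G/Z(G)_{2'}$ up to isoclinism. Then $G/O_{2'}(G)$ has quaternion Sylow $2$-subgroup $D$ and $O_2$ equal to $Q_8$ self-centralising. It is therefore $\operatorname{CSU}_2(3)$, the unique such group of shape $Q_8.S_3$ with $Q_{16}$ Sylow; the other extension, $\GL_2(3)$, has semidihedral Sylow subgroup.

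I expect the main obstacle to be the bookkeeping in this last case. It must ensure that the reductions of \thref{reduced_block} do not leave a nontrivial central $2'$-twist. It must also rule out that $D\cap O_2(G)Z(G)$ differs from $O_2(G)$, and identify the group up to isomorphism rather than just its order. I would try first to reduce to $O_{2'}(G)=1$ by arguing that a faithful block of a central $2'$-extension of a group whose Schur multiplier has trivial $2'$-part is isomorphic to a block of the quotient.
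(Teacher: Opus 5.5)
Your overall route is the paper's. You pass via \thref{reduced_block} to $O_{2'}(G)\le Z(G)$ and deduce $C_G(O_2(G))\le O_2(G)Z(G)$. You then split according to whether $O_2(G)$ is cyclic, equal to $D$, or isomorphic to $Q_{2^{n-1}}$, using that $\Aut(O_2(G))$ is a $2$-group unless $O_2(G)\cong Q_8$. Invoking Külshammer's theorem when $O_2(G)=D$ is a sound variation: inertial quotient $1$ or $C_3$ together with $H^2(C_3,k^\times)=0$ gives $\CO D$ or $\CO\SL_2(3)$ directly. This avoids identifying $G$ or controlling $O_{2'}(G)$, whereas the paper identifies $G\cong\SL_2(3)$ itself. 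However, some steps do not work as written.

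\textbf{The cyclic case.} Applying (R2) to $N=O_2(G)O_{2'}(G)$ only returns $N\le O_2(G)Z(G)$, which is a tautology and says nothing about $G$. The missing step is the one you use later.
\begin{enumerate}
\item $\Aut(O_2(G))$ is a $2$-group, so $G/O_2(G)Z(G)$ is a $2$-group.
\item The block of $O_2(G)Z(G)$ covered by $b$ is $G$-stable, so $G=D\,O_2(G)Z(G)=DZ(G)$.
\item Hence $D\lhd G$ and $D=O_2(G)$, contradicting cyclicity.
\end{enumerate}
The same argument shows that the case $O_2(G)\cong Q_{2^{n-1}}$ with $n\ge 5$ is empty. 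It also shows that in the $Q_8$ case $G/O_2(G)Z(G)$ is not a $2$-group.

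Note that \thref{prop cover nilpotent} and \thref{2-nilpotent_quotient} (with $M=O_2(G)Z(G)$) are vacuous here. Since $M$ is nilpotent, the covered block is nilpotent, and both results only return ``Morita equivalent to a block of a solvable group'', which is your hypothesis. So only your direct computation is viable in the last case.

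\textbf{The $Q_8.S_3$ case.} The block cannot be that of ``$O_2(G)\rtimes S_3$'': $Q_8\rtimes S_3\cong\GL_2(3)$ has semidihedral Sylow $2$-subgroups. $\operatorname{CSU}_2(3)$ is a non-split $Q_8.S_3$, as you say later.

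The vanishing of $H^2(S_3,C_m)$ for odd $m$ is correct. But it governs the central extension $G/O_2(G)$ of $S_3$ by $O_{2'}(G)\cong C_m$, not $G$ itself. The repaired argument is as follows.
\begin{enumerate}
\item Take a complement to $O_{2'}(G)O_2(G)/O_2(G)$ in $G/O_2(G)$, and let $G_0\ge O_2(G)$ be its preimage.
\item Then $G_0\cap O_{2'}(G)\le O_2(G)\cap O_{2'}(G)=1$, so $G=G_0\times O_{2'}(G)$ exactly; no isoclinism is involved.
\item Since $C_{G_0}(Q_8)=Z(Q_8)$, the group $G_0$ has a single block, so $\CO Gb\cong\CO G_0$.
\item Finally, $G_0/Z(Q_8)\cong S_4$ with Sylow subgroup $Q_{16}$ pins down $G_0\cong\operatorname{CSU}_2(3)$.
\end{enumerate}

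There is a more direct finish, which is what the paper states. (R2) applied to $N=O_{2'}(G)$ already gives $O_{2'}(G)\le[G,G]$. Moreover $G/O_{2'}(G)\cong\operatorname{CSU}_2(3)$ has trivial Schur multiplier, since its Sylow subgroups are cyclic or quaternion. Hence $O_{2'}(G)=1$. This settles your worry about a residual central $2'$-twist without any cohomology computation.
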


\begin{proof}
By \thref{reduced_block} we may assume $O_{2'}(G) \leq Z(G) \cap [G,G]$, so since $G$ is solvable we must have $C_G(O_2(G)) \leq O_2(G)Z(G)=O_2(G)O_{2'}(G)$. This implies that $D$ is a Sylow $2$-subgroup of $G$ \cite[Lem.~10.6.5]{LiBookII}. By \thref{normal_subgroup_quaternion} $O_2(G)$ is either cyclic, $D$ or isomorphic to $Q_{2^{n-1}}$. If $O_2(G)$ is cyclic or isomorphic to $Q_{2^m}$ for $m \geq 4$, then $\Aut(O_2(G))$, and so $G/O_2(G)O_{2'}(G)$, is a $2$-group. This implies $G=D \times O_{2'}(G)$ and so $\CO Gb$ is Morita equivalent to  $\CO D$. Hence we may assume that $n \leq 4$, and that if $n=4$, then $O_2(G) \cong Q_8$. 

Now $\Aut(Q_8) \cong S_4$, so if $n=3$, then $O_2(G)=D$ and $[G:DZ(G)]=1$ or $3$. If follows that $G=D$ or $G \cong SL_2(3)$.

If $n=4$ and $O_2(G) \cong Q_8$, then $G/O_2(G)Z(G) \cong S_3$. It follows that $G \cong CSU_2(3)$.
\end{proof}

\begin{Definition}
\thlabel{moritaminimal}
For a  finite group $G$ and  a block idempotent $b$ of $\CO G$, we say that  $(G,b)$ is {\it Morita minimal}  if  whenever
$H$ is a finite group and $c$ is a block idempotent of $\CO  H$ such that $\CO Gb$ is Morita equivalent to  $\CO Hc $, then
$[H:Z(H)_{\ell'}]\geq  [G:Z(G)_{\ell'}] $.
\end{Definition}

Note that a given Morita equivalence class will contain multiple Morita minimal representatives. For example, taking a direct product of a finite group in a Morita minimal pair with an abelian $\ell'$-group gives further Morita minimal pairs. However, it suffices in our situation to consider only certain Morita minimal pairs as follows.

\begin{Corollary}
\thlabel{main-reduction}
Let $G$ be a finite group and $\CO Gb$ a block of $\CO G$ with defect group $D \cong Q_{2^n}$ for some $n \geq 3$. Then there is a Morita minimal pair $(H,c)$, where $H$ is a finite group and $c$ is a block idempotent of $\CO  H$ with defect group $P \cong D$, such that $\CO Gb$ is Morita equivalent to  $\CO Hc$ and $H$ satisfies one of:
\begin{enumerate}[(i)]
\item $H$ is quasisimple;
\item there is $N \lhd H$ such that $N$ is quasisimple, $[H:N]=2$, $H=NP$ and $N \cap P \cong Q_{2^{n-1}}$;
\item $H$ is $P$, $\SL_2(3)$ or $\operatorname{CSU}_2(3)$, in which cases $\CO Hc$ is the principal block.
\end{enumerate}
\end{Corollary}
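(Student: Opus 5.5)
The plan is to combine a minimality argument with \thref{prelim-reduction} and \thref{solvable_classification}. Among all pairs $(H,c)$ with $\CO Hc$ Morita equivalent to $\CO Gb$, choose one minimising $[H:Z(H)_{2'}]$. This is possible since the index is a positive integer. Such a pair is Morita minimal by definition. By \thref{quat-Morita}, applied to the reductions mod the maximal ideal (a Morita equivalence over $\CO$ induces one over $k$, and defect groups of $\CO Hc$ and $kH\bar c$ coincide), the defect groups of $\CO Hc$ are isomorphic to $Q_{2^n}$.

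Now apply \thref{prelim-reduction} to $(H,c)$. It yields $(H',c')$ with $\CO H'c'$ basic Morita equivalent to $\CO Hc$, hence to $\CO Gb$, and with $[H':O_{2'}(Z(H'))]\le [H:O_{2'}(Z(H))]$. Note that $O_{2'}(Z(H))=Z(H)_{2'}$. By minimality, equality holds and $(H',c')$ is again Morita minimal. Moreover $H'$ together with a defect group $P\cong Q_{2^n}$ satisfies (i), (ii), or is solvable. In cases (i) and (ii) we are done.

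In the solvable case, \thref{solvable_classification} shows that $\CO H'c'$ is Morita equivalent to the principal block of $K\in\{P,\SL_2(3),\operatorname{CSU}_2(3)\}$, with $P\cong Q_{2^n}$ in the first case. We must still check that the pair $(K,\text{principal block})$ is Morita minimal. The approach is to restart the minimisation inside the Morita class: apply the whole argument to a minimal pair $(H'',c'')$ in the class of $\CO Gb$. If this pair lands in (i) or (ii), we are done. Otherwise it is solvable, and we compare indices.

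\begin{itemize}
\item For $K=P$: here $Z(K)_{2'}=1$, so $[K:1]=2^n$. Any block with defect group $Q_{2^n}$ satisfies $[H:Z(H)_{2'}]\ge |P|$, because $P\cap Z(H)_{2'}=1$. So $K$ is minimal.
\item For $K=\SL_2(3)$ and $K=\operatorname{CSU}_2(3)$: the index equals $|K|$, which is $24$, respectively $48$. A competitor $H$ with the same Morita class has $l(kHc)=3$, respectively $2$. Hence the block is not nilpotent and $H$ is not $2$-nilpotent modulo $Z(H)_{2'}$. It follows that $[H:Z(H)_{2'}]\ge |P|\cdot 3$.
\end{itemize}

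The main obstacle is exactly this last minimality check for the solvable representatives. The cleanest way around it is to observe that the minimal pair $(H'',c'')$, being solvable, is itself already of the form $D\times O_{2'}$, $\SL_2(3)$ or $\operatorname{CSU}_2(3)$ modulo a central $2'$-subgroup, by the proof of \thref{solvable_classification} (using (R2), so that $O_{2'}(H'')\le Z(H'')$). Replacing $H''$ by the quotient by that central $2'$-part, via \thref{lem:centraldomination} or directly, gives $K$ without increasing the index, and therefore preserves Morita minimality.
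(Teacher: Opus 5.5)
Your proposal is correct and follows the paper's route. You take a minimal pair, apply \thref{prelim-reduction} (which cannot increase the index, so minimality persists), and in the solvable case invoke \thref{solvable_classification}. Your bullets supply the Morita-minimality check for the principal blocks of $P$, $\SL_2(3)$ and $\operatorname{CSU}_2(3)$ that the paper only calls easy: the odd part of $[H:Z(H)_{2'}]$ must be at least $3$, since otherwise $H$ is $2$-nilpotent and the block would be nilpotent.

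Drop the final paragraph. It is unnecessary, and its step of passing to the quotient by a central $2'$-subgroup is not valid in general, because a block on which that subgroup acts nontrivially does not descend to the quotient.
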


\begin{proof}  Let $(H,c)$ be a Morita minimal pair such that  $\CO Hc$ is Morita equivalent to $\CO Gb $.  By \thref{quat-Morita}  the defect  groups of $\CO Hc$  are ismorphic to $D$.  Thus, we may assume that  $(H,c)$  are as in the conclusion  of  \thref{prelim-reduction}. Now the result follows from  \thref{solvable_classification}, as it is easy to check that the principal blocks of $D$, $SL_2(3)$ and $CSU_2(3)$ are all Morita minimal.
\end{proof}

\section{Around Ruhstorfer's extensions of  Bonnaf\'e--Dat--Rouquier equivalences.}\label{sec:extendBDR}
Except where otherwise stated, in this section we let $\ell$ be any prime.
  In  \cite{Ruh22}  and \cite{Ru22},  Ruhstorfer has developed several results   extending  Bonnaf\'e--Rouquier and Bonnaf\'e--Dat--Rouquier  equivalences   through  automorphisms.   In this section we  present adaptations  of some  of these results (see Propositions~\ref{prop:diag2} and \ref{prop:field}) and derive some consequences. We are  not always  able to  directly  invoke  Ruhstorfer's statements  as we  do not have  the freedom  to  choose the isomorphism type of  the extension  inducing a given outer automorphism.

 Throughout this  section, we let  $\bG$ be   a  connected reductive group defined over a finite field of characteristic  prime to $\ell$  with  Frobenius $F$. Let $\bG \hookrightarrow \tilde \bG$  be a regular embedding such that $F$   extends to a Frobenius morphism on $\tilde \bG$   and let   $\tilde \bG^*  \twoheadrightarrow \bG^*  $ be  a dual   pair,  with Frobenius $F^*$. (See, for example, \cite[Ch.~15]{CaEnbook}.)

 Recall that  $Z(\bG^F)=Z(\bG)^F $ (see \cite[Prop. 3.6.8]{Carter}).  We  record here a well-known related fact.
\begin{Lemma}\thlabel{lem:GGF-centralisers}  Let $Z \leq  Z(\bG^F)$. Then $C_{\bG}(\bG^F/Z) = Z(\bG)$ and  $C_{\tilde \bG} (\bG^F/Z)=  Z(\tilde \bG)$.
 \end{Lemma}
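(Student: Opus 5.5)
We have to show that if $Z\leq Z(\bG^F)$, then $C_{\bG}(\bG^F/Z)=Z(\bG)$ and $C_{\tilde\bG}(\bG^F/Z)=Z(\tilde\bG)$. Here $C_{\bG}(\bG^F/Z)$ is the set of $g\in\bG$ with $[g,x]\in Z$ for all $x\in\bG^F$. The containments $Z(\bG)\subseteq C_{\bG}(\bG^F/Z)$ and $Z(\tilde\bG)\subseteq C_{\tilde\bG}(\bG^F/Z)$ are clear. The plan is to prove the reverse inclusions, and it suffices to do this for $\tilde\bG$. Indeed, $\bG=[\tilde\bG,\tilde\bG]Z(\bG)$-type arguments are not even needed: $\tilde\bG=\bG Z(\tilde\bG)$, so $Z(\bG)=Z(\tilde\bG)\cap\bG$, and the first statement follows from the second by intersecting with $\bG$.

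Fix $g\in\tilde\bG$ with $[g,x]\in Z$ for all $x\in\bG^F$, and reduce to the case $Z=1$. For each $x\in\bG^F$ set $\lambda_g(x)=[g,x]=gxg^{-1}x^{-1}\in Z$. Since $Z$ is central in $\bG$ and $g$ normalises $\bG$ and centralises $Z(\bG)$ (as $Z(\bG)\leq Z(\tilde\bG)$ for a regular embedding), the identity $[g,xy]=[g,x]\,x[g,y]x^{-1}=[g,x][g,y]$ shows that $\lambda_g:\bG^F\to Z$ is a homomorphism to an abelian group. Raising $g$ to the power $m=|Z|$ then gives $[g^m,x]=\lambda_g(x)^m=1$ for all $x\in\bG^F$. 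So $g^m\in C_{\tilde\bG}(\bG^F)$, and the $Z=1$ case would give $g^m\in Z(\tilde\bG)$. This alone does not give $g\in Z(\tilde\bG)$, so instead I use the following route.

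The key input is a density statement: the closed subgroup generated by $\bG^F$ contains all $F$-stable unipotent radicals, and hence $[\bG,\bG]$. Concretely, choose an $F$-stable maximal torus $\bT$ in an $F$-stable Borel subgroup. Each $F$-orbit of root subgroups gives an $F$-stable subgroup whose $F$-fixed points are Zariski dense in it, since for a connected unipotent $F$-stable group $\bU$, $\bU^F$ is nontrivial in each relevant root direction. The conjugation action of $g$ on $\bG$ is an algebraic automorphism that, modulo $Z$, is trivial on $\bG^F$. Since $\lambda_g$ is a homomorphism into the finite group $Z$, its kernel has index at most $|Z|$ and so contains every element of odd order prime to $|Z|$ and of $p$-power order when $p\nmid |Z|$. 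Here $Z\leq Z(\bG)^F$ has order prime to $p$, because the centre of a reductive group has no nontrivial unipotent elements. Hence $\lambda_g$ is trivial on every $p$-element of $\bG^F$, so $g$ centralises $\mathbf U^F$ for every $F$-stable unipotent radical $\mathbf U$. These groups are finite, so one needs care. Alternatively, $g$ centralises $O^{p'}(\bG^F)$, which is generated by the $p$-elements.

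The main obstacle is the passage from centralising $O^{p'}(\bG^F)$ to centralising $\bG$ (or $[\bG,\bG]$), especially for small $q$. I would first argue via the Steinberg/Bruhat structure. The group $O^{p'}(\bG^F)$ contains $\bU^F$ and $\bU^{-F}$ for an opposite pair of $F$-stable Borels, so $g$ centralises these. An element of $\tilde\bG$ centralising $\bU^F$ normalises the unique Borel with that unipotent part, and similarly for $\bU^-$. Hence $g\in\tilde\bB\cap\tilde\bB^-=\tilde\bT$. A torus element centralising $\bU^F$ acts trivially on each nontrivial $\bU_\alpha^F$, which forces $\alpha(g)=1$ for all simple roots $\alpha$. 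Therefore $g\in\bigcap\ker\alpha=Z(\tilde\bG)$, and this gives the second claim. If I recall correctly, this is also the standard argument in Carter's Prop.~3.6.8, which the paper cites for $Z(\bG^F)=Z(\bG)^F$, so I would mostly cite it and adapt it.
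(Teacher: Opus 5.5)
Your reduction to $\tilde\bG$ via $\tilde\bG=\bG Z(\tilde\bG)$ is correct. So is your observation that $x\mapsto[g,x]$ is a homomorphism from $\bG^F$ into the $p'$-group $Z$, which means $g$ centralises every $p$-element of $\bG^F$, in particular $\bU^F$ and $\bU^{-F}$. This is organised differently from the paper, which works in $\bG$, treats the semisimple and unipotent parts of $x$ separately, and only then passes to $\tilde\bG=Z^\circ(\tilde\bG)\bG$. Discard the ``density'' paragraph: $\bU^F$ is finite, hence Zariski closed and not dense in $\bU$, and the closed subgroup generated by $\bG^F$ is $\bG^F$ itself.

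The genuine gap is the sentence ``an element of $\tilde\bG$ centralising $\bU^F$ normalises the unique Borel with that unipotent part''. Your $g$ is not $F$-fixed, and it centralises only the finite group $\bU^F$, not $\bU$. So $N_{\tilde\bG}(\bU)=\tilde{\mathbf B}$ does not apply. What you need is that $\mathbf B$ is the \emph{only} Borel subgroup of $\bG$ containing $\bU^F$, and nothing in your argument gives this.

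The torus step has a related defect for twisted $F$. If $F$ moves a simple root $\alpha$, then $F(\bU_\alpha)=\bU_{\rho(\alpha)}\neq\bU_\alpha$, so $\bU_\alpha^F=1$. Then ``$g$ acts trivially on each nontrivial $\bU_\alpha^F$'' says nothing about $\alpha(g)$.

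Both holes are closed by exactly the input the paper uses: regular unipotent elements.
\begin{itemize}
\item $\bG^F$ contains a regular unipotent element (Carter, Prop.~5.1.7). It lies in a unique Borel subgroup, which is therefore $F$-stable. After $\bG^F$-conjugation you get a regular unipotent $u\in\bU^F$, and likewise $u^-\in\bU^{-F}$.
\item Since $u$ lies in a unique Borel subgroup and has nontrivial component in every simple root subgroup, $C_{\tilde\bG}(u)=Z(\tilde\bG)C_{\bU}(u)$ (compare Carter, Prop.~5.1.5, which the paper also uses).
\item Hence $g\in Z(\tilde\bG)\bU\cap Z(\tilde\bG)\bU^-=Z(\tilde\bG)$.
\end{itemize}

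Repaired this way, your route is a little more economical than the paper's. The homomorphism trick removes the need for the Jordan decomposition. The opposite regular unipotent $u^-$ replaces the regular semisimple element of $\bG^F$, whose existence the paper has to extract from the Steinberg character having $p$-defect zero.
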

\begin{proof} The second assertion is a consequence of the first  as $\tilde \bG = Z^{\circ}(\tilde \bG) \bG$.  We prove the first assertion.  Let $p$ be the prime number with $q=p^r$ and  let $x \in C_{\bG} (\bG^F/Z)$. Then  both the semisimple part and  the unipotent part of $x$  are in $C_{\bG} (\bG^F/Z)$.  Hence we may assume that $x$ is either semi-simple or that $x$ is unipotent. 

Let $u \in \bG^F$ be  a  regular unipotent element of $\bG$ (see  \cite[Prop.~5.1.7]{Carter})  and let  $ s \in \bG^F$ be a regular semi-simple element  of  $\bG$. The existence of  $s$  can be seen  from the fact that  the Steinberg character  of $\bG^F$   is of  $p$-defect $0$ (see \cite[Cor. 7.4.4]{DiMi}), hence there exists an (necessarily semi-simple)  element $s$ in $\bG^F$ whose  centraliser in  $\bG^F$ is a $p'$-group (see \cite[Thms.~6.2.1,~6.5.4]{LiBookII}). Since  $|C_{\bG}^{\circ}(s)^F|_p =  q^N$, where $N$ is the number of positive  roots of  $C_{\bG}^{\circ}(s) $ (see \cite[Props.~4.4.1,~4.4.2]{DiMi}  or \cite[Sec.~2.9]{Carter}), it follows that  $C_{\bG}^{\circ} (s)$  is a torus. A different argument for this fact  is given in \cite[Rem.~7.1.5]{DOR}   which   however excludes the groups $\SU_3(2)$.  

First suppose  that $x$ is  semisimple.   We have  $u \in C_{\bG} (xZ)$. Since  $Z$ is a finite central subgroup of $\bG$ (see remark above the statement) of order prime to $p$  and $u$ is a  $p$-element,  we  have that  $u \in C_{\bG}(x)$.  Since $u $ is regular this means that $ x \in Z(\bG)$  (see \cite[Prop.~5.1.5]{Carter})  as required. Suppose next that $x$ is  unipotent. Since $ x \in C_{\bG} (sZ)$, by the same argument as above, we have that  $x \in C_{\bG} (s)$. Since $C_{\bG}(s)/C_{\bG}^{\circ} (s)$ is a $p'$-group (see \cite[Prop.~3.5.3]{DiMi}),  $x  \in C_{\bG}^{\circ}(s) $,  a contradiction to the regularity of $s$.
 \end{proof}

 Recall that a  {\it diagonal}  automorphism  of $\bG^F$ is  an  automorphism  induced by conjugation by an element of  $\tilde \bG^F$, where  $\tilde  \bG $ is  a connected reductive  group    with connected centre  and with  Frobenius morphism  $F$   containing $\bG$  as a closed  subgroup such that $[\tilde  \bG, \tilde \bG] \leq  \bG$.  The set of diagonal automorphisms of $\bG^F$ forms a subgroup of $\Aut(\bG^F)  $ containing   $\Inn(\bG^F) $.      If $Z$ is a central subgroup  of $\bG^F$,  then  $Z$ is centralised by any diagonal automorphism  of $\bG^F$. By a  diagonal  automorphism  of  $\bG^F/Z$   we mean an automorphism which is   induced from a  diagonal automorphism of  $\bG^F$.   Let $\bT$  be  an $F$-stable   maximal torus of   $\bG$ and  let $ T_0$  be the  set of $ t\in \bT $ 
such that $t^{-1} F(t)  \in  Z(\bG^F)$. Then  $T_0$ is  a subgroup of $\bT$    normalising  $\bG^F$. Moreover,  the  image of  the map from $T_0\bG^F $  to $\Aut(\bG^F) $  which sends an element $x$ of 
$T_0\bG^F$ to conjugation by  $x$   is the group of  diagonal automorphisms   of  $\bG^F$.    We record the following  easy consequence.

\begin{Lemma} \label{lem:diagonalLevi}   Let  $\alpha $ be a diagonal automorphism of $\bG^F$.  Let $\bL $ be an $F$-stable Levi subgroup of  $\bG$, and  $\tilde \bL:= Z(\tilde \bG) \bL $.  Then  there exists an $h \in \tilde \bL^F$    and  $ g \in  \bG^F$ such that 
$\alpha (x) =  gh x (gh)^{-1}$  for all $x \in \bG^F$.
\end{Lemma}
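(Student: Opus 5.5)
By the description recalled just before the statement, there exist an $F$-stable maximal torus and an element $t \in T_0$ with $\alpha(x) = txt^{-1}$ for all $x \in \bG^F$. Here $T_0$ is the set of $t \in \bT$ with $t^{-1}F(t) \in Z(\bG^F)$. The plan is to choose this maximal torus inside $\bL$ and then move the element $t$ into $\tilde \bL^F$, up to an element of $\bG^F$.

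\textbf{Step 1 (choice of torus).} Since $\bL$ is an $F$-stable Levi subgroup, it contains an $F$-stable maximal torus $\bT$ of $\bL$, which is also a maximal torus of $\bG$. We use this $\bT$ in the description of diagonal automorphisms. This gives $\alpha(x) = g_0 t x (g_0t)^{-1}$ with $g_0 \in \bG^F$ and $t \in \bT$ satisfying $t^{-1}F(t) \in Z(\bG^F)$. If the recalled description is only stated up to the image of $T_0\bG^F$, this is exactly the form we obtain.

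\textbf{Step 2 (lifting to $\tilde \bL^F$).} Recall $\tilde\bG = Z^\circ(\tilde\bG)\bG$ and $Z(\tilde\bG)$ is connected. Put $z = t^{-1}F(t) \in Z(\bG)^F \le Z(\tilde \bG)$. By Lang's theorem in the connected group $Z(\tilde\bG)$, there is $w \in Z(\tilde \bG)$ with $w^{-1}F(w) = z^{-1}$. Set $h = tw \in \bT Z(\tilde\bG) \le \tilde \bL$. Because $Z(\tilde\bG)$ is central, $F(h) = F(t)F(w) = tz\,w z^{-1} = tw = h$, so $h \in \tilde\bL^F$. Since $w$ is central in $\tilde \bG$, conjugation by $h$ agrees with conjugation by $t$ on $\bG^F$. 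Hence $\alpha(x) = g_0 h x (g_0h)^{-1}$, and we take $g = g_0$.

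\textbf{Main obstacle.} The only subtle point is the independence of the torus: the recalled description of diagonal automorphisms must hold for any $F$-stable maximal torus, in particular one lying inside $\bL$. If that is not clear, argue directly as follows. A diagonal automorphism is conjugation by some $y \in \tilde\bG^F$. Write $\tilde\bG^F = \bG^F\tilde\bT^F$ with $\tilde \bT = Z(\tilde\bG)\bT$ the corresponding maximal torus of $\tilde\bG$. This holds because $\tilde\bG^F/\bG^F \cong \tilde\bT^F/\bT^F$, the quotient $\tilde\bG/\bG$ being a torus and Lang's theorem applying to the connected group $\bT$ (a standard fact, e.g.\ \cite[Ch.~15]{CaEnbook}). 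Then $y = gh$ with $g \in \bG^F$ and $h \in \tilde\bT^F \le \tilde\bL^F$, which proves the lemma directly.
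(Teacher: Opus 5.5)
Your proposal is correct and is essentially the paper's proof: choose an $F$-stable maximal torus of $\bL$, write $\alpha$ as conjugation by $gt$ with $t\in T_0$, then apply Lang--Steinberg in the connected group $Z(\tilde\bG)$ to replace $t$ by $h=tw\in\bT Z(\tilde\bG)\cap\tilde\bG^F\leq\tilde\bL^F$, where your $w$ plays the role of the paper's $z^{-1}$. The fallback argument via $\tilde\bG^F=\bG^F\tilde\bT^F$ with $\tilde\bT=Z(\tilde\bG)\bT\leq\tilde\bL$ is also valid, but it is not needed here, since the recalled description of diagonal automorphisms holds for any $F$-stable maximal torus of $\bG$.
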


\begin{proof}  Let  $\bT $  be an $F$-stable maximal torus of  $\bL$.     As explained above,   there exists  $g \in \bG^F $   and $ t \in T_0 $  such that  $\alpha $  is conjugation by $g t$. Since $Z(\tilde \bG)$ is connected,  and  $ t^{-1}F(t) \in  Z(\bG) $, by the Lang-Steinberg theorem,  
$t^{-1}F(t) = z^{-1} F(z) $   for some $ z \in  Z(\tilde \bG)$.  Then, $tz^{-1} \in  \bT Z(\tilde \bG) \cap  \tilde \bG^F \leq \tilde \bL^F $ and  $t$  acts on $\bG^F$  as  $tz^{-1}$. 
\end{proof}

Let $\tau: \bG_{sc}  \to  [\bG, \bG] $ be a  simply-connected covering    suitably equivariant with respect to  the  relevant Frobenius   morphism  (see  \cite[Sec.~8.1]{CaEnbook}  or \cite[Sec.~3] {FK}).    Then  $\tau (\bG_{sc}^F) $ is  a  normal subgroup of $\bG^F$ and  $ \bG^F =  \bT^F \tau (\bG^F_{sc} )$   for any $F$-stable maximal torus $\bT$ of $\bG$ (see   the text after  \cite[Rem.~8.18]{CaEnbook})  and  thus in particular 
$\bG^F/\tau(\bG_{sc}^F)$  is an  abelian  group.

\begin{Lemma} \label{lem:sccovering}     Let $\alpha $ be  a  diagonal automorphism  of   $\bG^F$.  Then $\tau (\bG_{sc}^F) $  is invariant under   $\alpha $ and   $\alpha  $   acts  trivially on $\bG^F/ \tau(\bG_{sc}^F) $.
\end{Lemma}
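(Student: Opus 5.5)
Write $\alpha$ as conjugation by an element of $\tilde\bG^F$, using the description of diagonal automorphisms recalled above. Modulo inner automorphisms of $\bG^F$, we have $\alpha = c_t$, conjugation by some $t \in T_0$, or more generally by some $h \in \tilde\bG^F$. The plan is to work with $h \in \tilde\bG^F$ throughout. Inner automorphisms by elements of $\bG^F$ clearly preserve the normal subgroup $\tau(\bG_{sc}^F)$. They also act trivially on the quotient $\bG^F/\tau(\bG^F_{sc})$, because that quotient is abelian. So it suffices to treat $\alpha = c_h$ with $h \in \tilde\bG^F$.

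For invariance, I would use that $\tau(\bG_{sc}^F)$ admits a description intrinsic to $\bG^F$ which conjugation by $\tilde\bG^F$ preserves. One option is to use the equivariance of the simply connected covering. The conjugation action of $\tilde\bG$ on $[\bG,\bG]$ factors through the adjoint group. The adjoint action lifts uniquely to an action on $\bG_{sc}$ compatible with $\tau$, since $\tau$ is a central isogeny and automorphisms of the derived group lift to the simply connected cover. This lifted action commutes with the Frobenius whenever the acting element is $F$-fixed. Hence $c_h$ lifts to an automorphism $\hat c_h$ of $\bG_{sc}$ commuting with $F$, so $\hat c_h$ preserves $\bG_{sc}^F$. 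Then $c_h(\tau(\bG_{sc}^F)) = \tau(\hat c_h(\bG_{sc}^F)) = \tau(\bG_{sc}^F)$.

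For triviality on the quotient, I would take $x \in \bG^F$ and show that $h x h^{-1} x^{-1} \in \tau(\bG^F_{sc})$. Write $\tilde\bG = Z(\tilde\bG)\bG$, so that $h = z g$ with $z \in Z(\tilde\bG)$ and $g \in \bG$. Then $c_h = c_g$ on $\bG$, and $[h,x] = [g,x]$. Next write $g = g_0 g_1$ with $g_0 \in Z^\circ(\bG)$ and $g_1 \in [\bG,\bG]$. This gives $[g,x] = [g_1,x]$, and $g_1$ lifts to some $\hat g_1 \in \bG_{sc}$. Similarly $x = x_0 x_1$, and $x$ acts on $[\bG,\bG]$ by conjugation, which lifts to $\bG_{sc}$ as above. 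So the commutator $[g_1,x] = \tau(y)$, where $y = \hat g_1\, {}^{x}\hat g_1^{-1} \in \bG_{sc}$.

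It remains to check that $y$ can be chosen $F$-fixed. Since $[g_1,x] = [h,x]$ is $F$-fixed, $F(y)y^{-1}$ lies in $\ker\tau$, which is central and finite. I would avoid this ambiguity with a cleaner route via tori, and I expect this to be the main obstacle. Use $\bG^F = \bT^F\tau(\bG_{sc}^F)$, and by Lemma~\ref{lem:diagonalLevi} (applied with $\bL = \bT$) choose $h \in \tilde\bT^F$ modulo $\bG^F$. Then $h$ commutes with $\bT^F$. For $x = t u$ with $t \in \bT^F$ and $u \in \tau(\bG_{sc}^F)$, we get $c_h(x) = t\,c_h(u)$. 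Since $c_h(u) \in \tau(\bG_{sc}^F)$ by the invariance step, $c_h(x) \in x\,\tau(\bG^F_{sc})$. This proves triviality on the quotient and bypasses the lifting subtlety. So the key input is invariance, which rests on the equivariant lifting of automorphisms to $\bG_{sc}$ (see \cite[Sec.~8.1]{CaEnbook}).
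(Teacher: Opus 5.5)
Your argument is correct and is essentially the paper's proof. Triviality on $\bG^F/\tau(\bG_{sc}^F)$ comes from $\bG^F=\bT^F\tau(\bG_{sc}^F)$ with the acting element taken in a torus that centralises $\bT^F$, and invariance comes from lifting the conjugation to $\bG_{sc}$ and checking that the lift is compatible with Frobenius.

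The one difference is in the invariance step. You handle an arbitrary $h\in\tilde\bG^F$ at once, but you only assert that the lifted automorphism commutes with $F$. The paper first reduces to $t\in T_0\cap[\bG,\bG]$ and proves exactly this point by noting that $t^{-1}F(t)$ lifts into $Z(\bG_{sc})$. You should add that line, or the equivalent observation that $\hat c_h F_{sc}$ and $F_{sc}\hat c_h$ differ by a morphism from the connected group $\bG_{sc}$ into the finite group $\ker\tau$, and are therefore equal.
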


\begin{proof} Let  $\bT$  be an $F$-stable  maximal torus  of  $\bG$ and let  $T_0  =\{ t \in \bT  \mid  t^{-1}F(t) \in Z(\bG) \} = N_{\bT} (\bG^F)$.   Since   $\bG^F $  normalises  $\tau (\bG_{sc}^F) $   and  acts trivially on  $\bG^F/  \tau (\bG^F_{sc} )$, it suffices  to show that  $T_0$  normalises  $\tau(\bG_{sc}^F) $ and acts trivially on $\bG^F/  \tau (\bG^F_{sc} )$. Since  $\bG^F  =  \bT^F \tau (\bG^F_{sc} )$  and $T_0$   centralises  $\bT^F$, it suffices to show that  $T_0$  normalises  $\tau(\bG_{sc}^F) $.
Further,  since  $\bT =  Z(\bG)([\bG, \bG] \cap \bT ) $  and   $ Z(\bG)  \leq T_0 $,  it suffices  to show this for  $T_0 \cap  [\bG, \bG]$. Let $t\in \tau^{-1}(T_0 \cap  [\bG, \bG])$. Then, by definition of $T_0$, we have $t^{-1}F(t)\in\tau^{-1}(Z(\bG)\cap [\bG,\bG])\leq Z(\bG_{sc})$,  where the  last inequality follows from the  fact than any  proper closed normal  subgroup of  a  connected  reductive group is central. Hence $t$ normalises $\bG_{sc}^F$   and  it follows that  $T_0 \cap  [\bG, \bG]$  normalises   $\tau (\bG_{sc}^F) $. 
\end{proof}

The next result will be used for dealing with graph automorphisms in type $\type{A}$. 
\begin{Lemma} \label{lem-red-graph2}   Let $Z \leq Z(\bG^F)$ and  set $H = \bG^F/Z$  and  $\tilde H= \tilde \bG^F/Z$. Let $\tau $ be  an automorphism  of $\tilde \bG^F$ of $\ell$-power order  which normalises  $\bG^F$ and $Z$ and denote also by $\tau $  the induced maps on  $H$ and $\tilde H$. Let $H_1=H\langle x \rangle $   be a  group  containing $H$ as  a normal subgroup  and with $x$ an  $\ell$-element acting  on $H$ as $\tau \varphi $ for  some  diagonal  automorphism $\varphi$ of $H$  and let  $c$ be  an $H_1$-stable block idempotent of $\CO H$.
Then there exists  a   $\tau$-stable block idempotent  $b$  of   $\CO \tilde H$ with $bc \ne 0$.
Further, writing $b_1, c_1$ for the unique block idempotent for $\tilde \bG^F, \bG^F$ dominating  $b,c$ respectively, so that $b_1c_1 \neq 0$, the idempotent $b_1$ is $\tau$-stable. 
\end{Lemma}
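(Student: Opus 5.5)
The plan is to obtain $b$ by a counting argument: $\tau$ will permute a set of blocks of $\CO\tilde H$ whose size is prime to $\ell$, and since $\tau$ has $\ell$-power order it must fix one of them. Let $\mathcal B$ denote the set of block idempotents $b$ of $\CO\tilde H$ with $bc\neq 0$, that is, the blocks of $\CO\tilde H$ covering $c$. Since $[\tilde\bG,\tilde\bG]\leq \bG$, the quotient $\tilde H/H\cong \tilde\bG^F/\bG^F$ is abelian. Lemma~\ref{lem:normalabelian} therefore applies to $H\lhd \tilde H$ and the block $e=c$. It shows that $\mathcal B$ is nonempty and that $|\mathcal B|$ is prime to $\ell$.

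The key step is to check that $\tau$ permutes $\mathcal B$, even though $c$ itself need not be $\tau$-stable. By hypothesis $x$ stabilises $c$, so $\tau(\varphi(c))=c$. Since $\varphi$ is a diagonal automorphism of $H$, it is induced by conjugation by some element $t\in\tilde H$; here we use the description of diagonal automorphisms via $T_0\bG^F\leq \tilde\bG^F$, or Lemma~\ref{lem:diagonalLevi}. Hence $\varphi(c)={}^tc$.

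Now let $b\in\mathcal B$. Because $b$ is central in $\CO\tilde H$, we have $b\,{}^tc={}^t(bc)\neq 0$, so $b$ also covers $\varphi(c)$. Since $\tau$ normalises $\bG^F$ and $Z$, it induces compatible automorphisms of $\CO H$ and $\CO\tilde H$. Applying $\tau$ gives $\tau(b)\,c=\tau(b\,\varphi(c))\neq 0$, and $\tau(b)$ is again a block idempotent of $\CO\tilde H$. Thus $\tau(\mathcal B)\subseteq \mathcal B$, and since $\mathcal B$ is finite, $\tau$ permutes $\mathcal B$.

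The group $\langle\tau\rangle$ acting on $\tilde H$ has $\ell$-power order, and $|\mathcal B|$ is prime to $\ell$, so some orbit has length one. This gives a $\tau$-stable $b\in\mathcal B$, which proves the first assertion.

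For the second assertion, let $\pi:\CO\tilde\bG^F\to\CO\tilde H$ be the canonical surjection; it commutes with $\tau$ because $\tau(Z)=Z$. The block $b_1$ is the unique block idempotent of $\CO\tilde\bG^F$ with $\pi(b_1)b=b$. Applying $\tau$ gives $\pi(\tau(b_1))\tau(b)=\tau(b)$, that is, $\pi(\tau(b_1))b=b$. By uniqueness, $\tau(b_1)=b_1$.

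Finally, $b_1c_1\neq 0$ holds because $\pi(b_1c_1)$ multiplied by $bc\neq0$ is nonzero: $\pi(b_1c_1)bc=bc$. I expect the main obstacle to be the careful justification that $\varphi$ is realised by conjugation by an element of $\tilde H$, compatibly with the quotient by $Z$, so that the identity $\varphi(c)={}^tc$ is legitimate.
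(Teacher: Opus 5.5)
Your proof is correct. It reaches the $\tau$-stable block by a different route from the paper.

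\textbf{The paper's route.} It extends the action of $x$ to all of $\tilde H$ as $\alpha=\tau\circ c_g$, where $g\in\tilde\bG^F$ induces $\varphi$. It then shows that $\alpha$ has $\ell$-power order: $\alpha^{\ell^t}$ is an inner automorphism $c_{g'}$ of $\tilde H$, and $g'$ centralises $H$, so $g'\in Z(\tilde\bG^F)$ by \thref{lem:GGF-centralisers}. Finally it applies Lemma~\ref{lem-stableblock} to $X=\tilde H\rtimes\langle\alpha\rangle$, whose stabiliser of $c$ contains $H$ and $\alpha$. The resulting $\alpha$-stable block is $\tau$-stable because $c_g$ fixes central idempotents.

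\textbf{Your route.} You use instead that $\tilde H/H$ is abelian, so Lemma~\ref{lem:normalabelian} gives the set of blocks covering $c$ size prime to $\ell$. The hypothesis that $\tau\varphi$ fixes $c$, together with centrality of blocks of $\CO\tilde H$, shows that $\tau$ alone permutes this set. A fixed-point count for the $\ell$-group $\langle\tau\rangle$ then finishes.

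\textbf{Comparison.} Your argument is more elementary. It never needs $\alpha$ to have $\ell$-power order, so it bypasses the centraliser computation and does not even use that $x$ is an $\ell$-element. The paper's tool, Lemma~\ref{lem-stableblock}, does not need the quotient $\tilde H/H$ to be abelian, only an $\ell$-group on top. It also gives control over defect groups, so it transfers to more general overgroups. Here both approaches work.

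Your treatment of $b_1$ (uniqueness of the dominating block together with $\pi\circ\tau=\tau\circ\pi$) is fine, as is your check that $b_1c_1\neq 0$. The point you flag as the main obstacle is handled exactly as in the paper, where it is simply asserted. By definition, a diagonal automorphism of $\bG^F/Z$ is induced from one of $\bG^F$, and the argument in Lemma~\ref{lem:diagonalLevi} shows it is realised by conjugation by an element of $\tilde\bG^F$ for the fixed regular embedding.
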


\begin{proof} Write  $\overline{h}=hZ$ for $h \in \tilde \bG^F$ and for any $m  \in \tilde \bG^F$ write $c_m$ for the  automorphism of $\tilde  H$ induced  by conjugation with $m$. Then $\Inn(\tilde H) =\{c_m \mid m \in \tilde  \bG^F\}$. Let $g\in   \tilde \bG^F$ be such that $\varphi$ is induced by conjugation by $g$   and let $\alpha=  \tau \circ c_g\in \Aut(\tilde H)$ (so  $\alpha $  is  an extension  of conjugation by $x$ on $H$ to  $\tilde H$).  Now  the restriction of 
$\alpha $  to $H$ is  an automorphism  of $\ell$-power order,  and $\tau $  is also  of $\ell$-power order. Let $t \geq 1 $ be such that $\alpha^{\ell^t}|_{H} =1 =\tau^{\ell^t}$.  Since  $\Inn (\tilde H)$ is  a normal subgroup of $\Aut(\tilde H)$,  and $\tau^{\ell^t}=1$, we  have that  $\alpha^{\ell^t} =  c_{g'}$  for some $ g' \in  \tilde \bG^F $.  Since  $\alpha|_{H}^{\ell^t} =1$ it follows that $g'  \in  C_{\tilde \bG^F} (H)= Z(\tilde \bG^F)$ and hence  $\alpha $ also has $\ell$-power order.    By Lemma \ref{lem-stableblock},   applied  with 
$N=H$, $ G=\tilde H $  and  $ X=  \tilde H   \rtimes \langle \alpha \rangle$, we have  that   there 
exists an $\alpha$-stable  block  idempotent $b$  of  $\CO \tilde H $   with $bc \ne 0$. 
Since $ g \in \tilde \bG^F $, 
this means that  $b$ is also  $\tau$-stable. 

The final part is immediate.
\end{proof}

Recall that the  set of ordinary  irreducible characters  of $\bG^F$ is  a disjoint union of  Lusztig series  $\CE(\bG^F, s)$  as $s$ runs over the $\bG^{*F^*}$-classes of  semi-simple  elements  of $\bG^{*F^*}$.  Further, for any block  $\CO \bG^Fb $ of $\CO \bG^F$, there is a unique  $\bG^{*F^*}$-class  of  semi-simple  $\ell'$-elements $s$  of $\bG^{*F^*} $ such that $\CO \bG^Fb $   contains  an ordinary irreducible character in  $\CE(\bG^F, s)$ (see \cite[Thm.~9.12]{CaEnbook}). For  $s$  a semi-simple  $\ell'$-element of  $\bG^{*F^*}$  we  denote by  $e_s^{\bG^F}$ the  central idempotent of   $\CO  \bG^F$  (or $k\bG^F $) corresponding  to  $s$, that is,  the sum of  the block idempotents     $b$  such that  $\CO \bG^Fb $   contains  an ordinary irreducible character in  $\CE(\bG^F, s)$.

 \begin{Lemma}\thlabel{lem-primediagonalstable}  Let $Z \leq  Z(\bG^F)$,  let $H=\bG^F/Z$ and  let $H_1=H\langle x \rangle $   be a  group  containing $H$ as  a normal subgroup  and with $x$ an  $\ell$-element acting as a diagonal automorphism on $H$. Then every block of $\CO H $    is $H_1$-stable.   
\end{Lemma}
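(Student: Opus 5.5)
We reduce to blocks of $\bG^F$ and argue via Lusztig series, using the description of diagonal automorphisms as conjugation by elements of $\tilde\bG^F$. Since $x$ acts as a diagonal automorphism of $H=\bG^F/Z$, by definition this action is induced by conjugation by some $g\in\tilde\bG^F$ on $\bG^F$, which fixes $Z$ pointwise. Block idempotents of $\CO H$ correspond to block idempotents of $\CO\bG^F$ not annihilated by the natural surjection $\CO\bG^F\to\CO H$ (each block of $H$ is dominated by a unique block of $\bG^F$, as in Lemma~\ref{lem-red-graph2}). Conjugation by $g$ commutes with this surjection. So it suffices to show that every block idempotent of $\CO\bG^F$ is stable under conjugation by every element of $\tilde\bG^F$. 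The $\ell$-power order of $x$ will be used only if needed.

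For $\tilde\bG^F$-stability of blocks of $\bG^F$, I would use the following standard argument. Let $b$ be a block of $\CO\bG^F$ and let $\tilde b$ be a block of $\CO\tilde\bG^F$ covering $b$. Blocks of $\tilde\bG^F$ restrict to $\bG^F$ by a well-understood rule: $\tilde\bG^F/\bG^F$ is abelian, and irreducible characters restrict multiplicity-freely from $\tilde\bG^F$ to $\bG^F$ (Lusztig). The stabiliser of $b$ in $\tilde\bG^F$ contains $\bG^F Z(\tilde\bG^F)$. Consider first the $\ell'$-part. The $\tilde\bG^F$-orbit of $b$ has size dividing $[\tilde\bG^F:\bG^FZ(\tilde\bG^F)]$, which is the order of the group of diagonal automorphisms modulo inner ones; this quotient is isomorphic to a quotient of $Z(\bG)^F$-related data and can have order divisible by both $\ell$ and $\ell'$ primes. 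So full $\tilde\bG^F$-stability is false in general, and the hypothesis that $x$ is an $\ell$-element must be used.

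The refined plan is therefore the following. Let $g\in T_0\bG^F$ realise $x$; by Lemma~\ref{lem:diagonalLevi} we may take $g\in\tilde\bG^F$. Replacing $g$ by a suitable power, we can take $g$ so that its image in $\tilde\bG^F/\bG^FZ(\tilde\bG^F)$ (an abelian group) has $\ell$-power order. This works because $x$ has $\ell$-power order and $C_{\tilde\bG^F}(H)=Z(\tilde\bG^F)$ by \thref{lem:GGF-centralisers}, the same argument as in Lemma~\ref{lem-red-graph2}. Then $M=\bG^FZ(\tilde\bG^F)\langle g\rangle$ has $M/\bG^FZ(\tilde\bG^F)$ an $\ell$-group. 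By Remark~\ref{remark:fongcorrespondent}(2), applied to $\bG^FZ(\tilde\bG^F)\lhd M$, the block of $M$ covering $b$ equals $\Tr_T^M(b)$. We need that $T=M$.

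The key input, and the main obstacle, is the following. By Bonnafé--Rouquier / Broué--Michel, $b$ is a sum of blocks in $e_s^{\bG^F}$, where $e_s^{\bG^F}$ corresponds to $\ell$-series $\CE_\ell(\bG^F,s)$. Conjugation by $\tilde\bG^F$ fixes each $\CE(\bG^F,t)$, hence fixes each $e_s^{\bG^F}$, but may permute the blocks inside it. I would use Lemma~\ref{lem:normalabelian}-type counting: the blocks of $\tilde\bG^F$ covering $b$ are permuted by $\Lin(\tilde\bG^F/\bG^F)$, and the number of $\tilde\bG^F$-conjugates of $b$ is prime to $\ell$. The latter holds since $\ell$-elements $z$ of $Z(\bG^{*F^*})$ multiply series by $\hat z$ of $\ell$-power order, staying in the same $\ell$-block, so $\ell$-parts of the orbit collapse; this is the Cabanes–Enguehard description of unipotent-type block distributions under $\tilde\bG^F$. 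Then an $\ell$-group acting on an orbit of $\ell'$ size, namely $\langle g\rangle$ acting on the $\tilde\bG^F$-orbit of $b$ (where $\tilde\bG^F$ acts through an abelian quotient, so the orbit is a regular coset space), must fix $b$. This gives $H_1$-stability after passing to $H$.
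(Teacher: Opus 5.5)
The outer framework of your proof is sound and matches the paper's. You pass from $H$ to $\bG^F$, and you realise $x$ by conjugation by some $g\in\tilde\bG^F$. Since $C_{\tilde\bG^F}(\bG^F/Z)=Z(\tilde\bG^F)$, you can replace $g$ by an $\ell$-element inducing the same automorphism. You then finish with the abelian-quotient argument: $\tilde\bG^F$ acts on the orbit of $b$ through $\tilde\bG^F/\Stab_{\tilde\bG^F}(b)$, and an $\ell$-element of an $\ell'$-group is trivial.

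The gap is the step on which everything rests: that the $\tilde\bG^F$-orbit of $b$ has length prime to $\ell$. What you write does not prove this. Your two ingredients are Lemma~\ref{lem:normalabelian}-type counting and the observation that multiplying by $\hat z$ of $\ell$-power order keeps a character of $\tilde\bG^F$ in the same block. Together they only show that the number of blocks of $\tilde\bG^F$ \emph{covering} $b$ is prime to $\ell$. That number says nothing about how many blocks of $\bG^F$ a single block of $\tilde\bG^F$ covers, i.e.\ about the number of $\tilde\bG^F$-conjugates of $b$. For instance, if $\tilde\bG^F/\bG^F$ were an $\ell$-group, there would be exactly one covering block whatever the orbit of $b$ is. Worse, this counting holds for any normal subgroup with abelian quotient, where the conclusion is false: $\ell$-elements of $N\wr C_\ell$ permute blocks of $N^\ell$ non-trivially. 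So genuinely Lie-theoretic input is unavoidable, and ``$\ell$-parts of the orbit collapse'' does not supply it.

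The missing idea is to work with a character rather than with the block. The block $b$ contains some $\chi\in\CE(\bG^F,s)$ with $s$ of $\ell'$-order, and $\Stab_{\tilde\bG^F}(\chi)\leq\Stab_{\tilde\bG^F}(b)$. So it suffices that the $\tilde\bG^F$-orbit of $\chi$ has $\ell'$-length. The paper obtains this, in the stronger form that every element of $\CE(\bG^F,s)$ is $x$-stable, from two facts:
\begin{enumerate}
\item Bonnaf\'e's Cor.~11.13, which controls these orbits by $A_{\bG^*}(s)^{F^*}$;
\item elements of $C_{\bG^*}(s)/C^\circ_{\bG^*}(s)$ have order prime to $\ell$ when $s$ is an $\ell'$-element.
\end{enumerate}

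Your $\hat z$ idea can also be made to work, but it runs the other way round.
\begin{enumerate}
\item By Lusztig's multiplicity-freeness, the $\tilde\bG^F$-orbit of $\chi$ has the same length as the stabiliser in $\Lin(\tilde\bG^F/\bG^F)$ of a character $\tilde\chi$ of $\tilde\bG^F$ above $\chi$. Both equal $\langle\Res^{\tilde\bG^F}_{\bG^F}\tilde\chi,\Res^{\tilde\bG^F}_{\bG^F}\tilde\chi\rangle$.
\item One may choose $\tilde\chi\in\CE(\tilde\bG^F,\tilde s)$ with $\tilde s$ of $\ell'$-order.
\item For a nontrivial $\ell$-element $z\in Z(\tilde\bG^*)^{F^*}$, the character $\hat z\tilde\chi$ lies in $\CE(\tilde\bG^F,\tilde sz)\neq\CE(\tilde\bG^F,\tilde s)$, because $\tilde sz$ and $\tilde s$ have different $\ell$-parts.
\end{enumerate}
Hence no nontrivial $\ell$-element of $\Lin(\tilde\bG^F/\bG^F)$ fixes $\tilde\chi$, the stabiliser is an $\ell'$-group, and the orbit of $\chi$ has $\ell'$-length. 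The operative point is that $\hat z$ of $\ell$-power order moves $\tilde\chi$ into a different Lusztig series, not that it keeps it in the same block.
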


\begin{proof}  The conjugation action  of  $x$ on $H$  lifts to a diagonal automorphism of  $\bG^F$ of order a power of $\ell$. Therefore, we may assume that $Z=1$ and  that $H_1 \leq \tilde \bG^F$. Since every  block of $\CO \bG^F$ contains a character in  a Lusztig series $\CE (\bG^{F}, s )$, where $ s \in \bG^{*F}$ is  semisimple of $\ell'$-order,  it suffices to  prove that  the elements of $\CE (\bG^{F}, s )$ are $\langle x \rangle $-stable  for all $s \in \bG^{*F} $ of order prime to $\ell$.    Now the result follows from \cite[Cor. 11.13]{Bon06} and the fact that every element of  
$C_{\bG^*}(s)/C_{\bG^*}^{\circ} (s)$   has  order prime to $\ell$ (see \cite[Rem.~11.2.2]{DiMi}).
    \end{proof}
    
    \begin{Lemma} \label{lem: centralkerne}  Let $ Z\leq Z(\bG^F)$, let $H = \bG^F/Z $  and  for any $a \in \CO \bG^F $, denote by $\bar a $ the image of $a$  under the canonical surjection $\CO \bG^F \to \CO  H $. Let  $s\in  \bG^{*F}$ be a semi-simple element of $\ell'$-order. The following are equivalent.

\begin{enumerate}
\item [(i)]  $\bar e_s^{\bG^F}  \ne 0 $.
\item  [(ii)] $\bar b   \ne 0 $ for some   block idempotent $b$ of $\CO \bG^F$ such that  $b e_s^{\bG^F} \ne 0$.
\item [(iii)] $\bar b   \ne 0 $  for any  block idempotent $b$  of $\CO \bG^F$such that  $b e_s^{\bG^F} \ne 0$.
\item [(iv)]   For any  $F$-stable maximal torus  $\bT$ of $\bG$ and $\theta \in \Irr (\bT^F)$ such that  $(\bT, \theta) $  corresponds to the  class of $s$ via duality,  the restriction of $\theta $ to $Z$ is trivial.
\end{enumerate}
\end{Lemma}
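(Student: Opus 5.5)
The equivalences (i)$\Leftrightarrow$(ii)$\Leftrightarrow$(iii) come from the behaviour of the kernel map on central idempotents, and (i)$\Leftrightarrow$(iv) from a character computation. Write $e_Z=\frac{1}{|Z|}\sum_{z\in Z}z$; this is an idempotent of $\CO\bG^F$, since $Z$ is an $\ell'$-group. Indeed, the elements of $Z(\bG^F)$ of order divisible by $\ell$ would not matter anyway, but we are in the setting where $\bar{\ }$ has kernel $\CO\bG^F(1-e_Z)$ only when $|Z|$ is invertible. For the general case I would argue directly with characters: an irreducible character $\chi$ of $\bG^F$ is a character of $H$ if and only if $Z\le\ker\chi$. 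Moreover, for a central idempotent $f$ of $\CO\bG^F$ we have $\bar f\neq 0$ if and only if $K H\bar f\neq 0$, i.e.\ if and only if some $\chi\in\Irr(\bG^F)$ with $f$ acting as the identity on it has $Z\le \ker\chi$. Here $\CO H$ is $\CO$-free, so it embeds in $KH$, and $KH$ is the sum of the Wedderburn components of $K\bG^F$ for characters with $Z$ in the kernel.

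Next I use that $Z$ is central. Each $\chi\in\Irr(\bG^F)$ restricts to $Z$ as $\chi(1)\lambda_\chi$ for a linear character $\lambda_\chi$ (central character). Two facts are needed.

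\textbf{(a)} All characters in a block $b$ have the same restriction to the $\ell'$-part of $Z$. This is because $\omega_\chi(z)=\lambda_\chi(z)$ reduces modulo the maximal ideal to the same value across a block, and $\ell'$-roots of unity inject into $k$. For the $\ell$-part $Z_\ell$ I would use instead that $Z_\ell$ lies in every defect group, so that $\bar b\neq 0$ for the image in $\CO\bG^F/Z_\ell$ always holds. This is the standard fact that the kernel $\CO\bG^F\to\CO(\bG^F/Z_\ell)$ contains no block idempotent.

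\textbf{(b)} All characters in $\CE(\bG^F,s)$ have the same central character on $Z$. This is determined by $(\bT,\theta)$: for $\chi$ a constituent of $R_\bT^\bG(\theta)$, the central character is $\theta|_Z$, because the Deligne--Lusztig character satisfies $R_\bT^\bG(\theta)(zg)=\theta(z)R_\bT^\bG(\theta)(g)$ for central $z$ by the character formula. Every $\chi\in\CE(\bG^F,s)$ occurs in some such $R_\bT^\bG(\theta)$, and $\theta|_Z$ does not depend on the choice of dual pair, as $Z\le Z(\bG)^F\le\bT^F$ and geometric conjugacy preserves $\theta|_{Z(\bG)^F}$.

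Combining these, $\theta|_Z$ trivial is equivalent to every character of $\CE(\bG^F,s)$ having $Z_{\ell'}$ in its kernel. By (a) and the Broué--Michel result that $e_s^{\bG^F}$ covers $\bigcup_t\CE(\bG^F,st)$ with $t$ running over $\ell$-elements of $C(s)$, this is further equivalent to every character of every block $b$ with $be_s\neq0$ being trivial on $Z_{\ell'}$. Now $\bar b\neq 0$ if and only if $b$ is trivial on $Z_{\ell'}$: the $\ell'$-part gives $bf_{Z_{\ell'}}\in\{0,b\}$ with $f_{Z_{\ell'}}$ the idempotent $e_{Z_{\ell'}}$, and the $\ell$-part is harmless by the defect group fact. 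This yields (iv)$\Rightarrow$(iii)$\Rightarrow$(ii)$\Rightarrow$(i). For (i)$\Rightarrow$(iv), $\bar e_s\neq 0$ gives some block $b$ with $be_s\neq0$ and $\bar b\neq0$, hence a character in a series $st$ trivial on $Z_{\ell'}$, hence $\theta$ trivial on $Z_{\ell'}$. Triviality on $Z_\ell$ is the step I expect to be the main obstacle. Characters in $\CE(\bG^F,st)$ have central character $\theta\hat t$ on $Z$, and the $Z_\ell$ values vary with $t$. So I would check that the assertion in (iv) is meant for $\ell'$-parts, or that $\theta$ dual to an $\ell'$-element $s$ has $\ell'$-order and thus is automatically trivial on $Z_\ell$. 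The latter holds since the order of $\theta$ equals that of $s$, which settles it.
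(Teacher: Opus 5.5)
Your route is the paper's: you reduce $\bar b\neq 0$ to $Z_{\ell'}$ lying in the kernel of the characters of $b$, compare the central character of a character in the series attached to $s$ with $\theta|_Z$, and dispose of $Z_\ell$ because $\theta$ has $\ell'$-order. The paper differs only in sourcing. It quotes the first fact. It picks $\chi\in\CE(\bG^F,s)\cap\Irr(b)$ via \cite[Thm.~9.12]{CaEnbook} instead of going through Brou\'e--Michel and the series $\CE(\bG^F,st)$. It takes the central-character statement from \cite[Lem.~2.2]{MaHeight}.

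However, one step you rely on is false: geometric conjugacy does \emph{not} preserve $\theta|_{Z(\bG)^F}$. Take $\bG=\SL_2$ with $q$ odd and $\ell$ odd. Let $\alpha$ and $\beta$ be the characters of order $2$ of $\bT_1^F\cong C_{q-1}$ and $\bT_2^F\cong C_{q+1}$. The pairs $(\bT_1,\alpha)$ and $(\bT_2,\beta)$ are geometrically conjugate: they are dual to the two $\PGL_2(q)$-classes of involutions, which fuse in $\PGL_2(\bar{\mathbb F}_q)$. Yet $\alpha(-I)=(-1)^{(q-1)/2}=-\beta(-I)$. Correspondingly, the characters of $\SL_2(q)$ of degrees $(q+1)/2$ and $(q-1)/2$ have opposite central characters, and for $Z=\langle -I\rangle$ exactly one of the two idempotents $\bar e_s^{\bG^F}$ (for $s$ an involution) is nonzero.

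This matters for (i)$\Rightarrow$(iv). From $\bar b\neq 0$ you only learn that $\theta'|_{Z_{\ell'}}$ is trivial for the particular pair $(\bT',\theta')$ attached to the character you found in $b$, whereas (iv) concerns every pair dual to $s$. Your claim (b) also rests on this independence. Your Deligne--Lusztig computation only gives $\chi|_Z=\chi(1)\theta|_Z$ when $\chi$ is a constituent of $R_{\bT}^{\bG}(\theta)$.

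What you need is the rational statement: $\theta|_{Z(\bG)^F}$ depends only on the $\bG^{*F^*}$-class of $s$. Equivalently, $\chi|_Z$ is a multiple of $\theta|_Z$ for every $\chi\in\CE(\bG^F,s)$ and every pair $(\bT,\theta)$ dual to $s$. This is exactly what \cite[Lem.~2.2]{MaHeight} supplies in the paper. Put it in place of the geometric-conjugacy claim and the rest of your argument goes through. That includes the $st$-bookkeeping, which you could also replace by choosing $\chi\in\CE(\bG^F,s)$ inside $b$.
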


\begin{proof}  Clearly, (i)  and  (ii)  are equivalent  and  (iii) implies (i).   
Let $b $ be a block idempotent  of $\CO \bG^F $  with   $be_s^{\bG^F} \ne 0$. Then  $\bar b \ne 0 $ if and only if $Z_{\ell'} $ is in the kernel   of every ordinary  irreducible  character of $\bG^F$  in $b$ which in turn holds   if and only if   $Z_{\ell'}$  is  in the kernel of  some irreducible  character  of  $\bG^F$ in $b$ (see \cite[Ch.~5, Thm.~8.8]{NT} or \cite[Lem.~2.10]{FK}.)

Let   $\chi \in {\mathcal E}(\bG^F , s)$ be  an irreducible character  which is in $b$ (there is always  such a character by \cite[Thm.~9.12]{CaEnbook}). Let   $\bT $ be  an $F$-stable maximal  torus  of $\bG$ and let $\theta \in \Irr(\bT^F) $ be  such that $(\bT, \theta)$  corresponds to $s$ via duality.
 Then, by \cite[Lem.~2.2]{MaHeight},   $\chi|_Z $ is  a  multiple  of  $\theta|_Z$.   Since $s$  is an $\ell'$-element, so is $\theta $. Hence   $Z_{\ell'} $ is in the kernel of $\chi$ if and only if   $\theta|_Z$  is the trivial character.   Thus (iv) implies (iii)  and (ii) implies  (iv). This proves the equivalence of (i) - (iv). 
 \end{proof}

  \subsection{Bonnaf\'e--Dat--Rouquier equivalences and diagonal automorphisms}\label{subsec:extenddiagonal} We put ourselves in the setting of \cite[Sec.~7]{BDR}.  Let    $s \in \bG^{*F^*}$ be a semi-simple element of  order prime to $\ell $. Let $\bL^* =C_{\bG^*}(Z^{\circ} (C^{\circ}_{\bG^*} (s)) )$  be a  minimal   Levi subgroup  of $\bG$  with respect to containing $C^{\circ}_{\bG^*} (s)$, and let $\bP^*$ be a  parabolic subgroup with $\bL^*$ as Levi complement.  
Let $(\bL, \bP )$  be dual  to $(\bL^*, \bP^* )$  and let  $\bN \leq N_{\bG} (\bL^F) $ be the  dual  to the subgroup $C_{\bG^*}(s) ^{F^*}\cdot\bL^*$  of $\bG^*$ as defined  in \cite[ Sec.~7A]{BDR}.   Denote by $\bU$
 the unipotent radical of   $\bP$, set $\tilde \bL  =  Z(\tilde \bG)  \bL$ and $\tilde \bN =  \tilde \bL \bN$.   Let   $$X  =  (\bG^F \times (\bL^F)^{\opp} )\Delta \tilde \bL^F ,   \text{ and }    Y= (\bG^F  \times  (\bN^{F})^{\opp} ) \Delta \tilde \bN^F,$$
   $$\tilde X=  \tilde\bG^F \times (\tilde \bL^F)^{\opp},   \text{ and }   \tilde  Y= \tilde \bG^F  \times  (\tilde \bN^F)^{\opp}. $$
 
Recall  the  Deligne-Lusztig variety  $$ Y_{\bU}^{\bG}  :=\{ g \bU \, : \,  g^{-1} F(g) \in \bU  F(\bU) \} \subseteq \bG/\bU.$$ 
Then    $Y_{\bU}^{\bG} $  is an  $X$-variety  via  $$ \,^{(y, z) (x, x^{-1})}  g \bU   =  y xg x^{-1}z \bU,    \  \ x\in \tilde \bL^F,   y \in \bG^F, z \in \bL^F $$ By functoriality, this action endows   the  associated  $i$-th  $\ell$-adic cohomology (with compact support)   groups   $H_c^i( Y_{\bU}^{\bG}, \CO)$    with the structure  of an $\CO X$-module.

The idempotent $e_s^{\bL^F}$ is $\tilde \bN^F$-stable and  $e_s^{\bG^F}$ is $\tilde \bG^F$-stable  hence $H_c^i( Y_{\bU}^{\bG}, \CO) e_s^{\bL^F}$  is an  $\CO  X$-module.  Note also that  the subgroup  $\Delta Z(\tilde \bG^F) $   acts  trivially  on $ Y_{\bU}^{\bG} $  and hence   also  on  $H_c^i( Y_{\bU}^{\bG}, \CO) e_s^{\bL^F}$. One of the main results of  \cite{BDR}  is the following theorem.

\begin{Theorem} \label{thm-BDR}  Let $d$ be  the dimension of   $Y_{\bU}^{\bG}$  and set  $M=  H_c^d (Y_{\bU}^{\bG}, \CO) e_s^{\bL^F}$.
\begin{enumerate} 
\item  Suppose that   there is  an   $\CO Y$-module   $M'$  which extends  the $\CO X$-module structure of  $M$. Then the restriction to $\CO (\bG^F \times  (\bN^F)^{\opp}) $  of  $M'$ induces  a Morita equivalence between $\CO \bG^F e_s^{\bG^F} $ and   $\CO \bN^F e_s^{\bL^F} $  and $\Ind_Y ^{\tilde Y} M' $ induces a Morita equivalence between  $\CO \tilde \bG^F e_s^{\bG^F} $ and   $\CO\tilde \bN^F e_s^{\bL^F} $.
\item  Suppose that  $\bN^F/ \bL^F $ is cyclic. Then the   $\CO X $-module  structure of   $M$  extends to  an  $\CO Y$-module structure.    
\end{enumerate}
\end{Theorem}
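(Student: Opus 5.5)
Theorem~\ref{thm-BDR} is stated as one of the main results of \cite{BDR}. Accordingly, my plan is to reduce it to the statements proved there and then add the extension to $\tilde Y$ via Lemma~\ref{lem:marcus}, rather than prove it from scratch.

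For part (2), I would use \cite[Thm.~7.5 and Sec.~7]{BDR} directly. There it is shown that the $\CO X$-module structure of $M$ extends to $Y$ whenever $\bN^F/\bL^F$ is cyclic. The reason is that $Y/X\cong \bN^F/\bL^F$. The obstruction to extending an $X$-stable module that induces a Morita equivalence lies in $H^2(Y/X,\CO^\times)$. One must first check, using the $\bN^F$-stability of $M$ up to isomorphism proved in \cite{BDR}, that $M$ is $Y$-stable. Since $H^2$ of a finite cyclic group with coefficients in the divisible-enough group $\CO^\times$ vanishes on the relevant part (over an algebraically closed residue field, with a Clifford-theory argument for the $\ell$-part), the extension exists. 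The step I expect to be the main obstacle is precisely this $Y$-stability of $M$, that is, showing ${}^{n}M\cong M$ for $n\in\bN^F$. I would quote it from \cite{BDR} rather than reprove it.

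For part (1), given $M'$ extending $M$, I would first invoke the main theorem of \cite{BDR}. It says that $M$ induces a Morita equivalence between $\CO\bG^F e_s^{\bG^F}$ and $\CO\bL^F e_s^{\bL^F}$ twisted appropriately, and that $\Ind_{(\bG^F\times(\bL^F)^{\opp})\Delta\bN^F}^{\bG^F\times(\bN^F)^{\opp}}$ of an extension induces a Morita equivalence with $\CO\bN^F e_s^{\bL^F}$. Restricting $M'$ to $\bG^F\times(\bN^F)^{\opp}$ realises exactly this bimodule, because $(\bG^F\times (\bN^F)^{\opp})\cap Y$ contains $\Delta\bN^F$.

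Next I would apply Lemma~\ref{lem:marcus} with $G=\bG^F$, $G_1=\tilde\bG^F$, $L_1=\tilde\bN^F$ and $L=\bN^F=\bG^F\cap\tilde\bN^F$. The idempotent $e_s^{\bG^F}$ is $\tilde\bG^F$-stable and $e_s^{\bL^F}$ is $\tilde\bN^F$-stable. The group $\CD$ in that lemma is $Y$. This is because $\tilde\bG^F=\bG^F\tilde\bL^F$ (by Lemma~\ref{lem:diagonalLevi}), so the condition $xG=y^{-1}G$ is realised by diagonal elements of $\tilde\bN^F$ modulo $\bG^F\times\bN^F$. Lemma~\ref{lem:marcus} then gives that $\Ind_Y^{\tilde Y}M'$ induces the stated Morita equivalence.
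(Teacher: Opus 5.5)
Your overall architecture matches the paper's. The theorem is read off from \cite{BDR}: (2) comes from the $\bN^F$-stability of \cite[Thm.~7.2]{BDR} plus an extension lemma for cyclic quotients (the paper cites \cite[Lem.~10.2.13]{Rou98}), and (1) comes from the end of the proof of \cite[Thm.~7.5]{BDR}. You deduce the $\tilde Y$-half of (1) from the $\bG^F\times(\bN^F)^{\opp}$-half via Lemma~\ref{lem:marcus}, with $G_1=\tilde\bG^F=\bG^F\tilde\bL^F$, $L_1=\tilde\bN^F$ and $L=\bN^F=\bG^F\cap\tilde\bN^F$, so that $\CD=Y$. That is correct, and a reasonable substitute for citing BDR's proof for that half.

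There are, however, three points you need to fix.

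\emph{Stability.} The crux of (2) is that $M$ is $\bN^F$-stable \emph{as an $\CO X$-module}, i.e.\ compatibly with the action of $\Delta\tilde\bL^F$. But \cite[Thm.~7.2]{BDR} only asserts $\bN^F$-stability of $M$ as a $\bG^F\times(\bL^F)^{\opp}$-module, which is weaker. So this cannot simply be quoted; as the paper notes, one has to check that the proof of \cite[Thm.~7.2]{BDR} actually yields the stronger property.

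\emph{The extension mechanism.} Your $H^2(Y/X,\CO^\times)$ argument is not the right one. $\mathrm{End}_{\CO X}(M)$ is in general much larger than $\CO$, so the obstruction does not live in $H^2(Y/X,\CO^\times)$. Moreover $H^2(C_m,\CO^\times)\cong\CO^\times/(\CO^\times)^m$ need not vanish when $\ell\mid m$. Cyclicity of $\bN^F/\bL^F$ enters instead through the direct argument of \cite[Lem.~10.2.13]{Rou98}.

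\emph{The BDR equivalence in (1).} Your description is inaccurate. When $\bN^F\neq\bL^F$, $M$ is not a Morita bimodule for $\CO\bL^Fe_s^{\bL^F}$, even up to twist; the functor it induces is the BDR equivalence composed with $\Ind_{\bL^F}^{\bN^F}$. Also, since $\bN^F\le\bG^F$, we have $(\bG^F\times(\bL^F)^{\opp})\Delta\bN^F=\bG^F\times(\bN^F)^{\opp}$, so your induction is the identity. The Morita bimodule is $M$ itself with its right action extended to $\bN^F$, which is exactly why (1) is phrased via the restriction of $M'$. Finally, \cite[Thm.~7.5]{BDR} is stated over $k$; the paper remarks that the part of the argument it uses works over $\CO$ (cf.\ \cite[Thm.~1.1]{Ruh22}), and your citation needs the same caveat.
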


\begin{proof}  Part (1)  is  contained in the last part of the proof of  \cite[Thm.~7.5]{BDR}.  Part (2) follows from  \cite[Thm.~7.2]{BDR}  combined with \cite[Lem.~10.2.13]{Rou98}. Note that \cite[Thm.~7.2]{BDR} states that the  $\bG^F \times {\bL^F}^{\opp}$-module  $M$ is  $\bN^F$-stable, however the proof of \cite[Thm.~7.5]{BDR} requires  the stronger property  that  $M$ is  $\bN^F$-stable  as an $ X$-module. It can be checked  that  the proof of \cite[Thm.~7.2]{BDR} does indeed yield the stronger property. Note also  that \cite[Thm.~7.5]{BDR} is  stated for  $k$  in place of $\CO$, but the part of the proof  that we are using here works  in the same way  for $\CO$  (see also \cite[Thm.~1.1]{Ruh22}).
\end{proof}

The next Proposition extends  the Bonnaf\'e--Rouquier equivalences through   diagonal automorphisms. We make an elementary observation first that will be used in the proof.

\begin{Lemma} \label{lem:extensions}  Let  $N$  be  a finite group contained as a normal subgroup in finite groups $G =N \langle x\rangle $ and $H = N \langle  y \rangle$  such that that the conjugation action of $x$ on $N$  coincides with that of $y$  on $N$.    Let  $ N \cap \langle x \rangle  = \langle x^i \rangle $, with $ i \mid o(x)$  and set  $Z = \langle  x^i y^{-i}  \rangle \leq H $. Then  $Z$   is a   central subgroup  of $H$ and there is a surjective group homomorphism from   $ G$  to $H/Z$ which sends $n x^j$  to $n  y^j  Z$ for all $ n\in N $ and  all  $ j \in {\mathbb Z} $.
\end{Lemma}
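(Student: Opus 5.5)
The plan is to verify directly that $Z$ is central and then build the homomorphism on the semidirect-type description of $G$. First, $x^i\in N$ because $\langle x^i\rangle = N\cap\langle x\rangle$. Moreover, $y$ acts on $N$ as $x$ does, so $y^i$ acts on $N$ as conjugation by $x^i$. Hence $y^i$ and $x^i$ induce the same automorphism of $N$, and $w:=x^iy^{-i}$ centralises $N$. Also $w$ commutes with $y$: $y$ commutes with $y^{-i}$, and $yx^iy^{-1}=x\,x^i\,x^{-1}=x^i$ because $x^i\in N$ and $y$ acts on $N$ as $x$ does. Since $H=N\langle y\rangle$, it follows that $w\in Z(H)$, so $Z=\langle w\rangle$ is a central subgroup of $H$.

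Next, I will show that the map $\phi(nx^j)=ny^jZ$ is well defined. Suppose $nx^j=n'x^{j'}$. Then $x^{j-j'}=n^{-1}n'\in N\cap\langle x\rangle=\langle x^i\rangle$. Since $i\mid o(x)$, the group $\langle x^i\rangle$ consists exactly of the powers $x^{m}$ with $i\mid m$, so $j-j'=im$ for some $m$, and $n'=nx^{im}$. We need $ny^jZ=nx^{im}y^{j'}Z$, that is, $y^{im}\equiv x^{im}$ modulo $Z$. Because $w$ is central, $(x^iy^{-i})^m=x^{im}y^{-im}$. This holds since $x^i\in N$ commutes with $y^{-i}$: $y^{-i}$ acts on $N$ as $x^{-i}$, which fixes $x^i$. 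So $x^{im}y^{-im}\in Z$, and $\phi$ is well defined.

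Finally, I will check that $\phi$ is a surjective homomorphism. For the product, $(nx^j)(n'x^{j'})=n\,(x^jn'x^{-j})\,x^{j+j'}$. Its image is $n\,(x^jn'x^{-j})\,y^{j+j'}Z$, which equals $n\,(y^jn'y^{-j})\,y^{j+j'}Z=ny^j\,n'y^{j'}Z$, because the two conjugation actions agree on $N$. Surjectivity is clear, since $H=N\langle y\rangle$.

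The only delicate point is the well-definedness computation, and specifically the interplay between $x^{im}$ and $y^{im}$ modulo $Z$, which rests on the commutation of $x^i$ with $y$. Everything else is routine.
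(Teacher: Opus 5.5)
Your proof is correct and follows essentially the same route as the paper. You show that $x^iy^{-i}$ centralises $N$ and commutes with $y$ (using $x^i\in N$), use $i\mid o(x)$ together with the commutation of $x^i$ and $y^i$ to get well-definedness, and use the agreement of the two conjugation actions on $N$ for the homomorphism property.
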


\begin{proof}  
To see that $Z$ is a central subgroup of $H$, note that $x^iy^{-i}$ commutes with all elements of $N$ by assumption. Moreover, since $x^i\in N$ we have $^y(x^i)={^x(x^i)}=x^i$, again by assumption. That is, $y$ commutes with $x^i$, and therefore also with $x^iy^{-i}$. We conclude that $x^iy^{-i}$ commutes with all elements of $N\langle y\rangle=H$, as claimed.

Let us now show the second assertion. Since  $ x^i $ and  $y^i$  commute with each other, we have that $ (x^i y^{-i})^t =  x^{it} y^{-it} $   for all  $t \in {\mathbb Z}$.
Let $n_1, n_2 \in N $  and  $ j_1, j_2 \in {\mathbb Z}$   with  $n_1 x^{j_1} = n_2  x^{j_2} $. Then,   $n_2^{-1} n_1  = x^{j_2 -j_1}  \in  N \cap   \langle x \rangle =  \langle x^ i \rangle  $.  Since $ i \mid o(x)$, this means that $j_2- j_1  = i t $ for some $ t\in {\mathbb Z}$  and hence $n_2^{-1} n_1   \in    y^{j_2- j_1}  Z $. Thus the map from  $G$  to $H/Z$ in the statement of the proposition is well defined  and is clearly surjective.    It is a  homomorphism  since  $\,^{x^t} n  =  \,^{y^t} n $ for all $ t \in {\mathbb Z}$ and all $n \in N$.
\end{proof}

\begin{Proposition} \label{prop:diag2} Suppose that   $\bN^F/\bL^F $ is  cyclic.   Let   $H  =\bG^F/Z$  and  $Q= \bN^F/Z$  for   $Z$  a central subgroup of $\bG^F$  such that $ \bar e_s^{\bG^F} \ne 0  $, where   $\bar e_s^{\bG^F}$  is  the   image of   $e_s^{\bG^F} $  under the canonical surjection $\bG^F \to H $.   Suppose that  $H_1 $ is a  finite group  containing $H$ as a normal subgroup with $H_1= H \langle x\rangle $  for some $x$ acting by  a diagonal   automorphism on   $H$.
 Then $\bar  e_s^{\bG^F} $  is  $H_1$-stable. Moreover,
there exists 
 a  subgroup $Q_1 $  of $H_1 $  with $ Q_1 \cap H  =Q  $,  $Q_1/Q \cong  H_1/H$   and with  $Z(Q_1) \geq  Z(H_1) $   such that $\bar e_s^{\bL^F}$ is $Q_1$-stable  and  there is a Morita equivalence between $\CO H_1\bar e_{s}^{\bG^F}$ and $\CO Q_1 \bar e_{s}^{\bL^F}$.  
  \end{Proposition}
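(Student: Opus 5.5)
Our plan is to lift the conjugation action of $x$ to an element of $\tilde\bG^F$, move it into $\tilde\bL^F$, and then combine Theorem~\ref{thm-BDR} with Lemma~\ref{lem:marcus}, Lemma~\ref{lem:extensions} and Lemma~\ref{lem:centraldomination}.

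\emph{Stability.} The conjugation action of $x$ on $H$ lifts to a diagonal automorphism $\alpha$ of $\bG^F$. Since $e_s^{\bG^F}$ is $\tilde\bG^F$-stable, its image $\bar e_s^{\bG^F}$ is stable under $x$, and hence under $H_1$.

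\emph{Choosing the lift.} By Lemma~\ref{lem:diagonalLevi}, $\alpha$ is conjugation by $gh$ with $g\in\bG^F$ and $h\in\tilde\bL^F$. Replacing $x$ by $\bar g^{-1}x$ changes neither $H_1$ nor the statement, so we may assume $\alpha$ is conjugation by $h\in\tilde\bL^F$. Now set
\[
Q_1 := Q\langle x\rangle .
\]
Since $h$ normalises $\bL^F$ and $\bN^F$ (because $\tilde\bN=\tilde\bL\bN$), $x$ normalises $Q$. Moreover $Q_1\cap H=Q(\langle x\rangle\cap H)$, and $\langle x\rangle\cap H=\langle x^i\rangle$ with $x^i\in H$ acting as conjugation by $h^i$. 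We need $x^i\in Q$; for this we show $h^i\in Z(\tilde\bG)\bN^F$, modulo $Z(\tilde\bG)$. The element $h^i$ induces an inner automorphism of $H$, given by some $n\in\bG^F$. Then $h^{i}n^{-1}$ centralises $\bG^F/Z$, so by Lemma~\ref{lem:GGF-centralisers} it lies in $Z(\tilde\bG)$. Hence $n\in\tilde\bL^F\cap\bG^F$ normalises $\bL^F$, so $n\in N_{\bG^F}(\bL)$. We still need $n\in\bN^F$. The plan is to argue via the stabiliser of $e_s^{\bL^F}$: $\bN^F$ is the stabiliser in $N_{\bG^F}(\bL)$ of the relevant series, and $h$ fixes $e_s^{\bL^F}$. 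This point needs care, and we expect it to be the main obstacle, together with the central bookkeeping below. Granting it, $Q_1\cap H=Q$, $Q_1/Q\cong H_1/H$, and $Z(H_1)\le C_{H_1}(Q)\cap Q_1$ up to the same centraliser argument, which gives $Z(Q_1)\ge Z(H_1)$. Stability of $\bar e_s^{\bL^F}$ under $Q_1$ follows because $e_s^{\bL^F}$ is $\tilde\bN^F$-stable.

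\emph{Morita equivalence.} By Theorem~\ref{thm-BDR}(2), since $\bN^F/\bL^F$ is cyclic, $M$ extends to an $\CO Y$-module $M'$. Let $\hat G=\bG^F\langle h\rangle\le\tilde\bG^F$ and $\hat N=\bN^F\langle h\rangle\le\tilde\bN^F$. Restrict $M'$ to $(\bG^F\times(\bN^F)^{\opp})\Delta\langle h\rangle$, which is the group $\CD$ of Lemma~\ref{lem:marcus} for $G_1=\hat G$ and $L_1=\hat N$. Lemma~\ref{lem:marcus} then gives a Morita equivalence between $\CO\hat G e_s^{\bG^F}$ and $\CO\hat N e_s^{\bL^F}$, where we use Theorem~\ref{thm-BDR}(1) for the restriction to $\bG^F\times(\bN^F)^{\opp}$.

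\emph{Passing to the quotient.} Apply Lemma~\ref{lem:extensions} with $N=\bG^F/Z$: there is a central subgroup $Z'$ of $\hat G/Z$, generated by $x^ih^{-i}$, with $H_1\cong(\hat G/Z)/Z'$ after possibly enlarging by the central element. The same construction, restricted to $\hat N$, gives $Q_1$. Let $\tilde Z$ be the preimage of $ZZ'$ in $\hat G$; it is central and lies in $\hat N$ because $\bN^F\ge Z(\bG^F)$. We then check that $\Delta\tilde Z$ acts trivially on the induced bimodule. This holds because $\Delta Z(\tilde\bG^F)$ acts trivially on $Y_\bU^\bG$ and $\Delta\bG^F$-diagonal central elements act trivially on $M$. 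Lemma~\ref{lem:centraldomination} then yields the equivalence between $\CO H_1\bar e_s^{\bG^F}$ and $\CO Q_1\bar e_s^{\bL^F}$. One further subtlety is that $Z'$ involves $x^i$, which is an element of $H$ rather than of $Z(\tilde\bG)$. To handle this we would work instead in $H\langle h\rangle$ and identify it with $H_1$ modulo a central subgroup, using Lemma~\ref{lem:extensions} directly with $G=\hat G/Z$ mapping onto $H_1/Z''$. If that identification fails we would need to adjust the choice of $h$ by a central element.
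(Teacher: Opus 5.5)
\textbf{There is a genuine gap in your last step.} To push the Marcus equivalence for $\CO\hat G e_s^{\bG^F}$ down with Lemma~\ref{lem:centraldomination}, you need $H_1$ to be a central quotient of the concrete group $\hat G=\bG^F\langle h\rangle\le\tilde\bG^F$, and $Q_1$ one of $\hat N$. Lemma~\ref{lem:extensions}, applied with $N=H$, $G=H_1$ and $H\langle\bar h\rangle=\hat G/Z$, does not give this. It gives a surjection \emph{from} $H_1$ onto $(\hat G/Z)/Z'$, not an isomorphism. In general no central adjustment of $h$ fixes this, because $H_1$ need not be a quotient of any subgroup of $\tilde\bG^F$ containing $\bG^F$.

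For example, take $\bG=\SL_2$, $q\equiv 3\pmod 4$, $Z=1$, and $H_1=\SL_2(q)\circ C_4$ with $x$ central of order $4$ and $x^2=-I$. Every element of $H_1$ induces an inner automorphism of $H$. So any admissible $h$ lies in $\bG^F Z(\tilde\bG^F)$ by \thref{lem:GGF-centralisers}. Since $(q-1)_2=2$ and $-I\in\SL_2(q)$, this group has the same $2$-part of order as $\SL_2(q)$, whereas $|H_1|_2=2|\SL_2(q)|_2$. Hence ``Marcus inside $\tilde\bG^F$, then quotient'' cannot reach $\CO H_1\bar e_s^{\bG^F}$ in general.

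\textbf{How the paper avoids this.} It reverses the order and applies Lemma~\ref{lem:extensions} to the diagonal groups, not to the groups themselves.
\begin{enumerate}
\item Theorem~\ref{thm-BDR} and Lemma~\ref{lem:centraldomination} give $\CO\otimes_{\CO Z}M'$. This module induces a Morita equivalence between $\CO H\bar e_s^{\bG^F}$ and $\CO Q\bar e_s^{\bL^F}$. It still carries an action of $\CE=(H\times Q^{\opp})\Delta\langle\bar h\rangle$, on which $\Delta(Z(\tilde\bG^F)/Z)$ acts trivially.
\item Lemma~\ref{lem:extensions} is applied with $N=H\times Q^{\opp}$, $G=\CD=(H\times Q^{\opp})\Delta\langle x\rangle$ and $\CE$. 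It yields a homomorphism $\CD\to\CE/\Delta Z_1$ with $\Delta Z_1\le\Delta(Z(\tilde\bG^F)/Z)$, restricting to the natural map on $H\times Q^{\opp}$.
\item Pulling back along this homomorphism gives the required $\CD$-module structure. Lemma~\ref{lem:marcus} is then applied directly to $H_1\times Q_1^{\opp}$.
\end{enumerate}
Because modules pull back along homomorphisms, you only need $\CD$ to map onto a quotient of $\CE$. That is exactly the direction Lemma~\ref{lem:extensions} provides.

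\textbf{A separate remark.} The point you flag as the main obstacle is immediate. Since $\tilde\bL\cap\bG=(Z(\tilde\bG)\cap\bG)\bL=\bL$, you get $n\in\tilde\bL^F\cap\bG^F=\bL^F\le\bN^F$. The same observation gives $C_{H_1}(H)\le(\bL^F/Z)\langle x\rangle$. Hence $Q_1\cap H=Q$, $Q_1/Q\cong H_1/H$ and $Z(H_1)\le Z(Q_1)$, with no need to use the stabiliser of $e_s^{\bL^F}$.
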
 

\begin{proof}   
By Lemma \ref{lem:diagonalLevi},  we may assume that   $H_1 = H \langle  x \rangle   $  where $x$ acts   on $H$ by  some   $  h  \in \tilde \bL^F$.  
 Since $h \in \tilde \bL^F$, and $\bN$  contains $\bL$,  $\,^h \bN  =\bN  $, hence $\,^h \bN^F = \,^h(\bG^F \cap \bN ) = \,^h \bG^F \cap \,^h \bN =  \bG^F \cap \bN = \bN^F $.   Hence $h$  normalises  $Q$, from which it follows that    $x$  also normalises  $Q$.   Set $Q_1 =  Q \langle x \rangle$.   Since $C_{\tilde \bG^F} (H) =  Z(\tilde\bG^F)  \leq Z(\tilde \bL^F)$ (see \thref{lem:GGF-centralisers}) and  $\tilde \bL^F \cap \bG^F= \bL^F$, we  have that  $C_{H_1} (H)  \leq  (\bL^F/Z)\langle x \rangle$  from which it follows that  $Z(H_1) \leq  Z(Q_1) $  and that $x^a \in   H$  if and only if $x^a  \in  Q$,  hence $Q_1/ Q  \cong  H_1/H$.
 Since  $h$  is  acting as a diagonal automorphism on both $\bG^F$ and on $\bL^F$, $e_s^{\bG^F}$ and  $ e_s^{\bL^F}$  are both $\langle h \rangle$-stable and it follows that  $\bar e_s^{\bG^F}$  is $H_1$-stable and  $\bar e_s^{\bL^F} $ is $Q_1$-stable.

Let $\bar h:= h Z$   and let   $$\CD :=  (H \times Q^{\opp} ) \Delta \langle x  \rangle\leq  H_1 \times Q_1^{\opp}, $$ $$\CE:= (H\times Q^{\opp} )\Delta  \langle \bar h \rangle \leq  (H\times Q^{\opp} )\Delta (\tilde \bL^F/Z )\leq  \tilde \bG^F /Z\times \tilde \bN^F/Z. $$ Again,   since $C_{\tilde  \bG^F/Z} (H) =  Z(\tilde \bG^F)/Z  $, we have that  for any  $i$ such that $(x, x^{-1})^i  \in H \times Q$,    
 $ (x, x^{-1}) ^i (\bar h, \bar h^{-1})^{-i}  \in \Delta (Z(\tilde \bG^F )/Z)\cap  \CE $.  Thus  setting  $Z_1=   \langle x^i \bar h^{-i}\rangle  $,   by Lemma \ref{lem:extensions}, applied    with $N=H \times Q^{\opp}$, $G  =\CD$, $H= \CE$, $x = (x, x^{-1}) $,  $y=(\bar h, \bar h^{-1})$,  we have a  homomorphism from $\CD$  to  $\CE/\Delta Z_1$   which   restricts to the  natural  surjection  of $H \times Q $ onto $(H \times Q) \Delta Z_1/\Delta Z_1$.

 By Theorem  \ref{thm-BDR}, the $\CO X$-module   structure of  $M=  H_c^d (Y_{\bU}^{\bG}, \CO) e_s^{\bL^F}$ extends to a $\CO Y$-module structure. Let 
$M_1$ be such  an extension  and denote also by $M_1$   the  restriction to any  subgroup of $Y$.  Since  $  \Delta Z(\tilde \bG^F) $   acts trivially on  $M$, it does so on $M_1$. Hence by  Theorem \ref{thm-BDR}  and  Lemma \ref{lem:centraldomination},  $\CO \otimes_{\CO Z}M_1$  induces a  Morita equivalence between  $\CO H \bar e_s^{\bG^F} $ and  $ \CO Q  \bar  e_s^{\bG^F} $.   Again  by the trivial action of  $\Delta Z(\tilde \bG^F) $ on $M_1$, it follows that   $\CO \otimes_{\CO Z} M_1$ is  the  inflation  to   $\CO \mathcal {E} $  of an  $\CO \mathcal {E}/\Delta Z_1$-module  and we obtain 
via  restriction through the  homomorphism  above  a  $k\CD$-module structure on $\CO \otimes_{\CO Z} M_1$  whose   restriction to    $\CO(H\times Q^{\opp})$  induces a  Morita equivalence between $\CO H \bar  e_{s}^{\bG^F}$  and  $\CO Q\bar e_s^{\bL^F}$. Hence by Lemma \ref{lem:marcus}, $\Ind_{\CD}^{H_1 \times Q_1} \CO \otimes_{\CO Z} M_1$ induces a Morita equivalence between  $\CO H_1 \bar e_{s}^{\bG^F}$  and  $\CO Q_1 \bar e_s^{\bL^F}$.
\end{proof} 

The following extends \cite[Ex.~7.9]{BDR}. 
\begin{Proposition} \label{example2}   Keep  the notation  and assumptions of Proposition \ref{prop:diag2}  and let $Q_1 $ be  as  in the  conclusion of Proposition \ref{prop:diag2}.  Suppose  further that  $C^{\circ}_{\bG^*}(s) =\bL^* $,  and   that  $H_1/H$  is   an $\ell$-group.  Then  there is  a Morita equivalence  between the $\CO$-algebras  $\CO H_1 \bar e_{s}^{\bG^F}$  and  $\CO Q_1 \bar e_1^{\bL^F} $
\end{Proposition}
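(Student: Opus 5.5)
Combine Proposition~\ref{prop:diag2} with the classical twisting of Lusztig series by a linear character, lifted to $Q_1$. By Proposition~\ref{prop:diag2}, $\CO H_1\bar e_s^{\bG^F}$ is Morita equivalent to $\CO Q_1\bar e_s^{\bL^F}$, where $Q=\bN^F/Z$ and $Q_1=Q\langle x\rangle$. So it suffices to show $\CO Q_1\bar e_s^{\bL^F}\cong \CO Q_1\bar e_1^{\bL^F}$ as $\CO$-algebras. I would do this via multiplication by a suitable linear character of $\ell'$-order, as in \cite[Ex.~7.9]{BDR}.

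\textbf{Step 1: the linear character on $\bL^F$.} Since $C^\circ_{\bG^*}(s)=\bL^*$, the element $s$ lies in $Z^\circ(\bL^*)^{F^*}$. By duality it corresponds to a linear character $\hat s$ of $\bL^F$ of $\ell'$-order. Multiplication by $\hat s$ maps $\CE(\bL^F,t)$ bijectively to $\CE(\bL^F,st)$, so the induced automorphism of $\CO\bL^F$ sends $e_1^{\bL^F}$ to $e_s^{\bL^F}$. Then:
\begin{enumerate}
\item[(a)] Since $\bN$ stabilises $s$ up to $\bL^*$-conjugacy, and $s$ is central in $\bL^*$, the character $\hat s$ is $\bN^F$-stable.
\item[(b)] The element $x$ acts on $\bL^F$ via some $h\in\tilde\bL^F$. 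Conjugation by $h$ fixes every linear character of $\bL^F$, since $[\tilde\bL,\tilde\bL]\le\bL$ and linear characters are trivial on commutators.
\item[(c)] By Lemma~\ref{lem: centralkerne}(iv) and the hypothesis $\bar e_s^{\bG^F}\neq0$, the restriction $\hat s|_Z$ is trivial. Hence $\hat s$ descends to $\bL^F/Z$.
\end{enumerate}

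\textbf{Step 2: extending to $Q_1$.} I need a linear character $\eta$ of $Q_1$ of $\ell'$-order restricting to $\hat s$ on $\bL^F/Z$. Then the automorphism $\eta.$ of $\CO Q_1$ maps $\bar e_1^{\bL^F}$ to $\bar e_s^{\bL^F}$, since both are central idempotents supported on $\bL^F/Z$, and this gives the required isomorphism. The extension would be built in two stages.
\begin{enumerate}
\item[(a)] Extend $\hat s$ from $\bL^F$ to $\bN^F$. In BDR's setting $\bN^F/\bL^F$ is isomorphic to $A_{\bG^*}(s)^{F^*}$, which is cyclic here. An invariant character extends through a cyclic quotient, and $\hat s$ can be identified with a character of $\bN^F$ trivial on $[\bN^F,\bN^F]\cap\bL^F$; this is the extension used in \cite[Ex.~7.9]{BDR}. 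I would check that this extension is still trivial on $Z$ and has $\ell'$-order, the latter by taking the $\ell'$-part of any extension, which still extends $\hat s$ because $\hat s$ has $\ell'$-order.
\item[(b)] Extend from $Q$ to $Q_1=Q\langle x\rangle$, with $Q_1/Q$ cyclic of $\ell$-power order. The character $\eta|_Q$ must be $x$-invariant. On $\bL^F$ this is Step~1(b); on $\bN^F$ one may need to adjust the choice in (a) by a character of $\bN^F/\bL^F$, using that $|Q_1/Q|$ is a power of $\ell$ while the set of admissible extensions is a coset of characters of $\ell'$-order, as in the proof of Proposition~\ref{prop:frobeniusnormal}. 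Once $\eta|_Q$ is $x$-invariant, it extends to $Q_1$ because $Q_1/Q$ is cyclic. Taking $\ell'$-parts again, and noting that $\langle x\rangle\cap Q$ is the only constraint, I would choose the extension with $\ell$-elements in its kernel so that it takes values in $\CO$.
\end{enumerate}

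\textbf{Main obstacle.} I expect the main difficulty to be the $x$-invariance of the extension to $\bN^F/Z$. The compatibility on $\bL^F$ is easy, but on $\bN^F$ it needs the coprimality argument above. That argument only works if the group of characters of $\bN^F/\bL^F$ that could differ has order prime to $\ell$. Since $A_{\bG^*}(s)$ has $\ell'$-order for $\ell'$-elements $s$ \cite[Rem.~11.2.2]{DiMi}, the coset-fixed-point argument of Proposition~\ref{prop:frobeniusnormal} should apply.
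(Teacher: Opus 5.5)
Your plan follows the paper's proof. You apply Proposition~\ref{prop:diag2} and twist by the linear character $\hat s$. You extend $\hat s$ to $\bN^F/Z$ through the cyclic quotient $\bN^F/\bL^F$, then pick an $x$-stable extension, using that the number of extensions, $|\bN^F/\bL^F|$, is prime to $\ell$ while $Q_1/(\bN^F/Z)$ is an $\ell$-group. Finally you extend through the cyclic quotient $Q_1/(\bN^F/Z)$. Steps 1(a), 1(c) and Step~2 essentially match the paper. In 1(c) you implicitly use that $\hat s|_{\bT^F}$ is the character dual to $s$ for an $F$-stable maximal torus $\bT\le\bL$; the paper takes this from \cite[Lem.~3.2]{FK}. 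Your $\ell'$-part bookkeeping, which ensures the final character takes values in $\CO$, is a useful addition.

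The step that fails as written is Step~1(b). A diagonal automorphism need not fix every linear character of $\bL^F$. For $h\in\tilde\bL^F$ and $g\in\bL^F$, the commutator $[h,g]$ lies in $\bL^F$ and in $[\tilde\bL,\tilde\bL]$. It need not lie in $[\bL^F,\bL^F]$, and only the latter is killed by every linear character of $\bL^F$. For example, take $\bL=\bG=\SL_2$ and $q=3$. Then $\GL_2(3)/\langle -I\rangle\cong S_4$ and $\SL_2(3)/\langle -I\rangle\cong A_4$, so an element of $\GL_2(3)$ of determinant $-1$ interchanges the two linear characters of order $3$ of $\SL_2(3)$.

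What you need, and what is true, is the statement for $\hat s$ alone. Since $s\in Z(\bL^*)^{F^*}$, the character $\hat s$ is trivial on $\tau(\bL_{sc}^F)$, where $\tau\colon\bL_{sc}\to[\bL,\bL]$ is a simply connected covering. Lemma~\ref{lem:sccovering}, applied to $\bL\hookrightarrow\tilde\bL$, says that diagonal automorphisms act trivially on $\bL^F/\tau(\bL_{sc}^F)$. This is how the paper gets $\langle h\rangle$-stability of $\hat s$.

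The point matters for the rest of the argument. Without $x$-stability of $\hat s$, the element $x$ does not permute the extensions of $\hat s$ to $\bN^F/Z$. The coprime fixed-point argument in your Step~2(b) then has nothing to act on.
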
 

\begin{proof}  Let $\tau: \bL_{sc} \to [\bL, \bL]$  be a simply connected  covering. 
The element $s $ defines a linear character,  say $\hat s$, of $\bL^F/ \tau (\bL_{sc}^F)$  such that $e_s^{\bL^F} =\hat s. e_1^{\bL^F}$.  By definition  of $\bN$, $ \hat s $ is $\bN^F$-stable.   Let $ h  \in \tilde \bL^F$  be  as in  the proof of  Proposition \ref{prop:diag2}. By  Lemma \ref{lem:sccovering}, $ \hat s $  is also $\langle h \rangle $-stable.    Since $\bar e_s^{\bG^F}  \ne 0 $, by Proposition  \ref{prop:diag2}, $\bar e_s^{\bL^F} \ne 0 $. Hence by  Lemma \ref{lem: centralkerne}  and  \cite[Lem.~3.2]{FK}, we have that   $Z$  is  in the kernel of  $\hat s$. Thus $\hat s $  may be viewed as  a    $Q_1$-stable  character of $\bL^F/Z$. Further,  
  since $(\bN^F/Z) /(\bL^F/Z) \cong \bN^F/\bL^F $ is cyclic,  there are  $ |\bN^F/\bL^F| $  distinct  extensions of  $\hat s$    to $\bN^F/Z$. Finally, since $Q_1/ (\bN^F/Z) $ is an $\ell$-group,  whereas $\bN^F/\bL^F$  is an $\ell'$-group, one of these  extensions  is $Q_1$-stable. Therefore, since  $Q_1 /(\bN^F$/Z) is cyclic,   there  is a further extension  of $\hat s $  to a linear character, say $\theta$  of  $Q_1$.   Then $\theta $ induces an $\CO$-algebra isomorphism between $\CO  Q_1 \bar  e_{s}^{\bL^F}$ and $\CO  Q_1 \bar  e_1^{\bL^F}$.  The result now follows by Proposition \ref{prop:diag2}.
\end{proof}

If  $\ell=2 $  and  $\bG$   is simple  of   classical type   $\type{A}$, $\type{B}$, $\type{C}$ or $\type{D}$, then the assumptions of Proposition~\ref{example2}  all hold (in fact, unless  $\bG$ is of  type $\type{A}$, we have that $\bL^* = C_{\bG^*}(s) $). We  thus  obtain the following corollary.
\begin{Corollary}  \thlabel{cor:extendclassical2}   Suppose that  $\ell=2 $ and that  $\bG$   is simple  of   classical type   $\type{A}$, $\type{B}$, $\type{C}$, or $\type{D}$.  
Let $Z \leq Z(\bG^F) $, let $H = \bG^F/Z$  and let $H_1 $  be  a   group containing  $H$  as  a normal  subgroup with $H_1/H$ a cyclic $2$-group acting as  diagonal automorphisms on $H$.   Then any block algebra  $\CO H_1b$   is Morita equivalent  to  a  block  algebra  $\CO Q_1c$, where  $Q_1$  is a  subgroup  of $H_1$  containing $Z(H_1) $   and with   normal  subgroups $L \leq  N $  such that  $N/L$ is a  cyclic  $2'$-group, $Q_1/N$  is a cyclic $2$-group  and  $ \CO Q_1c$ covers the principal block   of $\CO L$.   Consequently,  $\CO H_1b$ is Morita equivalent to  a principal block algebra and  the $\CO$-Morita--Frobenius number of   $ \CO H_1 b$   equals  $1$.     
\end{Corollary}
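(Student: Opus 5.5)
The plan is to reduce to Proposition~\ref{example2} and then analyse the structure of the group $Q_1$ that it produces. First, given a block $\CO H_1 b$, Remark~\ref{remark:fongcorrespondent}(2) shows that $b$ lies in $\CO H$, because $H_1/H$ is a $2$-group. It is the unique block covering some block $c_0$ of $\CO H$, and by \thref{lem-primediagonalstable} this $c_0$ is $H_1$-stable, so $b=c_0$. The block $c_0$ is the image of a block of $\CO\bG^F$ lying in $e_s^{\bG^F}$ for some semisimple $2'$-element $s$. By Lemma~\ref{lem: centralkerne}, $\bar e_s^{\bG^F}\neq 0$, and $\bar e_s^{\bG^F}$ is a sum of blocks of $H$, each of them $H_1$-stable.

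Next I check the hypotheses of Propositions~\ref{prop:diag2} and~\ref{example2} for $\ell=2$ and $\bG$ simple of classical type. The centraliser $C_{\bG^*}(s)$ of a semisimple $2'$-element has component group of odd order, because it embeds in the fundamental group. That fundamental group has $2$-power order in types $\tB,\tC,\tD$, so $C_{\bG^*}(s)$ is connected there. It is cyclic in type $\tA$, and in that case $C^\circ_{\bG^*}(s)$ is a Levi subgroup. Hence $C^\circ_{\bG^*}(s)=\bL^*$, and $\bN^F/\bL^F\cong (C_{\bG^*}(s)/C^\circ_{\bG^*}(s))^{F^*}$ is a cyclic $2'$-group. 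The step I expect to be the main obstacle is justifying this cyclicity cleanly in type $\tA$; here I would use that the component group is a subgroup of the cyclic group $Z(\bG)/Z^\circ(\bG)$ dual to it. Proposition~\ref{example2} then gives a Morita equivalence between $\CO H_1\bar e_s^{\bG^F}$ and $\CO Q_1\bar e_1^{\bL^F}$, where $Q_1=Q\langle x\rangle$, $Q=\bN^F/Z$ and $Z(Q_1)\geq Z(H_1)$.

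It remains to pass from the sum of blocks to the single block $b$. Set $L=\bL^F/Z$ and $N=Q$. Then $N/L$ is a cyclic $2'$-group and $Q_1/N\cong H_1/H$ is a cyclic $2$-group. Because a Morita equivalence of algebras induces a bijection between their block decompositions, $\CO H_1 b$ is Morita equivalent to some block $\CO Q_1 c$ with $c\bar e_1^{\bL^F}=c$. This means that $c$ covers the principal block of $\CO L$, since $\bar e_1^{\bL^F}$ is the sum of the unipotent blocks. I then choose the principal block of $\CO L$ among those summands. More carefully, any block of $\CO Q_1$ appearing in $\bar e_1^{\bL^F}$ covers a unipotent block of $L$. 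Composing with a Bonnafé--Rouquier-free argument is awkward here, so instead I would apply Proposition~\ref{prop:frobeniusnormal} directly wherever possible: take $L\leq N\leq Q_1$ with $N/L$ an abelian $2'$-group and $Q_1/N$ a $2$-group.

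To finish, when $c$ covers the principal block of $\CO L$, Proposition~\ref{prop:frobeniusnormal} shows that $\CO Q_1c$ is Morita equivalent to the principal block of some $T$ with $N\leq T\leq Q_1$, and that $\mf_\CO=1$. The same proposition gives $\mf_\CO(\CO Q_1 c)=1$ whenever the covered block $e$ of $\CO L$ satisfies $\,^\sigma e=e$. Unipotent blocks have rational idempotents, so this holds in every case. Since the Morita--Frobenius number is invariant under Morita equivalence (the equivalence commutes with the Frobenius twist), $\mf_\CO(\CO H_1 b)=1$. To obtain the principal-block statement in general, I would argue that in the relevant situation the unipotent block covered is principal. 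If that fails, I would weaken the claim to the $\mf$ conclusion, which is the part used later.
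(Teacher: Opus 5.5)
Your route is the paper's: make the block $H_1$-stable via \thref{lem-primediagonalstable}, apply Proposition~\ref{example2}, transport the block across the resulting Morita equivalence, and finish with Proposition~\ref{prop:frobeniusnormal}. The gap is exactly where you hesitate.

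From $c\,\bar e_1^{\bL^F}=c$ you can only conclude that $c$ covers \emph{some} unipotent block of $L=\bL^F/Z$. You also have no freedom to ``choose'' the principal one, since $c$ is determined by $b$ through the Morita equivalence. The missing fact is that for $\ell=2$ there is only one unipotent block. Since $\bL$ is a Levi subgroup of a group of classical type, all its simple components are of classical type, and by \cite[Thm.~21.14]{CaEnbook} all unipotent characters of $\bL^F$ lie in the principal $2$-block. Hence $e_1^{\bL^F}$ is the principal block idempotent of $\CO\bL^F$, and its image $\bar e_1^{\bL^F}$ is the principal block idempotent of $\CO L$. This is precisely the input the paper's proof uses. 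With it, every block $c$ of $\CO Q_1\bar e_1^{\bL^F}$ covers the principal block of $\CO L$. The second part of Proposition~\ref{prop:frobeniusnormal}, applied to $L\leq N\leq Q_1$, then gives the Morita equivalence with a principal block and $\mf_\CO=1$.

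Your fallback of proving only the $\mf$ statement does not suffice. The principal-block conclusion is what is used later, in \thref{cor:notB_nC_nF4G2E8} and \thref{lietype}, and hence in \thref{Main2}. Your justification for the fallback, that unipotent block idempotents are $\sigma$-stable because they are ``rational'', is also asserted without proof and is not automatic in general. Here it becomes trivial once you know the only unipotent block is the principal one.

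A smaller point: in types $\tB$, $\tC$, $\tD$ you show only that $C_{\bG^*}(s)$ is connected. Proposition~\ref{example2} needs $C^\circ_{\bG^*}(s)=\bL^*$, that is, that the centraliser is a Levi subgroup. This holds because, after lifting $s$ to an odd-order element of the natural matrix group, there is no eigenvalue $-1$. So the centraliser is the isometry group of the $1$-eigenspace times a product of general linear groups.
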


\begin{proof}    The  idempotents  $\bar e_t^{\bG^F}$,  where $t$  runs over a  set of representatives  of  the $\bG^F$-classes of  semi-simple odd-order elements of $\bG^F$,  are central in $kH$  and pairwise orthogonal.  Thus the first assertion follows from Proposition \ref{example2}   and the fact that $\bG$  being classical implies that    the idempotent  $\bar e_1^{\bL^F}$  in   Proposition \ref{example2}  is the principal block idempotent  of $k(\bL^F/Z)$ by \cite[Thm.~21.14]{CaEnbook}.
The  second assertion  follows  from the first by Proposition \ref{prop:frobeniusnormal}.
   \end{proof}

\subsection{Bonnaf\'e--Rouquier equivalences and field automorphisms}\label{subsec:extenfield} For the rest of the section,   we switch notation  and   let $\bL $  be   a minimal   Levi subgroup of $\bG$  such that $\bL^*$  contains $C_{\bG^*}(s)$. (Here $s$ is said to be quasi-isolated in $\bL^\ast$, and the blocks in $\CO \bL^Fe_s^{\bL^F}$ are called quasi-isolated blocks.)   Let $F_0$  be an  endomorphism of $\bG$ such that   $F =F_0^r$  and  assume that  $e_s^{\bG^F} $  is   $F_0$-stable.  By \cite[Lem.~4.6]{Ru22}, we may assume that $\bL$ is  $F_0$-stable.

\begin{Proposition} \thlabel{prop:field}   Let   $H  =\bG^F/Z$,  $Z \leq  Z(\bG^F)$.  Suppose that $\bar e_s^{\bG^F}  \ne 0$, where $\bar e_s^{\bG^F}$  is  the   image of   $e_s^{\bG^F} $  under the canonical surjection $\bG^F \to H $.  Suppose that  $F_0$ acts trivially  on $Z^F$ and   $e_s^{\bG^F} $ is   $F_0$-stable. Let $\bL $ be  an    $F_0$-stable Levi subgroup of $\bG$  such that $\bL^*$  contains $C_{\bG^*}(s)$, and set $Q= \bL^F/Z$. Suppose that  $H_1 $ is a  finite group  containing $H$ as a normal subgroup with $H_1= H \langle x\rangle $  for some $x$ acting by  the  composition of  a diagonal   automorphism  and $F_0$  on  $H$.
 Then $\bar  e_s^{\bG^F} $  is  $H_1$-stable. Moreover,
there exists 
 a  subgroup $Q_1 $  of $H_1 $  with $ Q_1 \cap H  =Q  $,  $Q_1/Q \cong  H_1/H$   and with  $Z(Q_1) \geq  Z(H_1) $   such that $\bar e_s^{\bL^F}$ is $Q_1$-stable  and  there is a Morita equivalence between $\CO H_1\bar e_{s}^{\bG^F}$ and $\CO Q_1 \bar e_{s}^{\bL^F}$.  
  \end{Proposition}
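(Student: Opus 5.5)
The plan is to mirror the proof of Proposition~\ref{prop:diag2}, replacing the Bonnaf\'e--Dat--Rouquier bimodule by the Bonnaf\'e--Rouquier bimodule $M=H_c^d(Y_{\bU}^{\bG},\CO)e_s^{\bL^F}$. Here $\bL$ is now a Levi subgroup with $C_{\bG^*}(s)\leq \bL^*$, so no $\bN$ is needed and $M$ already induces a Morita equivalence between $\CO\bG^Fe_s^{\bG^F}$ and $\CO\bL^Fe_s^{\bL^F}$. The new ingredient is to extend this equivalence through the field automorphism $F_0$. For that we take Ruhstorfer's result \cite{Ru22}: for a suitable $F_0$-stable parabolic $\bP=\bL\bU$ (which exists by \cite[Lem.~4.6]{Ru22} together with the choice of $\bL$), the variety $Y_{\bU}^{\bG}$ carries an action of $(\bG^F\times(\bL^F)^{\opp})\Delta(\tilde\bL^F\rtimes\langle F_0\rangle)$. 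Hence $M$ extends to a module for $(\bG^F\times(\bL^F)^{\opp})\Delta(\tilde\bL^F\langle F_0\rangle)$, since $e_s^{\bL^F}$ is $F_0$-stable (it corresponds to $e_s^{\bG^F}$, which is $F_0$-stable by hypothesis).

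First, by Lemma~\ref{lem:diagonalLevi} we may write the action of $x$ on $H$ as conjugation by $g\,h\,F_0$ with $g\in\bG^F$ and $h\in\tilde\bL^F$. Replacing $x$ by $g^{-1}x$, we may assume that $x$ acts as $hF_0$, which normalises $\bL^F$ and hence $Q$. Put $Q_1=Q\langle x\rangle$. Stability of $\bar e_s^{\bG^F}$ under $H_1$ and of $\bar e_s^{\bL^F}$ under $Q_1$ follows because diagonal automorphisms fix each $e_t$, and $F_0$ fixes $e_s^{\bG^F}$ and (by the above) $e_s^{\bL^F}$. The assertions $Q_1\cap H=Q$, $Q_1/Q\cong H_1/H$ and $Z(H_1)\leq Z(Q_1)$ go exactly as in Proposition~\ref{prop:diag2}. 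We use \thref{lem:GGF-centralisers}: an element of $\tilde\bG^F\langle F_0\rangle$ inducing an inner automorphism of $H$ differs from an element of $\bG^F$ by something centralising $H$, and the relevant centraliser lies in $Z(\tilde\bG^F)\langle F_0^r\rangle$-type elements. So $C_{H_1}(H)\leq Q_1$, and $x^a\in H$ if and only if $x^a\in Q$.

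Next we pass to quotients and abstract extensions. Let $\CE=(H\times Q^{\opp})\Delta\langle \overline{hF_0}\rangle$ inside the appropriate quotient of the semidirect product with $\langle F_0\rangle$. We also use the hypothesis that $F_0$ acts trivially on $Z$, so that $\Delta Z$ is normalised and the module $\CO\otimes_{\CO Z}M_1$ is defined. By Lemma~\ref{lem:centraldomination} it induces a Morita equivalence between $\CO H\bar e_s^{\bG^F}$ and $\CO Q\bar e_s^{\bL^F}$, and it is a module for $\CE$ on which $\Delta Z(\tilde\bG^F)$ acts trivially. Lemma~\ref{lem:extensions}, applied with $N=H\times Q^{\opp}$, $x\mapsto(x,x^{-1})$ and $y\mapsto(\overline{hF_0},\overline{hF_0}^{-1})$, produces a homomorphism from $\CD=(H\times Q^{\opp})\Delta\langle x\rangle$ onto $\CE/\Delta Z_1$. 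Here $Z_1$ is the cyclic group generated by $x^i(hF_0)^{-i}$, which is central and acts trivially on the module. Inflating gives a $\CD$-module structure extending the $H\times Q^{\opp}$ one. Lemma~\ref{lem:marcus} then yields the Morita equivalence between $\CO H_1\bar e_s^{\bG^F}$ and $\CO Q_1\bar e_s^{\bL^F}$.

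The main obstacle is the step that $Z_1$ acts trivially on the module. In the diagonal case this was automatic because $\Delta Z(\tilde\bG^F)$ acts trivially on the variety. Now $x^i(hF_0)^{-i}$ centralises $H$ but involves $F_0^{i}$, which acts on $Y_{\bU}^{\bG}$ nontrivially unless $F_0^i$ is a power of $F$. I would handle this by choosing $i$ so that $x^i\in H$ forces $F_0^i$ to act on $\bG^F$ as an inner-diagonal automorphism, hence $r\mid i$. Then $F_0^i=F^{i/r}$, which acts trivially on the $\ell$-adic cohomology of the $F$-stable variety $Y_{\bU}^{\bG}$, since $F$ acts on $\bG^F$-rational points trivially and on $Y_{\bU}^{\bG}$ compatibly with the $\tilde\bL^F$-action. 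If that fails in general, one could instead adjust the extension $M_1$ by a linear character of the cyclic group, as is done in Ruhstorfer's work.
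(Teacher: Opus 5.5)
Your outline is the paper's proof. You reduce via Lemma~\ref{lem:diagonalLevi} to $x$ acting as $hF_0$ with $h\in\tilde\bL^F$, and set $Q_1=Q\langle x\rangle$. You take the extension $M_1$ of $M$ to $(\bG^F\times(\bL^F)^{\opp})\Delta(\tilde\bL^F\langle F_0\rangle)$ from \cite[Prop.~5.7]{Ru22}, descend along $Z$ with Lemma~\ref{lem:centraldomination}, pull back along $\CD\to\CE/\Delta Z_1$ from Lemma~\ref{lem:extensions}, and finish with Lemma~\ref{lem:marcus}.

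The gap is in the step you single out, and your justification there is wrong.
\begin{itemize}
\item An $F_0$-stable parabolic $\bP$ with Levi complement $\bL$ would be $F=F_0^r$-stable. Outside the Harish-Chandra situation it therefore does not exist.
\item $F_0$ maps $Y_{\bU}^{\bG}$ to $Y_{F_0(\bU)}^{\bG}$, and $F$ maps it to $Y_{F(\bU)}^{\bG}$. So $Y_{\bU}^{\bG}$ is not $F$-stable, and \cite[Prop.~5.7]{Ru22} extends the module $M$; it is not a geometric action of $F_0$.
\item Even for a geometrically realised Frobenius, there is no reason for it to act trivially on $\ell$-adic cohomology, since its eigenvalues are Weil numbers.
\item Your fallback also fails. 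An automorphism of $M$ commuting with $\bG^F\times(\bL^F)^{\opp}$ is multiplication by a unit of $Z(\CO\bL^Fe_s^{\bL^F})$, not in general a scalar, so it cannot be absorbed by a linear character.
\item Likewise, $F_0$-stability of $e_s^{\bL^F}$ does not follow from that of $e_s^{\bG^F}$, because $G^*$-conjugate elements of $L^*$ need not be $L^*$-conjugate. It is part of what Ruhstorfer's choice of $\bL$ supplies.
\end{itemize}

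The missing idea is a centraliser computation; your phrase ``$Z(\tilde\bG^F)\langle F_0^r\rangle$-type elements'' leaves exactly this unproved. The paper shows $C_{\tilde\bG^F\langle F_0\rangle}(\bG^F/Z)\le C_{\tilde\bG^F}(\bG^{F_0}/Z)\langle F_0\rangle\le Z(\tilde\bG^F)\langle F_0\rangle$ as follows. If $aF_0^j$ centralises $\bG^F/Z$, it centralises $\bG^{F_0}/Z$, on which $F_0^j$ is trivial, so $a$ centralises $\bG^{F_0}/Z$. Since $F_0$ fixes $Z$ we have $Z\le Z(\bG^{F_0})$. Then \thref{lem:GGF-centralisers} applied to $(\bG,F_0)$ gives $a\in Z(\tilde\bG)^F=Z(\tilde\bG^F)$.

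This is where the hypothesis on $Z$ is really used, not merely to normalise $\Delta Z$. Since $Z(\tilde\bG^F)\le\tilde\bL^F$, it yields $Q_1\cap H=Q$, $Q_1/Q\cong H_1/H$ and $Z(H_1)\le Z(Q_1)$.

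Applied to $x^i(\bar hF_0)^{-i}$, it shows this element is $zF_0^{-i}$ with $z\in Z(\tilde\bG^F)/Z$ and $F_0^{i}$ trivial on $\bG^F/Z$, which forces $r\mid i$. In the group $\tilde\bL^F\langle F_0\rangle$, the element $F_0$ acts as an automorphism of $\tilde\bG^F$, so $F_0^i=F^{i/r}$ is the identity there. Hence $Z_1\le Z(\tilde\bG^F)/Z$, and $\Delta Z_1$ acts trivially because $\Delta Z(\tilde\bG^F)$ does, exactly as in Proposition~\ref{prop:diag2}. No statement about Frobenius acting on cohomology is needed.
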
 

\begin{proof}   We follow the  proof of Proposition  \ref{prop:diag2}.
   By Lemma \ref{lem:diagonalLevi},  we may assume that   $H_1 = H \langle  x \rangle   $  where $x$ acts   on $H$ by   $F_0 \circ h $, for some    $  h  \in \tilde \bL^F$.  Set $Q_1 =  Q \langle x \rangle$,
   Let $\bar h:= h Z$   and let   $$\CD :=  (H \times Q^{\opp} ) \Delta \langle x  \rangle\leq  H_1 \times Q_1^{\opp}, $$ $$\CE:= (H\times Q^{\opp} )\Delta  \langle \bar h  F \rangle \leq  (H\times Q^{\opp} )\Delta (\tilde \bL^F/Z \langle F_0\rangle ). $$   As  in  the proof of Proposition  \ref{prop:diag2},    we have a  homomorphism from $\CD$  to  $\CE/\Delta Z_1$   which   restricts to the  natural  surjection  of $H \times Q $ onto $(H \times Q) \Delta Z_1/\Delta Z_1$. 
 
By \cite[Prop.~5.7]{Ru22}, the $\CO \bG^F\times \CO {\bL^F}^{\opp}$-module   structure of  $M=  H_c^d (Y_{\bU}^{\bG}, \CO) e_s^{\bL^F}$ extends to a $\CO (\bG^F \times  (\bL^F)^{\opp})\Delta (\tilde \bL^F \langle F_0\rangle )$-module structure. Let 
$M_1$ be such  an extension  and denote also by $M_1$   the  restriction to any  subgroup of $ (\bG^F \times  (\bL^F)^{\opp})\Delta (\tilde \bL^F \langle F_0\rangle )$.  Since  $  \Delta Z(\tilde \bG^F) $   acts trivially on  $M$, it does so on $M_1$. Hence by  Theorem \ref{thm-BDR}  and  Lemma \ref{lem:centraldomination},  $\CO \otimes_{\CO Z}M_1$  induces a  Morita equivalence between  $\CO H \bar e_s^{\bG^F} $ and  $ \CO Q  \bar  e_s^{\bG^F} $.   Again  by the trivial action of  $\Delta Z(\tilde \bG^F) $ on $M_1$, it follows that   $\CO \otimes_{\CO Z} M_1$ is  the  inflation  to   $\CO \mathcal {E} $  of a  $\CO \mathcal {E}/\Delta Z_1$-module  and we obtain 
via  restriction through the  homomorphism  above  a  $k\CD$-module structure on $\CO \otimes_{\CO Z} M_1$  whose   restriction to    $\CO(H\times Q^{\opp})$  induces a  Morita equivalence between $\CO H \bar  e_{s}^{\bG^F}$  and  $\CO Q\bar e_s^{\bL^F}$. Hence by Lemma \ref{lem:marcus}, $\Ind_{\CD}^{H_1 \times Q_1} \CO \otimes_{\CO Z} M_1$ induces a Morita equivalence between  $\CO H_1 \bar e_{s}^{\bG^F}$  and  $\CO Q_1 \bar e_s^{\bL^F}$.

It remains to  show that   $Q_1/Q \cong  H_1/H$   and  $Z(Q_1) \geq  Z(H_1) $.  This  will   follow  as in Proposition~\ref{prop:diag2}, using that $$C_{\tilde \bG^F \langle F_0 \rangle} (\bG^F/Z)  \leq C_{\tilde \bG^F \langle F_0 \rangle} (\bG^{F_0}/Z) \leq  C_{\tilde \bG^F} (\bG^{F_0}/Z) \langle F_0\rangle \leq   Z(\tilde \bG^F) \langle F_0 \rangle \leq  Z(\tilde \bL^F)  \langle F_0 \rangle.$$  Here, note that by \thref{lem:GGF-centralisers}, $C_{\tilde \bG^F} (\bG^{F_0}/Z) \leq  C_{\tilde \bG} (\bG^{F_0}/Z)  \leq  Z(\tilde \bG)$. 
\end{proof}

\begin{Corollary} \thlabel{cor:minisquasi-isolated} Let   $H  =\bG^F/Z$ and  let $H_1$  be  a  finite group  containing $H$ as a normal subgroup with $H_1= H \langle x\rangle $  for some $x$ acting by  the  composition of  a diagonal   automorphism  and $F_0$  on  $H$, where  $F_0$  is an  endomorphism of $\bG$ such that   $F =F_0^r$  (for some $r$).  Suppose that $b$ is a block idempotent of $\CO H_1$ such that $(H_1, b)$ is Morita minimal. Then $\CO H_1b$  covers a quasi-isolated block of $\CO H$.
\end{Corollary}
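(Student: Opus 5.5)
We argue by contradiction, using \thref{prop:field} to produce a Morita equivalent block of a strictly smaller group. Let $c$ be a block idempotent of $\CO H$ covered by $b$. Since $H_1/H$ is cyclic, generated by the image of $x$, we first want $c$ to be $H_1$-stable. By Remark~\ref{remark:fongcorrespondent}(1), $\CO H_1 b \cong \mathrm{Mat}_n(\CO T d)$, where $T$ is the stabiliser of $c$ and $n=[H_1:T]$. If $T<H_1$, then $[T:Z(T)_{\ell'}]<[H_1:Z(H_1)_{\ell'}]$. This requires a short check: $Z(H_1)\le C_{H_1}(H)$, and the latter is controlled by \thref{lem:GGF-centralisers}, so $Z(H_1)_{\ell'}\le Z(T)$. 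Such a strict inequality contradicts Morita minimality. Hence $c$ is $H_1$-stable. Moreover $T=H\langle x^m\rangle$ is still generated over $H$ by an element acting as a diagonal automorphism composed with a power of $F_0$, so passing to $T$ would not have left the setting of the statement anyway.

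Next, $c$ lifts to a block of $\CO\bG^F$ contained in $e_s^{\bG^F}$ for some semisimple $\ell'$-element $s\in\bG^{*F^*}$, and then $\bar e_s^{\bG^F}\ne 0$ by Lemma~\ref{lem: centralkerne}. Since $x$ stabilises $c$ and acts as a diagonal automorphism composed with $F_0$, and diagonal automorphisms fix every $e_t^{\bG^F}$, the idempotent $e_s^{\bG^F}$ is $F_0$-stable. We also need $F_0$ to act trivially on $Z$. That holds after replacing $F_0$ by a suitable power and relabelling $x$, or in any case because the image of $x$ acts on $Z\le Z(H)$ compatibly; I would handle this carefully but expect it to be routine.

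Now suppose, for a contradiction, that $c$ is not quasi-isolated. Then a minimal $F_0$-stable Levi subgroup $\bL$ with $C_{\bG^*}(s)\le\bL^*$ is proper in $\bG$. \thref{prop:field} gives $Q_1\le H_1$ with $Q_1\cap H=\bL^F/Z$, $Q_1/Q\cong H_1/H$ and $Z(Q_1)\ge Z(H_1)$, together with a Morita equivalence between $\CO H_1\bar e_s^{\bG^F}$ and $\CO Q_1\bar e_s^{\bL^F}$. A Morita equivalence of algebras permutes the blocks, so $\CO H_1 b$ is Morita equivalent to some block $\CO Q_1 b'$. Then
\[
[Q_1:Z(Q_1)_{\ell'}]\le [Q_1:Z(H_1)_{\ell'}]<[H_1:Z(H_1)_{\ell'}],
\]
because $[Q_1:Z(H_1)_{\ell'}]/[H_1:Z(H_1)_{\ell'}]=|\bL^F|/|\bG^F|<1$. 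This contradicts Morita minimality, so $c$ is quasi-isolated.

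The main obstacle is verifying the hypotheses of \thref{prop:field}: the $F_0$-stability of $e_s^{\bG^F}$ and the triviality of the $F_0$-action on $Z$. These need a careful reduction using the $H_1$-stability of $c$ established in the first step.
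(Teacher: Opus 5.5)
Your proposal is correct and is the paper's argument. Minimality forces the covered block $c$ to be $H_1$-stable; your Fong-reduction index comparison is exactly what the paper's ``by minimality'' means. Then \thref{prop:field}, applied with a proper $F_0$-stable Levi subgroup $\bL$, gives a Morita equivalent block of $\CO Q_1$ with $[Q_1:Z(Q_1)_{\ell'}]<[H_1:Z(H_1)_{\ell'}]$, contradicting minimality.

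One caution concerns the hypothesis of \thref{prop:field} that $F_0$ fixes $Z$ pointwise. This is not automatic: the $p$-power Frobenius inverts $Z(\SL_3(p^2))\cong C_3$ when $p\equiv 2\pmod 3$. Your suggested fix of passing to a power of $F_0$ does not work, because it changes the automorphism induced by $x$. The paper's two-line proof likewise takes this hypothesis for granted. It does hold in the paper's applications in \thref{cor:notB_nC_nF4G2E8}, where either $H_1=H$ (so one may take $F_0=F$) or $|Z|\le 2$.
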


\begin{proof}  Let $\CO Hc$  be a block of $\CO H $ covered by $\CO H_1b$. By  minimality, $c$ is $H_1$-stable. Now the result follows  from \thref{prop:field}.\end{proof}


\section{Quaternion Defect Groups in quasisimple groups of Lie type in non-defining characteristic}\label{sec:lie-non}

For this section, we return to the situation $\ell=2$. The aim here is to describe the blocks of finite quasisimple groups of Lie type in odd characteristic having quaternion defect groups $Q_{2^n}$ with $n\geq 3$. The case of the remaining quasisimple groups will be addressed in Section \ref{sec:altspor}.

\subsection{The Setup}\label{sec:setup}
Throughout  this section, we let $q$ be a power of an odd prime and let $G:=\bG^F$, where $\bG$ is a simple, simply-connected algebraic group and $F\colon \bG\rightarrow\bG$ is a Frobenius morphism defining $G$ over $\FF_q$. Then, aside from a finite number of cases (see \cite[Thm.~24.17]{MT}), $G/\zent(G)$ is a finite simple group of Lie type. On the other hand, if $S$ is a finite simple group of Lie type with non-exceptional Schur multiplier, then the Schur covering group of $S$ is such a group $G=\bG^F$, with $\zent(G)$ the Schur multiplier. (See e.g.  \cite[Rem.~24.19]{MT}). We remark that quasi-simple groups covering groups of Lie type in odd characteristic not covered here (in particular, covering groups by exceptional Schur multipliers) will be treated in Section \ref{sec:altspor}.

In this situation, the automorphisms of a quasi-simple group $G/Z$ for $Z\leq \zent(G)$ are those induced by the automorphisms of $G$. In particular, a diagonal automorphism of $G/Z$ is induced by a diagonal automorphism of $G$, which we recall are induced by the action of $\wt{G}:=\wt\bG^F$ on $G$, where $\iota\colon\bG\hookrightarrow\wt\bG$ is a regular embedding as discussed in Section \ref{sec:extendBDR}.
Field automorphisms are those induced by maps $F_0$ as before, and graph automorphisms are those coming from symmetries of the corresponding Dynkin diagram. 
By \cite[Thm.~2.5.12]{GLS3}, every automorphism of $G/Z$ is a product of  diagonal (which includes inner automorphisms in our definition), field, and graph automorphisms.

Let $(\bG^\ast, F^\ast)$ be dual to $(\bG, F)$ and write $G^\ast:={\bG^\ast}^{F^\ast}$. Given a semisimple element $s$ of $G^\ast$ with odd order, we write $\mathcal{E}_2(G, s)$ for the union of blocks of $\CO G e_s^{\bG^F}$. 
Note that by \cite[Prop.~2.6]{KM13}, there is some block in $\mathcal{E}_2(G, s)$ with defect group of size $|\cent_{G^\ast}(s)|_2$. Such a block is sometimes called a \emph{maximal} block in $\mathcal{E}_2(G, s)$. If $\pi\colon \bG\rightarrow\bG_{ad}\cong \bG^\ast$ is the adjoint surjection, then as in \cite[Sec.~2.2]{Ruh25}, we have an $F$-stable subgroup $\bG(s):=\pi^{-1}(\cent_{\bG^\ast}(s))\leq \bG$ dual to $\cent_{\bG^\ast}(s)$. We write $\bG^\circ(s)$ for the connected component of $\bG(s)$, which is then dual to $\cent_{\bG^\ast}^\circ(s)$, and we have $\bG(s)/\bG^\circ(s)\cong A(s)=\cent_{\bG^\ast}(s)/\cent_{\bG^\ast}^\circ(s)$. Further, write $G(s):=\bG(s)^F$ and $G^\circ(s):=\bG^\circ(s)^F$. 

Throughout, we also write $d:=d_2(q)$ for the order of $q$ modulo $4$.

\subsection{Quasi-Isolated Blocks}\label{sec:quasiisol}

Keep the situation above. In this subsection and the next, we will  analyze the situation in which $\CO G b$ is a quasi-isolated $2$-block, meaning that $\CO G b$ is in $\mathcal{E}_2(G,s)$ where $s$ is a semisimple $2'$-element such that $\cent_{\bG^\ast}(s)$ is not contained in any proper Levi subgroup  of $\bG^\ast$ (such a semisimple element is called quasi-isolated).

\begin{Lemma}\thlabel{lem:classicalqi}
    Suppose that  $\bG$ is of classical type but not of type $\tA$. Let $D$ be a defect group of a quasi-isolated $2$-block of $\bG^F$. Let $Z\leq \zent(G)$ be a $2$-group. Then $D/Z$ is neither quaternion, cyclic, nor trivial.
\end{Lemma}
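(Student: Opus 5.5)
We would reduce to the known description of quasi-isolated $2$-elements and of the defect groups of the corresponding blocks in classical groups of types $\tB$, $\tC$, $\tD$ in odd characteristic. For these types $\bG^\ast$ is of adjoint type $\tC$, $\tB$, $\tD$ respectively, and a semisimple $2'$-element $s$ is quasi-isolated only if $s^2=1$ up to centre; since $s$ has odd order and the torsion primes of $\bG^\ast$ are only $2$, the only quasi-isolated odd-order element is $s=1$. So $\CO G b$ lies in $\mathcal{E}_2(G,1)$, i.e.\ it is a unipotent $2$-block. By the result of Enguehard (and Cabanes--Enguehard) on unipotent $2$-blocks of classical groups, for $\ell=2$ and $\bG$ classical of type $\tB$, $\tC$, $\tD$, every unipotent character lies in the principal $2$-block (cf.\ \cite[Thm.~21.14]{CaEnbook}). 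Hence $b$ is the principal block and $D$ is a Sylow $2$-subgroup of $G$.

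It then suffices to show that for $G=\bG^F$ simply connected of type $\tB_n$ ($n\ge 2$), $\tC_n$ ($n\ge 2$, including $\SL_2=\mathrm{Sp}_2$? No: type $\tA_1$ is excluded by hypothesis, so $\tC_n$ with $n\geq 2$), $\tD_n$ or $\tw2\tD_n$ ($n\ge 4$), and any $2$-subgroup $Z\le Z(G)$, the group $S/Z$, with $S\in\mathrm{Syl}_2(G)$, is not cyclic, trivial, or quaternion. A convenient criterion: a $2$-group is cyclic or generalised quaternion if and only if it has a unique involution, equivalently $2$-rank at most $1$ with no Klein four subgroup. So we will exhibit in $G/Z$ an elementary abelian subgroup of order $4$. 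To do this, note $|Z|\le 4$ and $Z(G)$ is contained in a maximal torus; we use the natural subgroup $\SL_2(q)^{m}$ (central product or direct product) embedded via orthogonal decomposition of the natural module: e.g.\ $\mathrm{Sp}_{2n}(q)\ge \mathrm{Sp}_2(q)\times\mathrm{Sp}_2(q)$, and $\Spin$ groups contain commuting $\SL_2(q)$ factors corresponding to orthogonal roots ($\tA_1\times\tA_1\le \tB_2,\tD_4$ etc.). Inside the product of two commuting $\SL_2(q)$'s, take $Q_8\circ Q_8$ or $Q_8\times Q_8$; its image modulo $Z$ (which meets it in at most the central elements) still has $2$-rank at least $2$, since $(Q_8\times Q_8)/\langle(-1,-1)\rangle$ contains $\langle (i,i),(j,j)\rangle$ giving a Klein four group, and quotienting by more of the centre only enlarges $2$-rank. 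Small cases ($\tB_2=\tC_2$ with $\mathrm{Sp}_4(q)$, and $\Spin_8^{\pm}$ with $Z$ of order $4$) are checked directly using $\mathrm{Sp}_4(q)\ge \SL_2(q)\times\SL_2(q)$, whose image in $\PSp_4(q)$ contains $(Q_8\times Q_8)/\langle(-1,-1)\rangle$.

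The main obstacle is the first step: rigorously justifying that quasi-isolated $2'$-elements are trivial for these types and that all unipotent $2$-blocks are principal, including for the simply connected (not adjoint) groups and twisted type $\tw2\tD$; we would cite Bonnaf\'e's classification of quasi-isolated elements and \cite[Thm.~21.14]{CaEnbook}. Triviality of $D/Z$ is then immediate since $|S|>4\ge|Z|$.
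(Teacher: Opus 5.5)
Your proposal is correct. The first half is exactly the paper's reduction: an odd-order quasi-isolated $s$ in types $\tB$, $\tC$, $\tD$ is trivial by \cite{bon}, and \cite[Thm.~21.14]{CaEnbook} then makes the block principal, so $D$ is a Sylow $2$-subgroup of $G$. For this you only need from Bonnaf\'e that quasi-isolated elements here have $2$-power order; your sharper claim $s^2\in Z$ is not needed.

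You diverge from the paper in the group-theoretic second half.
\begin{itemize}
\item The paper argues type by type. For type $\tC$ it reads off non-quaternion from the Carter--Fong description of Sylow $2$-subgroups \cite{CarterFong}. For types $\tB$ and $\tD$ it uses the Sylow $2$-subgroup $T_2$ of a Sylow $d$-torus: $T_2$ is normal in $D$, contains $Z$, and $T_2/Z$ is non-cyclic. It excludes the cyclic and trivial cases by quoting the list of quasisimple groups with abelian Sylow $2$-subgroups.
\item You give one uniform certificate instead: a Klein four subgroup of $G/Z$ coming from $K=Q_8\times Q_8\le \SL_2(q)\times\SL_2(q)$. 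This $K$ is available as $\mathrm{Sp}_2(q)\times\mathrm{Sp}_2(q)\le \mathrm{Sp}_{2n}(q)$, and as $\Spin_4^+(q)\cong \SL_2(q)\times\SL_2(q)\le \Spin(V)$ for a non-degenerate plus-type $4$-dimensional subspace of $V$ (one exists since the Witt index is at least $2$ in all cases considered). Since the central $2$-group $Z$ lies in every Sylow $2$-subgroup, this rules out quaternion, cyclic and trivial at once.
\end{itemize}
Your route is more elementary and self-contained, needing no Sylow structure theory. The paper's route costs nothing given the literature it already cites.

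One justification needs repair. The statement ``quotienting by more of the centre only enlarges $2$-rank'' is false in general; for example, $C_2\times C_2$ modulo a $C_2$ has rank $1$. What you need, and what is true, is that $K/(K\cap Z)$ contains a Klein four group for every subgroup $K\cap Z\le Z(K)=\langle(-1,1),(1,-1)\rangle$. This is a direct check over the five cases:
\begin{itemize}
\item if $K\cap Z=1$, take $\langle(-1,1),(1,-1)\rangle$;
\item if $K\cap Z=\langle(-1,1)\rangle$, take the image of $\langle(i,1),(j,1)\rangle$, and symmetrically for $\langle(1,-1)\rangle$;
\item if $K\cap Z=\langle(-1,-1)\rangle$, take your $\langle(i,i),(j,j)\rangle$;
\item if $K\cap Z=Z(K)$, use $K/Z(K)\cong C_2^4$.
\end{itemize}
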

\begin{proof}
    By \cite{bon}, the unipotent blocks are the only quasi-isolated blocks of $G$, and by \cite[Thm.~21.14]{CaEnbook}, the only unipotent block of $G$ is the principal block. Then $D$ is a Sylow $2$-subgroup of $G$ and $D/Z$ is a Sylow $2$-subgroup of $G/Z$. Note that by our assumption, $G\neq \operatorname{Sp}_2(q)\cong \SL_2(q)$. The structure of Sylow $2$-subgroups of the classical matrix groups is given in \cite{CarterFong}. From the descriptions there, we see if $G=\operatorname{Sp}_{2n}(q)$ (that is, $\bG$ is type $\tC$), then $D/Z$ is not quaternion since $G\neq\operatorname{Sp}_{2}(q)$. If $\bG$ is type $\tB_n$ or $\tD_n$, then we may assume that $n\geq 3$, respectively $n\geq 4$. By \cite[Thm.~4.10.2]{GLS3}, $D$ contains the Sylow $2$-subgroup $T_2$ of a a Sylow $d$-torus of $(\bG, F)$, and note that $Z\leq T_2\lhd D$. Since $T_2/Z$ is not cyclic for $n\geq 3$, we have $D/Z$ is not quaternion.
    (This could also be seen by \cite[Thm.~1.1]{kl20} by comparing the number of Brauer characters.)
    
    Further, as $G\neq\SL_2(q)$, $D/Z$ is not cyclic (nor trivial), as no other such $\bG^F/Z$ has abelian Sylow $2$-subgroups (see e.g. the discussion after \cite[Prop.~2.2]{Ma14}). 
\end{proof}

We next deal with the exceptional types. For $\epsilon\in\{\pm1\}$, we denote by $\type{E}_6(\epsilon q)$ the untwisted group $\type{E}_6(q)$ when $\epsilon=1$ and the twisted group $\tw{2}\type{E}_6(q)$ when $\epsilon=-1$. Note that the Ree groups $\tw{2}\type{G}_2(q)$ are omitted here and dealt with instead in Section \ref{sec:altspor}.

\begin{Lemma}\thlabel{lem:exceptionalqi}
    Suppose that $G$ is of exceptional type $\tG_2(q)$, $\tw{3}\tD_4(q)$, $\tF_4(q)$, $\tE_6(\epsilon q)$, $\tE_7(q)$, or $\tE_8(q)$. Let $D$ be a defect group of a quasi-isolated $2$-block $\CO G b$ of $\CO G$ and let $Z\leq \zent(G)$ be a $2$-group. Then $D/Z$ is neither quaternion nor a non-trivial cyclic group. 
    
    Further, if $D/Z$ is trivial, then $\CO G b$ is a defect-zero unipotent block such that $\Irr(\CO G b)$ is comprised of a $d$-cuspidal unipotent character of $G$. (These occur in the cases $G=\tG_2(q)$, $\tF_4(q)$, $\tE_6(\epsilon q)$, and $\tE_8(q)$.)
\end{Lemma}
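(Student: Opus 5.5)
We will rely on the known classification of quasi-isolated $2$-blocks of exceptional groups, due to Enguehard (unipotent blocks) and Kessar--Malle (non-unipotent quasi-isolated blocks). Their defect groups are described there as extensions of the Sylow $2$-subgroup of a torus by (Sylow subgroups of) relative Weyl groups. Since $2$ is a bad prime for every exceptional type, the quasi-isolated elements $s$ have order dividing $|Z(\bG^*)|\cdot$(small primes), and the relevant list of $s$ is finite and explicit.

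We split the analysis by the type of $s$. First suppose $s=1$, so $\CO Gb$ is unipotent. By Enguehard's description, the $2$-blocks in $\mathcal{E}_2(G,1)$ are parametrised by $d$-cuspidal pairs $(\bL,\lambda)$ with $d=d_2(q)$. The defect group $D$ contains $Z(\bL)^F_2$ as a normal subgroup, and $D/Z(\bL)^F_2$ embeds in a Sylow $2$-subgroup of the relative Weyl group $W_G(\bL,\lambda)$. We then go through the list of $d$-cuspidal pairs.

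\begin{itemize}
\item If $\bL$ is a maximal torus (the principal series case), $D$ is a Sylow $2$-subgroup of $G$. Its Sylow $d$-torus part has $2$-rank at least $2$, even after quotienting by the $2$-group $Z\leq Z(G)$. Indeed $Z(G)_2$ is trivial except for $\tE_7$, where it has order $2$, and the rank of the Sylow $d$-torus is at least $2$ in all listed types. Hence $D/Z$ contains a noncyclic abelian subgroup, so it is neither quaternion nor cyclic.
\item If $\bL$ is a proper Levi subgroup that is not a torus, the same argument applies whenever $Z(\bL)^F_2/Z$ is noncyclic.
\item In the remaining cases $Z(\bL)^F_2/Z$ is cyclic or trivial. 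We inspect the tables: either $W_G(\bL,\lambda)$ has even order, which forces $D/Z$ to be a nonabelian group with a noncyclic abelian normal subgroup (for instance $C_{2^a}\times C_2$, or wreath type), or $\bL=\bG$ and $\lambda$ is $d$-cuspidal of defect zero.
\end{itemize}

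The last possibility gives exactly the exceptional defect-zero case in the statement. We would read off which groups admit $d$-cuspidal unipotent characters of $2$-defect zero, namely $\tG_2$, $\tF_4$, $\tE_6(\epsilon q)$ and $\tE_8$.

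Next suppose $s\neq 1$ is quasi-isolated of odd order, so $s$ has order $3$ or $5$ (and $5$ only in $\tE_8$). Kessar--Malle give the Bonnaf\'e--Rouquier reductions together with the defect groups; alternatively, one can use \thref{prop:field} and its analogue for the connected component. In each case $C_{\bG^*}(s)$ has a centre whose $2$-part gives an abelian normal subgroup of $D$ of $2$-rank at least $2$, or contains a factor such as $\SL_3$ or $\tA_2\times\tA_2$ together with Sylow structure that is again not quaternion or cyclic. We would verify case by case from the Kessar--Malle tables that $|D|\geq 8$ and that $D$ has a noncyclic abelian subgroup. We must also check that this property survives passing to $D/Z$; this is only a concern for $\tE_7$, where $|Z|=2$, and there the rank bound drops by at most one from a starting value of at least $3$.

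The main obstacle is the bookkeeping over all $d$-cuspidal pairs and quasi-isolated classes, especially for $\tE_7$ and $\tE_8$ when $d$ is $1$ or $2$. The key uniform tool is the observation that a quaternion or cyclic group has $2$-rank $1$. It therefore suffices to exhibit, in each case, an elementary abelian subgroup of order $4$ in $D$ that meets $Z$ trivially. We expect this to follow from $Z(\bL)^F_2$ or from the Sylow $d$-torus of $C^\circ_{\bG^*}(s)$ in all cases except the defect-zero $d$-cuspidal ones.
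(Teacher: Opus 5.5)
Your framework matches the paper's first step. You attach to $\CO Gb$ the quasi-central $d$-cuspidal pair $(\bL,\lambda)$ from Kessar--Malle and Enguehard, note that $Z(\bL)^F_2$ lies in a defect group, and conclude that $D/Z$ has $2$-rank at least $2$ whenever $Z(\bL)^F_2/Z$ is noncyclic.

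\textbf{The gap.} The residual cases, where $Z(\bL)^F_2/Z$ is cyclic, contain the real content of the lemma, and they are not handled. In your third bullet you assert that an even-order relative Weyl group forces $D/Z$ to have a noncyclic abelian normal subgroup. This does not follow. An extension of a cyclic $2$-group by $C_2$ whose outer elements invert the cyclic part can be generalized quaternion just as well as dihedral or semidihedral, and deciding between these is exactly what the lemma asks.

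The assertion is also false in cases that actually occur. For $\tG_2(q)$ (Case 2 of Kessar--Malle's Table 9, with $G(s)\cong\SU_3(q)$) and for $\tE_6(q)$ (Case 8 of their Table 3), the defect groups are semidihedral. Semidihedral groups have no noncyclic abelian normal subgroup.

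Your fallback criterion in the last paragraph, a Klein four-group in $D$ meeting $Z$ trivially, is the right one. But the sources you propose for it, $Z(\bL)^F_2$ or a Sylow $d$-torus of $C^\circ_{\bG^*}(s)$, are exactly what is cyclic modulo $Z$ in the residual cases. The second involution must be found outside the toral part. Likewise, your remark that for $\tE_7$ the rank ``drops by at most one from a value of at least $3$'' is unsupported.

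\textbf{What is missing.} You need an identification of $D$ up to isomorphism in each residual case, and this cannot be read off from $|W_G(\bL,\lambda)|$. The paper isolates four such cases.
\begin{enumerate}
\item[(a),(b)] These are the $\tG_2$ and $\tE_6$ cases above. Ruhstorfer's results identify $D$ with a Sylow $2$-subgroup of $G(s)$ (of $\SU_3(q)$, resp.\ of $\SL_3(q^3)$), hence semidihedral.
\item[(d)] Rows 3 and 7 of Enguehard's table for $\tE_7(q)$. Enguehard's own discussion gives $D/Z$ dihedral.
\item[(c)] Case 6 of Table 5 for $\tE_8(q)$, the most delicate. Here $Z(\bL)^F_2\cong C_{(q-1)_2}$ and $D=D_0.2$ with $D_0\cong C_{2(q-1)_2}$, and an element of $D\setminus D_0$ inverts $Z(\bL)^F_2$. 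So a quaternion $D$ is a genuine a priori possibility. It is excluded as follows: take the involution $z\in Z(\bL)^F_2$; then $D$ is a defect group of a block of $C_G(z)\cong(\SL_2(q)\circ\tE_7(q)).2$; the Gorenstein--Lyons--Solomon tables show that the element inducing $D/D_0$ can be chosen to be an involution.
\end{enumerate}
Without this kind of split-versus-nonsplit information, your case analysis cannot separate the quaternion outcome from the dihedral and semidihedral ones. So the proof as proposed does not go through.
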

\begin{proof}
As before, let $\CO G b$ be a block in $\mathcal{E}_2(G,s)$ with $s$ a quasi-isolated semisimple element of $G^\ast$ and let $D$ be a defect group for $\CO G b$. Note that $Z$ is trivial unless $G=\tE_7(q)$, in which case $|Z|$ divides $2$. 

By \cite[Thm.~7.12(d)]{KM13}, there is associated to $\CO G b$ a quasi-central $d$-cuspidal pair $(\bL, \lambda)$ such that $\bL=\cent_{\bG}(\zent(\bL)_2^F)$ and $(\zent(\bL)_2^F, b_{\bL^F}(\lambda))$ is a $b$-Brauer pair. In particular, we may assume that $\zent(\bL)_2^F$ is contained in $D$ (by e.g. \cite[Thm.~4.14]{N98}). If $D/Z$ is nontrivial cyclic or quaternion, then $\zent(\bL)_2^F/Z$ must contain a unique involution, and hence be cyclic.

Now, the structure of the possible pairs $(\bL, \lambda)$ are given in the numbered lines of Tables 2-5, 9 of \cite{KM13} if $s\neq 1$ and by the numbered lines of the tables in \cite[Sec.~3.2]{Eng00} if $s=1$. As noted in \cite{KM13, Eng00}, the cases not shown are obtained by an appropriate Ennola duality. Hence, in the following, we may consider only those listed, but recall that the situation for each block discussed also applies to an Ennola dual block. (For example, the situation of Case 8 of $\tE_6(q)$ when $d=2$ discussed below also applies to a block in $\tw{2}\tE_6(q)$ when $d=1$.)

If $s\neq 1$, then we see from this that $\zent(\bL)_2^F/Z$ is nontrivial and is further noncyclic except for the following cases:
\begin{enumerate}[(a)]
    \item Case 2 of \cite[Table 9]{KM13} for $G=\tG_2(q)$;
    \item Case 8 of \cite[Table 3]{KM13} for $G=\tE_6(q)$; and
    \item Case 6 of \cite[Table 5]{KM13} for $G=\tE_8(q)$.
\end{enumerate}
If $s=1$, that is, $\CO G b$ is a unipotent $2$-block, then we see from \cite[Sec.~3.2]{Eng00} that $D/Z$ is trivial only when $\CO G b$ is defect-zero consisting of a $d$-cuspidal unipotent character. Otherwise, we have $\zent(\bL)_2^F/Z$ is noncyclic except in the following case:
\begin{enumerate}[(a)]
\setcounter{enumi}{3}
\item Rows 3, 7 for $G=\tE_7(q)$ in \cite[Sec.~3.2]{Eng00}.
\end{enumerate}

It now suffices to know that in cases (a)-(d), $D/Z$ has at least two involutions.
In case (a), the block is maximal and, as pointed out in \cite[Thm.~6.1]{Ruh25}, $D$ is a Sylow $2$-subgroup of $G(s)\cong \operatorname{SU}_3(q)$, hence is semidihedral. (This is also noted in \cite{hiss}.)

In case (b), $\CO G b$ is again maximal, but this is one of the cases excluded in \cite[Thm.~6.1]{Ruh25}. However, \cite[Cor.~12.12]{Ruh25} yields that $D$ is still isomorphic to a Sylow $2$-subgroup of $G(s)$. Hence $D$ is isomorphic to a Sylow $2$-subgroup of $\SL_3(q^3)$, and is therefore again semidihedral. 

In case (d), the discussion following the relevant table in \cite{Eng00} yields that $D/Z$ is dihedral. 

So, finally consider case (c). From the discussion in the proof of \cite[Prop.~6.4]{KM13} we have $\zent(\bL)^F_2\cong C_{(q-1)_2}$ and $D$ is of the form $C_{2(q-1)_2}.2$. Writing $D_0:=C_{2(q-1)_2}$, we further have an element of $D\setminus D_0$ acts on $\zent(\bL)^F_2$ by inversion. Letting $z\in\zent(\bL)^F_2$ be the involution, we have $\bL^F\leq \cent_G(z)$, $z$ is central in $D$, and \cite[Prop.~4.15(c)]{AlperinBroue} yields that $D$ is also the defect group of some block of $\cent_G(z)$. By \cite[Tab.~4.5.1]{GLS3} we have $\cent_{G}(z)\cong (\SL_2(q)\circ\tE_7(q)).2=\langle \SL_2(q)\circ\tE_7(q), x\rangle$, where the central product identifies the centers of $\SL_2(q)$ and $\tE_7(q)$ and $x$ acts on $\SL_2(q)$ and $\tE_7(q)$ simultaneously as the outer diagonal automorphism. As in the proof of \cite[Prop.~6.4]{KM13}, we have $D_0\leq \SL_2(q)\circ\tE_7(q)$, and $x$ induces the group $D/D_0$. 
From \cite[Tab.~4.5.2]{GLS3} we see $x$ can be chosen as an involution, completing the proof.
\end{proof}

In the remainder of this section we show that blocks with quaternion defect groups of quasisimple groups of type $\tA$ in non-defining characteristic are Morita equivalent to principal blocks, and further describe (when combined with Section \ref{sec:altspor} below) the blocks of quasi-simple groups with quaternion defect groups. While the latter is not explicitly needed for our main results, we believe this could be of independent interest.

\subsection{Quasi-Isolated Blocks in Type $\tA$}\label{sec:typeAqi}
If $\bG$ is type $\tA_{n-1}$, the only isolated blocks are the unipotent blocks by \cite{bon}, and there is a unique unipotent block by \cite[Thm.~21.14]{CaEnbook}. That is, the principal block is the only isolated block. Hence, it will be useful to reduce this case to isolated blocks. For this, we may use the results of Bonnaf{\'e}--Dat--Rouquier \cite{BDR} (see Theorem \ref{thm-BDR}). 

Namely, let $\bH^\ast$ be an $F$-stable Levi subgroup of $\bG^\ast$ containing $\cent_{\bG^\ast}^\circ(s)$ and minimal with respect to this property, so that $s$ is isolated in $\bH^\ast$. Let $\bH$ be an $F$-stable Levi subgroup of $\bG$ dual to $\bH^\ast$. 
Then Theorem \ref{thm-BDR}
yields that $\CO G b$ is Morita equivalent to a block $\CO N c$ of $\CO N$, where $N=\bN^F$ for $\bN:=\norm_{\bG}(\bH, e_{s}^{\bH^F})\leq \norm_{\bG}(\bH)$. 
Further, these blocks have the same defect group, say $D=D(b)=D(c)$, by \cite[Thm.~7.7]{BDR}, \cite[Thm.~2]{Ruh20}.

Now, $\CO N c$ covers an isolated block of $\CO H'$, where $H':=[\bH, \bH]^F$, a product of groups of simple, simply-connected types. Then $D$ contains a defect group of such a block. Let $\CO H' c'$ be an (isolated) block of $\CO H'$ below $\CO N c$, and assume $D$ is quaternion. Then $\CO H' c'=\bigotimes \CO H_i c_i$ for $\CO H_i c_i$ isolated blocks of the direct factors $H_i$ of $H'$. Then at most one of these factors has positive defect (as otherwise $D$ contains multiple involutions). Without loss, say this is $H_1$, with block $\CO H_1c_1$ and defect group $D_1$.

Now, suppose that $s\in G^\ast$ is a quasi-isolated, odd-order semisimple element and let $\CO G b$ be  a block in $\mathcal{E}_2(G,s)$.  Write $\iota\colon \bG\rightarrow \wt\bG$ and $\iota^\ast\colon \wt\bG^\ast\rightarrow \bG^\ast$ for a regular embedding and its dual and let $\wt{s}\in\wt{G}^\ast$ with $\iota^\ast(\wt{s})=s$ and $\CO\wt{G}\wt{b}$  a block in $\mathcal{E}_2(\wt{G},\wt{s})$  above $\CO G b$. Note that in fact $\CO\wt G\wt{b}=\mathcal{E}_2(\wt{G},\wt{s})$ and $\CO G b=\mathcal{E}_2(G,s)$ since $G(s)$ and $\cent_{\wt{G}^\ast}(s)$ have unique unipotent blocks by \cite[Thm.~21.14]{CaEnbook}.  

We have $\cent_{\bG^\ast}^\circ(s)=\iota^\ast(\cent_{\wt\bG^\ast}(\wt{s}))=\cent_{\wt\bG^\ast}(\wt{s})/\zent(\wt\bG^\ast)$ by \cite[(2.2)]{bon}. Note that $G^\circ(s)$ is dual to $\cent_{ G^\ast}^\circ(s) \cong (\cent_{\wt\bG^\ast}(\wt{s})/\zent(\wt\bG^\ast))^F\cong (\cent_{\wt\bG^\ast}(\wt{s}))^F/(\zent(\wt\bG^\ast))^F=\cent_{\wt{G}^\ast}(\wt{s})/\zent(\wt{G}^\ast)$ (where the last two equalities are by \cite[Ex.~1.4.11]{GM20} since $\zent(\wt\bG^\ast)$ is connected). 
Further, in this situation,  $\cent_{\bG^\ast}^\circ(s)$ is a Levi subgroup, so $\bH^\ast=\cent_{\bG^\ast}^\circ(s)$ and $\bH=\bG^\circ(s)$. Note that $\CO N c$ lies above the principal block $\CO H b_0(H)$ of $H=G^\circ(s)$.
From \cite[Prop.~5.2]{bon} we have $A(s)\cong C_m$, where $m:=o(s)$ is odd and from \cite[Prop.~6.1]{Ruh25}, we see that a Sylow $2$-subgroup of $G(s)$ is a defect group for $\CO G b$.

Then we know that $D(b)=D(c)=D(b_0(G^\circ(s)))$. Recall $N/\bH^F=N/G^\circ(s)$ is cyclic here, so $\CO N c\cong\CO N b_0(N)$ via some linear $\beta\in\Irr(N/G^\circ(s))$ by Lemma \ref{lem:normalabelian}. Hence if $D$ is quaternion, then $\CO N b_0(N)$ has quaternion defect groups; that is, $N$ has  quaternion Sylow $2$-subgroups. Then $\CO N b_0(N)$ is Morita equivalent to one in the list in \cite[Thm.~1.1]{kl20}. Then so is $\CO N c$, hence $\CO G b$.

We next further analyze the structure of $\bH^F=G^\circ(s)$ in this situation.

\subsubsection{Strictly Quasi-Isolated Elements in Type $\tA$}\label{sec:strictlyqi}
We continue with the notation above, so $G=\SL_n(\epsilon q)$ and $\wt{G}=\GL_n(\epsilon q)\cong \wt{G}^\ast$. As noted in \cite[Thm.~A]{Ruh22}, the main result of \cite{BR} 
can be applied when $\bH^\ast$ is minimal with respect to the property $\cent_{{\bG^\ast}^F}(s)\cent_{\bG^\ast}^\circ(s)\leq \bH^\ast$. 
In this situation, $s$ is sometimes called ``strictly quasi-isolated" in $\bH^\ast$. The strictly quasi-isolated elements are studied in \cite[Sec.~5.3]{Ruh22} using the description of quasi-isolated elements in \cite{bon}. Namely, we have $o(s)=m$ divides $n$ and   
 $C:=\cent_{\wt{G}^\ast}(\wt{s})\cong \GL_{n/m}((\epsilon q)^a)^{m/a}$ for some $a$ dividing $m$. 
 
 Then if the Sylow $2$-subgroup of $G^\circ(s)/Z$ for some $Z\leq \zent(G)$ has a unique involution (i.e. is quaternion or cyclic), 
we know $C\cong \GL_1((\epsilon q)^n)$ with $o(s)=n$ or $\GL_2((\epsilon q)^{n/2})$ with $o(s)=n/2$. (Recall that $\GL_1((\epsilon q)^{n/2})^2$ cannot occur since $o(s)$ is odd.)

If we are in the above situation and $C=\GL_1((\epsilon q)^n)$, then note that $|C/\zent(\wt G^\ast)|$ is odd and $\CO G b$ is defect zero.

If instead $C=\GL_2((\epsilon q)^{n/2})$, then $|C/\zent(\wt{G}^\ast)|_2=(q^{n/2}-\epsilon^{n/2})_2(q^n-\epsilon^n)_2/(q-\epsilon)_2=(q^n-\epsilon^n)_2=(q^2-1)_2$ since $n/2$ is odd. Here $D$ (which we recall is a Sylow $2$-subgroup of $G^\circ(s)$) is isomorphic to a Sylow $2$-subgroup of $\SL_2(q^{n/2})$, and hence quaternion. 
In particular, the case that $D/Z$ is nontrivial cyclic does not occur, as then $D$ would be abelian.

Note that if $n/2=1$, then $C=\GL_2(\epsilon q)=\wt{G}^\ast$ (i.e. $\wt{s}\in \zent(\wt{G}^\ast)$) and $\Irr(\CO\wt G \wt b) =\Irr(\CO \wt{G} b_0(\wt{G}))\otimes \hat{\wt{s}}$, and this recovers the case that $\CO G b=\CO G b_0(G)$ is the principal block of $\CO\SL_2(q)$.

\subsection{Reducing to Quasi-Isolated Blocks}\label{sec:reducetoqi}
We return to the more general situation of Section \ref{sec:setup}, and now suppose $\bH^\ast$ is an $F$-stable Levi subgroup of $\bG^\ast$, minimal such that it contains $\cent_{{\bG^\ast}^{F^\ast}}(s)\cent_{\bG^\ast}^\circ(s)$. We argue here similar to Subsection \ref{sec:typeAqi}, but now using \cite{BR} to reduce to the case of quasi-isolated blocks (rather than isolated blocks).  Note that by definition, $s$ is then (strictly) quasi-isolated in $\bH^\ast$. Let $\bH$ be an $F$-stable Levi subgroup of $\bG$ dual to $\bH^\ast$. Then the main result of \cite{BR} yields that $\CO G b$ is Morita equivalent to a block $\CO \bH^F c$ of $\bH^F$. Further, these blocks have the same defect group, say $D=D(b)=D(c)$, by \cite[Thm.~7.7]{BDR}, \cite[Thm.~2]{Ruh20}, and \cite[Thm.~7.14]{KM13}. 

Note that $[\bH, \bH]$ is semisimple and simply-connected by \cite[Props.~6.20(c),~12.14]{MT}. Then by \cite[Cor.~1.5.16]{GM20}, $H':=[\bH, \bH]^F$ is isomorphic to a direct product $\prod_{i=1}^k \bH_i^{F_i}$, where each $\bH_i$ is simple, simply connected and $F_i$ is some power of $F$. 

In this situation, $\CO \bH^F c$ covers a quasi-isolated block of $\CO H'$, where we define $H':=[\bH, \bH]^F$. Then $D$ contains a defect group of such a block. Let $\CO H'c'$ be a (quasi-isolated) block of $\CO H'$ below $\CO \bH^F c$, and assume $D(b)=D(c)$ is quaternion. Then $\CO H' c'=\bigotimes \CO H_ic_i$ where each $\CO H_ic_i$ is a quasi-isolated block of the direct factors $H_i:=\bH_i^{F_i}$ of $H'$. Then at most one of these factors has positive defect (as otherwise $D(c')$ and hence $D(c)$  contains multiple involutions). Without loss, say this is $\CO H_1$, with block $\CO H_1c_1$ and defect group $D_1$. Note then that $D_1$ contains a unique involution.

Then by Section \ref{sec:typeAqi} and Lemmas  \ref{lem:classicalqi} and \ref{lem:exceptionalqi}, we have $H_1$ is $\SL_n(\epsilon q)$ for some $q$ and $\CO H_1c_1$ must be as described in Section \ref{sec:strictlyqi}. Further, from these we also see that if $k>1$, then each $H_i$ for $2\leq i\leq k$ is of exceptional type $X(q_i)$ and the corresponding block $\CO H_ic_i$ is a defect-zero block corresponding to a $d_i$-cuspidal unipotent character, where $d_i:=d_2(q_i)$, or is of type $\type{A}_{n-1}(\epsilon q_i)$ with $C=\GL_1(\epsilon q_i^n)$ as discussed in Section \ref{sec:strictlyqi}. Further, if $\bG\neq \bH$, then Lemma \ref{lem:exceptionalqi} gives that either each of these $H_i$ for $2\leq i\leq k$ is of the latter type $\tA$ form, or $X=\tE_6$ or $\tw{2}\tE_6$ and therefore $G\in\{\tE_7(q), \tE_8(q)\}$.

Now, if $K$ is quasi-simple of the  form $G/Z$ for some $Z\leq \zent(G)$ (which we may assume by \cite[Thm.~9.9c]{N98} is a $2$-group), then note that $Z\leq \bH^F$ and that $DZ/Z$ is defect group for a block $\CO[G/Z]\bar b$ dominated by $\CO G b$. As $Z$ is central, further any block has such a defect group (see e.g. \cite[Lem.~17.2]{CaEnbook}). Then again from Lemmas \ref{lem:classicalqi} and \ref{lem:exceptionalqi} and Section \ref{sec:typeAqi}, we have $H_i$ and $\CO H_ic_i$ are of the form in the previous paragraph. From this we see:

\begin{Proposition}\label{prop:structurequasisimplegenquat}
     Let $Z\leq \zent(G)$ be a central $2$-group and let $\CO[G/Z]\bar b$ be a $2$-block of $\CO[G/Z]$ with  quaternion defect groups. Let $\CO[\bH^F/Z]\bar c$ be the block of $\CO[\bH^F/Z]$ (with the notation above) such that the corresponding blocks $\CO G b$ and $\CO \bH^F c$ of $\CO G$ and $\CO \bH^F$ are Bonnaf{\'e}--Rouquier correspondents. Then 
    \begin{enumerate}[(i)]
        \item $[\bH, \bH]$ has exactly one $F$-simple component $\bH_1$ such that a block $\CO H_1 c_1$ of $\CO H_1$, where $H_1=\bH_1^F$, covered by $\CO \bH^F c$ has nontrivial defect groups. Here $\bH_1^F\cong \SL_n(\epsilon q_1)$ for some $n$ such that $n\equiv 2\pmod 4$ and some power $q_1$ of $p$, and $\CO H_1 c_1$ has quaternion defect groups and is as described in Section \ref{sec:strictlyqi}.
        \item If $G\neq \tE_8(q)$, then the remaining $F$-simple components of $[\bH, \bH]$ are again of type $\tA$ and contribute defect-zero blocks as in Section \ref{sec:strictlyqi}.
        \item If $G=\tE_8(q)$, then either (ii) still holds or $\bH$ is type  $\tA_1.\tE_6$. Here the $\tE_6(\epsilon q_2)$-component of $[\bH, \bH]^F$ contributes a defect-zero block containing a $d_2(q_2)$-cuspidal unipotent character of $\tE_6(\epsilon q_2)$.
    \end{enumerate}\end{Proposition}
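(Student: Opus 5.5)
The proof collects the analysis of Subsection~\ref{sec:reducetoqi}; the main work is bookkeeping of which quasi-isolated blocks can occur as tensor factors. First I would pass from $G/Z$ to $G$. Since $Z$ is a central $2$-group, the block $\CO[G/Z]\bar b$ is dominated by a unique block $\CO Gb$ with defect group $D$ such that $DZ/Z$ is quaternion. By \cite[Thm.~7.7]{BDR} and \cite[Thm.~2]{Ruh20}, $D$ is also a defect group of the Bonnaf\'e--Rouquier correspondent $\CO\bH^F c$. Let $\CO H'c'=\bigotimes_i \CO H_ic_i$ be a block of $H'=[\bH,\bH]^F$ covered by $\CO\bH^Fc$. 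Each $\CO H_ic_i$ is quasi-isolated, because $s$ is strictly quasi-isolated in $\bH^\ast$. Their defect groups $D_i$ multiply to a defect group of $\CO H'c'$, which is conjugate to $D\cap H'$. Since $DZ/Z$ has a unique involution, at most one factor can contribute a defect group whose image modulo $Z\cap H_i$ is nontrivial. Call this factor $H_1$. I also have to rule out the case where every factor has trivial defect. Then $D\cap H'$ would be trivial and $D$ would embed in $\bH^F/H'$, which is abelian; this contradicts $DZ/Z$ being quaternion.

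For part (i), apply Lemmas~\ref{lem:classicalqi} and~\ref{lem:exceptionalqi} to $H_1$ with the central $2$-subgroup $Z\cap H_1$. For $H_1$ classical but not of type $\tA$, or exceptional, the image of $D_1$ cannot be quaternion or nontrivial cyclic. Its image is also nontrivial, since it has to carry the quaternion structure, given that $\bH^F/H'$ is abelian. So $H_1$ is of type $\tA$, say $\SL_n(\epsilon q_1)$. Subsection~\ref{sec:strictlyqi} then forces $C\cong\GL_2((\epsilon q_1)^{n/2})$ with $n/2$ odd, so $n\equiv 2\pmod 4$, and $D_1$ is quaternion. One point needs care here: the image of $D_1$ must be quaternion rather than cyclic. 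Subsection~\ref{sec:strictlyqi} excludes the nontrivial cyclic case because $D_1$ would then be abelian.

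For parts (ii) and (iii), each remaining factor $H_i$, $i\ge2$, gives a block whose defect group is trivial modulo the center. By Lemma~\ref{lem:classicalqi}, classical types other than $\tA$ cannot give trivial $D/Z$. By Lemma~\ref{lem:exceptionalqi}, an exceptional factor gives a defect-zero $d_i$-cuspidal unipotent block, and only for $\tG_2,\tF_4,\tE_6,\tE_8$. A type $\tA$ factor gives the case $C=\GL_1$ of Subsection~\ref{sec:strictlyqi}.

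The obstacle I expect is (iii): showing that an exceptional component can appear only when $G=\tE_8(q)$ and only in the form $\tA_1.\tE_6$. I would argue by ranks. $\bH$ is a proper Levi subgroup of $\bG$ containing $\bH_1$ of type $\tA_{n-1}$ with $n\ge2$. A $\tG_2$, $\tF_4$ or $\tE_8$ component can only occur as a Levi subgroup of a group of the same type, which forces $\bH=\bG$, a contradiction. Moreover, $\tG_2$ and $\tF_4$ are never proper Levi components. An $\tE_6$ component together with an $\tA_1$ factor needs rank at least $8$, which happens only in $\tE_8$; this matches the remark made before the proposition. In $\tE_7$ the only Levi subgroup containing $\tE_6$ is $\tE_6T_1$, which leaves no room for $\bH_1$. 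This completes (ii) for $G\ne\tE_8(q)$. It also identifies the $\tE_8$ exception in (iii) as having an $\tE_6(\epsilon q_2)$ component carrying a $d_2(q_2)$-cuspidal unipotent character of defect zero.
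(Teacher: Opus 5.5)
Your overall route is the paper's (the discussion in Subsection~\ref{sec:reducetoqi} leading up to the statement). It consists of the Bonnaf\'e--Rouquier reduction, the tensor decomposition of the block of $H'=[\bH,\bH]^F$, and Lemmas~\ref{lem:classicalqi}, \ref{lem:exceptionalqi} and Subsection~\ref{sec:strictlyqi} applied factor by factor. Your Dynkin-diagram argument for (iii), and your remark that not all factors can have trivial defect because $\bH^F/H'$ is abelian, make explicit what the paper leaves implicit.

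However, one step fails as written: ``since $DZ/Z$ has a unique involution, at most one factor can contribute a defect group whose image modulo $Z\cap H_i$ is nontrivial.'' When $Z=1$ this is fine, because $D\cap H'\cong\prod_i D_i$ is a direct product. For $Z\neq 1$, though, $Z\cap H'$ need not equal $\prod_i(Z\cap H_i)$. For instance, $-I_4\in\SL_4(q)$ lies in $\SL_2(q)\times\SL_2(q)$ but in neither factor. So the images $D_iZ/Z\cong D_i/(Z\cap H_i)$ of different factors can meet nontrivially in $D/Z$: two nontrivial commuting cyclic images could both contain the unique involution of $D/Z$. The uniqueness of the involution alone therefore does not single out $H_1$. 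Your later treatment of the factors $i\geq 2$ assumes their images are trivial, so it would not see such a factor.

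The fix uses only tools you already cite, but in the reverse order: classify every factor first. The image $D_iZ/Z$ is a subgroup of the quaternion group $D/Z$, hence trivial, cyclic or quaternion. It is isomorphic to $D_i/(Z\cap H_i)$, where $Z\cap H_i$ is a central $2$-subgroup of $H_i$. So the cited results apply to each factor:
\begin{itemize}
\item Lemma~\ref{lem:classicalqi} excludes classical factors not of type $\tA$.
\item Lemma~\ref{lem:exceptionalqi} forces exceptional factors to have defect zero.
\item Subsection~\ref{sec:strictlyqi} shows that a type-$\tA$ factor has trivial image ($C\cong\GL_1$, defect zero) or quaternion image ($C\cong\GL_2$); the nontrivial cyclic case is impossible.
\end{itemize}
Hence every image is trivial or quaternion. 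Two distinct factors cannot both have quaternion image, since their images centralise each other. By \thref{normal_subgroup_quaternion}, a noncyclic subgroup $Q$ of $D/Z$ satisfies $C_{D/Z}(Q)=Z(Q)$, which has order $2$. This is in effect what the paper's terse ``again from Lemmas~\ref{lem:classicalqi} and~\ref{lem:exceptionalqi}'' does for general $Z$. With this reordering the rest of your argument goes through.
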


\section{Groups of Lie type as index-$2$ subgroups}\label{sec:AD} 
Throughout this section,  $q$  will denote an odd prime power, and $\epsilon \in \{\pm 1\} $.   Recall that if $\epsilon =1 $, then   $\GL_n (\epsilon q)$  is   the general linear group of  an $n$-dimensional vector space over ${\mathbb F}_{q} $  and  if $\epsilon =-1$, then $\GL_n(\epsilon q)= \GU_n(q)$ is the  isometry group of a non-degenerate  unitary form on  an $n$-dimensional vector space 
over ${\mathbb F}_{q^2} $. Recall that $\SU_2(q)  \cong  \SL_2(q)$. By an abuse of notation, we  will use  $\SL_2(q)$  to  denote the  commutator subgroup  of  $ \GL_2(\epsilon q) $.

The  aim of this section is to prove \thref{lietype}, which studies the case of  groups of Lie type in the context of \thref{main-reduction}.  Throughout this section,  as in Section~\ref {sec:lie-non}, $\bG$ will denote a  simple, simply-connected algebraic group with a Frobenius morphism $F$  with respect to an ${\mathbb F}_q$-structure.

\subsection{Reducing to types $\type{A}_n$ and $\type{D}_n$}
We first further refine \thref{main-reduction} as a consequence of \thref{lem:exceptionalqi}.
 \begin{Corollary}\thlabel{cor:notB_nC_nF4G2E8}   Let $(H,c)$ be a Morita minimal pair such that $H$ is a finite group and $c$ is a block idempotent of $\CO  H$ with defect group $P \cong Q_{2^n}$ for some $n \geq 3$, and such that $H$ is either quasisimple or there is $N\lhd H$ with $N$ quasisimple, $[H:N]=2$, $H=NP$, and $N\cap P\cong Q_{2^{n-1}}$ (that is, $H$ and  $c$ are as in parts (i) or (ii)  of 
\thref{main-reduction}). In the first case,  set  $ N:=H$.  Suppose that  $N= G/Z$, where $G=\bG^F$  and $Z \leq  Z(G)$.
If $H=N$,  then  $\bG$ is of type $\type{A}_n$, and  $\OH c $ is Morita equivalent to a principal block.  If   $ [H:N]=2 $,  then  $\bG$ is  of type $\type{A}_n$, $\type{D}_n$ or $\type{E}_6$.
\end{Corollary}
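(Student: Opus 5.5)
Write $\bar c$ for a block of $\OH$ covered by $c$, so that $\bar c$ is a block of $\CO N$ with $N=G/Z$. By Morita minimality of $(H,c)$, $\bar c$ is $H$-stable; otherwise a Fong reduction (\thref{remark:fongcorrespondent}) would give a smaller representative. The conjugation action of $H$ on $N$ is given by automorphisms of $N$, each a product of diagonal, field and graph automorphisms. When $[H:N]\le 2$, the generator $x$ of $H/N$ (which we may take in $P$, so it is a $2$-element) acts as a composition of a diagonal automorphism, a field automorphism $F_0$ of $2$-power order, and possibly a graph automorphism. The plan is a case analysis on the type of $\bG$, using minimality to force the covered block of $N$ to be quasi-isolated, and then using the known classification of quasi-isolated blocks.

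First, suppose the outer part of $x$ involves no graph automorphism. This holds automatically unless $\bG$ has type $\tA_n$, $\tD_n$ or $\tE_6$, since otherwise there are no graph automorphisms for $q$ odd; we ignore the exceptional graph automorphisms of $\tB_2$, $\tF_4$, $\tG_2$, which need characteristic $2$ or $3$. Here I would invoke \thref{cor:minisquasi-isolated}: since $(H,c)$ is Morita minimal and $x$ acts as (diagonal)$\circ F_0$, the block $\CO N\bar c$ is quasi-isolated. Lifting to $G$, we get a quasi-isolated block $\CO Gb$ whose defect group $D$ satisfies $DZ/Z\cong P\cap N$, which is $Q_{2^n}$ if $H=N$ and $Q_{2^{n-1}}$ if $[H:N]=2$. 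In particular $D/Z$ is quaternion.

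By \thref{lem:classicalqi}, such $D/Z$ cannot be quaternion in types $\tB,\tC,\tD$. The type $\tC$ case with $\operatorname{Sp}_2=\SL_2$ is type $\tA_1$ and is handled there. By \thref{lem:exceptionalqi} it also cannot be quaternion for exceptional $G$ (the Ree groups are excluded here). This leaves type $\tA$, which proves the $[H:N]=2$ assertion in the absence of a graph automorphism. If a graph automorphism is present, $\bG$ already has type $\tA_n$, $\tD_n$ or $\tE_6$ (or $\tD_4$ with triality, whose $2$-part is again a graph involution of $\tD_4$). Either way the second assertion holds.

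For $H=N$, the action is trivial, so \thref{cor:minisquasi-isolated} applies with trivial $x$ and $\bG$ is of type $\tA$. The block is quasi-isolated, so by Section~\ref{sec:typeAqi} it is Morita equivalent, via Theorem~\ref{thm-BDR}(2), to $\CO N' c'$. Here $N'/G^\circ(s)$ is cyclic, and $c'$ covers the principal block of $G^\circ(s)$. To pass to $G/Z$, I would use Proposition~\ref{example2}, or \thref{cor:extendclassical2} with trivial $H_1/H$, which handles the quotient by $Z$ via Lemma~\ref{lem:centraldomination}. Applying Proposition~\ref{prop:frobeniusnormal} with the principal block of $L$ and $N/L$ a cyclic $2'$-group, this gives a Morita equivalence with a principal block.

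The main obstacle is the careful passage to the quotient $G/Z$ while keeping the defect group quaternion. In particular, one must check that $D/Z$ being quaternion is exactly what \thref{lem:classicalqi} and \thref{lem:exceptionalqi} exclude, including the edge case $[H:N]=2$ with $n=3$, where $N\cap P$ would be cyclic; there the block of $N$ is nilpotent, which is excluded by minimality via \thref{prop cover nilpotent}.
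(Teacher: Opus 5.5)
Your proposal is correct and follows the paper's proof. \thref{cor:minisquasi-isolated} combined with \thref{lem:classicalqi} and \thref{lem:exceptionalqi} forces type $\tA$ when $H=N$. Then \thref{cor:extendclassical2} with trivial $H_1/H$ gives the principal-block claim directly, so your detour through Section~\ref{sec:typeAqi} is not needed. For $[H:N]=2$ the same argument applies whenever no graph automorphism occurs.

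The one slip is your dismissal of exceptional graph automorphisms. Since $q$ odd allows characteristic $3$, $\tG_2(3^a)$ does have one. It is itself an endomorphism $F_0$ of $\bG$ with a power equal to $F$, so it is already covered by \thref{prop:field} and \thref{cor:minisquasi-isolated}; this is how the paper's appeal to ``diagonal composed with $F_0$'' handles it.
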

\begin{proof} If  $H=N$, then  the first  assertion  is immediate from  \thref{cor:minisquasi-isolated}, \thref{lem:classicalqi}  and \thref{lem:exceptionalqi}. The second assertion then follows from  \thref{cor:extendclassical2}.
Now suppose that    $N$ is of index $2$ in $H$. Let  $H=N \langle  x \rangle $,  for  $x$ a  $2$-power element.   If $\bG$ is  not of type  $\type{A}_n$, $\type{D}_n$ or $\type{E}_6$,  then  $x$  acts   as the composition  of a diagonal automorphism  with $F_0$, where  $F_0$ is as in \thref{prop:field} (see \cite[Thms.~2.5.1,~2.5.12]{GLS3}).  Again, the result follows  from \thref{cor:minisquasi-isolated}, \thref{lem:classicalqi}  and \thref{lem:exceptionalqi}.
 \end{proof}

\begin{Proposition} \thlabel{E6}  Let $G= \bG^F =  \type{E}_{6,\mathrm{sc}}(\epsilon q) $,   $Z \leq  Z(G)$, $ N= G/Z$, and  $H=N\langle x\rangle$, with   $[H:N]=2$.  
Let $c$  be a  block  idempotent  of    $H$ with  a  defect group $Q_{2^n} $, $ n\geq 4 $  and  covering a  stable block of $N$. Then $\CO Hc$ is nilpotent.
\end{Proposition}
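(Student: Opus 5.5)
The strategy is to show that the block $\CO Nb$ of $N$ covered by $\CO Hc$ has cyclic defect groups, hence is nilpotent; then, exactly as in the Corollary following \thref{normal_subgroup_quaternion}, $\CO Hc$ is nilpotent by \cite[Thm.~8.12.1]{LiBookII}. Since $H/N$ is a $2$-group and $c$ covers an $H$-stable block $b$ of $\CO N$, we have $H=NP$ for a defect group $P\cong Q_{2^n}$ of $\CO Hc$. Moreover $P\cap N$ is a defect group of $\CO Nb$ and is normal of index $2$ in $P$. By \thref{normal_subgroup_quaternion}, $P\cap N$ is either cyclic or isomorphic to $Q_{2^{n-1}}$. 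It therefore suffices to rule out $P\cap N\cong Q_{2^{n-1}}$ with $n-1\geq 3$.

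Assume, for a contradiction, that $\CO Nb$ has quaternion defect group $Q_{2^{n-1}}$. The first step is to pull this back to $G$. Since $|Z(G)|$ divides $3$, $Z$ is a $2'$-group, so $\CO Nb$ is isomorphic to a block $\CO G\tilde b$ with the same defect group. Next, $\tilde b$ lies in some $\mathcal{E}_2(G,s)$. Applying the reduction of Section~\ref{sec:reducetoqi} (Bonnaf\'e--Rouquier), we get a Levi $\bH$ of $\bG$ and a block of $\bH^F$ with the same defect group. Proposition~\ref{prop:structurequasisimplegenquat} then describes its structure: $[\bH,\bH]$ has a unique component $\bH_1$ carrying the defect, with $\bH_1^F\cong \SL_m(\epsilon' q_1)$ and $m\equiv 2\pmod 4$. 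The other components carry defect-zero blocks. Part (iii) of that proposition does not apply, since $G\neq \tE_8(q)$. Also, the quasi-isolated case $\bH=\bG$ is excluded by \thref{lem:exceptionalqi}. So $\bH$ is a proper Levi of $\tE_6$ whose derived subgroup contains a type $\tA_{m-1}$ factor with $m\in\{2,6\}$; since a proper Levi of $\tE_6$ has rank at most $5$, in fact $m=2$ and $\bH_1$ is of type $\tA_1$.

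The second step exploits the outer element $x$. Because $H=NP$ and $P\not\leq N$, the element $x$ can be taken to be a $2$-element acting on $N$ by a non-inner automorphism of $2$-power order. For $\tE_6$ the diagonal automorphisms have order dividing $3$, so $x$ induces a graph automorphism, or a graph composed with a field automorphism, possibly combined with an inner/diagonal part. If $x$ acted as a diagonal-times-field automorphism, then \thref{prop:field} together with \thref{cor:minisquasi-isolated}-type arguments would apply. Instead, I will compare with the structure of $P$ directly. Write $P=\langle P\cap N, y\rangle$, and let $z$ be the unique involution of $P$; it lies in $P\cap N$. Centralising $z$ gives a $b$-Brauer pair, and by \thref{G_and_C_G(z)} the block of $C_H(z)$ also has defect group $P$. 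In $\tE_6(\epsilon q)$ there is a single class of involutions, with $C_G(z)\cong (\SL_2(q)\circ \Spin^{\pm}_{10}(q)).(q-\epsilon)$-type by \cite[Tab.~4.5.1]{GLS3}. The block of $C_N(z)$ then has quaternion defect group meeting $\SL_2(q)$ in $Q_{2^{n-1}}$ and meeting the $\Spin_{10}$ factor centrally. The graph automorphism normalises $C_G(z)$ and acts on the $\Spin_{10}$ factor as a graph automorphism, while acting on the $\SL_2(q)$ factor as an inner/diagonal automorphism.

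The key claim is therefore that, in $C_H(z)$, the extension of the defect group $P\cap N\cong Q_{2^{n-1}}\le \SL_2(q)$ by $y$ splits off a component centralising $\SL_2(q)$ modulo inner automorphisms. This would force $P$ to contain a noncyclic abelian subgroup, or to be of the form $Q_{2^{n-1}}\times C_2$ or $Q_{2^{n-1}}\circ C_4$, contradicting that $P$ is quaternion. Hence $P\cap N$ is cyclic and $\CO Hc$ is nilpotent.

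The main obstacle is precisely this last step: controlling how $x$ acts on the $\SL_2$-part carrying the defect. I would try first to reduce via Proposition~\ref{prop:structurequasisimplegenquat} and the local structure at $z$ to an explicit computation in $(\SL_2(q)\circ\Spin_{10}^\epsilon(q))\langle x\rangle$. There I would check that no element outside $N$ can extend $Q_{2^{n-1}}$ to a generalised quaternion group $Q_{2^n}$ while keeping a unique involution, since the graph automorphism fixes an involution of the $\Spin_{10}$ factor's coset.
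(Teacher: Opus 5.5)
Your proposal has a genuine gap. The step meant to do the work (your ``key claim'') is not proved, and the local structure you plan to prove it in is wrong.

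In $\tE_6(\epsilon q)$ there are two classes of involutions, not one. The centraliser $\bC=C_{\bG}(u)$ is one of the following.
\begin{enumerate}
\item[(a)] Type $\tA_1\tA_5$: here $\bC^F$ contains $\SL_2(q)\circ\SL_6(\epsilon q)$ with index $2$.
\item[(b)] Type $\tD_5T_1$: here $\bC^F$ contains $[\bC,\bC]^FZ^{\circ}(\bC)^F$ as a normal subgroup of index $\gcd(q-\epsilon,4)$. In this product, $[\bC,\bC]^F$ is simply connected of type $\tD_5$ and $Z^{\circ}(\bC)^F\cong C_{q-\epsilon}$.
\end{enumerate}
There is no centraliser of shape $\SL_2(q)\circ\Spin^{\pm}_{10}(q)$.

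So your assertions that the defect group of the Brauer correspondent meets an $\SL_2(q)$ factor in $Q_{2^{n-1}}$, and that $x$ acts on that factor as an inner/diagonal automorphism, have no basis. In case (b) there is no $\SL_2$ factor at all. In case (a) the correct argument shows $P\cap N\not\le \bL_1^F\bL_2^F/Z$, so $P\cap N$ cannot lie in the $\SL_2(q)$ factor.

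Your first step does not rescue this. It is never used afterwards, and its rank argument is wrong: $\tA_5$ is a Levi subgroup of $\tE_6$, so $m=6$ is not excluded.

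More broadly, trying to prove that $P\cap N$ is cyclic is stronger than the statement needs. It is what pushes you into analysing the automorphism induced by $x$, which you cannot complete.

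The idea you are missing is that the action of $x$ is irrelevant. The argument runs as follows.
\begin{enumerate}
\item Let $u$ be the involution of $P$ and $(\langle u\rangle,d)$ a $c$-Brauer pair. Since $C_H(u)=C_N(u)P$, $d$ equals a $C_H(u)$-stable block $e$ of $C_N(u)$ with defect group $P\cap N$.
\item In both involution classes, $C_N(u)$ has a characteristic subgroup $K$ of index $2$. In case (a) it is $\bL_1^F\bL_2^F/Z$. In case (b) it is $[\bC,\bC]^FZ^{\circ}(\bC)^F/Z$; here the quaternion defect forces $q\equiv 3\epsilon \pmod 4$, which makes the index $2$.
\item $C_N(u)$ acts on $K$ by diagonal automorphisms, so every $2$-block of $K$ is $C_N(u)$-stable by \thref{lem-primediagonalstable}.
\item Hence $e$ coincides with the block of $K$ it covers, which forces $C_N(u)=K(P\cap N)$.
\item Thus $P\cap K$ is normal of index $4$ in $P$, hence cyclic by \thref{normal_subgroup_quaternion}.
\item So the block of $K$ is nilpotent, hence so is $d$ because $C_H(u)/K$ is a $2$-group, and $\CO Hc$ is nilpotent by \thref{G_and_C_G(z)}.
\end{enumerate}
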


\begin{proof} Let $P$ be a defect group of    $\CO Hc $.   Then $P\cap N  \cong Q_{2^{n-1}}$  is a defect group   of $\CO Nc$  and   $P/P\cap N \cong H/N$. In particular, we may assume that $P$ contains  $x $.    
  Let $u \in P\cap N$ be the unique involution  of $P$ and note  that since $x \in P$,  $C_H(u)  =  C_N(u)  \langle x\rangle $. Let $d$ be a block of $C_H(u)$  such that $(\langle u \rangle, d )$ is  a $c$-Brauer pair. Then $P$  is a defect group of $d$.  Let  $e$  be a  block of  $\CO C_N(u) $  covered by $d$.  Since  $ C_H(u) =C_N(u) P$,  $e$ is  $C_H(u)$-stable (and hence $e=d$)  and  $ P \cap  N$  is a defect group of $\CO C_N(u) e$. 
     Since $Z$   is a $3$-group,   $u$  lifts to an involution, say  $\tilde u \in G$    and  $C_N(u)  =C_G(\tilde u)/Z $.
  
 Set $\bC=C_{\bG}(\tilde u)$.  By \cite[Tab.~4.5.2]{GLS3} (see also the tables available at \cite{Luebeck}), there are two $G$-classes of involutions in $G$.
Suppose first that  $\bC= [\bC,\bC] =\bL_1 \bL_2$,  where $\bL_1 $ is of simply-connected type $\type{A}_5$ and $ \bL_2 $  is  simply connected of type $\type{A}_1$, $\bL_1 \cap \bL_2 \cong C_2$,  $ Z \leq  \bL_1$, and  $\bL_1^F\bL_2^F$ is  of index $2$ in $\bC^F$.  We claim that  $\bL_1^F\bL_2^F$ is characteristic  in $\bC^F$.  Indeed, we have $$[\bL_1^F, \bL_1^F]  [\bL_2^F, \bL_2^F] \leq [\bC^F, \bC^F] \leq \bL_1^F \bL_2^F \leq \bC^F. $$   Further, $\bL_1^F$ is perfect and unless  $\bL_2^F \cong \SL_2(3)$ (which  only happens if  $\epsilon q = 3$), so is $\bL_2^F$.  If $\bL_2^F \cong \SL_2(3)$, then $[\bL_2^F, \bL_2^F]$ is of index $3$ in $\bL_2^F$  and consequently,  since  $\bL_1^F $ and $\bL_2^F$ intersect in a group of order $2$,  $[\bL_1^F, \bL_1^F][\bL_2^F, \bL_2^F] =\bL_1^F [\bL_2^F, \bL_2^F]$ is of index $3$  in $\bL_1^F\bL_2^F$.   Thus, either  $ \bL_1^F \bL_2^F =[\bC^F, \bC^F]$ or  $[\bC^F, \bC^F]= \bL_1^F [\bL_2^F, \bL_2^F]  $ is of index $3$  in $\bL_1^F \bL_2^F$. Since groups of order $6$ have  a unique Sylow $3$-subgroup, the claim follows.  Further, since $\bC^F$ acts as  diagonal automorphisms on  each of $\bL_1^F$ and $\bL_2^F$, every $2$-block  of  $\bL_1^F\bL_2^F$  is  $\bC^F$-stable by \thref{lem-primediagonalstable}.   It follows that $\bL_1^F\bL_2^F/Z$ is a characteristic  subgroup of  index $2$ of $\bC^F/Z=C_N(u)$ and  that every block of  $\bL_1^F \bL_2^F/Z$ is $C_N(u)$-stable. 
In particular, $\bL_1^F \bL_2^F$ is  normal  of index $4$ in $C_H(u)$. Let $e_0$ be the unique  block of  $\CO \bL_1^F \bL_2^F/Z$ covered by $e$ (so $e_0=e$). Then  $P \cap \bL_1^F \bL_2^F/Z $ is a  defect group of $\CO (\bL_1^F \bL_2^F)e_0$ and since $e=e_0$,  $P \cap  C_N(u)$ is not contained in $\bL_1^F\bL_2^F/Z$. It follows  that  $P \cap \bL_1^F \bL_2^F/Z $ is  a normal subgroup of index  $4$ of   $P$.  Thus by \thref{normal_subgroup_quaternion}  $P \cap   \bL_1^F \bL_2^F/Z  $  is cyclic. This means that  $\CO (\bL_1^F \bL_2^F/Z )e_0$ is  nilpotent, and since  $C_H(u) /(\bL_1^F \bL_2^F/Z ) $ is   a  $2$-group, it follows that   $\CO C_H(u)d$ is nilpotent, and hence by \thref{G_and_C_G(z)} $ \CO Hc$  is  nilpotent.

Next suppose that $[\bC,\bC]$  is simple  of  simply connected type $\type{D}_5$.  Then  $\bC = [\bC, \bC] Z^{\circ}(\bC)$,     $[\bC, \bC]^F \cong \type{D}_{5,\mathrm{sc}}(q)$, $ Z^{\circ} (\bC)^F  \cong  C_{q-\epsilon}$  and  
$[\bC, \bC]^F \cap Z^{\circ}(\bC)^F \cong  C_{\mathrm{gcd} (q-\epsilon, 4)}  \cong    \bC^F/ ([\bC, \bC]^F Z^{\circ}(\bC)^F)$.
Further, since  the block of $\CO \bC^F$   dominating $\CO C_N(u) e$   has  quaternion defect groups and $Z^\circ(\bC)^F$ is central in $\bC^F$,  it follows that $q\equiv \epsilon 3 \pmod 4$ and hence that   $([\bC^F , \bC^F]Z^{\circ} (\bC)^F )/[\bC, \bC]^F$ is the unique  subgroup of index $2$  in 
$\bC^F/[\bC, \bC]^F$. Since  the simple-connectedness of $[\bC, \bC]$  forces $[\bC, \bC]^F =[\bC^F, \bC^F] $, it follows that     $[\bC, \bC]^F Z^{\circ}(\bC)^F  $  is a characteristic subgroup of $\bC^F$ of index $2$. Again  since $\bC^F$  acts by diagonal automorphisms on  $[\bC, \bC]^F $ every block of  $[\bC, \bC]^F Z^{\circ}(\bC)^F$ is  $\bC^F$-stable, again by \thref{lem-primediagonalstable}.  Now argue as in  the previous case.
\end{proof}

 \thref{cor:notB_nC_nF4G2E8} and \thref{E6}  show that if $H$  and $c$ are as  in  \thref{main-reduction}(ii),  and if  $[H, H]$   is  of Lie type in odd characteristic  and not an  exceptional cover, then  $[H,H]$ is  of type $\type{A}_n$ or $\type{D}_n$. The remainder of this section will  study these  remaining two cases.

\subsection {On quaternion and dihedral subgroups} 
We will make repeated use of \thref{normal_subgroup_quaternion}. 

\begin{Lemma} \label{lem:inbetween}  Let $\SL_2(q) \leq  H \leq  \GL_2(\epsilon q) $, $q$ an odd prime power and let $Z =\langle -I \rangle$. The Sylow  $2$-subgroups  of $H$ are not dihedral and  the Sylow $2$-subgroups of  $H/Z$ are  not quaternion.
If $[H: \SL_2 (q)]_2 \geq 2 $, then $H$ does not have quaternion  Sylow $2$-subgroups  and if   $|H: \SL_2 (q)|_2 \geq 4 $,  then  the  Sylow $2$-subgroups of  $H/\langle -I \rangle $    are not  dihedral.
\end{Lemma}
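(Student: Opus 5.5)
The plan is to work inside a Sylow $2$-subgroup and use that the Sylow $2$-subgroup $S$ of $\SL_2(q)$ is quaternion of order $2^m$ with $m\geq 3$ and contains $-I$ as its unique involution. Since $H\unlhd \GL_2(\epsilon q)$ (it contains $\SL_2(q)$, so $\GL_2(\epsilon q)/\SL_2(q)$ is abelian), we may choose a Sylow $2$-subgroup $T$ of $H$ with $T\cap \SL_2(q)=S$ and $[T:S]=[H:\SL_2(q)]_2$. Then $S\unlhd T$, and everything reduces to statements about $T$ and $T/Z$.

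\textbf{First claims.} Suppose $T$ were dihedral. Then $S$ would be a noncyclic subgroup of a dihedral group, hence dihedral by \thref{normal_subgroup_quaternion}. But $S$ is quaternion, a contradiction.

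For $H/Z$, note $TZ/Z=T/Z$ is a Sylow $2$-subgroup. Suppose $T/Z$ were quaternion. Then $T$ is a central extension by $C_2$ of a quaternion group in which $Z$ lies in the generalized quaternion subgroup $S$. The image $S/Z$ is dihedral of order $2^{m-1}\geq 4$, hence noncyclic. By \thref{normal_subgroup_quaternion} it would therefore have to be quaternion, contradicting the fact that it is dihedral (a Klein four group when $m=3$).

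\textbf{Index conditions.} Now assume $[T:S]\geq 2$ and that $T$ is quaternion. Then $S$ is a proper noncyclic normal subgroup, so $[T:S]=2$. Choose $t\in T\setminus S$. The key input is to exhibit an element of $T\setminus S$ of order $2$ other than $-I$; this contradicts $T$ having a unique involution.

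\begin{itemize}
\item For $\epsilon=1$, $\det t$ is a nonsquare of $2$-power order. I would use the explicit model: the normaliser of a split or nonsplit torus in $\GL_2(q)$ contains reflections $\mathrm{diag}(1,-1)$-type elements times torus elements, and $T$ can be taken inside the monomial or semidihedral model. Concretely, $H\cap$ (the torus normaliser) contains an involution of determinant $-1$ when $-1$ is a nonsquare, and otherwise an element $\mathrm{diag}(\lambda,1)$ composed with a suitable element.
\item For $\epsilon=-1$ the argument is analogous, with unitary reflections.
\end{itemize}

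A cleaner route is to quote the known Sylow structure. Sylow $2$-subgroups of $\GL_2(\epsilon q)$ are semidihedral or wreath $C_{2^a}\wr C_2$, and those of intermediate groups $\SL_2(q)\cdot C_{2^j}$ are either $S\circ C_{2^{j+1}}$ or semidihedral. Neither of these is quaternion when $j\geq 1$. I expect this identification of $T$, with the case split $q\equiv \pm\epsilon \pmod 4$, to be the main obstacle.

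The last claim follows the same way. If $[T:S]\geq 4$ and $T/Z$ were dihedral, then $S/Z$ would be a noncyclic normal subgroup of index $\geq 4$ in a dihedral group, forcing index $2$ by \thref{normal_subgroup_quaternion}, a contradiction.
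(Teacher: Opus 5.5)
Your arguments for the first two assertions and for the last one match the paper's proof. Quaternion and dihedral groups have only cyclic or same-type noncyclic subgroups. In the last claim, $S/Z$ is a noncyclic normal subgroup of index at least $4$ in $T/Z$, which a dihedral group cannot have.

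\textbf{The gap.} The one step with real content is showing that $[H:\SL_2(q)]_2\geq 2$ rules out quaternion Sylow $2$-subgroups. You correctly say what must be shown, an involution in $T\setminus S$, but you never produce one.
\begin{itemize}
\item The first bullet starts from a false claim. If $[T:S]=2$ and $t\in T\setminus S$, then $\det t=-1$, and this is a square whenever $q\equiv 1\pmod 4$.
\item The split according to whether $-1$ is a square is irrelevant, and the ``otherwise'' branch contains no argument.
\item The ``cleaner route'' asserts a classification of Sylow $2$-subgroups of the intermediate groups without proof. As stated it is also not quite right. For $q\equiv\epsilon\pmod 4$ and $H$ of odd index in $\GL_2(\epsilon q)$, the Sylow $2$-subgroup is $C_{2^a}\wr C_2$, which is neither $S\circ C_{2^{a+1}}$ nor semidihedral (its centre has order $2^a\geq 4$).
\end{itemize}

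\textbf{The missing idea.} It is elementary and needs no knowledge of the isomorphism type of $T$. Write $\GL_2(\epsilon q)=\SL_2(q)\rtimes C$, where $C=\{\mathrm{diag}(\lambda,1)\}$ is cyclic of order $q-\epsilon$. For $\epsilon=-1$, use the standard Hermitian form, so the condition is $\lambda^{q+1}=1$. Since $H\supseteq \SL_2(q)$, you get $H=\SL_2(q)\rtimes(C\cap H)$ with $|C\cap H|=[H:\SL_2(q)]$, which is even. Hence $\mathrm{diag}(-1,1)\in H\setminus \SL_2(q)$, and it is an involution different from $-I$. Equivalently, $\det(H)$ is a subgroup of even order in a cyclic group, so it contains $-1$.

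To finish, use that $\SL_2(q)\unlhd H$ and that Sylow $2$-subgroups of $H$ are conjugate. Then every Sylow $2$-subgroup $P$ of $H$ contains an involution outside $\SL_2(q)$. It also contains $-I\in P\cap \SL_2(q)$. So $P$ has at least two involutions and is not quaternion, which is exactly the paper's argument.
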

\begin{proof}   Let $Q$    be a   Sylow $2$-subgroup of  $\SL_2(q) $  and $P$ a Sylow $2$-subgroup of $H$ containing $Q$.  Then   $Q$ is quaternion and  $Q/Z$ is dihedral, so $P$  is not dihedral and $P/Z$ is not quaternion.  
Since  $\GL_2(\epsilon q) $ is a semi-direct product of  $\SL_2(q)$  with  a cyclic group of order  $(q-\epsilon) $, if $[H: \SL_2 (q)]_2 \geq 2 $, then  $P\setminus Q$ contains an involution, whence $P$ is not quaternion.  Since $Q$ is quaternion, $P$ is clearly not dihedral.   Now,   suppose $[H: \SL_2 (q)]_2 \geq 4 $.  Then  $P/Z$ is   of order at least  $16$ and  $Q/Z$ is a normal dihedral  subgroup of  $P/Z$  of   index at least $4$. Hence $P/Z$ is not dihedral.
\end{proof}

\begin{Lemma}\label{lem:noGL2}
Let $ X$ be  a  finite group and $Z$ a  central subgroup of  $X$ with Sylow $2$-subgroup $Z_2$. Suppose that $X/Z$ has quaternion Sylow $2$-subgroups.
\begin{enumerate}
    \item [(i)] Suppose that $H$  is a normal subgroup of  $X$   containing   a subgroup  $H_0$  which  is isomorphic to   $\SL_2(q)$  for some odd prime power $q$. Then $H_0\cap Z_2 =1$  and $|X/HZ|_2\leq 2$.
    \item [(ii)]$X$ does not contain  a  subgroup isomorphic   to $\GL_2(\epsilon q) $ for any odd prime power $q$.
\end{enumerate}
\end{Lemma}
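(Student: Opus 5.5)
The approach is to work inside a Sylow $2$-subgroup $S$ of $X$ and to exploit two facts. First, $S Z/Z\cong S/Z_2$ is a Sylow $2$-subgroup of $X/Z$, hence quaternion, and in particular contains a unique involution. Second, \thref{normal_subgroup_quaternion} tells us that every subgroup of a quaternion group is cyclic or quaternion. The key idea is that $S/Z_2$ has a unique involution, so every subgroup of $S/Z_2$ also has at most one involution.

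For (i), let $Q$ be a Sylow $2$-subgroup of $H_0\cong \SL_2(q)$; it is quaternion. Choose $S\geq Q$. Suppose $H_0\cap Z_2\neq 1$. Since $H_0\cap Z_2$ is central in $H_0$, it lies in $Z(\SL_2(q))=\langle -I\rangle$, so it equals $\langle -I\rangle$. Then $QZ_2/Z_2\cong Q/\langle -I\rangle$ is dihedral of order at least $4$. A dihedral group of order at least $4$ has more than one involution, which contradicts the fact that $S/Z_2$ has a unique involution. Hence $H_0\cap Z_2=1$, and $QZ_2/Z_2\cong Q$ is a quaternion subgroup of $S/Z_2$.

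For the index bound in (i), pass to $\bar X=X/Z$ with Sylow $2$-subgroup $\bar S$ quaternion. The image $\bar H=HZ/Z$ is normal in $\bar X$. The subgroup $\bar S\cap \bar H$ is normal in $\bar S$, is a Sylow $2$-subgroup of $\bar H$, and contains the noncyclic image $\bar Q$. By \thref{normal_subgroup_quaternion}, a noncyclic normal subgroup of $\bar S$ is either $\bar S$ itself or of index $2$. Since $|X/HZ|_2=[\bar S:\bar S\cap\bar H]$, we get $|X/HZ|_2\leq 2$.

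For (ii), suppose $Y\leq X$ with $Y\cong\GL_2(\epsilon q)$. Apply the first part of (i) to the subgroup $H_0=[Y,Y]\cong\SL_2(q)$; this part of the argument uses only $H_0\leq X$, not normality of $H$. We conclude $H_0\cap Z_2=1$. Hence $YZ/Z$ contains a copy of a subgroup between $\SL_2(q)$ and $\GL_2(\epsilon q)$. More precisely, $Y\cap Z$ is central in $Y$, so it lies in the scalars, and $Y\cap Z_2$ meets $\SL_2(q)$ trivially, so $-I\notin Z_2$.

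Now $Y$ contains the element $\mathrm{diag}(1,-1)$, or its unitary analogue: an involution outside $\SL_2(q)$ not equal to $-I$. Its image in $X/Z$ is an involution distinct from the image of $-I$, unless the two differ by an element of $Z_2$. Their product is $\mathrm{diag}(-1,1)$, which is non-scalar and therefore not central, so it does not lie in $Z$. Thus the images of $-I$ and $\mathrm{diag}(1,-1)$ are two distinct involutions of $YZ/Z$. These involutions generate a noncyclic $2$-subgroup contained in some Sylow $2$-subgroup of $X/Z$, which is quaternion. This contradicts the uniqueness of involutions in a quaternion group.

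The main obstacle is making this last step clean in the unitary case. There I would instead invoke Lemma~\ref{lem:inbetween}: the image of a Sylow $2$-subgroup of $Y$ in $X/Z$ is isomorphic to a Sylow $2$-subgroup of $Y/(Y\cap Z_2)$. By the above, $Y\cap Z_2$ consists of scalars not containing $-I$, so this image contains a Sylow $2$-subgroup of $\GL_2(\epsilon q)$ modulo a scalar $2$-subgroup avoiding $-I$. Such a group is not quaternion, since it has index-$2$ overgroup structure over $\SL_2(q)$ with an extra involution. So the image is not contained in a quaternion group, which is again a contradiction.
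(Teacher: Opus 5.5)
Your proposal is correct and follows essentially the paper's route. For (i), a nontrivial $H_0\cap Z_2$ would put the dihedral group $Q/\langle -I\rangle$ (the Sylow $2$-subgroup of a copy of $\PSL_2(q)$) inside $X/Z$, and the index bound holds because a noncyclic normal subgroup of a quaternion group has index at most $2$. For (ii), part (i) keeps $-I$ out of $Z$, and the diagonal involutions then give a Klein four subgroup of $X/Z$. You work inside $X$ directly rather than first reducing to $X\cong\GL_2(\epsilon q)$ as the paper does, which is if anything slightly cleaner.

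The unitary case is not an obstacle: with respect to an orthonormal basis, $\mathrm{diag}(1,-1)\in\GU_2(q)$. So your fallback paragraph is unnecessary. Note also that there $Y\cap Z_2$ is in fact trivial, since any nontrivial $2$-subgroup of the cyclic centre of $Y$ contains $-I$.
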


\begin{proof}  
 Suppose that $H_0 \cap Z_2 \ne 1$. Then  $H_0 \cap Z_2 =Z(H_0) $  and  consequently $X/Z$  contains a subgroup  isomorphic to $\PSL_2(q)$, a  contradiction. This proves the first assertion of (i). The second assertion follows  from the fact that  a  Sylow $2$-subgroup of  $HZ/Z$  is non-abelian and  normal  in a Sylow $2$-subgroup of $X/Z$.    

 In order to prove (ii), we may assume that $X=\GL_2(\epsilon q) $. Since $\SL_2(q)$  contains the unique involution  of  $Z(X)$,  by  part (i)  applied with $H=X$, we have that $Z_2 =1$. Then  the  image of   the subgroup of diagonal matrices  of $X$ in $X/Z$ contains a   Klein $4$-group, a contradiction.
 \end{proof}

\begin{Lemma}\label{lem:noSL2} Let $X, Y $ be  finite groups and  let $M$ be a normal  subgroup of  $X\times Y $ with $(X \times Y)/M$  cyclic.  Let $P_X $  be a  Sylow $2$-subgroup  of $X$, $P_Y $  a  Sylow $2$-subgroup  of $Y$, and let $P  =(P_X\times P_Y) \cap M$, a Sylow $2$-subgroup of $M$.  Let $Z \leq  Z(X \times Y) \cap M $  be a cyclic $2$-group.    Set $Q_X=  P_X\cap   P $, $Q_Y = P_Y \cap P$,   let  $Z_X $ be the projection  into   $X$ of $ Z$ and let   $Z_Y $ be the projection  into   $Y$ of $ Z$. 
Suppose that $P/Z$ is dihedral or quaternion  and let $q$ be an odd prime power.   Then the following holds.
\begin{enumerate} [(a)]\item  $M$  does not involve any of  the groups   $\PSL_n(q)$,  $n\geq 3 $,  $\PSU_n(q) $, $n \geq 3 $, $\PSp_{2n}(q)$, $n\geq 2$, $\PO_{2n+1}(q)$, $n \geq  2$, $\PO_{2n}^{\pm}(q)$, $n\geq 3$, or  $\PO_4^{+} (q)$.  
\item      Suppose that $M \cap X $ contains  $\SL_2 (q)$,  $\PGL_2(q)$  or $\PSL_2(q) $.  Then $ |P/Q_XZ| \leq 2 $,   $Q_Y =Q_XZ\cap  Z_Y$,  $P_Y/Q_Y$ is cyclic and $P_Y $ is  abelian of rank at most $2$.    
\item  Suppose that   $X$ contains  a  subgroup $H$  isomorphic to   $\GL_2 (\epsilon q) $  such that 
  $Z \cap  H =1 $.   Then $P/Z$ is quaternion,   $P_Y$ is  cyclic   and $ (q-\epsilon)_2 \leq  [P_X \times  P_Y: P]$. If    $Z=1$, then $P_Y=1 $.   Moreover, $H \cap M  $  contains $ \SL_2(q) $ as  a subgroup of odd index.
  \item   Suppose that  $X$ contains  a  subgroup $H$  isomorphic to   $\GL_2 (\epsilon q) $  such that   $Z \cap  H \ne 1 $. Then  $P/Z$   is dihedral.
 \end{enumerate}
\end{Lemma}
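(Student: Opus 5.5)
The whole argument will run on one structural fact. Since $P/Z$ is dihedral or quaternion, it has $2$-rank at most $2$, and in the quaternion case it has a unique involution. Moreover, by \thref{normal_subgroup_quaternion}, every subgroup of $P/Z$ is cyclic, dihedral or quaternion.

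Now $P\cap X=Q_X$ and $P\cap Y=Q_Y$ commute, and both are normal in $P$. Their images in $P/Z$ are commuting normal subgroups. If both images were non-cyclic, their product would contain a non-cyclic abelian subgroup that is not among the allowed ones. So at most one of the images $Q_XZ/Z$ and $Q_YZ/Z$ is non-cyclic. The other then centralises a dihedral or quaternion normal subgroup, so by \thref{normal_subgroup_quaternion} it lies in that subgroup's centre.

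Since $(X\times Y)/M$ is cyclic, $P_X/Q_X$ and $P_Y/Q_Y$ are cyclic (they embed in $(X\times Y)/M$), and $P/(Q_X\times Q_Y)$ embeds in a cyclic group as well.

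For (a), each listed simple group has a Sylow $2$-subgroup of $2$-rank at least $3$, or containing $D_8\times C_2$ or a non-dihedral, non-quaternion section (see \cite{CarterFong}). If $M$ involved such a group, then $P$, and hence $P/Z$ once we account for the cyclic $Z$, would have a section that is neither dihedral, quaternion nor cyclic. But a section of a group whose subgroups are all cyclic, dihedral or quaternion has the same property (quotients of these are again cyclic, dihedral or quaternion, or Klein four). That is a contradiction.

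The groups $\PSL_2$ are excluded from (a), since their Sylow subgroups are dihedral. For $n\geq 3$ and for $\PSU_n$, I would quote the Sylow structure. I expect this bookkeeping over many families to be the main obstacle. To keep it short, I would try to exhibit in each group a subgroup $\PSL_2(q)\times\PSL_2(q)$, or one with elementary abelian $2^3$, which would give a direct contradiction.

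For (b), $Q_X$ contains a quaternion or dihedral Sylow subgroup of the given $\SL_2$, $\PSL_2$ or $\PGL_2$ of order at least $4$. It is non-cyclic modulo $Z$, apart from the Klein-four case, which is handled directly. So $Q_YZ/Z$ is central in $P/Z$, hence of order at most $2$. This gives $Q_Y\le Q_XZ\cap Z_Y$, and the reverse inclusion comes from $Z\le M$. Then $P_Y/Q_Y$ is cyclic and $Q_Y$ is cyclic (being central-mod-$Z$ with $Z$ cyclic), so $P_Y$ is metacyclic abelian-by-cyclic. A rank count in $P/Z$ bounds its rank by $2$. I would get abelianness from $P_Y$ centralising $Q_X$ together with the embedding. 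Finally, $|P/Q_XZ|\le 2$ follows because $P/Q_XZ$ is cyclic and $Q_XZ/Z$ is non-cyclic normal in $P/Z$, so has index at most $2$ by \thref{normal_subgroup_quaternion}.

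For (c) and (d), I would apply Lemma~\ref{lem:noGL2} and Lemma~\ref{lem:inbetween} to $H\cap M$, which contains $\SL_2(q)$ because $H/(H\cap M)$ is cyclic and $\SL_2(q)$ is perfect (with the small cases $q=3$ treated by hand).

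In (c), $H$ meets $Z$ trivially. The diagonal torus of $H$ gives a Klein four subgroup, and its image in $P/Z$ forces the dihedral case unless the $(q-\epsilon)_2$-part of $\det$ is cut off by $M$. This yields the index bound $(q-\epsilon)_2\le[P_X\times P_Y:P]$. It also gives quaternion $P/Z$ by Lemma~\ref{lem:inbetween}, and the fact that $\SL_2(q)$ has odd index in $H\cap M$. Then $P_Y$ is cyclic by (b) combined with the unique involution. If $Z=1$, then $Q_Y$ would be central in the quaternion group $P$, hence trivial. Since $P_Y$ is cyclic with $P_Y/Q_Y$ embedding in the cyclic quotient already used up by $\det$, we get $P_Y=1$.

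In (d), $-I\in Z$, so $P/Z$ contains the image of the Sylow subgroup of $\GL_2(\epsilon q)\cap M$ modulo $-I$. This is dihedral, and in particular contains a Klein four group, so $P/Z$ is dihedral.
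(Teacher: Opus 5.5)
Your treatment of (b)--(d) follows the paper's skeleton: the self-centralising non-cyclic normal subgroup $Q_XZ/Z$ in (b), and the Sylow $2$-subgroup of $\SL_2(q)\le H\cap M$ mapped into $P/Z$ in (c) and (d).

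For (a) you take a different route.
\begin{itemize}
\item The paper argues globally. Every nonabelian composition factor of $M$ is one of $M/Z$, so its Sylow $2$-subgroup is a section of $P/Z$. Such a factor is therefore $\PSL_2(q')$ or $A_7$, by Gorenstein--Walter together with Brauer--Suzuki. The paper then disposes of $\PO_4^+(q)\cong\PSL_2(q)\times\PSL_2(q)$ by a rank argument.
\item Your family-by-family check of Sylow subgroups avoids that classification theorem, but it is heavier. Your proposed shortcut (finding a subgroup $\PSL_2(q)\times\PSL_2(q)$ or $2^3$) does not apply to $\PSL_3(q)$ and $\PSU_3(q)$, which have $2$-rank $2$. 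For those two families you must use that their Sylow $2$-subgroups are semidihedral or wreathed.
\item ``Accounting for $Z$'' has to be done on the group side. A section $A/B$ of $M$ with trivial centre is isomorphic to the section $AZ/BZ$ of $M/Z$. Sections of $P$ itself need not be cyclic, dihedral or quaternion; for example, take $P=D_8\times C_2$ with $Z=C_2$.
\end{itemize}

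The genuine gap is the abelianness of $P_Y$ in (b). The mechanism you suggest, \emph{$P_Y$ centralising $Q_X$ together with the embedding}, yields nothing, because $P_X$ and $P_Y$ commute for trivial reasons. Similarly, $Q_Y$ is not cyclic ``because it is central modulo $Z$''; it is cyclic because $Q_Y\le Z_Y$.

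The missing observation is that $Z\le Z(X\times Y)=Z(X)\times Z(Y)$, so $Z_Y$ is a cyclic subgroup of $Z(Y)$. Hence $Q_Y\le Z_Y$ is central in $P_Y$, and $P_Y/Q_Y$ is cyclic. So $P_Y$ is abelian and $2$-generated, hence of rank at most $2$.

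This is not cosmetic. In (c) you need $P_Y$ abelian to pass from $2$-rank $1$ to cyclic, since a metacyclic $2$-group of $2$-rank $1$ may be generalised quaternion. Also, ``by (b) combined with the unique involution'' and ``used up by $\det$'' should be replaced by an actual count, as in the paper:
\begin{itemize}
\item $P$ has $2$-rank at most $2$, since $Z$ is cyclic and $P/Z$ is quaternion.
\item $(P_X\times P_Y)/P$ is cyclic, so $P_X\times P_Y$ has $2$-rank at most $3$.
\item $P_X$ has $2$-rank at least $2$, from the diagonal $\pm1$ matrices in $H$.
\item Hence $P_Y$ has $2$-rank at most $1$, and being abelian it is cyclic.
\item If $Z=1$, the bound on $P_X\times P_Y$ drops to $2$, which forces $P_Y=1$.
\end{itemize}

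Smaller points:
\begin{itemize}
\item In (c), $P/Z$ is quaternion directly: since $Z\cap H=1$, the quaternion Sylow subgroup of $\SL_2(q)$ embeds in $P/Z$. Lemma~\ref{lem:inbetween} is what then gives that $\SL_2(q)$ has odd index in $H\cap M$. From that you get $(q-\epsilon)_2=[H:H\cap M]_2\le[P_X\times P_Y:P]$.
\item For $Z=1$, centrality of $Q_Y$ in the quaternion group $P$ only gives $|Q_Y|\le2$. Triviality comes from $Q_Y\le Q_XZ\cap Z_Y=1$.
\item In (d), you only need the image of a Sylow $2$-subgroup $S$ of $\SL_2(q)$. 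First, $Z\cap H\le Z(H)$, which is cyclic, and $Z\cap H\neq1$, so $-I\in Z$. Next, $S\cap Z\le Z(\SL_2(q))$. So the image is $S/\langle -I\rangle$, which contains a Klein four group. Your claim about the Sylow subgroup of $H\cap M$ modulo $-I$ is unnecessary and not justified as stated.
\end{itemize}
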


\begin{proof} Since any section of  $P/Z$ is  cyclic, quaternion, or  dihedral, the only non-abelian  composition factors of $M$ are two-dimensional  projective special linear groups or $A_7$. This  rules out all groups listed in (a), except  $\PO_4^{+} (q)$.  Now $\PO_4^{+} (q)$  is isomorphic to $\PSL_2(q) \times \PSL_2(q) $  (see \cite[Prop. 2.9.1]{KlLi}), hence it is ruled out by rank considerations. This proves (a).

The Sylow $2$-subgroups of $\SL_2 (q)$, $\PGL_2(q)$   and  $\PSL_2(q) $  are  non-cyclic. Hence $Q_XZ/Z$ is  non-cyclic and  normal in $P/Z$.  Thus,  the index of     $Q_XZ$  in $P$  is at most $2$  and  $Q_XZ/Z $ is    self-centralising in $P/Z$.  Since   $Q_YZ/Z$ centralises  $Q_XZ/Z$,  $Q_Y \leq  Q_X Z  \cap  Y  = Q_X Z \cap   Z_Y$.   Since $X\times Y/M$ is cyclic, $P_Y/Q_Y $ is  cyclic.    Since  $Q_Y \leq Z_Y$  and $Z_Y$  is  a cyclic  subgroup of    $Z(Y)$, it follows that  $P_Y/Q_Y $ is   abelian of rank at most $2$.  This proves (b).

Now  let $ H \leq  X $ be   isomorphic to $\GL_2(\epsilon q)$  and  suppose   $Z \cap H=1 $.    Since  $H/H\cap N  $  is cyclic,  $H\cap  N$  contains  $[H,H] =\SL_2(q)$. 
Since    $Z \cap H =1 $,  $P/Z$  contains a quaternion subgroup, hence is  itself  quaternion. Since $Z$ is  cyclic, $P$ is of rank at most  $2$ and  since   $(P_X\times P_Y)/P$ is cyclic, the rank of  $P_X \times P_Y$ is at most  $3$.  But since  $X$ contains  $\GL_2(\epsilon q) $, the  rank of $P_X$ is at  least $2$  and by part (b), $P_X $ is abelian.  Hence,  $P_Y$ is  cyclic.  Further, if   $Z=1$, then the rank  of $P_X \times P_Y$ is at most  $2$, hence $P_Y=1 $. Now by Lemma \ref{lem:inbetween},  $ \SL_2(q)$  has odd index in $H\cap M$.    Thus,   $ (q-\epsilon)_2 \leq     [H :H/ H\cap M]_2 \leq  |(X\times Y)/M |_2 =[P_X \times  P_Y: P]$.

Finally, suppose  that   $Z \cap H\neq1 $.   Then   $Z \cap  [H,H]  =Z([H,H])  \ne 1 $. Hence  $P/Z $  contains a dihedral   subgroup  and is therefore   itself dihedral. This proves (d).\end{proof}

\begin{Lemma} \label{lem:nofield} Suppose that $q=q_0^2$ is an odd prime power and  $ X=\GL_n(\bar{\mathbb F}_q)$.   Let $\sigma  $ be  the   automorphism of  $X$  raising every matrix entry to the  $q_0$-th power  and let $\tau $ be  the transpose inverse automorphism of $X$.  Suppose that   $x \in  X$  is  of order $ 2^a:=(q-1)_2 \geq 8$,  let $\Upsilon $ be  the multiset of eigenvalues of $x$  and let  $\Upsilon' $ be the  set of distinct eigenvalues of $x$ of order $2^a$.  
\begin{enumerate} [(a)]  \item If  $\Upsilon' =\{\lambda, \lambda^{-1} \}$  for some  $\lambda $,  then  neither $\sigma (x) $  nor  $\sigma\tau (x)$ is  $X$-conjugate to  $x$.  If  in addition  $1 \in \Upsilon $ but $-1 \notin 
\Upsilon $, then neither $\sigma (x) $  nor  $\sigma\tau (x)$ is  $X$-conjugate to  $-x$.
\item Let $Q$  be a non-cyclic $2$-subgroup of $X$ containing $x$ and let  $P =Q \langle t \rangle $ be a $2$-group containing $Q$  as  a  subgroup of index  $2$. Suppose further that  there exists $ h \in \GL_n(\bar {\mathbb F}_q)$ such that  $t$  acts as  $c_h \circ \sigma $  (respectively  $c_h \circ \sigma \tau$)  on $Q$. If $P$ is  dihedral or quaternion, then   $\sigma (x)  $  (respectively $\sigma \tau (x) $)   is $X$-conjugate to $x$. If  $ -I \in Q$  and   $P/  \langle  -I\rangle $  is dihedral or quaternion, then $\sigma (x)  $  (respectively $\sigma \tau (x) $)   is $X$-conjugate to $x$ or to $-x$.
\end{enumerate} 
\end{Lemma}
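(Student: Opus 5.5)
Part (a) reduces to a statement about eigenvalue multisets, since in $X=\GL_n(\bar{\mathbb F}_q)$ two semisimple elements are conjugate exactly when their eigenvalue multisets agree. The element $x$ has $2$-power order and $q$ is odd, so $x$ is semisimple. The eigenvalues of $\sigma(x)$ are $\{\mu^{q_0}:\mu\in\Upsilon\}$, and those of $\sigma\tau(x)$ are $\{\mu^{-q_0}:\mu\in\Upsilon\}$.

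Now take $\lambda$ of order $2^a=(q-1)_2=(q_0^2-1)_2\geq 8$. Then $q_0\not\equiv\pm1 \pmod{2^a}$. Indeed, $(q_0-1)_2(q_0+1)_2=2^a$ and one of the two factors equals $2$, so the other is $2^{a-1}$. Hence $q_0\equiv \pm1+2^{a-1}\pmod{2^a}$, and $\lambda^{q_0}=-\lambda^{\pm1}$. The set of order-$2^a$ eigenvalues of $\sigma(x)$ and of $\sigma\tau(x)$ is therefore $\{-\lambda,-\lambda^{-1}\}$. Since $a\geq 3$, we have $-\lambda\neq\lambda^{\pm1}$, so this set differs from $\Upsilon'=\{\lambda,\lambda^{-1}\}$, and neither element is conjugate to $x$.

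For $-x$, the order-$2^a$ eigenvalue set is $\{-\lambda,-\lambda^{-1}\}$, which does coincide. So we compare eigenvalues of small order instead. Raising to the $\pm q_0$ power fixes $1$ and $-1$, because $q_0$ is odd. Thus $1$ is an eigenvalue of $\sigma(x)$ and of $\sigma\tau(x)$. By contrast, $1$ is an eigenvalue of $-x$ only if $-1\in\Upsilon$, which is excluded.

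For (b), suppose $P$ is dihedral or quaternion with $|P|\geq 16$. This holds because $Q$ contains an element of order at least $8$, and $Q$ is non-cyclic. Then $Q$ is a non-cyclic subgroup of index $2$, so by \thref{normal_subgroup_quaternion} and its dihedral analogue, $Q$ is dihedral or quaternion of order $|P|/2$. Let $C$ be the cyclic maximal subgroup of $P$. Since $o(x)=2^a\geq 8$ and every element outside the cyclic maximal subgroup of a dihedral or quaternion group has order at most $4$, we get $x\in C$. The subgroup $C$ is characteristic in $P$ because $|P|\geq16$, so $t$ normalises $\langle x\rangle$. The automorphism group of a dihedral or (generalised) quaternion group acts on $C$ only through $\pm1$ times units $\equiv1$ modulo $|C|/|\langle x\rangle|$; restricted to $\langle x\rangle$, conjugation by $t$ sends $x$ to $x$ or $x^{-1}$. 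Moreover, $x$ and $x^{-1}$ are already conjugate in $Q$, since $\langle x\rangle\le C\cap Q$, and $Q$ contains an element outside its own cyclic maximal subgroup that inverts it. Hence $txt^{-1}$ is $Q$-conjugate to $x$. As $txt^{-1}=h\sigma(x)h^{-1}$ (respectively $h\sigma\tau(x)h^{-1}$), this shows $\sigma(x)$ (respectively $\sigma\tau(x)$) is $X$-conjugate to $x$.

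For the version modulo $\langle -I\rangle$, apply the same argument to $\bar P=P/\langle -I\rangle$ and the image $\bar x$. Here $\bar x$ has order $2^{a}$ or $2^{a-1}\geq 4$. When $a=3$ this order is borderline, and I expect this case to be the main obstacle. Passing to the quotient also requires that $\bar Q$ is non-cyclic. Granting these points, $t\bar xt^{-1}$ is $\bar Q$-conjugate to $\bar x$, so the lifts differ by $\pm I$. This gives $\sigma(x)$ conjugate to $x$ or to $-x$.

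If the order-$4$ case is problematic, I would instead argue directly. In a dihedral or quaternion group, the elements of order at least $8$ all lie in $C$. Each conjugacy class inside $C$ is $\{y,y^{-1}\}$, and $t$ induces an automorphism whose square is inner on $Q$. I would still need to check that the preimage in $Q$ of the cyclic maximal subgroup of $\bar Q$ is normalised by $t$.
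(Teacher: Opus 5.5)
\textbf{Parts (a) and the first clause of (b).} Your treatment of (a) is the paper's argument written out more explicitly, and it is correct.

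For the first clause of (b) you take a slightly different route from the paper. The paper picks $a\in Q$ with $at$ generating the cyclic maximal subgroup $C$; this is possible because the non-cyclic $Q$ is not $C$. Then $at$ centralises $x$, which gives $x=a\,h\sigma(x)h^{-1}a^{-1}$ at once. You instead show $txt^{-1}=x^{\pm1}$ and then use an element of $Q$ inverting $C\cap Q$. That works. However, your stated reason for $txt^{-1}=x^{\pm1}$ is wrong: automorphisms of $D_{2^m}$ or $Q_{2^m}$ can act on $C$ by any unit (e.g.\ $r\mapsto r^3$). What you actually need is only that $t\in P$, and every element of $P$ either centralises or inverts $C$.

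\textbf{The gap: the second clause of (b).} You prove this clause only after ``granting'' two points, and neither is established.

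The first point, that $\bar Q=Q/\langle -I\rangle$ is non-cyclic, cannot be derived from the hypotheses. In fact the clause as stated fails without it:
\begin{itemize}
\item Take $q_0=3$, $q=9$, $a=3$, and $\lambda\in\mathbb F_9^\times$ of order $8$.
\item Let $x=\mathrm{diag}(\lambda,-\lambda,1)\in\GL_3(\bar{\mathbb F}_9)$ and $Q=\langle x\rangle\times\langle -I\rangle\cong C_8\times C_2$; note $x^4=\mathrm{diag}(-1,-1,1)\neq -I$.
\item Then $\sigma(x)=\mathrm{diag}(\lambda^3,\lambda^7,1)$. If $h$ is the permutation matrix swapping the first two coordinates, then $h\sigma(x)h^{-1}=x^{-1}$, so $c_h\circ\sigma$ inverts $Q$.
\item Let $P=Q\rtimes\langle t\rangle$ with $t$ an involution inverting $Q$. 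Then $P/\langle -I\rangle\cong D_{16}$.
\item Yet $\sigma(x)$ has eigenvalues $\{\lambda^3,\lambda^7,1\}$, while $x$ has $\{\lambda,\lambda^5,1\}$ and $-x$ has $\{\lambda,\lambda^5,-1\}$.
\end{itemize}
So the hypothesis that $Q/\langle -I\rangle$ is non-cyclic must be added. It does hold where the clause is used in \thref{thm:typeD}, since that quotient is quaternion there. The paper's own proof does not treat this clause at all.

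Your second point, the case $o(\bar x)=4$, then needs an actual argument. It arises only when $a=3$, $x^4=-I$ and $\bar P$ is quaternion. Suppose $\bar x\notin\bar C$.
\begin{itemize}
\item Let $r\in Q$ lift a generator $\bar r$ of $\bar C\cap\bar Q$, which has order $2^k=|\bar Q|/2\ge 4$.
\item Since $\bar x^2$ is the unique involution $\bar r^{2^{k-1}}$, we get $x^2=\pm r^{2^{k-1}}$, and hence $r^{2^k}=x^4=-I$.
\item Since $\bar x$ lies outside $\bar C$, it inverts $\bar C$, so $xrx^{-1}=\pm r^{-1}$. Hence $x$ inverts $r^{2^{k-1}}$.
\item But $x$ commutes with $x^2=\pm r^{2^{k-1}}$, which forces $r^{2^k}=I$. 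This contradicts $r^{2^k}=-I$.
\end{itemize}
Hence $\bar x\in\bar C$. Your argument, run in $\bar P$, then shows that $h\sigma(x)h^{-1}$ is $Q$-conjugate to $x$ or to $-x$.
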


\begin{proof}  Suppose that the set of distinct eigenvalues of $x$ of order $2^a$    is   $\{\lambda, \lambda^{-1} \}$.
Then the eigenvalues  of $\sigma (x) $  and of $\sigma \tau (x)$  of order $2^a$ are $\lambda^{q_0} $ and  $\lambda^{-q_0} $.  Thus if either of these is $X$-conjugate to $x$, then either $\lambda^{q_0-1}=1$ or $\lambda^{q_0+1}=1$.  But this is impossible since $(q_0 \pm 1)_2 < (q-1)_2$.  The  second assertion of   (a) is  immediate  from that  fact  $-x$    has   $-1 $  as  an eigenvalue  whereas  $x$  does not. 

We now turn to (b). Suppose if possible that  $P$  is a quaternion or dihedral group and $t$  acts as $c_h \circ \sigma $.   
Since $Q$ is non-cyclic  and of index $2$  in $P$, the exponent of  $Q$ is  less than that of $P$.  
 Further,  $|P|  \geq  32 $, and in particular has a unique cyclic  subgroup of  index $2$. 
Let   $a \in Q $ such that $at$  generates this  cyclic  subgroup.   Any element of $P$ not contained in $\langle  at \rangle $ has order  $4$ if $P$ is quaternion and has order $2$ if $P$ is dihedral. In particular $x \in  \langle  at \rangle$,  hence $at$  centralises $x$.   But this  means that  $\sigma (x)$ and  $x$  are $\GL_n( \bar {\mathbb F}_q)$-conjugate.  The case that  $t$  acts as $c_h \circ \sigma \tau $ is entirely analogous.   
\end{proof}

\subsection{Type $\type{A}$} In this subsection let $G= \SL_n(\epsilon q)$,   $\wt{G}= \GL_n (\epsilon q) $ and let $Z$ be a central subgroup of  $G$.  Let  $s \in \wt{G}$ be a semi-simple element,  let $\Lambda $  be  the  set of the distinct  $\bar{\mathbb F}_q $ eigenvalues of $s$  and let
$\tilde \Lambda $  be the  multiset of eigenvalues  of $s$ counting multiplicities.  Then  $\tilde \Lambda ^{\epsilon q} = \tilde \Lambda $.
 We have \begin{equation}\label{eq:centdecomp} C_{\wt{G}}(s) =  \prod_{i=1}^r  \GL_{n_i}(\epsilon_i q^{d_i}) \end{equation} for  positive  integers   $ n_i $, $ d_i $, $ 1\leq i \leq  r $ with $\sum_{i=1}^r n_id_i = n $.  The factors of  the  direct product decomposition are in bijection with   the orbits  of  $\Lambda $ under the  $\lambda \to \lambda^{\epsilon q} $;
  $d_i$ is the length of the orbit and $n_i$  is the   the multiplicity of  some (and hence any)  element in the orbit. If $\epsilon=1 $, then all $\epsilon_i=1 $ and if $\epsilon=-1 $, then $\epsilon_i=-1 $ if and only if $d_i$ is odd. Further, if $x_i \in  \GL_{n_i}(\epsilon_i q^{d_i})  $ is  semisimple  and $ \Upsilon_i $ is the multiset  of eigenvalues of  $x_i$ as an element of   $\GL_{n_i}( \bar {\mathbb F}_q) $  that are different from $1$, then the  multiset of  eigenvalues of  $x_i$ different from $1$  as an element of $\wt{G}$ is the  union  of   $\Upsilon_i ^{ (\epsilon q)^j } $, $ 0\leq  j \leq d_i -1 $.

\begin{Lemma}  \label{lem-centralSylow2}  Let $s \in \wt{G}$ be  a semi-simple element of odd order,  $S$   a  Sylow  $2$-subgroup  of $C_{\wt{G}}(s)$, $P = G\cap  S$  a  Sylow $2$-subgroup of   $C_{\wt{G}}(s) \cap  G $   and $ A =  PZ/Z$,  a Sylow  $2$-subgroup  of $ (C_{\wt{G}}(s) \cap G )/Z$. Suppose that  $A$  is  a quaternion group.  
  Then  one of the following holds.
\begin{enumerate} \item  [(i)]
$n=2d $  for  $d$ an odd   integer, $ C_{\wt{G}}(s) \cong \GL_2 (\epsilon q^d) $ and $Z_2=1 $.  If  $s^{-1}$ is   $\wt{G}$-conjugate to $s$, then  $n=2$ and $s=1 $. If   $q =q_0^2$, and  $ x \in P $  is  of order $\frac{1}{2}|A|$, then the eigenvalues of  $x$     (as an element of 
$\GL_n(\bar {\mathbb {F}}_q) $) are  $\lambda $ and $\lambda^{-1}$   for some   $\lambda \in \bar {\mathbb {F}}_q^{\times} $,  each occurring with  mutiplicity  $d$.
\item [(ii)]  $|A|\geq 16$,   $ Z_2 =Z(\wt{G})_2 $,  $ |Z_2| \geq 4$,  $C_{\wt{G}}(s)  \cong  \GL_{2}(\epsilon q^{d}) \times \GL_1(q^{2e}) $ 
 for odd  positive integers $d, e $ with  $ n = 2(d + e)$. Moreover,   the full inverse images  in $G$ of the two non-abelian   subgroups of $A$ of  index $2$  have different exponents.  
\end{enumerate}
\end{Lemma}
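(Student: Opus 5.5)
We use the decomposition \eqref{eq:centdecomp}, $C_{\wt{G}}(s)=\prod_{i=1}^r \GL_{n_i}(\epsilon_i q^{d_i})$, and read off the $2$-part of $C_{\wt{G}}(s)\cap G$ modulo $Z$. Since $s$ has odd order, every $d_i$ is odd: the orbit of an odd-order eigenvalue under $\lambda\mapsto\lambda^{\epsilon q}$ has length dividing the order of $\epsilon q$ in $(\mathbb Z/o(s))^\times$. This is not quite immediate, so where needed we will instead argue directly with Sylow $2$-subgroups. Hence $S=\prod_i S_i$ with $S_i$ Sylow in $\GL_{n_i}(\epsilon_iq^{d_i})$, and $P$ is the kernel of the determinant map $S\to \FF^\times$. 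This map is the product of the norm-determinants $\det_i$, and on each factor it is surjective onto the $2$-part of $(q-\epsilon)$ when $d_i$ is odd.

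\emph{Step 1: bounding the number of factors.} The quotient $A=PZ/Z$ has a unique involution and is non-abelian. So we compute the $2$-rank of $P$: it is at least $\sum_i \mathrm{rk}(S_i)-1$, and $\mathrm{rk}(S_i)\geq \lfloor n_i/2\rfloor$, with equality up to small cases; for $n_i\ge 1$ the rank is at least $n_i$ when $q^{d_i}\equiv\epsilon_i \pmod 4$. The group $A$ has rank $1$ and $Z$ is cyclic, so $P$ has rank at most $2$, and if $Z_2=1$ it has rank $1$. Non-abelianness forces some $n_i\geq 2$. By \thref{normal_subgroup_quaternion} and Lemma~\ref{lem:noSL2}(b),(c),(d), applied to $X=\GL_{n_1}$ and $Y=$ the remaining product with $M=C_{\wt{G}}(s)\cap G$, we get the following.
\begin{itemize}
\item $\GL_{n_1}$ with $n_1\ge3$ is impossible: by Lemma~\ref{lem:noSL2}(a) it would involve $\PSL_{n_1}$ or $\PSU_{n_1}$.
\item If $n_1=2$, then Lemma~\ref{lem:noSL2}(c) gives $Z\cap H=1$ and $P_Y$ cyclic.
\item If moreover $Z=1$ (i.e.\ $Z_2=1$, where we pass to the $2$-part), then $P_Y=1$. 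The remaining factors are then $\GL_1(\pm q^{d_i})$ with odd order, which is impossible since $d_i$ is odd and $q^{d_i}-\epsilon_i$ is even unless the factor is absent.
\end{itemize}
Lemma~\ref{lem:noSL2}(d) excludes $Z\cap H\neq1$ because $A$ is quaternion. This yields exactly two shapes, $\GL_2(\epsilon q^d)$ alone, or $\GL_2(\epsilon q^d)\times$ (a cyclic-Sylow factor). In the second shape, the cyclic Sylow must come from a single $\GL_1(q^{2e})$, i.e.\ the $\epsilon=-1$ even-orbit case or the twisted case. The cases $\GL_1(\epsilon q^{e})$ with odd $e$ and more factors are killed by rank.

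\emph{Step 2: the conditions in each case.} In case (i), $n=2d$, and the quaternion condition together with $Z\cap[H,H]=1$ forces $Z_2=1$. For self-inverse $s$: the eigenvalue orbit has length $d$ and is closed under inversion. Since $d$ is odd and $o(s)$ is odd, inversion fixes an eigenvalue, forcing it to be $1$. Then the multiplicity-$2$ orbit is $\{1\}$, so $d=1$ and $s$ is central with $\det$ constraint, giving $s=1$ after the normalisation implicit in $s\in\wt G$ of odd order up to $Z(\wt G)$; here we use that the block statement only cares about $s$ up to central odd elements. For $q=q_0^2$, an element $x$ of order $\frac12|A|=(q^{2d}-1)_2$ lies in a cyclic subgroup of $\SL_2(q^d)$ with eigenvalues $\mu,\mu^{-1}$ in $\GL_2(\bar\FF_q)$. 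Pushing to $\wt G$ via the orbit description gives eigenvalues $\mu^{(\epsilon q)^j}$; since $q^{j}\equiv 1$ modulo the $2$-part when $q$ is a square, these all equal $\mu$ or $\mu^{-1}$, each with multiplicity $d$.

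In case (ii), $Z\neq1$ and the index bound $(q-\epsilon)_2\le[P_X\times P_Y:P]$ from Lemma~\ref{lem:noSL2}(c) force $Z_2=Z(\wt G)_2\cap G$ to be full with $|Z_2|\ge4$, hence $|A|\ge16$. The exponent statement will be checked by hand: the two index-$2$ non-abelian subgroups of $A$ lift to $\langle Q_X,\,\cdot\,\rangle$ twisted by different cosets of the $\GL_1(q^{2e})$ part, and we compute the orders of generators.

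We expect the main obstacle to be Step 1's exhaustion of factor configurations: tracking $Z$, determinant twisting and unitary signs simultaneously. We would organise it entirely through Lemma~\ref{lem:noSL2}, treating the $\GL_2$ factor as $X$.
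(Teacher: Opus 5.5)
Your skeleton matches the paper's: apply Lemma~\ref{lem:noSL2} with the $\GL_2$-factor as $X$, then treat the two surviving shapes. However, one step rests on a false claim and another is missing.

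First, it is false that every $d_i$ is odd. Here $d_i$ is the order of $\epsilon q$ modulo the order of an eigenvalue, and this can be even: for $q=3$, an eigenvalue of order $5$ has orbit length $4$. Indeed, conclusion (ii) itself contains the factor $\GL_1(q^{2e})$, whose orbit length $2e$ is even. So you never establish that the $\GL_2$-factor is $\GL_2(\epsilon q^{d})$ with $d$ odd and the same sign $\epsilon$. Yet your self-inverse argument in (i) (an inversion-stable orbit of odd size has a fixed point) and the shape $n=2d$ with $d$ odd both depend on this. It has to come from the quaternion hypothesis. Lemma~\ref{lem:noSL2}(c) says $\SL_2(q^{d_1})$ has odd index in $X\cap G$, i.e.\ $(q^{d_1}-\epsilon_1)_2\le [P_X\times P_Y:P]=(q-\epsilon)_2$, using that the determinant is onto on every factor. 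This forces $d_1$ odd and $\epsilon_1=\epsilon$.

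Second, in case (ii) the inequality you cite, $(q-\epsilon)_2\le[P_X\times P_Y:P]$, is an equality here and gives no information about $Z$. Likewise, ``killed by rank'' does not show that $e$ is odd. The missing step is an order count using Lemma~\ref{lem:noSL2}(b).
\begin{itemize}
\item We have $Q_X\cap Z=1$, $|Q_X|=(q^2-1)_2$, $[P:Q_XZ]\le 2$ and $|P|=(q^2-1)_2(q^{d_2}-1)_2$. Together these give $(q^{d_2}-1)_2\le 2|Z|\le 2(q-\epsilon)_2$.
\item Since $Z\neq 1$, $n$ is even, hence $d_2$ is even, so $(q^{d_2}-1)_2\ge (q^2-1)_2\ge 2(q-\epsilon)_2$.
\item Equality must hold throughout. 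This yields $d_2=2e$ with $e$ odd, $(q+\epsilon)_2=2$, $|Z|=(q-\epsilon)_2\ge 4$ and $|A|=2(q^2-1)_2\ge 16$.
\end{itemize}

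Finally, the exponent assertion in (ii) is only promised, and it is exactly what Corollary~\ref{non-stable2} uses to exclude case (ii). The argument runs as follows.
\begin{itemize}
\item Choose $a=xy\in P$ with $y$ generating the cyclic group $P_Y$ of order $(q^2-1)_2$; this is possible because the determinant is surjective on each factor.
\item Since $|Z_Y|\le(q-\epsilon)_2<|P_Y|$, every element of $aQ_XZ$ has $Y$-component of order $(q^2-1)_2$.
\item By contrast, $Q_XZ\le Q_X(Z_X\times Z_Y)$ has exponent $(q^2-1)_2/2$.
\item The preimage of the other quaternion subgroup of index $2$ in $A$ meets $aQ_XZ$, so the two preimages have different exponents.
\end{itemize}

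There are also two minor slips. For $q$ a square, $\tfrac12|A|=(q-1)_2$, not $(q^{2d}-1)_2$. And once $1$ is the only eigenvalue, $s=1$ outright, with no normalisation modulo $Z(\wt G)$ needed.
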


\begin{proof} First of all note that we may assume without loss that $Z=Z_2$. 
  If $r \geq 2$, then $Z$ intersects each factor  in the decomposition \eqref{eq:centdecomp} trivially. Further, no  non-trivial factor is an odd-order group, and any factor has a cyclic Sylow $2$-subgroup  if and only if the factor itself is cyclic. Hence, by  Lemma~\ref{lem:noSL2} (c),  either  $Z=1 $,  $r=1$ and $n_1=2 $  or  $Z\ne 1,  r=2, n_1=2, n_2=1 $. 
   
 Let us  first consider the case  $Z=1 $,  $r=1$ and $n_1=2 $ and note that $|A|=P$ in this case.   So, $C_{\wt{G}}(s)\cong \GL_2 (\epsilon_1 q^{d_1}) $. By Lemma~\ref{lem:noSL2} (c), applied with  $X= C_{\wt{G}}(s)$, $Y=1 $, and  $C_{\wt{G}} (s) \cap G$ in place of  $M$, $(q^{d_1} -1)_2 \leq  (q -\epsilon)  $, whence $d_1$ is odd and $\epsilon_1=\epsilon$.  Now suppose that  $s^{-1}$ is $\wt{G}$-conjugate to $s$. Then  $\Lambda $ is invariant under taking inverses.   Since $|\Lambda|$  is odd, there exists $\lambda \in \Lambda$  with $\lambda= \lambda^{-1}$.  Then $\lambda^2=1$, and since $s$ is  odd order, this means that $\lambda =1 $.   Since all elements of $\Lambda$ are  powers of $\lambda $, it follows that $\Lambda =\{1\}$,  $s=1 $ and $n=2$.  Let $x \in P $  with   order $\frac{1}{2}  |P|  = \frac{q^2-1}{2} \geq 8$. By Lemma~\ref{lem:noSL2},  $x \in \SL_2(q^{d_1})$. So the  eigenvalues of  $x$ as  an element of  $\GL_2(\bar {\mathbb F}_q)$ are $\lambda, \lambda^{-1}$ each occurring with  multiplicity  $1$.  Since $q=q_0^2 $, $ \frac{q^2-1}{2}  = (q-1)_2  $  and in particular $\{ \lambda, \lambda^{-1} \}   =  
  \{ \lambda, \lambda^{-1} \} ^{\epsilon q}$.   Thus,   (i) holds.

 Now suppose that $Z\ne 1,  r=2, n_1=2, n_2=1 $.  Since  $Z \ne1 $, $n =2d_1 +d_2$ is even, hence $d_2 $ is even and 
 $C_{\wt{G}}(s) \cap G  =  X \times Y $, where  $ \GL_2 (\epsilon_1 q^{d_1}) $  and $Y=  \GL_2 (q^{d_2})$.  Since   $Z$ intersects  $X$ trivially,    again by Lemma~\ref{lem:noSL2},  $d_1 $ is odd, and $\epsilon_1= \epsilon$.  Let $Q_X= P  \cap  \GL_2 (\epsilon_1 q^{d_1})$.  By Lemma~\ref{lem:noSL2}, $Q_X$ is a Sylow $2$-subgroup of  $\SL_2 (q^{d_1})  $. Since $d_1$ is odd,  $|Q_X| =(q^2-1)_2 $. On the other hand, by Lemma~\ref{lem:noSL2},  $[P: Q_XZ] \leq 2 $.    Since $|Z| \leq  (gcd(q-\epsilon,  n) )_2$ and  the index of   $P$  in   $P_X \times P_Y $  is   $(q-\epsilon)_2 $,    the only possibility is  that $d_2  = 2e $ for some odd integer $e$,   $|Z| = (q-\epsilon)_2 =  n_2  $ and $QZ  $ is of index $2$ in $P$.  In particular, $n= 2(d_1+ e) $ is  divisible by $4$ and   $|P/Z| \geq 16 $.
 The projection of $P$ onto each of $P_X$ and $P_Y$ is surjective since the  restriction of the determinant map on $\wt{G}$  to any of the factors is surjective.   In  particular, there  exists $x \in X$ and $y \in Y$  such that $a :=xy \in P$   and $y$ has order $|P_Y| =  (q^{2e}-1)_2 =  (q^2-1)_2$.       On the other hand, the  exponent of  $Q $ is $ \frac{(q^{2}-1)_2}{2} $  and  since   $Z$ is cyclic of order $(q-\epsilon)_2 $, the exponent of $Q (Z_X \times  Z_Y)  $  is  $ \frac{(q^{2}-1)_2}{2} $.   Thus,
 every element of $aQZ $  has order   $ (q^2-1)_2 $  whereas  the exponent   of   $QZ  \leq Q (Z_X \times  Z_Y)$   is $\frac{(q^2-1)_2}{2} $.   Since $QZ/Z$ is one of the normal subgroups of index $2$ of $P/Z$, it follows that the inverse image of the  other one    has exponent  $ (q^{2}-1)_2 $. Thus (ii) holds.
\end{proof}

Henceforth, we identify   $G$ with $\bG^F$,  and $\wt{G} $  with  $\wt{\bG}^F$, where  $\bG$  is a simple, simply-connected group of type $\tA_{n-1}$   over  a field  of odd characteristic and $\iota\colon\bG\hookrightarrow\wt\bG$ is a regular embedding as discussed in Section \ref{sec:extendBDR}.   In particular, as  in Section \ref{sec:extendBDR}  the   diagonal automorphisms  of $G$  are the automorphisms corresponding to conjugation by elements of $\wt{G}$.   Further, as already noted in Section~\ref{sec:lie-non} (assuming  $G$ is quasi-simple, i.e.  $(n,q) \ne (2,3)$), every automorphism  of   $ G/Z$ lifts  uniquely to an automorphism of $G$ (see \cite[Thm.~2.5.14]{GLS3}).   

Let $\tau $  be the transpose-inverse automorphism of  $\wt{G}$  and if  $q=q_0^2$  and $\epsilon =1$, let $\sigma $ be the  field  automorphism   of $\wt{G}$   raising every matrix  entry  to the  $q_0$-th power. Then $\tau $ and  $\sigma$ restrict to  automorphisms of $G$.  

\begin{Corollary} \label{non-stable2}   Keep the notation of Lemma~\ref{lem-centralSylow2}  and set  $N=G/Z$. Assume $N$ is quasisimple.  Let $H = N  \langle t \rangle $ be  a group containing $N$ as a subgroup of index $2$, with  $t$ a $2$-element normalising  $A$ and  let $D= A\langle t \rangle $. Let $\varphi $ be   the automorphism lifting the conjugation  action of $t$  on $ N$.   Suppose that  $A$ and $D$  are quaternion groups. 
Then  $s$  is as in  part (i) of Lemma~\ref{lem-centralSylow2} and 
$\varphi$  is either a diagonal  automorphism  of $G$  or the composition of a diagonal automorphism with $\tau $.
\end{Corollary}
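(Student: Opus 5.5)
We have to rule out case (ii) of Lemma~\ref{lem-centralSylow2}, and then rule out field automorphisms in case (i). By \cite[Thm.~2.5.12]{GLS3}, for $G=\SL_n(\epsilon q)$ the automorphism $\varphi$ has the form $c_h\circ\sigma^i\circ\tau^j$ with $h\in\wt G$. Here a field automorphism of $2$-power order modulo inner-diagonal automorphisms (and modulo $\tau$) only occurs when $\epsilon=1$ and $q=q_0^2$, and then it may be taken to be $\sigma$. When $\epsilon=-1$, $\tau$ agrees with the standard Frobenius up to a diagonal automorphism, so the only cases to exclude are $\varphi\in\{c_h\sigma,\,c_h\sigma\tau\}$ with $q=q_0^2$ and $\epsilon=1$.

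\emph{Step 1: excluding case (ii).} In case (ii), $A$ has two non-abelian subgroups of index $2$, and their full inverse images in $G$ have different exponents. The element $t$ normalises $A$ and acts by conjugation on $D$. I would argue that the non-abelian index-$2$ subgroups of $A$ are exactly the noncyclic maximal subgroups of $A$, and that in $D=A\langle t\rangle\cong Q_{2^{m+1}}$ these two subgroups are interchanged by conjugation by elements of $D\setminus A$. Indeed, in a generalised quaternion group $Q_{2^{m+1}}$ with $m\ge4$, an index-$2$ quaternion subgroup $A=\langle x^2,y\rangle$ has non-cyclic maximal subgroups $\langle x^4,y\rangle$ and $\langle x^4,x^2y\rangle$, and conjugation by $x$ swaps them. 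Since $\varphi$ lifts the action of $t$ to an automorphism of $G$, it maps the full inverse image of one of these subgroups onto the full inverse image of the other, up to the central kernel. This uses that $\varphi$ preserves $Z$, which holds since $Z$ is characteristic. It follows that the two inverse images have equal exponents, a contradiction. So $s$ is as in case (i). Some care is needed to see that $\varphi$ maps $P$-preimages correctly, since $t$ normalises $A$ and hence $PZ$.

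\emph{Step 2: excluding field automorphisms in case (i).} Now assume $q=q_0^2$ and $\varphi=c_h\sigma$ or $c_h\sigma\tau$. By Lemma~\ref{lem-centralSylow2}(i), $Z_2=1$, so we may take $Z=1$ after reducing to the $2$-part. Then $A=P\le G$ is quaternion of order $(q^2-1)_2\ge16$, and an element $x\in P$ of order $\frac12|A|=(q-1)_2\ge8$ has eigenvalues $\lambda^{\pm1}$, each with multiplicity $d$. Apply Lemma~\ref{lem:nofield}(b) with $Q=P$ (noncyclic), the $2$-group $D=P\langle t\rangle$, and $t$ acting as $c_h\sigma$ or $c_h\sigma\tau$ inside $\GL_n(\bar{\FF}_q)$. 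Since $D$ is quaternion, $\sigma(x)$, respectively $\sigma\tau(x)$, is conjugate to $x$. This contradicts Lemma~\ref{lem:nofield}(a) with $\Upsilon'=\{\lambda,\lambda^{-1}\}$. If $Z\neq1$ were needed (odd part only), one reduces to $Z_2$, so this case does not arise. Hence $\varphi$ involves no field part, and is diagonal or diagonal composed with $\tau$.

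\emph{Main obstacle.} I expect the main obstacle to be Step 1, namely verifying cleanly that conjugation by $t$ swaps the two non-abelian maximal subgroups of $A$ and that this transfers to the inverse images in $G$ under the lift $\varphi$. There is also some bookkeeping in Step 2 to see that replacing $t$ by $t$ times an element of $N$ does not change the automorphism class, so that the Lemma~\ref{lem:nofield}(b) hypothesis "$t$ acts as $c_h\circ\sigma$" is legitimately met.
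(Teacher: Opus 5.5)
Your proposal is correct and follows the paper's proof. Case (ii) is excluded because $t$, and hence $\varphi$, interchanges the two quaternion index-$2$ subgroups of $A$, whose preimages in $G$ have different exponents. The field cases $c_h\sigma$ and $c_h\sigma\tau$ are excluded by combining the eigenvalue description in Lemma~\ref{lem-centralSylow2}(i) with Lemma~\ref{lem:nofield}.

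One step needs tightening: your reduction to $\sigma$ should use that the image of $\varphi$ in $\Out(G)$ has order at most $2$ (because $t^2\in N$), not merely that $\varphi$ has $2$-power order. With only the latter, field automorphisms of order $4$ would not be excluded.
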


\begin{proof}  Since  $t$  normalises $A$, $\varphi $  normalises  the inverse image of $A$ in $G$.  
Suppose that  $s$  is as in (ii)  of Lemma~\ref{lem-centralSylow2}.    Since $D$ is quaternion, the two  quaternion subgroups of $A$   of index $2$  are  interchanged by $t$, hence their inverse images  in $A$ are interchanged by $\varphi $.  This is impossible  as the two  groups have different exponents. 

So we may assume that $s$ is as in (i) of  Lemma~\ref{lem-centralSylow2}.  By \cite[Thm.~2.5.14]{GLS3}, the image of $\varphi $ in $\Out(G)$  has order at most $2$. Thus, by  \cite[Thms.~2.5.12,~2.5.14]{GLS3}, there exists  $g \in \wt{G}$  such that $\varphi= c_g $ or  $c_g\tau$, or $\epsilon=1$, $q=q_0^2$ and $\varphi =c_g \sigma$ or $c_g \sigma\tau$.   
The result follows by Lemma~\ref{lem:nofield}.
\end{proof} 

Recall that the block idempotents   of $\CO \wt{G} $ are in one to one correspondence with the $\wt{G}$-conjugacy classes of  semi-simple elements of $\wt{G}$ of  odd order, such that if a block  idempotent $b$ corresponds to    the class of $s$, then   the Sylow $2$-subgroups  of $C_{\wt{G}}(s)$  are defect groups of $\CO \wt{G}b$. Further, if $c$  is a block idempotent of   $\CO G $  such that $bc \ne 0$, i.e.  such that $b$ covers $c$,   and $d$ is a block idempotent of $\CO  G/Z $ dominated by $c$, i.e. such that  $\bar c d \ne 0$, where $\bar c$ is the image  $c$ under the canonical surjection of $\CO G\to \CO G/Z$, then (up to replacing $s$ by a $\wt{G}$-conjugate) the Sylow $2$-subgroups  of $C_{\wt{G}}(s) \cap G $ are defect groups of  $\CO G c$ and the Sylow $2$-subgroups of $(C_{\wt{G}}(s) \cap G)/Z $ are  the defect subgroups of $\CO (G/Z) d$.

\begin{Lemma} \label{lem-notaustable}  Let $N=G/Z$  and assume that $N$ is quasisimple.
Suppose that   $b$ is a $\tau$-stable  block idempotent of $\CO \wt{G} $    and $c$  is a block idempotent of   $\CO G $  such that $bc \ne 0$  and $d$ a  block idempotent of $\CO  N $ dominated by  $c$. Let $s \in \wt{G} $  be a semisimple element  of  odd order  whose $\wt{G}$-class corresponds to $b$.  Suppose that   $\CO  N d$  has 
quaternion  defect groups   and  $s$ is as in (i) of Lemma~\ref{lem-centralSylow2}.Then   $n=2$  and $c $ is the principal  block idempotent.
\end{Lemma}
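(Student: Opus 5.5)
Since the $\wt{G}$-class of $s$ determines $b$, and $\tau$ acts on $\wt{G}$ by transpose-inverse, the first step is to identify which class corresponds to ${}^\tau b$. The Lusztig series of $\wt{G}$ labelled by $s$ (with $\wt{G}^*$ identified with $\wt{G}$) is sent by $\tau$ to the series labelled by $s^{-1}$. Indeed, $\chi\circ\tau=\bar\chi$, and complex conjugation sends $\CE(\wt{G},s)$ to $\CE(\wt{G},s^{-1})$, because duality takes $(\bT,\theta)$ to $(\bT,\bar\theta)$. Equivalently, transpose-inverse on $\GL_n$ is dual to transpose-inverse, and $s^{\top}$ is conjugate to $s$. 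Blocks of $\CO\wt{G}$ are in bijection with $\wt{G}$-classes of odd-order semisimple elements via the Lusztig series they contain, so ${}^\tau b=b$ forces $s^{-1}$ to be $\wt{G}$-conjugate to $s$. This identification is the one place where care is needed. We must check that the block-class bijection is compatible with $\tau$, including the unitary case $\epsilon=-1$, where the Frobenius is composed with transpose-inverse; there $\tau$ is simply an inner-times-field automorphism, but the statement about characters still holds.

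Now we use the hypotheses. The defect groups of $\CO Nd$ are the Sylow $2$-subgroups of $(C_{\wt{G}}(s)\cap G)/Z$, which are quaternion by assumption. So we are in the situation of Lemma~\ref{lem-centralSylow2}, and by hypothesis case (i) applies. Part (i) states that if $s^{-1}$ is $\wt{G}$-conjugate to $s$, then $n=2$ and $s=1$. Hence $n=2$ and $s=1$.

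Finally, $s=1$ means that $b$ is the unipotent block of $\wt{G}=\GL_2(\epsilon q)$. By \cite[Thm.~21.14]{CaEnbook} this is the unique unipotent $2$-block, namely the principal block. The blocks of $\CO G$ covered by the principal block of $\CO\wt{G}$ form one $\wt{G}$-orbit containing the principal block of $\CO G$. The principal block is $\wt{G}$-stable, so that orbit is a single block. Since $bc\ne0$, $c$ is the principal block idempotent of $\CO G=\CO\SL_2(q)$. We also note that $Z_2=1$ in case (i), consistent with $N$ having quaternion defect groups.
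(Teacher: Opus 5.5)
Your proposal is correct and follows the paper's argument. In both, $\tau$-stability of $b$ forces $s$ and $s^{-1}$ to be $\wt{G}$-conjugate, and then part (i) of Lemma~\ref{lem-centralSylow2} gives $n=2$ and $s=1$. The only differences are minor. The paper simply cites \cite[Lem.~4.2]{SriVin} for the conjugacy step, whereas you derive it from $\chi\circ\tau=\bar\chi$ and complex conjugation of Lusztig series. Your final step, identifying $c$ as the principal block through the $\wt{G}$-stability of the principal block of $\CO G$, spells out what the paper leaves implicit.
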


\begin{proof}   Since   $b$   is  $\tau$-stable,  $ s$ and $s^{-1}$  are 
$\wt{G}$-conjugate (see for instance \cite[Lem.~4.2]{SriVin}).  Then the result follows from Lemma~\ref{lem-centralSylow2}.
\end{proof}

\begin{Theorem}  \thlabel{thm:typeA} Let $H$ be  a group containing $ N:=G/Z$ as a subgroup of index $2$, and let $d$ be a $G$-stable block idempotent of $\CO  N $. Let $D$  be  a defect group of  the block $\CO Hd $ and   let  $D=\langle  D\cap  N, t\rangle $. 
Suppose that   both $D$ and $D\cap   N$  are quaternion groups.    Then either  $n=2 $ and $d$ is the principal block idempotent of  $\CO  N $ or $t$ acts by diagonal automorphisms on $ N$. 
\end{Theorem}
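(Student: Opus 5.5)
We reduce to the setting of Corollary~\ref{non-stable2} and Lemma~\ref{lem-notaustable}. Let $c$ be the block of $\CO G$ dominating $d$, and choose a block idempotent $b$ of $\CO\wt{G}$ covering $c$, with $b$ corresponding to the $\wt{G}$-class of a semisimple odd-order element $s$. As recalled before Lemma~\ref{lem-notaustable}, after conjugating $s$ we may take $A:=D\cap N$ to be a Sylow $2$-subgroup of $(C_{\wt{G}}(s)\cap G)/Z$, of the form $PZ/Z$ as in Lemma~\ref{lem-centralSylow2}. Since $D=\langle A,t\rangle$ and $A\lhd D$ has index $2$, the element $t$ normalises $A$. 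Both $A$ and $D$ are quaternion by hypothesis.

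Corollary~\ref{non-stable2} then shows that $s$ is as in part (i) of Lemma~\ref{lem-centralSylow2}. It also shows that the automorphism $\varphi$ of $G$ lifting conjugation by $t$ is either diagonal, $c_g$, or of the form $c_g\tau$ with $g\in\wt{G}$. In the diagonal case we are done. So it remains to treat $\varphi=c_g\circ\tau$ and to show that then $n=2$ and $d$ is principal.

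In this case we want a $\tau$-stable block of $\CO\wt{G}$ covering $c$, so that Lemma~\ref{lem-notaustable} applies. For this we invoke Lemma~\ref{lem-red-graph2} with $\ell=2$, $H_1=H$ and $x=t$; there $t$ acts as $\tau$ composed with a diagonal automorphism, and $\tau$ has order $2$. Its hypotheses need the block $d$ of $\CO N$ to be $H$-stable. We get this from the assumption that $d$ is stable (in particular $t$-stable, since $d$ is covered by a block of $H$ with defect group containing $t$ with $H=ND$); if necessary we replace the phrase ``$G$-stable'' by $H$-stability, which follows since $D\not\le N$ and $H/N$ is a $2$-group. Lemma~\ref{lem-red-graph2} then produces a $\tau$-stable block idempotent $b_1$ of $\CO\wt{G}$ (lifting through $Z$) with $b_1c\neq0$.

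Replacing $b$ by $b_1$, with corresponding element $s$, we re-run the above. The defect groups of $\CO Nd$ are quaternion and, by the previous paragraph, $s$ is as in Lemma~\ref{lem-centralSylow2}(i). Lemma~\ref{lem-notaustable} then gives $n=2$ and $c$ principal, so $d$ is principal.

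The main obstacle is the bookkeeping needed for the re-run. We must check that for the new $b_1$ the Sylow subgroup $A$ can again be chosen normalised by $t$, so that Corollary~\ref{non-stable2} applies again. This should follow because any $b$ covering $c$ gives the same $G$-defect groups up to conjugacy, and $\wt{G}$-conjugating $s$ preserves case (i) versus case (ii) by Lemma~\ref{lem-centralSylow2}, since $(C_{\wt G}(s)\cap G)/Z$ has quaternion Sylow subgroups equal to the defect groups of $d$. Hence, rather than re-running the argument, we can deduce directly from Lemma~\ref{lem-centralSylow2} that $s$ is in case (i): case (ii) was excluded for some $s$ attached to $c$, and the case depends only on $c$. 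We also need the remaining case $n=2$ with $(n,q)$ small, which is excluded because $N$ is assumed quasisimple.
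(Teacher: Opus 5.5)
Your proposal is correct and is essentially the paper's proof: you use Corollary~\ref{non-stable2} to reduce to $\varphi=c_g\tau$, then Lemma~\ref{lem-red-graph2} to obtain a $\tau$-stable block of $\CO\wt{G}$ covering the block of $\CO G$ that dominates $d$, and finish with Lemma~\ref{lem-notaustable}. For the step that the new element $s'$ again falls under case (i) of Lemma~\ref{lem-centralSylow2}, note that $s'$ need not be $\wt{G}$-conjugate to $s$; the cleanest justification is that the two cases are distinguished by $Z_2$ ($Z_2=1$ in (i), $|Z_2|\geq 4$ in (ii)), and $Z$ is fixed.
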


\begin{proof}   
Let $\varphi $ be   the automorphism lifting the conjugation  action of $t$  on $ N$.  Suppose that $\varphi$ is not diagonal.  Then  either  $\varphi = c_h  \tau $   or  $q $ is a square, $\epsilon = 1 $   and $\varphi= c_h \sigma $  or $\varphi=  c_h \sigma \tau $ for some $ h \in \wt{G}$.  
Let $d_u$ be the  block idempotent of $\CO G$  dominating $d$  and let $e$  be a block  idempotent of $\CO \wt{G}$   with $ed_u \ne 0 $.    Suppose that   $e$  corresponds to the  class of the  semi-simple element $s$ and let  $P$ be a  Sylow  $2$-subgroup of  $C_{\wt{G}}(s) $.   Then (up to replacing $s$ by a $\wt{G}$-conjugate) $P \cap  G $ is a defect group of $\CO G d_u $ and $P \cap G/Z  $ is a defect group of  $\CO Nd $  and  is consequently quaternion.   Thus by  Corollary  \ref{non-stable2},   $\varphi = c_h  \tau $.  By 
 Lemma \ref{lem-red-graph2}  there exists  a block  idempotent   $e'$  of $\CO \wt{G}$   with $e'd_u \ne 0 $ and such that $e'$ is $\tau$-stable.    Suppose that $e'$ corresponds to the  class of $s'$. Again by  Lemma~\ref{lem-centralSylow2}, $s'$ is as in (i) of Lemma~\ref{lem-centralSylow2}, hence by Lemma~\ref{lem-notaustable},    $n=2 $ and $d$ is the principal block idempotent.  So,   $G= N  = \SL_2 (\epsilon q) $.   In this case the  action of $\tau $  on $N$   is  inner (see \cite[p.~68]{GLS3} or \cite[Prop.~24.23]{MT}),  hence the Sylow $2$-subgroups of $H$ are not quaternion.
\end{proof}

\subsection{Type $\type{D}$} Suppose that $V$ is a  $2n$-dimensional vector space  over  ${\mathbb F}_q$, $q$ odd,  equipped with a non-degenerate bilinear form   with corresponding group of isometries $ \O(V) \leq \GL(V)$, $\SO(V):=\O(V) \cap \SL(V)$,  and  $\Omega (V):= [\O(V), \O(V)]$.    Let $\eta:= \eta(V)  $   equal $1$  if the bilinear form is of maximal Witt index and equal  $-1$   otherwise. Note that  $\SO(V)$ is of index $2$ in $\O(V)$ and  $\Omega (V) $ is index $2$ in $\SO(V)$;   
 $ Z(\O(V) )  =  Z(\SO(V))= \langle -I\rangle  $,  and   $-I  \in  \Omega(V) $ if and only if $q^n\equiv  \eta \pmod 4$ (see the discussion after \cite[Thm.~11.51]{Tay06}).

The centralisers of  semi-simple elements  are described in \cite[Sec.~1]{FoSri-classical}  and we  keep  close to that account.  Note that  the  definition  of $\eta(V)$ is  stated differently in  \cite{FoSri-classical}
but  the two notions coincide  for even-dimensional  spaces. Let $s\in  \SO(V)$  be a semi-simple element of odd order and let  $V_0 \leq  V$  be the $1$-eigenspace of   $s$.
Then $V_0$ is $2n_0$-dimensional  for some  non-negative integer  $n_0$ and 
$$C_{\O(V)}  (s)  =  \O(V_0) \times C_{\O(V_1)} (s_1), $$ where  $V_1 $ is the orthogonal 
complement of    $V_0$ in $V$ and  $s_1$ is the projection   of $s$  in $\GL(V_1) $.  Moreover, 
$$ C_{\SO(V_1)}(s_1) = C_{\O(V_1)} (s_1)  \cong  \prod_{i=1}^r\GL_{n_i}  (\epsilon_iq^{d_i})$$ and
$$ C_{\SO(V)}(s)   \cong   \SO(V_0) \times   \prod_{i=1}^r\GL_{n_i} (\epsilon_iq^{d_i})$$  for positive integers  $n_i$, $d_i$ such that  
\begin{equation} n=  n_0 +  \sum_{i=1}^{r} n_i d_i  {\ \text{ and } \ } \eta    =  \eta (V_0) \prod_{i=1}^r {\epsilon_i}^{n_i}.  \end{equation}
  We  note that  if  $ g  \in  \O(V) \setminus  \SO(V)$, then  the $\SO(V)$-conjugacy class of  $s $  is invariant under  conjugation by $g$ if and only if  $n_0 \ne 0$.  Note  also that for all   $i$ with $1\leq i \leq r$, we have $\SL_{n_i}(\epsilon_i q^{d_i}) \leq C_{\SO(V_i)}(s_i) \cap \Omega (V) $  and $ C_{\SO(V_i)}(s_i) \cap \Omega (V)   $ is  of index at  most $2$ in  $ C_{\SO(V_i)}(s_i)$. 

\begin{Lemma} \label{lem:Dsylow}   Suppose that $n \geq 4 $. Let  $Z \leq Z(\Omega (V) )$, let  $s$  be  a semi-simple element of $\SO(V)$ of odd order and let $P$  be  a Sylow $2$-subgroup of    $C_{\Omega(V)}(s)$.   Suppose that $P/Z$ is one of: quaternion; dihedral; cyclic of order $2$ with $ Z =\langle -I \rangle  $. Then one of the following holds.

\begin{enumerate} [(i)]\item  $P$ is abelian of order at most $8$, and all $n_i\leq 1 $.
\item  $n_0=2$,  $r=1 $, $n_1=1$,   $\eta(V_0)=  -1 $,  $4 \nmid q^{d_1} -\epsilon_1$, $Z=\langle -I \rangle $,  $P$  is neither quaternion nor dihedral  and $P/Z$ is dihedral. Moreover,  either $q$ is not a square or   $\sigma(s)$ is not $\GL_{2n}(\bar {\mathbb F}_q)$-conjugate to $s$. 

\item  $n_0=1$, $r=1$, $n_1=2$, $Z=\langle -I\rangle$, $P/Z$ is  quaternion of order at least $16$  and there exists a  normal subgroup  $Q$ of $P$ such that $QZ$ has  index $2$ in $P$, $Q \cap Z=1 $,  and $Q$ is quaternion   with the property that  no element of  $Q$ of order $4$ is  $\GL_{2n}(\bar {\mathbb F}_q)$-conjugate to  an element of  $P \setminus QZ$.  Moreover, if $q$ is a square, then  
there exists an element $x \in Q$ of order   $(q-1)_2$    such that $\sigma(x)$ is not $\GL_{2n}(\bar {\mathbb F}_q)$-conjugate to $x$ or $-x$.
\item  $n_0=0$, $r=2 $, $n_1=2$, $n_2 =1$, $Z=\langle -I\rangle$, $P/Z$ is  quaternion of order at least $16$   and there exists a  normal subgroup  $Q$ of $P$  such that $QZ$ has  index $2$ in $P$, $Q \cap Z=1 $,  and $Q$ is quaternion   with the property that  no element of  $Q$ of order $4$ is  $\GL_{2n}(\bar {\mathbb F}_q)$-conjugate to  an element of  $P \setminus QZ$.  Moreover, if $q$ is a square, then  
there exists an element $x \in Q$ of order   $(q-1)_2$    such that $\sigma(x)$ is not $\GL_{2n}(\bar {\mathbb F}_q)$-conjugate to $x$ or $-x$.
\item   $n_0=0$, $r=1 $,  $n_1=2$,  $Z=1$, $P$ is quaternion,  and either $q$ is not a square or   $\sigma(s)$ is not $\GL_{2n}(\bar {\mathbb F}_q)$-conjugate to $s$. 

\item $n_0=0$, $r=1 $,  $n_1=2 $,  $Z=\langle -I \rangle$,  $P$  is    not quaternion, $P/Z$  is  dihedral, and either  $q$ is not a square or   $\sigma(s)$ is not $\GL_{2n}(\bar {\mathbb F}_q)$-conjugate to $s$. 
\end{enumerate}
 \end{Lemma}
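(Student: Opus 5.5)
The plan is to follow the strategy of Lemma~\ref{lem-centralSylow2}, now applied to the decomposition
$$C_{\SO(V)}(s)\cong \SO(V_0)\times\prod_{i=1}^r\GL_{n_i}(\epsilon_i q^{d_i}).$$
We set $M=C_{\Omega(V)}(s)$. Since $\SO(V)/\Omega(V)$ has order $2$, $M$ is a normal subgroup of index at most $2$ in the direct product above, and the quotient is cyclic. So Lemma~\ref{lem:noSL2} applies with $X$ one factor (or a product of factors) and $Y$ the remaining factors. Throughout, $Z\le\langle -I\rangle$ is a cyclic $2$-group and $P/Z$ has a unique involution or is dihedral, hence of rank at most $2$ modulo $Z$. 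This rank bound controls everything that follows.

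\textbf{Bounding the factors.} First we bound the factors using Lemma~\ref{lem:noSL2}(a) and rank considerations.
\begin{itemize}
\item $\SO(V_0)$ with $n_0\ge 3$ involves a forbidden $\PO$ group.
\item $\SO(V_0)$ with $n_0=2$ and $\eta(V_0)=1$ involves $\PO_4^+(q)$.
\item Any $\GL_{n_i}$ with $n_i\ge3$ involves $\PSL$ or $\PSU$ of dimension at least $3$.
\end{itemize}
So $n_0\le 2$, with $\eta(V_0)=-1$ if $n_0=2$ (and then $\Omega_4^-(q)\cong\PSL_2(q^2)$), and $n_i\le 2$. Each $\GL_1$ factor, each $\SO_2^\pm$ factor, and the rank of $P_X\times P_Y$ contribute $2$-rank. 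Counting these against the bound (rank of $P\le 3$, since $Z$ is cyclic and the index is cyclic) limits us to a short list. When all $n_i\le1$ and $n_0\le1$, $P$ is abelian, and the rank count gives order at most $8$; this is case (i).

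\textbf{Nonabelian $P$.} Otherwise some $\GL_2(\epsilon_i q^{d_i})$ or $\Omega_4^-$ factor occurs. Parts (b)--(d) of Lemma~\ref{lem:noSL2} force the companion factors to have cyclic (or trivial) Sylow $2$-subgroups, which leaves the configurations in (ii)--(vi). In the $\GL_2$ cases, Lemma~\ref{lem:noSL2}(c),(d) separates $Z\cap H=1$ (quaternion modulo $Z$) from $Z\cap H\ne1$ (dihedral). The condition $-I\in\Omega(V)$ iff $q^n\equiv\eta\pmod 4$ decides whether $Z=\langle -I\rangle$ is possible.

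For (iii) and (iv), we take $Q$ to be the Sylow $2$-subgroup of the $\SL_2(q^{d_1})$ factor. Then we mimic the exponent/eigenvalue argument in the proof of Lemma~\ref{lem-centralSylow2}(ii): elements of $P\setminus QZ$ project nontrivially to the $\SO_2$ or $\GL_1$ factor, where they have eigenvalues of order $(q^2-1)_2$. Elements of order $4$ in $Q$ instead have eigenvalues $\pm i$ on the $\GL_2$-part and $1$ elsewhere. Comparing eigenvalue multisets rules out $\GL_{2n}(\bar{\FF}_q)$-conjugacy.

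\textbf{Field automorphism clauses.} The clauses about $\sigma$ when $q$ is a square come from Lemma~\ref{lem:nofield}(a). For $x\in Q$ of order $(q-1)_2\ge8$ (note $(q-1)_2\ge8$ when $q$ is a square), the order-$(q-1)_2$ eigenvalues form a pair $\{\lambda,\lambda^{-1}\}$, $1$ is an eigenvalue via $V_0$ or the other factor, and $-1$ is not. For (ii), (v), (vi) the statement concerns $s$ itself. Here the eigenvalue multiset of $s$ is a union of Frobenius orbits $\Lambda^{(\epsilon q)^j}$ of length $d_1$ with odd-order eigenvalues. If $\sigma(s)\sim s$, then raising to the $q_0$-th power would have to preserve this set, and we try to show this contradicts $(q^{d_1}-\epsilon_1)$-type parity constraints.

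\textbf{Main obstacle.} I expect the hardest step to be this last one: showing $\sigma(s)\not\sim s$ in (ii), (v), (vi), or else showing that $q$ is a nonsquare. I would try to show that the $2$-part conditions (e.g.\ $4\nmid q^{d_1}-\epsilon_1$) force $q\equiv3\pmod4$, hence $q$ a nonsquare, and otherwise argue directly on eigenvalue orbits.
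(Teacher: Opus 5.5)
\textbf{Verdict: genuine gap.} Your skeleton matches the paper's. You decompose $C_{\SO(V)}(s)\cong \SO(V_0)\times\prod_i\GL_{n_i}(\epsilon_iq^{d_i})$, use Lemma~\ref{lem:noSL2} and $2$-rank to get $n_0\le 2$, $n_i\le 2$, $r\le 2$, compare eigenvalue multisets in (iii)/(iv), and apply Lemma~\ref{lem:nofield}(a) to an element of $Q$ of order $(q-1)_2$.

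\textbf{The $\sigma(s)$ clause in (ii), (v), (vi).} The step you flagged as the main obstacle is a genuine gap, and your suggested route cannot close it. The $2$-adic conditions do not force $q\equiv 3\pmod 4$: for $q=9$ and $\epsilon_1=-1$ one has $q^{d_1}+1\equiv 2\pmod 4$. Also, your picture of the eigenvalues of $s$ as orbits of length $d_1$ is the $\epsilon_1=1$ picture, which is exactly what must be excluded. The missing idea is to use the $2$-adic conditions the other way round:
\begin{itemize}
\item If $q=q_0^2$ then $q\equiv 1\pmod 8$, so $q^{d_1}\equiv 1\pmod 8$. Hence $4\nmid q^{d_1}-\epsilon_1$ (in (ii), (v)) or $8\nmid q^{d_1}-\epsilon_1$ (in (vi)) forces $\epsilon_1=-1$.
\item Then an eigenvalue $\lambda\ne 1$ of $s$ satisfies $\lambda^{q^{d_1}}=\lambda^{-1}$, so $[\FF_q(\lambda):\FF_q]=2d_1$ and $[\FF_{q_0}(\lambda):\FF_{q_0}]=4d_1$.
\item Since $r=1$, the eigenvalues of $s$ other than $1$ are exactly the $\lambda^{q^j}=\lambda^{q_0^{2j}}$. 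The element $\lambda^{q_0}$ is not among them, since that would need $4d_1\mid 2j-1$. So $\sigma(s)$ has an eigenvalue that $s$ lacks.
\end{itemize}

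\textbf{The $2$-adic bounds themselves.} You never derive them, and ``(b)--(d) force cyclic Sylow subgroups on the companion factors'' does not give them.
\begin{itemize}
\item In (ii) you need $|P_1|\le 2$, i.e.\ $P\cap P_1=1$. Otherwise the involution $u_1$ of the cyclic group $P_1$ lies in $P$. Rank forces $Z=\langle -I\rangle\ne 1$, so $u_0=(-I)u_1$ also lies in $P$. Then $P_0=\langle u_0\rangle\times(P_0\cap\Omega(V_0))\le P$, and $P_0Z/Z\cong P_0$ would be a rank-$3$ subgroup of $P/Z$, which is impossible.
\item In (v), the bound $(q^{d_1}-\epsilon_1)_2\le 2$ comes from Lemma~\ref{lem:noSL2}(c) with $Y=1$.
\item In (vi), the bound $(q^{d_1}-\epsilon_1)_2\le 4$ needs the last assertion of Lemma~\ref{lem:inbetween}. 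Apply it to $\GL_2(\epsilon_1q^{d_1})\cap\Omega(V)$, which has index at most $2$ in $\GL_2(\epsilon_1q^{d_1})$.
\end{itemize}

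\textbf{A smaller inaccuracy in (iii)/(iv).} Let $y\in P\setminus QZ$ have order $4$. Its $V_0$-component (resp.\ $\GL_1$-component) has order exactly $4$: if it were $1$ or the central involution, then $y\in QZ$. So what separates $y$ from order-$4$ elements of $Q$ is the multiplicity of the eigenvalue $1$, not eigenvalues of order $(q^2-1)_2$. In (iii), for instance, order-$4$ elements of $Q$ have eigenvalue $1$ with multiplicity $2$. The $V_1$-part of $y$ has eigenvalue $1$ with multiplicity $0$ or at least $2d_1=n-1\ge 4$.
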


\begin{proof}   Let $u_0 $ be the central involution of $\SO(V_0) $  (if   $n_0 \ne 0$), and let $u_i$ be the central involution of $\GL_{n_i}(\epsilon_iq^{d_i} ) $. Then $-I =\prod_{i=0}^r u_i $.  Let   $P_0 $ be  a  Sylow $2$-subgroup   of $\SO(V_0) $   and  for  $i \geq 1 $, let    $P_i $  be  a Sylow $2$-subgroup of $\GL_{n_i}(\epsilon_iq^{d_i} ) $ such that $ P =  \Omega(V) \cap   \prod_{i=0}^r (P_i) $. Let  
$Q_i  = P \cap P_i$, $0  \leq i \leq r $,  let $Q_0' = P_0 \cap \Omega (V_0) $ and let $Q_i'=  P_i \cap  \SL_{n_i}(\epsilon_iq^{d_i} ) $, $ 1\leq i \leq r $.  Then $Q_0'$ is  a Sylow   $2$-subgroup of  $ \Omega (V_0) $ and  $Q_i'$ is  a  Sylow $2$-subgroup of    $\SL_{n_i}^{\epsilon_i} (q^{d_i} ) $,  $1\leq i \leq r $. Further,     $\prod_{i=0}^r Q_i'  \leq   P$  and for all $i$, $0\leq i \leq  r$, $Q_i $ is of index  at most  $2$ in $P_i$.

By  Lemma~\ref{lem:noSL2},  $n_0\leq  3  $ and  $n_i \leq 2 $ for all $i $ with $1\leq i \leq r $.  Suppose  first that  $n_0  =2 $. By Lemma~\ref{lem:noSL2}  $\eta(V_0) =-1$,  $Z(\Omega(V_0))=1 $,  $\Omega(V_0) \cong  \PSL_2(q^2)$  (see \cite[Prop.~2.9.1]{KlLi}) and  $\SO(V) = \langle u_0\rangle \times \Omega (V_0)$, and $P_0 =    \langle u_0\rangle \times  Q_0' $.   In particular,   $P_0$    is of rank  $3$, hence  $r \leq 1 $. Since  $n \geq 4 $,  it follows that  $r=1 $  and  by   Lemma~\ref{lem:noSL2}, $n_1 = 1 $ and  $P_1 $ is  non-trivial  cyclic.  Thus  $P_0 \times P_1 $ has  rank $4$  which forces $Z \ne 1 $.  The  rank of $P$ is at least $3$ and the rank of $P/Z$ is at  least  $2$, hence $P$ is  neither  quaternion, nor dihedral and $P/Z$ is not quaternion.   Since $P_1 $ is cyclic, if  $Q_1 \ne 1 $, then $u_1 \in  Q_1 $,  and since $-I =u_0u_1 \in P$, this would imply  that $u_0 \in   Q_0 $, and consequently $P_0 \leq  P$, a contradiction since   the rank of $P_0Z/Z  \cong P_0 $ is  $3$.  Thus $Q_1 =  1 $.  On the other hand, the index of $Q_1 $ in $P_1$ is at most $2$. It follows that     $4\nmid  |\GL_1 (\epsilon_1q^{d_1})| $.    Now suppose that $q$ is a  square.  Then  $\epsilon=-1 $.   Let $\Lambda'$ be  the set of distinct eigenvalues of $s$  different from $1$  and let $\lambda \in \Lambda'$.   Since $\epsilon=-1 $, ${\mathbb F}_q [\lambda ]: {\mathbb F}_q|=2d_1$ is even  and  consequently, $|{\mathbb F}_{q_0} [\lambda ]: {\mathbb F}_{q_0}|=4d_1$. It follows that  $\Lambda' $ is  not invariant  under  $\mu \to  \mu^{q_0} $ and consequently   the $\GL_{2n} (\bar{\mathbb F}_q)$-class of $s$ is not $\sigma$-stable.  Thus Case (ii) holds

 Suppose that $n_0=1 $  and set  $\eta(V_0) =\eta_0 $. Then  $\O (V_0) $ (respectively   $\SO(V_0)$, respectively   $\Omega(V_0) $ is  isomorphic to the dihedral group of order $2 (q-\eta_0)$  (respectively cyclic group of order $q-\eta_0$,  respectively cyclic group of order   $\frac{q-\eta_0}{2}$) (see \cite[Thm.~11.4]{Tay06}).    
In particular,    $P_0$ is   cyclic of order $   (q -\eta_0)_2  > 1 $.  If $n_i \leq 1 $ for all $i$, $1\leq i \leq r$, then  $P$ is abelian   and case (i) holds.  We  may assume from now on  that  $n_1 =2$.    Since $P_0 \ne 1 $    it follows from Lemma~\ref{lem:noSL2} (c) that $r=1 $,  $Z =\langle -I \rangle  $,  and   $ 4 \nmid  q^{d_1}  - \epsilon_1  $  and $P/Z$  is quaternion. 
In particular,  $ n = 1  +2d_1 $ is odd, and $\eta =\eta_0$.
Since $Z \ne1 $, we have that   $q- \eta_0 \equiv     q^{n} - \eta  \equiv 0 \pmod 4$.   Thus  $|P_0 \times P_1| \geq   8(q^{2d_1} -1) _2 \geq 64  $   and  consequently $|P/Z| \geq 16 $.  Since $ Z \cap P_1  =1 $, $ P_1  $ is not contained in   $P$, hence  $Q_1 $  is  isomorphic to a Sylow $2$-subgroup of  $\SL_2(q^{d_1}) ) $   and is of  order  $(q^{2d_1} -1) _2 $. Since $P$ is of index at most $2$ in $P_0 \times P_1 $, we  have  that $Q_1Z \ne P $.
Let $x$ be an element of order $4$ in $Q_1$.   As an element of $\GL_2(q^{d_1})$,  $ x$  has eigenvalues $i$ and $-i $ Then  as an element of $\GL_{2n}(\bar{\mathbb F}_q)$, $x$ has eigenvalues $i$ and $-i$  each with multiplicity  $n-1 \geq  3 $ and  $1$ with multiplicity  $2$.
Let $y$ be  an element of order $4$  of $P$ which is not in $QZ$. We claim that  the multiset of  eigenvalues  of $y$ is not that of $x$.  
Indeed, write  $y = y_0y_1$, $y_i \in P_i$, $ i=0,1$. Then $y_0 \ne 1$  since otherwise $y =y_2 \in P \cap  P_1 = Q_1$.  If $y_0 $  has order $2$, then $-1$  is an eigen-value of $y$. Thus $y_0$  has order $4$ and  since $y_0  \in \SL(V_0)$ and $V_0$ is  $2$-dimensional it follows that  the eigenvalues of  $y_0$ in $\GL(V_0) $ are $i$  and  $-i$, each with mutiplicity  $1$. Thus,  the  multiplicity  of  $1$  as  an eigenvalue of  $y_1$ as an element of $\GL (V_1)$  is   $2$.   This is impossible as  either this multiplicity is $0$ or   at least $n-1 \geq  4$.   Now let $x \in Q_1$ be an element of  order $(q-1)_2$.  Then  the eigenvalues of  $x$ as an element of $\SL_2(q^{d_1})   $ are  $\eta $ and $\eta^{-1}$, hence  as element of   $\GL_{2n} (\bar {\mathbb F}_q)$   the eigenvalues of   $x$  are  
  $\eta $ and $\eta^{-1}$  each with multiplicity  $n-1 $ and  $1$  with multiplicity  $1 $. Then by Lemma~\ref{lem:nofield}, we are in case (iii).

Now suppose that $n_0=0$.  If  all $n_i \leq 1 $, then we are on case  (i).  So we may assume that $n_1=2 $. By Lemma~\ref{lem:noSL2},  $r \leq 2 $.   Suppose first that $r=2 $.  Again by Lemma~\ref{lem:noSL2},  $n_2=1$,  $Z \ne 1 $,   $4\nmid (q^{d_1}-\epsilon_1) $,   $P/Z$ is quaternion,  $Q_1  Z$ is  of index  at  most $2$  in  $P$, and $  Q_1  $ is  of index $2$  in  $P_1 $.
Since  $Z \ne 1$  and $n - d_2 =2d_1$ is  even, we have that   $q^{d_2}- \epsilon_2 \equiv     q^{n} - \eta  \equiv 0 \pmod 4$  from which it follows  that $QZ$ is of index $2$ in  $P$  and that $|P|$  is  of order at least $16$.
Now arguing as in case (iii), we see that Case (iv) holds.

Finally, suppose that  $n_0=0 $  and $r=1 $. In this case, $\eta=1$.  If $Z =1 $, then by  Lemma~\ref{lem:noSL2}  and Lemma~\ref{lem:inbetween} $P=Q_1$ is quaternion and  $Q_1= Q_1' $. Since  $Q_1 $ is of index at most $2$ in $P_1$, it follows that 
$ 4 \nmid  (q^{d_1}-\epsilon_1)$.  Now suppose that   $q$ is  a square.   Then $\epsilon_1 =-1 $.   Arguing   as in Case (ii)   we see that   Case (v) holds.     Now suppose that  $Z \ne 1  $.  By  Lemma~\ref{lem:noSL2} and  Lemma~\ref{lem:inbetween}, $P/Z $ is dihedral  and $Q_1'$ is  of index at most  $2$ in $ Q_1' $. It follows that $ 8 \nmid  (q^{d_1}-\epsilon_1)$  and consequently if $q$ is a square, then  $\epsilon=-1 $.  Arguing as above, we see that Case (vi) holds.\end{proof}

Since special orthogonal groups in   even dimension are self-dual   (and  since centralisers of semi-simple elements  of odd order in type-$\type{D}$ groups  are Levi subgroups)  we have  that the  blocks of $\CO \SO(V)$ are  in  bijection  with $\SO(V)$-conjugacy classes of semi-simple elements  of   odd order; if a  block  idempotent  $e$  corresponds to the class of $s$, then  the   Sylow $2$-subgroups   of $C_{\SO(V)}(s)$  are  defect groups of $\CO \SO(V) e$, and moreover,  for any  (relevant)   automorphism  $\alpha $ of $\SO(V)$,  $\,^\alpha e   $  corresponds to   the class of $\,^\alpha  s$  (see e.g. \cite[Prop.~7.2]{taylor18}).  Further, if $c$  is a  block idempotent  of $\Omega (V)  $ covered by  $e$, then   the Sylow $2$-subgroups of $C_{\SO(V)}(s) \cap \Omega (V)$ are     defect groups of  $\CO \Omega (V) c$.

For the following  result,  note that the spin group $\Spin(V)$   may be identified  with $\bG^F$, where $\bG$ is  a simple, simply-connected group  of type $\type{D}_{n}$ over  $\bar{\mathbb F}_q$, and $F$ is  a Frobenius morphism.  

 \begin{Theorem}  \thlabel{thm:typeD} Let  $G= \Spin (V)$,  $Z \leq  Z(G) $, and  $ N =  G/Z$. Let $H$ be  a group containing $ N$ as a subgroup of index $2$, and let $d$ be a $G$-stable  block idempotent of $\CO  N $  such that $\CO  N d$ is not nilpotent. Let $D$  be  a defect group of  the block $\CO Hd $ and   let  $D=\langle  D\cap  N, t\rangle $. 
Suppose that   both $D$ and $D\cap  N$  are quaternion groups.    Then $t$ acts by diagonal automorphisms on $ N$. 
\end{Theorem}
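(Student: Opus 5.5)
We argue by contradiction, following the pattern of the proof of \thref{thm:typeA}. Suppose the automorphism $\varphi$ of $G$ lifting conjugation by $t$ on $N$ is not diagonal. Since $\CO N d$ is not nilpotent and $D\cap N$ is quaternion, we have $n\geq 4$, so $G/Z(G)$ is of type $\type{D}_n$ with $n\geq 4$. By \cite[Thms.~2.5.12,~2.5.14]{GLS3}, and because $t$ has $2$-power order while $t^2$ induces an inner-diagonal automorphism, $\varphi$ is of one of three forms: a diagonal automorphism composed with the graph automorphism $\gamma$ induced by an element of $\O(V)\setminus\SO(V)$; a diagonal automorphism composed with a field automorphism $\sigma$ (when $q=q_0^2$); or a diagonal automorphism composed with $\sigma\gamma$. (For $n=4$ triality cannot occur, since it has order $3$.) We pass from $\Spin(V)$ to $\Omega(V)$ and $\SO(V)$ via the description of blocks of $\SO(V)$ by odd-order semisimple classes $s$. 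The block $d$ is dominated by a block of $\Omega(V)$ covered by a block $e_s$ of $\SO(V)$. The defect group $D\cap N$ is then the image of a Sylow $2$-subgroup $P$ of $C_{\Omega(V)}(s)$. If $Z$ contains the kernel of $\Spin(V)\to\Omega(V)$, we work directly with $P/Z$; otherwise we first note that $D\cap N$ quaternion forces the corresponding image in $\Omega(V)$ to be dihedral or quaternion, so \ref{lem:Dsylow} still applies after quotienting.

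Next we apply \ref{lem:Dsylow}. Case (i) is excluded: there $P$ is abelian, so $D\cap N$ would be abelian or the block would be nilpotent. In each remaining case (ii)--(vi), we rule out each type of non-diagonal $\varphi$.

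\textbf{Graph part.} Suppose $\varphi$ involves $\gamma$. Using \ref{lem-red-graph2} with $\wt G=\SO(V)$-type overgroup (or its conformal version), we obtain a $\gamma$-stable block of the overgroup covering $d$. Conjugation by $\O(V)\setminus\SO(V)$ fixes the $\SO(V)$-class of $s$ only when $n_0\neq 0$, so the $\gamma$-stable choice forces $n_0\neq0$. This leaves cases (ii) and (iii).
\begin{enumerate}
\item[(ii)] $P/Z$ is dihedral, contradicting $D\cap N$ quaternion (when $Z=\langle -I\rangle$ is the relevant central subgroup).
\item[(iii)] $D$ quaternion forces $t$ to interchange the two noncyclic maximal subgroups of $D\cap N$ (as in \ref{non-stable2}). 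But by \ref{lem:Dsylow}(iii), $QZ$ is distinguished: no element of order $4$ of $Q$ is $\GL_{2n}(\bar{\mathbb F}_q)$-conjugate to an element of $P\setminus QZ$. Since $\varphi$, realised inside $\O(V)$ up to diagonal action, preserves $\GL_{2n}$-conjugacy classes, it must stabilise $QZ/Z$. Then $D=(D\cap N)\langle t\rangle$ has a normal noncyclic subgroup of index $4$, contradicting \thref{normal_subgroup_quaternion}.
\end{enumerate}
Case (iv) is handled by the same distinguished-subgroup argument. Cases (v) and (vi) have $n_0=0$ and are excluded already.

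\textbf{Field part.} Suppose $\varphi$ involves $\sigma$ or $\sigma\gamma$. We apply \ref{lem:nofield}(b), embedding in $\GL_{2n}(\bar{\mathbb F}_q)$ with $t$ acting as $c_h\circ\sigma$; here $\gamma$ is conjugation by an element of $\O(V)$, so it is absorbed into $c_h$. \ref{lem:nofield}(b) requires $\sigma(x)$ to be conjugate to $x$ or $-x$ for suitable $x\in D\cap N$ of order $(q-1)_2\geq 8$.
\begin{enumerate}
\item[(iii), (iv)] This contradicts the final assertions of \ref{lem:Dsylow}.
\item[(ii), (v), (vi)] By \ref{lem:Dsylow}, $\sigma(s)$ is not conjugate to $s$. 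Yet $d$, and hence the $\SO(V)$-block, would need to be $\sigma$-stable up to diagonal action; this follows from \ref{lem-stableblock}, which yields a $\sigma$-stable block of the overgroup covering $d$. So again we have a contradiction.
\end{enumerate}

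\textbf{Main obstacle.} The hardest step is likely the bookkeeping between $\Spin(V)/Z$, $\Omega(V)$ and $\SO(V)$. Specifically, we need to ensure that $t$'s action lifts compatibly, so that the $\GL_{2n}(\bar{\mathbb F}_q)$-conjugacy invariants in \ref{lem:Dsylow} are genuinely preserved by $\varphi$ modulo $\pm1$. We also need to confirm that a stable block of the overgroup covering $d$ exists, via \ref{lem-red-graph2} and \ref{lem-stableblock}. We would first try handling the kernel of $\Spin(V)\to\Omega(V)$ by noting that this kernel has order $2$ and lies in the unique involution of $D\cap N$ unless it is absorbed in $Z$.
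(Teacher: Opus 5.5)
Your overall strategy is the paper's: pass to $\Omega(V)$ and $\SO(V)$, use $\varphi$-stability of the $\SO(V)$-class of $s$, and run through the cases of Lemma~\ref{lem:Dsylow}. However, the graph-automorphism step has a genuine gap in case (ii).

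\textbf{The gap in case (ii).} Your contradiction there (``$P/Z$ dihedral versus $D\cap N$ quaternion'') is only valid when $N=\Omega(V)/\langle -I\rangle$. Consider instead $n$ even, $\eta=1$, and $N=G/Z$ a half-spin group, i.e.\ $Z(G)\cong C_2\times C_2$ and $Z$ has order $2$ with $Z\neq Z_0=\ker(G\to\Omega(V))$. Then you only have $(D\cap N)/Z(N)\cong P/\langle -I\rangle$, and case (ii) genuinely gives quaternion $D\cap N$. Indeed, there $P=Q_0'\times\langle -I\rangle$, where $Q_0'$ is a dihedral Sylow $2$-subgroup of $\Omega(V_0)\cong\PSL_2(q^2)$. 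The preimage of $Q_0'$ in $G$ is generalised quaternion and meets $Z$ trivially. Since $n_0=2\neq 0$, $\gamma$ fixes the $\SO(V)$-class of $s$, so nothing you wrote excludes $\varphi=\varphi_0\gamma$.

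\textbf{The missing idea.} The paper uses the following fact. $\gamma$ acts faithfully on $Z(G)\cong C_2\times C_2$, fixing $Z_0$ and interchanging the two half-spin kernels, whereas diagonal and field automorphisms stabilise $Z$. Since $\varphi$ lifts an automorphism of $G/Z$, it must stabilise $Z$. Hence for half-spin $N$ one has $\varphi=\varphi_0\sigma$, and your field part (via Lemma~\ref{lem:Dsylow}(ii),(vi)) then handles it. This is why the paper organises the proof by the possibilities for $Z$ rather than by the cases of Lemma~\ref{lem:Dsylow}.

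\textbf{Two further points.} First, for $n=4$, $\eta=1$ you dismiss triality because it has order $3$. But the graph part of $\Out(G)$ is then $S_3$, and its two involutions other than $\gamma$ are triality-conjugates of $\gamma$. These move $Z_0$, so they neither descend to $\Omega(V)$ nor extend to $\SO(V)$, and they cannot be absorbed into $c_h$. The paper removes them by replacing $N$, $\varphi$ and $H$ by their images under a suitable power of triality.

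Second, Lemmas~\ref{lem-red-graph2} and~\ref{lem-stableblock} are unnecessary, and Lemma~\ref{lem-red-graph2} is formulated for a regular embedding, which $\SO(V)$ is not. The block $d$ determines the block of $G$ dominating it, and hence the block $d_0$ of $\Omega(V)=G/Z_0$. Since $[\SO(V):\Omega(V)]=2$, the block of $\SO(V)$ covering $d_0$ is unique. It is therefore automatically $\varphi$-stable, which gives the $\varphi$-stability of the $\SO(V)$-class of $s$ directly.
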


\begin{proof}   Set  $Z_u =  Z(G)$  and  recall that   $Z_u \cong C_4 $ if  $n$ is odd and $4\mid  (q -\eta)$,
 $Z_u \cong C_2$ if  $n$ is odd and $4\nmid  (q -\eta)$, $Z_u  \cong  C_2 $ if  $n $ is even  and $\eta =-1$,  and 
 $Z_u \cong  C_2  \times C_2 $ if  $n $ is even  and $\eta =1$  (see \cite[Thm.~2.5.12]{GLS3}). Let $Z_0 \leq Z_u $ be  the  subgroup of  order $2$  such  that $ \Omega (V)  =  G/Z_0$.   If  $Z' \leq Z_u$, then 
 any  automorphism  of  $G/Z'$ lifts   to a unique automorphism  of $G$;  the  resulting map $\Aut(G/Z') \to \Aut (G) $   is  injective and has as image the automorphisms of  $G$  which  stabilise $Z'$  (see  \cite[Thm.~2.5.14]{GLS3}). We  use the same symbol to denote  an automorphism of  $G/Z'$  and  its  lift to $G$. 
 Let $g \in \O(V)\setminus  \SO(V) $  (be an element of order $2$) and denote by $\gamma $ (the lift of)  the automorphism of $\Omega (V)$  induced  by  conjugation  by  $g$.  If $q=q_0^2$  is a  square and $\eta=1 $, then denote by $\sigma $  (the lift of) the  automorphism  of $\Omega (V) $ corresponding  to  raising every entry to the $q_0$-th power. 

Let  $\varphi \in \Aut(G)$ be the lift of  the conjugation action of  $t$ on $N$.  
The image of  $\varphi $ in $\Out (G) $   has  order at most  $2$  and $ Z$  is $\varphi$-stable.  Thus, again by \cite[Thms.~2.5.12,~2.5.14]{GLS3} unless $n=4 $ and  $\eta=1 $, there is a diagonal automorphism  $\varphi_0$  such that  either $\varphi=  \varphi_0 $, or   $\varphi =  \varphi_0 \gamma  $ or $q =q_0^2 $,  $\eta  =1 $ and $\varphi=   \varphi_0 \gamma  \sigma $  or $\varphi_0 \sigma $. If $n=4 $ and  $\eta=1 $, then in addition to the above possibilities,   $\varphi$ could also be  the  conjugate of  an automorphism of the above form  by   a power, say  $\tau $,  of  a  triality  automorphism;  in this case, replacing  $N$ by  $\tau (N)$, $\varphi $ with $\tau \varphi \tau^{-1}$,  and $H$  by an overgroup  $H'$   of $\tau(N)$   such that  $\tau $  extends to an isomorphism  from $H$ to $H'$ (note that such a $H'$ exists), we  may assume that  $\tau=1$. In particular, 
 $Z_0$  is $\varphi$-stable.
Further, note that  there exists  $h \in \GL_{2n} (\bar{\mathbb F}_q) $    such that  $\varphi_0  $   is conjugation by $h$  on $\Omega (V) $,  $h$ normalises  $\SO(V)  $   and  $h$ stabilises  the   $\SO(V)$-conjugacy class of   any semi-simple element of odd order. In particular, $\varphi_0 $  (respectively $\varphi $)  extend to automorphisms  of $\SO(V) $   which we denote again by  $\varphi_0$  (respectively $\varphi$) (see \cite[Thm.~2.5.14]{GLS3}). Finally, note that $\gamma $ and $\sigma $  commute.

Let $d_u$  be the  block idempotent of $\CO G$ dominating  $d$, let $ d_a $  be the  block idempotent of  $\CO(G/Z_u) $  dominated by $d$  and let   $d_0$ be the block idempotent of $\CO \Omega (V)  $  dominated by $d_u$. Let  $e$  be the block  idempotent of  $\SO(V)$   covering $d_0$  and let   $s \in \SO(V)$  be the  conjugacy   class corresponding to $e$. Let   $P$   be  a Sylow $2$-subgroup of $C_{\SO(V) } (s)\cap  \Omega (V)$,  a defect group of $\CO \Omega (V)  d_0$. Note that  since $d$ is $\varphi$-stable, and  $d_u$, $d_a $,  $d_0$ and $e$  are all unquely determined by $d$, $d_u$, $d_a $,  $d_0$ and $e$  are  also all $\varphi$-stable.  Consequently  the  $\SO(V)$-class of $s$ is  $\varphi$-stable. 

Suppose  if possible that    $\varphi $ is not diagonal.  So, either  $\varphi =  \varphi_0 \gamma  $ or $q =q_0^2 $,  $\eta  =1 $ and $\varphi=   \varphi_0 \gamma  \sigma $  or $\varphi_0 \sigma $.   Consider first the case  that   $Z =Z_0 $. Then   $N=  \Omega (V)$    and  we may assume that  $P= D\cap N  $ is quaternion.   Then, we 
 are  in Case (v) of Lemma~\ref{lem:Dsylow}. Since $n_0=0 $, this means  that  $\gamma $  does not stabilise   the $\SO(V)$-class of $s$.  Hence   $q$  is a  square  and either  $\varphi=   \varphi_0  \sigma  $  or  $\varphi=   \varphi_0\gamma\sigma  $.   Since $\varphi $  stabilises  the $\SO(V)$-class of $s$, $\varphi_0= c_h $ and $\gamma =c_g $, it follows that  $\sigma  (s)  $ is  $\GL_n(\bar {\mathbb F}_q) $-conjugate  to $s$,  a  contradiction to  Lemma~\ref{lem:Dsylow}.

Next suppose that  $Z \ne Z_0 $ but    $N$ is  a   quotient  of $\Omega(V)$.  Then  $Z_u =Z$ is  cyclic  of order $4$, $Z(\Omega(V)) \ne 1 $, and  $N =  \Omega (V)/Z (\Omega (V) ) $. Further,  we may assume  that  $ P/Z (\Omega (V) )= D\cap N  $  is quaternion.
Thus   we are in case  (iii) or  (iv) of Lemma~\ref{lem:Dsylow}.  The case that  
 $q =q_0^2 $,  $\eta  =1 $ and $\varphi=   \varphi_0 \gamma  \sigma $ or $ \varphi_0 \sigma $  is ruled out  by Lemma~\ref{lem:nofield}.   So,  $\varphi=   \varphi_0 \gamma = c_{hg} $.   Since  the  $\SO(V)$-class of $s$ is  also $\varphi$-stable, we  are in fact in  Case  (iv) of 
Lemma~\ref{lem:Dsylow}. Let  $Q \leq  P$  as in Case (iv).  Then  $QZ(\Omega(V)) / Z(\Omega(V))   $  is  a quaternion subgroup of  index  $4$  in  $D$, hence non-normal.  Let  $y \in Q$  be an element of order $4$  such that $y Z(\Omega (V))  $  is not in the cyclic  subgroup of index $2$ of $P/ Z(\Omega (V))  $. Then  $tyZ(\Omega (V)) t^{-1} \notin QZ(\Omega (V)) t^{-1}$, hence  $\, ^{hg} y \notin  QZ$. But this is a  contradiction   to Lemma~\ref{lem:Dsylow}.

Now suppose  that   $N$ is not a  quotient of  $\Omega (V)  $  and   $Z_u$ is  cyclic.  In this   case,   $Z_0= 1  $ and $N =G$.  
Since $N$ has  a quaternion block,   $Z_u$  has order $2$  whence $ \Omega(V)   =  N/  Z_u $,  $P =  D \cap N/Z_u $  is   dihedral  and $Z(\Omega(V) )=1 $.  Thus  $s$ is as in Case (i)  of Lemma~\ref{lem:Dsylow},  which means  that $\CO SO(V) e$   and consequently   $\CO  \Omega (V) d_0$ and   $\CO N d$   are nilpotent,  a contradiction.

So, we  may assume  that  $N$ is not a  quotient of  $\Omega (V)  $  and   $Z_u$ is  not  cyclic. In this case, $Z_u = Z_0 \times Z  \cong C_2 \times C_2 $,   $n $ is even, $\eta =1 $  and  $Z(\Omega (V)) \ne 1 $. Now $\langle \gamma \rangle$ acts  faithfully on $Z_u$  and  stabilises  $Z_0$, thus  does not stabilise $Z$, whereas   $\sigma $ and $h$  do stabilise  $Z$ (see again \cite[Thms.~2.5.12,~2.5.14]{GLS3}).  Therefore,  since  $Z$ is $\varphi$-stable,  we have that  $q$  is a square and   $\varphi= c_h \sigma  $. 
Moreover, we may  assume that $P/Z(\Omega(V))   $     is isomorphic  to   the quotient of   $ D \cap N   $ by the  unique central subgroup of $N$ of  order $2$   (both are isomorphic to defect  groups of $\CO (G/Z_u) d_a$)  and in particular  $P/Z(\Omega(V))   $   is   dihedral.     Thus, we are in Case (i), (ii) or (vi) of Lemma~\ref{lem:Dsylow}.   Case (i) yields  that $d$ is nilpotent.   Case (ii) and Case (vi)  yield that  the $\SO(V)$-class of  $s$ is not $\varphi$-stable,  a contradiction.
\end{proof} 

With this, we are now ready to show that in the situation of \thref{main-reduction}(i, ii) for groups of Lie type, the block is Morita equivalent to a principal block: 
\begin{Theorem} \thlabel{lietype} 
Let $(H,c)$ be a Morita minimal pair such that $H$ is a finite group and $c$ is a block idempotent of $\CO  H$ with defect group $P \cong Q_{2^n}$ for some $n \geq 3$, and such that $H$ is either quasisimple or there is $N\lhd H$ with $N$ quasisimple, $[H:N]=2$, $H=NP$, and $N\cap P\cong Q_{2^{n-1}}$. (That is, $H$ and  $c$ are as in parts (i) or (ii)  of 
\thref{main-reduction}.) In the first case,  set  $ N=H$.  
Suppose that $N= \bG^F/Z$, where  $\bG$  is a simple, simply-connected  group and $F$   is a  Frobenius morphism on $\bG$ with respect to an ${\mathbb F}_q$-structure. Then  $\CO Hc $ is Morita equivalent to a principal block.
\end{Theorem}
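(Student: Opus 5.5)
The plan is to split according to whether $H=N$ or $[H:N]=2$, and to reduce the index-two case to diagonal automorphisms. There we can invoke \thref{cor:extendclassical2}.

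\emph{Case $H=N$.} This is contained in \thref{cor:notB_nC_nF4G2E8}. Morita minimality and \thref{cor:minisquasi-isolated} force the block to cover a quasi-isolated block. \thref{lem:classicalqi} and \thref{lem:exceptionalqi} then rule out all types except type $\tA$. For type $\tA$, \thref{cor:extendclassical2} (with $H_1=H$) gives a Morita equivalence to a principal block.

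\emph{Case $[H:N]=2$.} By \thref{cor:notB_nC_nF4G2E8}, $\bG$ is of type $\tA$, $\tD$ or $\tE_6$. Type $\tE_6$ is excluded by \thref{E6}. Its hypotheses hold because $n\geq 4$ (as $N\cap P\cong Q_{2^{n-1}}$ is quaternion) and because the block of $N$ covered by $c$ is $H$-stable (Morita minimality, or directly $H=NP$). The conclusion of \thref{E6} is that $\CO Hc$ is nilpotent, so $\CO Hc\cong \CO P$ up to Morita equivalence, which is a principal block; in fact by minimality this cannot even occur.

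Write $H=N\langle t\rangle$ with $t\in P$ a $2$-element, so that $P=\langle P\cap N,t\rangle$ with both $P$ and $P\cap N$ quaternion. Let $d$ be the ($H$-stable) block of $\CO N$ covered by $c$; then $c=d$ by Remark~\ref{remark:fongcorrespondent}. In type $\tA$, \thref{thm:typeA} gives two options. Either $t$ acts on $N$ by a diagonal automorphism, or $n=2$, $N=\SL_2(q)$ and $d$ is principal. In the latter subcase $c=d$ is the principal block of $\CO H$, so we are done. In type $\tD$, if $\CO Nd$ is nilpotent then $\CO Hc$ is nilpotent as well (\cite[Thm.~8.12.1]{LiBookII}), and hence Morita equivalent to $\CO P$. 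Otherwise \thref{thm:typeD} shows that $t$ acts by diagonal automorphisms. Type $\tD_n$ is only meaningful for $n\geq 4$; small cases coincide with type $\tA$ or have been excluded.

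So in every remaining case $H/N$ is a cyclic $2$-group acting by diagonal automorphisms on $N=\bG^F/Z$, with $\bG$ classical of type $\tA$ or $\tD$. \thref{cor:extendclassical2} then yields that $\CO Hc$ is Morita equivalent to a principal block.

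The only delicate points are bookkeeping. We must check that in \thref{thm:typeA} and \thref{thm:typeD} the block $d$ is $G$-stable in the required sense, i.e.\ stable under the lifted action of $t$; this holds because $d$ is $H$-stable. We must also confirm that the Morita equivalence from \thref{cor:extendclassical2} applies to $H$ itself rather than to an abstract extension, and this is exactly how that corollary is stated.
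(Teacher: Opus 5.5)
Your proposal is correct and follows the paper's proof. You reduce via \thref{cor:notB_nC_nF4G2E8} to $[H:N]=2$ with $\bG$ of type $\tA$, $\tD$ or $\tE_6$, dispose of $\tE_6$ by \thref{E6}, and use \thref{thm:typeA} and \thref{thm:typeD} to see that $t$ acts diagonally, so that \thref{cor:extendclassical2} applies. Your extra bookkeeping fills in details the paper leaves implicit: the nilpotent case excluded by the hypothesis of \thref{thm:typeD}, and the $\SL_2(q)$ principal-block alternative in \thref{thm:typeA}.
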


\begin{proof}  By \thref{cor:notB_nC_nF4G2E8}, we may assume that  $H \ne N $ and  $\bG$ is  of type $\type{E}_6$, $\type{A}_n$  or $\type{D}_n$. In the first case, the result follows  from \thref{E6}. In the latter two cases the result  follows  from \thref{thm:typeA}, \thref{thm:typeD}, and \thref{cor:extendclassical2}.
\end{proof}

We end this section by remarking that the work in this section could also be applied to the classical groups of types $\type{B}$ and $\type{C}$, although these groups are dealt with by different methods here. Also,  following through the reductions, one can prove that  \thref{lietype}  holds without the assumption on Morita minimality.

\section{The remaining (almost) quasi-simple groups}\label{sec:altspor}

In this section, we consider the remaining quasi-simple groups (namely, the alternating groups, sporadic simple groups, groups with exceptional Schur multiplier, Ree groups and groups of Lie type in defining characteristic) and almost quasi-simple groups with these as index-2 subgroups. 

\subsection{Alternating Groups} \label{alt}
We refer the reader to \cite[Secs.~11--13]{olsson93} for background on blocks of symmetric and alternating groups and their double covers.
In particular, to each block of the symmetric group $\sym_n$, there is associated a weight $w$, and the defect group of a block $\CO\sym_n\bar{c}$ of $\sym_n$ of weight $w$ is a Sylow $2$-subgroup of $\sym_{2w}$. (See e.g. \cite[Prop.~11.3]{olsson93}.)  
If $\CO\alt_n\bar{b}$ is a block of $\CO\alt_n$ covered by $\CO\sym_n\bar c$, then the defect group of $\CO\alt_n\bar{b}$ contains a Sylow $2$-subgroup of $\alt_{2w}$. The latter contains more than one involution for $w\geq 2$. It follows that no block of $\CO\alt_n$ (or $\CO\sym_n$) has quaternion defect groups. Then we may turn our attention to the double covers.

We denote by $\wt\sym_n$ and $\wt\alt_n$ a double cover of $\sym_n$, respectively $\alt_n$.

\begin{Proposition}\label{prop:alternatinggenquat}
    Let $\CO  \wt\alt_nb$ be a $2$-block  of $\wt\alt_n$. Then $\CO\wt\alt_n b$ has  quaternion defect groups if and only if the block $\CO \wt\sym_n c$ of $\CO \wt\sym_n$ covering $\CO\wt\alt_n b$ dominates a block $\CO \sym_n\bar c$ of $\CO\sym_n$ of weight $w=2$ or $3$. (In these cases, the defect groups are $Q_8$ and $Q_{16}$, respectively.)  Consequently, if $\CO\wt\alt_n b$ has  quaternion defect groups, then  $\CO\wt\alt_n b$  has  three simple modules.  The blocks of maximal defect of $6.\alt_6$ and $6.\alt_7$ have $Q_{16}$ defect groups. The blocks of maximal defect of $6.\alt_6$ and the principal block of $6.\alt_7$ have three simple modules. The two non-principal blocks of maximal defect of $6.\alt_7$ have two simple modules. Further $6.\alt_7$ has a unique block with defect group $Q_8$.
\end{Proposition}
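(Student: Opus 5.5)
The plan is to use the combinatorial description of spin blocks of $\wt\sym_n$ and $\wt\alt_n$ from \cite[Secs.~11--13]{olsson93}, together with the fact that $Z(\wt\alt_n)$ has order $2$.

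\textbf{Step 1: reduce to $\sym_n$ and $\alt_n$.} Every $2$-block of $\wt\alt_n$ contains the central involution $z$ in its defect group. It dominates a block of $\alt_n$ whose defect group is $D/\langle z\rangle$. The same holds for $\wt\sym_n$ and $\sym_n$.

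A block $\CO\sym_n\bar c$ of weight $w$ has a Sylow $2$-subgroup of $\sym_{2w}$ as defect group. Hence the block $\CO\wt\sym_n c$ has a Sylow $2$-subgroup of $\wt\sym_{2w}$ as defect group. The covered block $\CO\wt\alt_n b$ has defect group $D\cap\wt\alt_n$, namely a Sylow $2$-subgroup $\tilde S$ of $\wt\alt_{2w}$. (Since $\wt\sym_n/\wt\alt_n$ is a $2$-group, $b$ is $c$ restricted suitably; see Remark~\ref{remark:fongcorrespondent}.) Thus the question becomes: for which $w$ is a Sylow $2$-subgroup of $\wt\alt_{2w}$ quaternion?

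\textbf{Step 2: settle the small weights.}
\begin{itemize}
\item For $w=0$ the block has defect zero.
\item For $w=1$, $\wt\alt_2$ has Sylow subgroup $C_2$, which is not quaternion.
\item For $w=2$, $\wt\alt_4=\SL_2(3)$, whose Sylow $2$-subgroup is $Q_8$.
\item For $w=3$, $\wt\alt_6\cong\SL_2(9)$, whose Sylow $2$-subgroup is $Q_{16}$.
\item For $w\geq 4$, $\alt_{2w}\geq\alt_8$ contains an elementary abelian subgroup of order $8$, for instance $\langle(12)(34),(13)(24),(56)(78)\rangle$. Its preimage in $\wt\alt_{2w}$ has rank $\geq 2$, since an elementary abelian $2^3$ cannot be a quotient of a group of rank one by a central $C_2$. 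So the Sylow subgroup is not quaternion.
\end{itemize}
This gives the first assertion, with defect groups $Q_8$ and $Q_{16}$.

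\textbf{Step 3: three simple modules.} I would use that the number of $2$-modular simple modules of a spin block equals that of the dominated block of $\alt_n$. For $2$-blocks of $\wt\alt_n$ they are the same block algebra over $k$ modulo a nilpotent ideal, since $z$ acts trivially on simples. By Olsson, a block of $\alt_n$ of weight $w$ has $\ell$ equal to the number of pairs of $2$-quotient data. For weight $2$ and $3$ the dominated $\alt_n$-block is Morita equivalent (by Scopes/Chuang--Rouquier) to the principal block of $\alt_4$ or $\alt_6$, each with three simples. Alternatively one can argue via \thref{G_and_C_G(z)} and Erdmann's list.

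\textbf{Step 4: the exceptional covers $6.\alt_6$ and $6.\alt_7$.} Here I would compute directly with known decomposition matrices (Modular Atlas). Since $O_{2'}(Z)=C_3$ is central, the faithful-on-$C_3$ blocks have defect groups of the form $D$ with $D/\langle z\rangle$ a Sylow $2$-subgroup of $\alt_6$ or $\alt_7$, which is dihedral of order $8$. Their preimages are $Q_{16}$.

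The counts of simple modules (three for $6.\alt_6$ and the principal block of $6.\alt_7$; two for the two nonprincipal maximal blocks of $6.\alt_7$) and the unique $Q_8$-block of $6.\alt_7$ would be read off from the $2$-modular character tables. The uniqueness of the $Q_8$ block comes from the weight-$2$ block of $\alt_7$ (core $(3)$) lifted to $2.\alt_7$.

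The main obstacle I expect is this last step: justifying the block counts for the triple covers without explicit computation. I would first try a Fong reduction to $3.\alt_7\leq 3.\sym_7$ blocks combined with Brauer's formula for $\ell(B)$ via $2$-regular classes; failing that, I would cite the decomposition matrices.
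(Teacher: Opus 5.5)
Your Steps 1, 2 and 4 follow the paper's proof. You identify $D\cap\wt\alt_n$ with a Sylow $2$-subgroup of $\wt\alt_{2w}$, force $w\le 3$, recognise $\wt\alt_4\cong\SL_2(3)$ and $\wt\alt_6\cong\SL_2(9)$, and settle $6.\alt_6$ and $6.\alt_7$ from character-table data. The one genuinely different ingredient is how you exclude $w\ge 4$. You use the elementary abelian subgroup of order $8$ in $\alt_8$, together with the fact that a cyclic or quaternion group modulo its central involution has $2$-rank at most $2$. The paper instead compares the element of order $2^{k-2}$ in the dihedral group $D/Z$ with the largest $2$-power element order $2^t$ in $\sym_{2w}$ and the order of a Sylow $2$-subgroup of $\alt_{2^t}$. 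Your version is shorter and equally valid.

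\textbf{The gap is in Step~3.} The reduction $\ell(k\wt\alt_n b)=\ell(k\alt_n\bar b)$, via the nilpotent ideal generated by $z-1$, is correct and is what the paper uses. The problem is the next claim: it is false that every weight-$2$ (resp.\ weight-$3$) block of $\alt_n$ is Morita equivalent to $k\alt_4$ (resp.\ $B_0(k\alt_6)$).
\begin{itemize}
\item $B_0(k\alt_5)$ has weight $2$ but is not Morita equivalent to $k\alt_4$; these are two distinct Morita classes of Klein-four blocks.
\item $B_0(k\alt_7)$ has weight $3$, but its largest Cartan invariant is $4$, against $8$ for $B_0(k\alt_6)$.
\end{itemize}
Neither cited result rescues the claim. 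Scopes' theorem only identifies blocks whose cores are sufficiently far apart on the abacus. Chuang--Rouquier gives derived rather than Morita equivalences. Both are statements about $\sym_n$, not $\alt_n$; a derived equivalence would preserve $\ell$, but only for the $\sym_n$-blocks.

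Your fallback does not work either. Since $z$ is central, $C_G(z)=G$ and \thref{G_and_C_G(z)} says nothing. Erdmann's list alone permits two simple modules for $Q_{16}$, as the non-principal maximal-defect blocks of $6.\alt_7$ show.

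What is needed is the count $\ell(k\alt_n\bar b)=3$ for $w\in\{2,3\}$. The paper takes this from \cite[Prop.~12.9]{olsson93}; citing that count, instead of a Morita equivalence, closes the gap.

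Two small slips. For $w=0$ the spin block is not of defect zero: its defect group is $\langle z\rangle\cong C_2$, as your own Step~1 implies, though it is still not quaternion. Also, the weight-$2$ block of $\alt_7$ has $2$-core $(2,1)$, not $(3)$, since $(3)$ is not a $2$-core.
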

\begin{proof}
Let $Z=\zent(\wt\sym_n)$.
Since $\sym_n=\wt\sym_n/Z$ and $Z$ has size $2$, the blocks of $\wt\sym_{n}$ are in one-to-one correspondence with those of $\sym_n$, which can be seen by \cite[Thm.~9.9(b)]{N98} and the discussion before. Given a $2$-block idempotent $b$ of $\CO\wt\alt_n$, there is a unique $2$-block $\CO\wt\sym_n c$ covering $\CO\wt\alt_n b$ and unique $2$-blocks $\CO\alt_n\bar b$, $\CO\sym_n\bar c$ of $\alt_n$, respectively $\sym_n$  contained in $\CO\wt\alt_n b$, respectively $\CO\wt\sym_n c$.
The defect groups of $\CO\wt\alt_n b$ are then isomorphic to Sylow $2$-subgroups of $\wt{\alt}_{2w}$, where $w$ is the weight of $\CO\sym_n\bar{c}$. Further, $\CO\alt_n\bar b$, respectively $\CO\sym_n \bar c$ has the same number of  simple modules as $\CO\wt\alt_n b$, respectively $\CO\wt\sym_n c$.

Now, suppose that a defect group $D$ of $\CO\wt\alt_n b$ is quaternion of size $2^k$. Then $\zent(D)$ has size two, which forces $Z=\zent(D)$. Further, the defect group $D/Z=D/\zent(D)$ of $\CO\alt_n\bar b$ is dihedral of size $2^{k-1}$. In particular, $D/Z$ contains an element of order $2^{k-2}$. Note that $D/Z$ is a Sylow $2$-subgroup of $\alt_{2w}$. 

Now, the largest order of a $2$-power element of $\sym_{2w}$ is $2^t$, where $2w=2^t+a_{t-1}2^{t-1}+\cdots+a_1 2+a_0$ with $a_i\in\{0,1\}$ is the $2$-adic expansion of $2w$. But for $t\geq 3$, the Sylow $2$-subgroup of $\alt_{2^t}$ has size larger than $2^{t+1}$. It follows that $t\leq 2$, and hence $2w< 8$, so $w\leq 3$. From this, with our assumption that $D$ is quaternion and hence $D/Z$ is dihedral, we see that $|D/Z|$ must be $4$ or $8$. In the first case, we see $D \cong Q_8$.
If instead $|D/Z|=8$, then $D$ is isomorphic to a Sylow $2$-subgroup of $\wt\alt_{6}\cong \SL_2(9)$, which is indeed quaternion of size 16.   The  claim  about  the number of simple modules follows from the   first assertion since   if $w=2 $  or $3$, then $\CO\alt_n\bar b$  has three simple modules (see \cite[Prop.~12.9]{olsson93}).

The  assertions regarding the exceptional Schur covering groups $6.\alt_6$ and $6.\alt_7$  follow using \cite[Thm.~9.9]{N98}, together with the information in the GAP character table library \cite{GAP}.  
\end{proof}

\subsection{Sporadic Groups and Exceptional Schur Multipliers} \label{sporadic}
\begin{Proposition}\label{prop:sporadicgenquat}
   Let $\CO Gb$ be a $2$-block of a finite group $G$ with defect group $D$, and with a quasisimple normal subgroup $N$ of index dividing $2$ such that $G=DN$ and $N/\zent(N)$ is a sporadic simple group. Then $\CO Gb$ has quaternion defect groups precisely in the following situations:
   \begin{enumerate}
       \item $D \cong Q_8$ and $G \cong 2.M_{12}$, $2.J_2$ or $2.Ru$, where in each case $\CO Gb$ is the unique block with this defect group;
       \item $D \cong Q_{16}$ and one of the following:
       \begin{enumerate}
           \item $G=2.Co_1$ and $\CO Gb$ is the unique non-principal $2$-block. We have $\ell(\CO Gb)=3$;
           \item $G=2.Suz$ or $6.Suz$ and $\CO Gb$ is the unique $2$-block of $G$ with nonmaximal defect group. In the case of $2.Suz$, this is the unique non-principal $2$-block. We have $\ell(\CO Gb)=3$.
           \item $G/Z(G) \cong \Aut(J_2)$, where $|Z(G)|=2$, and $\CO Gb$ is the unique non-principal $2$-block. We have $\ell(\CO Gb)=2$.
       \end{enumerate}
   \end{enumerate}
\end{Proposition}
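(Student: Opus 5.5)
The plan is a case-by-case verification over the 26 sporadic groups, organised so that most groups are eliminated by structural arguments before any character-table work is needed.

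\emph{Setup.} Write $Z=\zent(N)$. Every block of $N/Z$ is dominated by a block of $N$ whose defect group $D_N$ satisfies $D_NZ_2/Z_2\cong$ a defect group of the quotient block. If $D$ is quaternion, then $D\cap N$ is quaternion or cyclic by \thref{normal_subgroup_quaternion}. It is not cyclic, since $G=DN$ with $[G:N]\le 2$ would then force $\CO Gb$ to be nilpotent with cyclic-by-$2$ defect group, which is never quaternion of order $\ge 8$ unless $D\le N$. So $D\cap N$ is quaternion, and has index at most $2$ in $D$.

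\emph{Reduction to faithful blocks.} A quaternion group has a unique involution, so $Z_2\cap D$ must either be trivial or equal $\zent(D)$. If $Z_2\cap D=1$, then $D$ embeds as a defect group of a block of $N/Z_2$. I will first rule this out using the known lists of $2$-blocks of sporadic groups and their automorphism groups; here no non-cyclic defect group has a unique involution, because every non-principal block with nontrivial noncyclic defect group is known from the literature (Landrock, An--O'Brien and others). Hence $|Z_2|=2$ with $Z_2=\zent(D)$, and $\CO Gb$ is faithful on $Z_2$. The image $D/Z_2$ is then a dihedral (or Klein four) defect group of a block of $G/Z_2$, an almost simple sporadic group with Sylow $2$-subgroup large enough to contain it.

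\emph{Enumeration.} I therefore run through the sporadic groups whose Schur multiplier has even order: $M_{12},M_{22},J_2,HS,Suz,Ru,McL$ (odd multiplier, discard), $Co_1,Fi_{22},Fi_{24}',B$, and $M_{22}$'s covers, together with their automorphism groups of index $2$. For each, I list the $2$-blocks of $G/Z_2$ with dihedral or Klein four defect groups; these are known explicitly from the literature and from \cite{GAP}. For each such block, I check using \cite{GAP} character tables of the double covers whether the covering faithful block of $G$ has defect group $D$ of order $2|D/Z_2|$ with a unique involution. The test is computed from the faithful characters: the degrees give the defect, and a quaternion defect group is detected because $D$ has a central involution acting as $-1$ and the block's height-zero character count matches $Q_{2^n}$ (rather than dihedral or semidihedral). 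The number of simple modules $\ell(\CO Gb)$ is read off from the decomposition matrices, or from Brauer character counts in GAP. The cases $6.Suz$ and $\Aut(J_2)$ (with $2.J_2.2$ isoclinism issues) need a check that the index-$2$ extension keeps $D$ quaternion, via the rule that $D\cap N$ has index $2$ only when the outer involution is not realised by an involution lifting into $D$.

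\emph{Main obstacle.} The hardest step is the large groups $2.B$, $2.Fi_{22}$, $6.Fi_{22}$, $2.Co_1$ and $3.Fi_{24}'$-type covers. There, full decomposition data may be unavailable, so I would rely on block theoretic invariants computable from ordinary characters: the defect via $p$-parts of degrees, the number $k(b)$ of irreducibles and of height-zero characters, and Olsson-type counts ($k(b)=2^{n-2}+4$ or $+5$ for quaternion blocks). These invariants distinguish quaternion from dihedral, semidihedral and abelian defect groups of the same order. For the $2.Co_1$ and $2.Suz$ blocks, I determine $\ell=3$ via $k(b)-k_0$ relations for $Q_{16}$ blocks.
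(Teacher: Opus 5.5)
Your outline agrees with the paper's up to the point where $D$ has to be identified. Landrock's list and the Schur multipliers force $|O_2(Z(G))|=2$, with $D/O_2(Z(G))$ Klein four or $D_8$. That leaves finitely many covers and index-$2$ extensions to check by computer.

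\textbf{The gap.} The problem is the test you use to decide whether $D$ is quaternion. The numerical invariants $k(B)$, $k_i(B)$, $\ell(B)$ do not determine $D$, contrary to your claim that they separate quaternion from dihedral, semidihedral and abelian groups of the same order.
\begin{itemize}
\item For example, $\GL_2(3)$ and $\operatorname{CSU}_2(3)$ are isoclinic double covers of $S_4$. Each has a single $2$-block and the same character degrees $1,1,2,2,2,3,3,4$. So both have $k(B)=8$, $(k_0,k_1,k_2)=(4,3,1)$ and $\ell(B)=2$. Yet their Sylow $2$-subgroups are $SD_{16}$ and $Q_{16}$ respectively.
\item This is exactly the isoclinism issue you raise for $2.J_2.2$. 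The two double covers of $\Aut(J_2)$ cannot be told apart by these counts. The preimage of $\bar D\cong D_8$ is $Q_{16}$ or $SD_{16}$ according as the involutions of $\bar D$ outside $J_2$ lift to elements of order $4$ or $2$.
\item A central extension of $D_8$ by $C_2$ need not be tame at all ($C_2\times D_8$, $C_4\rtimes C_4$, \dots). So agreement with Olsson's tame values such as $k(B)=2^{n-2}+5$ does not identify $D$, even when $\ell(B)=3$.
\item The fact that $Z$ acts as $-1$ in faithful characters holds for every $D$, so it carries no information.
\item The same defect affects the cases where you must show $D$ is \emph{not} quaternion: $2.B$, $2.M_{12}.2$, $2.HS.2$, $2.Suz.2$.
\end{itemize}

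\textbf{What is needed.} You need a structural identification of $D$.
\begin{itemize}
\item The paper computes in GAP the radical $2$-subgroups of the relevant order (defect groups are radical) and finds them quaternion.
\item For the exclusions, the paper exhibits two classes of involutions on which characters of $B$ do not vanish. By Brauer's second main theorem both classes meet $D$, which is impossible if $D$ has a unique involution.
\item A version using only character tables also works. The involution classes meeting $D$ are exactly those occurring as $2$-parts of elements at which some $\chi\in\Irr(B)$ is nonzero. Write $Z=\langle z\rangle$. Since $D/Z$ is non-cyclic, $D$ is quaternion if and only if the only such class is $\{z\}$.
\end{itemize}
Once $D\cong Q_{16}$ is established, your deduction of $\ell(B)=3$ from $k(B)=9$ via Olsson is valid. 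It is a legitimate alternative to the paper's argument for $2.Co_1$, which goes through Erdmann's decomposition matrices for the dominated $D_8$-block of $Co_1$.

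\textbf{Smaller slips.}
\begin{itemize}
\item Your exclusion of cyclic $D\cap N$ is not an argument. $Q_{2^n}$ is itself cyclic-by-$C_2$, and nilpotent blocks can have quaternion defect groups. That case has to be ruled out from the block data of $N$.
\item $Fi_{24}'$ has Schur multiplier $3$, so it does not belong on your list.
\item $6.Suz$ is quasisimple, not an index-$2$ extension.
\end{itemize}
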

\begin{proof}

Let $\CO Nc$ be a block of $\CO N$ covered by $\CO Gb$. Note that since $G=DN$ we have that $\CO Nc$ is $G$-stable, and $D \cap N$ is a defect group for $\CO Nc$. Note also that $Z(G)=Z(N)$. Then either $O_2(Z(G))=1$ and $D \cap N$ is quaternion, or $|O_2(Z(G))|=2$ and $(D \cap N)/O_2(Z(G))$ is dihedral. By~\cite{landrock}, and taking into account Schur multipliers of the sporadic simple groups, we must have $|O_2(Z(G))|=2$, and $(D \cap N)/O_2(Z(G))$ is $C_2 \times C_2$ (and $N/Z(N)$ is $M_{12}$, $J_2$, $HS$, or $Ru$) or $D_8$ (and $N/Z(N)$ is $Suz$, $Co_1$, or the Baby Monster $B$). In the case of the Baby Monster $B$, we utilize the CTBlocks and CTblLib packages for GAP~\cite{GAP} to show that there are two conjugacy classes of involutions contributing elements of a defect group, which cannot then by quaternion (there are at least two conjugacy classes of involutions on which irreducible characters of the block do not vanish). In the other cases we  apply~\cite{GAP} to see that $N$ has a unique radical $2$-subgroup of order $|D \cap N|$, and that this is quaternion. We further use~\cite{GAP} to directly verify the number of simple modules in all cases except $2.Co_1$. In this last case, note that it suffices to determine the number of simple modules in the corresponding block of $Co_1$ with defect groups $D_8$. Using~\cite{GAP} the irreducible characters in this block are $\{\chi_{50},\chi_{66},\chi_{71},\chi_{93},\chi_{99}\}$, which have distinct degrees. But from the decomposition matrices in~\cite{er90} if the number of simple modules is two, then there is an irreducible Brauer character that lifts to two distinct irreducible ordinary characters. Hence $\ell(\CO Gb)=3$ in this case. Note that the block with defect group $Q_{16}$ of $6.Suz$ has $O_{2'}(Z(N))$ in its kernel.  

It remains to consider $2.Suz.2$, $2.M_{12}.2$, $2.J_2.2$ and $2.HS.2$. In each case the block of $N$ identified above is $G$-stable. In the first case potential blocks would have defect group of order $32$ and in the remaining of order $16$. We verify by~\cite{GAP} as above that in the case of $2.Suz.2$, $2.M_{12}.2$, and $2.HS.2$ each possess two conjugacy classes of involutions containing elements of a defect group, which then cannot be quaternion. Finally, in case $2.J_2.2$ we may use~\cite{GAP} to check that all radical $2$-subgroups of order $16$ are quaternion, and we are done.
\end{proof}

\begin{Proposition}\label{prop:excschur}
Let $\CO Gb$ be a $2$-block of a finite group $G$ with defect group $D$, and with a quasisimple normal subgroup $N$ of index dividing $2$ such that $G=DN$ and $N/\zent(N)$ is a simple group of Lie type with exceptional Schur multiplier, but is not isomorphic to an alternating group. Then $\CO Gb$ has quaternion defect groups precisely in the following situations:

\begin{enumerate}
\item the principal block of $G=2.\type{A}_2(2)$, which has $Q_{16}$ Sylow subgroups, or of $2.\type{A}_2(2).2$ which has $Q_{32}$ Sylow subgroups. In the former case $\ell(\CO Gb)=3$ and the latter $\ell(\CO Gb)=2$;
    \item one of three blocks of $G=6.\type{B}_3(3)$ or one block of $2.\type{B}_3(3)$ with $Q_{16}$ defect groups. In each case $\ell(\CO Gb)=2$.
\end{enumerate}

\end{Proposition}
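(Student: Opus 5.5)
This will be a finite case-by-case verification, in the same spirit as the proof of Proposition~\ref{prop:sporadicgenquat}. First I list the simple groups of Lie type $S$ with exceptional Schur multiplier that are not isomorphic to alternating groups. They are $\type{A}_1(4)$, $\type{A}_1(9)$, $\type{A}_2(2)$, $\type{A}_2(4)$, $\type{A}_3(2)$, $\tw{2}\type{A}_3(2)$, $\tw{2}\type{A}_3(3)$, $\tw{2}\type{A}_5(2)$, $\type{B}_3(2)$, $\type{B}_3(3)$, $\type{C}_3(2)$, $\type{D}_4(2)$, $\tw{2}\type{B}_2(8)$, $\type{G}_2(3)$, $\type{G}_2(4)$, $\type{F}_4(2)$ and $\tw{2}\type{E}_6(2)$. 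Those isomorphic to alternating groups, such as $\type{A}_1(4)$, $\type{A}_1(9)$ and $\type{A}_3(2)$, have already been treated in Subsection~\ref{alt}. Since $\type{A}_2(2)\cong \PSL_2(7)$, I keep that case.

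For each remaining $S$, the quasisimple $N$ with $N/Z(N)\cong S$ is a quotient of the full covering group. The possibilities are therefore finite and explicit, and the same holds for the almost quasisimple $G=DN$ with $[G:N]\leq 2$.

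The argument for the quasisimple case runs as in the sporadic case. Let $\CO Nc$ be covered by $\CO Gb$. Then $c$ is $G$-stable and $D\cap N$ is a defect group of $\CO Nc$. If $O_2(Z(N))=1$, then $D\cap N$ is quaternion. Otherwise $|O_2(Z(N))|=2$ and $(D\cap N)/O_2(Z(N))$ is dihedral (including Klein four). Whenever the $2$-part of the multiplier is trivial, or the Sylow $2$-subgroup of $N$ is far from rank one, I rule the block out with Lemma~\ref{lem:noSL2}(a). Where that does not apply directly (for example $\type{B}_3(3)$, which involves $\PO_7(3)$, and $\type{G}_2(3)$), I pass to the block of $S$ and use the known classification of dihedral defect blocks and of blocks with $C_2\times C_2$ defect. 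Only blocks of $S$ with dihedral or Klein four defect can lift to quaternion blocks of a double cover. Each candidate block is then checked with the GAP character table library~\cite{GAP}, using the CTBlocks package. I compute the defect, and I determine whether $D$ is quaternion in one of two ways. If there is a unique radical $2$-subgroup of the relevant order, I check directly that it is quaternion. Otherwise I exhibit two classes of involutions on which characters of the block do not vanish, which rules out quaternion defect groups. I also read off $\ell(\CO Gb)$ from the Brauer tables where they are available.

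This should leave $2.\type{A}_2(2)\cong \SL_2(7)$, with principal block of defect $Q_{16}$ and three simple modules. It should also leave the faithful blocks of $2.\type{B}_3(3)$ and $6.\type{B}_3(3)$ with $Q_{16}$ defect groups. When the Brauer table of a faithful block of $6.\type{B}_3(3)$ is unavailable, I would compute $\ell$ indirectly. One route is Lemma~\ref{G_and_C_G(z)} applied in $C_G(z)$. Another is the degree argument used for $2.Co_1$: the ordinary characters and decomposition patterns in~\cite{er90} distinguish two simple modules from three.

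For the extensions with $[G:N]=2$, I check whether the block of $N$ found above is $G$-stable and whether the defect groups of $G$ remain quaternion. For $2.\type{A}_2(2).2$ (the double cover of $\PGL_2(7)$ with quaternion Sylow subgroups), the Sylow $2$-subgroup is $Q_{32}$, and $\ell=2$ follows because the two $3$-dimensional simple modules of $\SL_2(7)$ are fused. For $2.\type{B}_3(3).2$ and $6.\type{B}_3(3).2$, I would rule out quaternion defect groups by finding an involution outside $N$ in a defect group, again via the class-fusion and involution-count criterion in GAP. The same criterion handles the other double covers such as $2.\type{G}_2(4).2$. I expect the main obstacle to be the $\type{B}_3(3)$ covers: computing $\ell(\CO Gb)$ for the $6$-fold cover, and excluding quaternion defect groups in the index-two extensions, both rely on incomplete Brauer character data, and there I would fall back on Erdmann's decomposition matrices and Lemma~\ref{G_and_C_G(z)}.
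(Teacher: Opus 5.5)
Your overall plan matches the paper's proof, which is a case check with the GAP character table library as in Proposition~\ref{prop:sporadicgenquat}, plus passing to the quotient by the central $2$-subgroup so that a $Q_{2^n}$ block becomes one with quaternion or dihedral defect group. However, there is a gap: your filtering step via Lemma~\ref{lem:noSL2}(a) is invalid.

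That lemma concerns a \emph{Sylow} $2$-subgroup $P$ of a group $M$: if $P/Z$ is dihedral or quaternion, then $M$ involves none of certain classical groups over fields of odd order. It can therefore be used only when the defect group is a Sylow $2$-subgroup. A non-principal block can have a small quaternion defect group inside a group of large $2$-rank.

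The statement you are proving shows this. The groups $2.\type{B}_3(3)$ and $6.\type{B}_3(3)$ have $Q_{16}$ blocks, although $\type{B}_3(3)\cong\PO_7(3)$ is on the lemma's list. So $\type{B}_3(3)$ is not an exception to your filter but a counterexample to it.

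The lemma is also silent on $\type{G}_2(3)$ and on the characteristic-$2$ groups $\type{A}_2(4)$, $\tw{2}\type{A}_5(2)$, $\type{B}_3(2)$, $\type{D}_4(2)$, $\tw{2}\type{B}_2(8)$, $\type{G}_2(4)$, $\type{F}_4(2)$, $\tw{2}\type{E}_6(2)$. Likewise, ``the $2$-part of the multiplier is trivial'' rules out nothing by itself, since a quaternion block of $S$ would remain.

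For the characteristic-$2$ groups, use the argument of Proposition~\ref{prop:defchar} instead. The odd part of their Schur multiplier is the generic one, so $N/O_2(Z(N))\cong\bG^F/Z'$ with $|Z'|$ odd. By Dagger--Humphreys its $2$-blocks have defect zero or full defect. Hence every block of $N$ has defect group $O_2(Z(N))$ or a Sylow $2$-subgroup of $N$. The latter is quaternion only if $S$ has dihedral Sylow $2$-subgroups, i.e.\ $S\cong\type{A}_2(2)$. For $\tw{2}\type{A}_3(3)$, $\type{G}_2(3)$ and $\type{B}_3(3)$ you genuinely have to examine every block of every relevant cover.

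Second, for $[G:N]=2$ you only extend the quaternion blocks of $N$ found earlier. But $D\cap N$ is a maximal subgroup of $D$ and may be the cyclic one; for $D\cong Q_8$ it always is. So you must also treat $G$-stable blocks of $N$ with cyclic defect group $C_{2^{n-1}}$. These would give nilpotent blocks of $G$ with quaternion defect groups, which are absent from the list you are proving, so they must be excluded.

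For the same reason, your claims ``if $O_2(Z(N))=1$ then $D\cap N$ is quaternion'' and ``otherwise $|O_2(Z(N))|=2$'' fail when $G\neq N$. The subgroup $O_2(Z(N))$ is normal in $G$, hence contained in $D$, but it need not be central in $G$. (The paper quotients by $O_2(Z(G))$, not $O_2(Z(N))$.)

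By the previous argument, in characteristic $2$ the only instance is $N/Z(N)\cong\PSL_3(4)$ with $O_2(Z(N))\cong C_4$. There, the block of $N$ dominating the Steinberg block of $\PSL_3(4)$ has defect group $O_2(Z(N))$. In an extension $G=N.2$ whose outer involution inverts $O_2(Z(N))$, the covering block has defect group $D_8$ or $Q_8$, and you have to decide which.
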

\begin{proof}
 We use the same ideas as in Proposition \ref{prop:sporadicgenquat}. To deal with some of the groups with larger Schur multipliers, we also use the fact that if a block of $G$ has defect group $Q_{2^n}$, then it must dominate a block of $G/Z_2$ with defect group $Q_{2^n}/Z_2$, where $Z_2=O_2(\zent(G))$, together with the fact that $Q_{2^n}/\zent(Q_{2^n})$ is dihedral.
\end{proof}

\subsection{Groups of Lie type in defining characteristic} \label{sec:defining}
 \begin{Proposition}
     \label{prop:defchar}
Let $G$ be a quasi-simple group of Lie type defined in characteristic $2$ or let $G$ be the Ree group $\tw{2}\tG_2(3^{2m+1})$, and assume that $G$ is not isomorphic to one of the groups considered in Propositions \ref{prop:alternatinggenquat}, \ref{prop:sporadicgenquat},  or \ref{prop:excschur}. Then no block of $\CO G$ has quaternion defect groups $Q_{2^n}$ with $n\geq 3$. 
 \end{Proposition}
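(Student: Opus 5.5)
We split into the defining characteristic $2$ case and the Ree group case.

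\textbf{Defining characteristic $2$.} Let $G$ be quasisimple with $G/Z(G)$ of Lie type in characteristic $2$, excluding the exceptional covers and the small isomorphisms already treated. Then $G$ is a quotient of $\bG^F$ with $\bG$ simple and simply connected, and $Z(G)$ has odd order. By Humphreys' theorem, the $2$-blocks of $\bG^F$ (and so of $G$) have either defect zero or full defect. So a block with quaternion defect groups has a Sylow $2$-subgroup of $G$ as defect group. This Sylow $2$-subgroup is the unipotent radical $U$ of a Borel subgroup, modulo the central $2'$-part, which does not affect it.

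The rest is to rule out $U$ being quaternion. We do this by showing that $U$ always contains a Klein four-group or is abelian:

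\begin{itemize}
\item When the rank is at least $2$, or $q > 2$, root subgroups $X_\alpha \cong (\FF_q,+)$ are elementary abelian of order $q$. If $q \ge 4$, any single root subgroup already contains $C_2 \times C_2$.
\item If $q = 2$ and the rank is at least $2$, take a pair of orthogonal or commuting root subgroups (for example, a highest root together with a suitable simple root). These commute, so they generate $C_2 \times C_2$.
\item In rank $1$: $\SL_2(2^f)$ has elementary abelian Sylow subgroups. The Suzuki groups have Sylow subgroups of exponent $4$ with centre elementary abelian of order $q \ge 8$. For $\SU_3(2^f)$, the centre of $U$ is elementary abelian of order $2^f$; the case $f = 1$ is solvable or excluded.
\end{itemize}

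In every case $U$ has more than one involution, so it is not quaternion.

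\textbf{Ree groups.} For $G = \tw{2}\tG_2(3^{2m+1})$ with $m \ge 1$ (the case $m = 0$ gives $\PSL_2(8)$, which is excluded or already covered), the Sylow $2$-subgroups are elementary abelian of order $8$. Every block has a defect group contained in a Sylow subgroup, hence abelian and not quaternion. The Schur multiplier is trivial for $m \ge 1$.

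\textbf{Main obstacle.} The only delicate step is the careful bookkeeping over small $q$ and small rank, together with exceptional isomorphisms such as $\SL_2(4) \cong A_5$ and $\SL_3(2) \cong \PSL_2(7)$. For these we rely on the exclusion hypothesis, since such groups are treated in Propositions \ref{prop:alternatinggenquat}, \ref{prop:sporadicgenquat} and \ref{prop:excschur}. For any residual small group (for example $\tG_2(2)'$ or $\tw{2}\tF_4(2)'$), we check directly via \cite{GAP} that Sylow $2$-subgroups contain Klein four-groups.
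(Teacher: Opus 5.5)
Your proof is correct and has the same skeleton as the paper's. The Ree groups are dispatched by their elementary abelian Sylow $2$-subgroups of order $8$. In characteristic $2$ you pass to $\bG^F$ (the centre has odd order, so defect groups are unchanged) and use the Dagger--Humphreys theorem, so that only a Sylow $2$-subgroup $U$ has to be ruled out.

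The genuine difference is the last step. The paper quotes \cite[Prop.~4.9]{NRSV}: $U$ cannot be generated by two elements, whereas quaternion groups are $2$-generated. That is a single citation covering all types at once. You instead exhibit a Klein four-subgroup of $U$, using that a quaternion group has a unique involution. This is more hands-on and self-contained. Your criterion also does not care about small cases such as $\SL_2(4)$ or $\SL_3(2)$, where $U$ really is $2$-generated (though these are excluded by hypothesis anyway). The price is type-by-type bookkeeping.

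That bookkeeping needs tightening for twisted groups, because their root subgroups need not be $\cong(\FF_q,+)$. They can be $\cong(\FF_{q^2},+)$ or $(\FF_{q^3},+)$. In $\tw{2}\tA_{2m}(q)$ they can be nonabelian of order $q^3$, isomorphic to $Q_8$ when $q=2$. In $\tw{2}\tF_4(q)$ they can be of Suzuki type of order $q^2$, cyclic of order $4$ when $q=2$. So for $q=2$ it is not true that two commuting root subgroups automatically generate $C_2\times C_2$. Your conclusion still holds, for these reasons:
\begin{itemize}
\item For $q\ge 4$, every root subgroup has an elementary abelian centre of order at least $q$.
\item For $q=2$, the twisted groups $\tw{2}\tA_m(2)$ ($m\ge 3$), $\tw{2}\tD_m(2)$, $\tw{3}\tD_4(2)$ and $\tw{2}\tE_6(2)$ all have a root subgroup isomorphic to $(\FF_4,+)$ or $(\FF_8,+)$.
\item $\tw{2}\tF_4(2)'$ is covered by your direct check.
\end{itemize}
The bullet for $q=2$ should be phrased along these lines rather than as written.
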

 \begin{proof}
 Since the Ree groups $\tw{2}\tG_2(3^{2m+1})$ have elementary abelian Sylow $2$-subgroups of size $8$, we may assume $G$ is defined in characteristic $2$. Then our assumptions yield that the Schur covering group of $G/\zent(G)$ is of the form $\bG^F$, where $\bG$ is a simple, simply connected algebraic group defined over $\overline{\mathbb{F}}_2$ and $F\colon \bG\rightarrow\bG$ is a Steinberg endomorphism. In this situation, $\zent(\bG^F)$ has odd order, so the defect groups of $G$ and $\bG^F$ are isomorphic by \cite[Thm.~9.9(c)]{N98}. Then we may assume without loss that $G=\bG^F$. 
 
 By a result of Dagger and Humphreys (see \cite{humphreys}), the only defect groups for $2$-blocks of $G$ are the trivial group and the Sylow $2$-subgroups. The group $U:=\mathbf{U}^F$, where $\mathbf{U}$ is the unipotent radical of an $F$-stable Borel subgroup $\mathbf{B}$ of $\bG$, is a Sylow $2$-subgroup of $G$. However, as argued in \cite[Prop.~4.9]{NRSV}, any generating set of $U$ has more than two elements, and hence $U$ is not quaternion.
\end{proof}

We remark that Propositions \ref{prop:structurequasisimplegenquat}, \ref{prop:alternatinggenquat},
  \ref{prop:sporadicgenquat},  \ref{prop:excschur}, and \ref{prop:defchar}, now give a description of the blocks of quasi-simple groups whose defect groups are quaternion.

\section{Proofs of the main results.}\label{sec:mainproofs}

\begin{proof}[Proof  of \thref{Main2}.] Let $H,c $ be  as in \thref{main-reduction}.  We may assume that $H$ is of type (i) or (ii). If $H$ (in case (i)) or $N$ (in case (ii)) is a finite group of Lie type in non-defining characteristic, then by \thref{lietype} $H$ (or $N$) is Morita equivalent to a principal block. Suppose that $H$ (or $N$) is a quasisimple  group  as in  Section \ref{sec:altspor}. If $l(\CO Hc)=3$, then, as described in \thref{Erdmann_class_plus}, $\CO Hc$ is Morita equivalent to a principal block. Otherwise, by Propositions \ref{prop:alternatinggenquat}, \ref{prop:sporadicgenquat},  \ref{prop:excschur} and \ref{prop:defchar} $l(\CO Hb)=2$ and we have the cases listed.
\end{proof}

\begin{proof}[Proof of \thref{Main1}.] The Morita--Frobenius number  of any principal  block  equals $1$. It then suffices to consider the non-principal  blocks  listed in \thref{Main2}. In all cases except the two non-principal blocks of maximal defect of $6.\alt_7$ and the two non-principal blocks of maximal defect of $6.\type{B}_3(3)$, either the number of simple modules is 3 and as in \thref{Erdmann_class_plus} the block is Morita equivalent to a principal block, or the block in question is the unique non-principal block with that defect group. In both cases the Morita--Frobenius number must then be 1. Finally, the $2$-blocks of $6.\alt_7$ and of $6.\type{B}_3(3)$ have Morita--Frobenius number 1 by~\cite[Prop.~7.2]{FK}. The last statement now follows, considering Remark \ref{rem:mfbound}.
\end{proof}

 \begin{proof}[Proof of \thref{tame-Donovan-Corollary}.]   For  blocks with dihedral (including Klein-$4$) or semidihedral defect groups, the result is contained in \cite{er90}. The  case of blocks with quaternion defect groups  follows  from \thref{Main1}.
 \end{proof}

 \begin{proof}[Proof of \thref{MainCorollary2}.]  This follows by combining \thref{Main2} and \cite[Thm.~1.1]{kl20}.\end{proof}

We close by recording the distribution of $k$-blocks in \thref{MainCorollary2} into classes in the notation of~\cite{er90}. 

\begin{Remark}
In the notation of \thref{MainCorollary2}:
\begin{enumerate}
    \item principal blocks of $\SL_2(q_1)$ belong to the class $Q(3A)_2$;
    \item principal blocks of $\SL_2(q_2)$ belong to $Q(3K)$;
    \item the principal block of $2.A_7$ belongs to $Q(3B)$;
    \item the blocks of $2.\Aut(J_2)$ and $2.S_7$ with $Q_{16}$ defect groups, and the two faithful blocks of $6.\type{B}_3(3)$ with $Q_{16}$ defect groups belong to $Q(2A)$;
    \item the nonprincipal blocks of maximal defect of $6.A_7$ and the unique block of $2.\type{B}_3(3)$ with $Q_{16}$ defect groups belong to $Q(2B)_1$.
\end{enumerate}
\end{Remark}


\end{document}